\documentclass[a4paper, 11pt, reqno]{article}

\usepackage[left=2.65cm,right=2.65cm,top=2.7cm,bottom=2.7cm]{geometry}

\usepackage{amsmath,amsfonts,amssymb,amsthm}
\usepackage[dvipsnames]{xcolor}
\usepackage{hyperref}
\hypersetup{
    colorlinks,
    linkcolor={red!50!black},
    citecolor={blue!50!black},
    urlcolor={blue!80!black}
}
\usepackage{combelow}
\usepackage{tikz-cd}
\usepackage{cleveref}

\DeclareMathOperator{\Homeo}{Homeo}
\DeclareMathOperator{\LocHomeo}{LocHomeo}

\DeclareMathOperator{\Id}{Id}
\DeclareMathOperator{\Ad}{Ad}
\DeclareMathOperator{\Cl}{Cl}

\newcommand{\R}{\mathbb{R}}
\newcommand{\bN}{\mathbb{N}}
\newcommand{\cG}{\mathcal{G}}
\newcommand{\cF}{\mathcal{F}}
\newcommand{\cK}{\mathcal{K}}
\newcommand{\fX}{\mathfrak{X}}
\renewcommand{\d}{\mathrm{d}}

\newcommand{\pp}[2]{\frac{\partial#1}{\partial#2}}
\newcommand{\on}[1]{ \operatorname{#1}}

\theoremstyle{plain}
\newtheorem{theoremintro}{Theorem}
\newtheorem{questionintro}[theoremintro]{Question}
\newtheorem*{questionintro*}{Question}

\newtheorem{theorem}{Theorem}[section]
\newtheorem{proposition}[theorem]{Proposition}
\newtheorem{corollary}[theorem]{Corollary}
\newtheorem{lemma}[theorem]{Lemma}

\newtheorem*{propertyP}{Property P(r)}

\theoremstyle{definition}
\newtheorem{definition}[theorem]{Definition}
\newtheorem{remark}[theorem]{Remark}
\newtheorem{caveat}[theorem]{Caveat}
\newtheorem{example}[theorem]{Example}

\newtheorem{claim}[theorem]{Claim}
\newtheorem{question}[theorem]{Question}

\newtheorem{questionother}{Question}

\definecolor{myorange}{RGB}{199, 95, 95}
\definecolor{mymagenta}{RGB}{139, 39, 95}
\definecolor{myblue}{RGB}{52,105,165}
\definecolor{mydarkblue}{RGB}{40,18,111}

\author{Robert Cardona and Fabio Gironella}

\newcommand{\Addresses}{{
  \bigskip
  \footnotesize

  R.~Cardona, \textsc{Departament de Matem\`atiques i Inform\`atica, Universitat de Barcelona, Gran Via de Les Corts Catalanes 585, 08007 Barcelona, Spain; Centre de Recerca Matemàtica, Campus de Bellaterra, Edifici C, 08193, Barcelona, Spain.}\par\nopagebreak
  \textit{E-mail address}: \texttt{robert.cardona@ub.edu}

  \medskip

  F.~Gironella, \textsc{CNRS - Laboratoire de Mathématiques Jean Leray, Nantes Université, 2 Chem. de la Houssinière, 44322 Nantes, France.}\par\nopagebreak
  \textit{E-mail address}: \texttt{fabio.gironella@cnrs.fr}

}}

\date{}

\title{$C^0$-Poisson geometry, coisotropic submanifolds, \\ and clean intersection points}

\begin{document}

\maketitle

%%%%%%%%%%%%%%%%%%%%%%%%%%%%%%%%%%%%%%%%%%%%%%%%%%%%%%%%%%%%%%%%%%%%%%%%%%%%%%%%%%%%%%%%%%%%%%%%%%%%%%%%%%%%%%%%%
%%%%%%%%%%%%%%%%%%%%%%%%%%%%%%%%%%%%%%%%%%%%%%%%%%%%%%%%%%%%%%%%%%%%%%%%%%%%%%%%%%%%%%%%%%%%%%%%%%%%%%%%%%%%%%%%%

\begin{abstract}
In this work, we initiate the study of rigidity and non-rigidity phenomena for
Poisson homeomorphisms, defined as uniform $C^0$-limits of Poisson
diffeomorphisms. First, we prove that Poisson homeomorphisms preserve the singular
symplectic foliation: they map symplectic leaves to symplectic leaves by
symplectic homeomorphisms. Second, we establish the $C^0$-rigidity of coisotropic
submanifolds in Poisson manifolds. A key ingredient is the notion of ``clean
intersection point” between a submanifold and the leaves of a singular foliation,
whose study is of independent interest for singular foliation theory and Poisson
geometry. In contrast with the symplectic case, characteristic foliations of
coisotropic submanifolds are not rigid under Poisson homeomorphisms, exhibiting
flexibility phenomena specific to the Poisson setting. We discuss partial rigidity
results, introduce a topological invariant of coisotropic submanifolds, the $C^0$-characteristic partition, and show that $C^0$-coisotropic submanifolds are non-smooth objects whose vanishing ideal defines a Lie subalgebra of the Poisson algebra. Finally, we consider Poisson homeomorphisms that lift to symplectic homeomorphisms of a symplectic realization and show that nearly all Poisson manifolds admit non-liftable Poisson homeomorphisms. Our main results answer three questions posed by Joksimovi\'c and M\u{a}rcu\c{t}.
\end{abstract}

\setcounter{tocdepth}{2}

{
  \hypersetup{linkcolor=black}
  \tableofcontents
}

\section{Introduction}

\paragraph{Symplectic homeomorphisms.}  Symplectic geometry has seen an outburst of interest in recent decades as it is characterized by a subtle interaction between \emph{flexible} (i.e., topological), and \emph{rigid} (i.e., geometric) phenomena. Flexibility usually takes the form of h-principles, i.e.\ of results that roughly speaking state that if a certain existence and/or classification problem can be solved ``formally'' (i.e.\ in the set of algebraic topological objects associated to the geometric object in question), then it can also be solved geometrically. Rigidity is probed through the use of ``hard'' techniques such as pseudo-holomorphic curves à la Gromov \cite{gromov1985pseudo}. Among the first rigidity results in symplectic geometry there was exactly Gromov's non-squeezing theorem \cite{gromov1985pseudo}, which implies that symplectomorphisms (diffeomorphisms preserving the symplectic structure) are more rigid, in a rather subtle way, than just volume-preserving diffeomorphisms. 
\medskip

Another emblematic rigidity result for symplectomorphisms is the so-called Eliashberg\,-Gromov's $C^0$-rigidity theorem, stating that the space of symplectomorphisms is $C^0$-closed in the space of diffeomorphisms. 
This is rather surprising as being a symplectomorphism is a condition that involves first-order derivatives of the map, over which there is no control via the $C^0$-norm of the map. This gave rise to the notion of \emph{(local) symplectic homeomorphism}: a homeomorphism that is (locally) the uniform $C^0$-limit of symplectomorphisms. 

\medskip

There is by now a vast literature on the study of symplectic homeomorphisms, which shows that the latter are in certain aspects more flexible than their smooth counterparts, while at the same time preserving some of their rigid features. For instance, on the flexible side, Buhovsky and Opshtein exhibited in \cite{BO2} an example of a symplectic homeomorphism mapping a symplectic disc to an isotropic one. Moreover, Buhovsky, Humilière, and Seyfaddini provided in  \cite{BHS_Arnold} an example of a Hamiltonian homeomorphism violating the lower bound on the number of $1$-periodic orbits prescribed by the Arnold conjecture.
On the other hand, in the direction of rigidity, the same authors proved in \cite{BHSarnoldtype} that there is a meaningful, even if weaker, topological lower bound in the style of the Arnold conjecture that survives the passage to the $C^0$ realm in some situations.

More relevant for our purposes is the study of $C^0$-rigidity results for Lagrangian submanifolds and more generally for coisotropics, which has quite a long history in the literature, starting from \cite{laudenbach_sikorav:C0_limits_lagrangians}, going through \cite{opshtein2009mathcal}, 
and culminating in the work \cite{HLS15}. 
In the latter, Humilière, Leclercq, and Seyfaddini prove the following striking and very general result: if a local symplectic homeomorphism $\varphi$ maps a coisotropic submanifold $C$ to a smooth submanifold $\varphi(C)$, then $\varphi(C)$ is automatically coisotropic, and moreover $\varphi$ maps each leaf of the characteristic foliation of $C$ to a leaf of the characteristic foliation of $\varphi(C)$.
Note that, after \cite{HLS15}, an independent proof of these two results has also been given by Usher in \cite{Usher_CoisotropicC0Rigidity}.
The boundary between flexible and rigid behaviors for symplectic homeomorphisms remains a very subtle and intriguing object of study, as shown in the context of squeezing/non-squeezing of symplectic submanifolds via symplectic homeomorphisms in \cite{BO}.

\medskip

\paragraph{Poisson topology through a symplectic lens.}
\emph{Poisson structures} are a wide natural generalization of symplectic structures. 
These are bivector fields $\Pi\in \Gamma(\Lambda^2TM)$ on the ambient manifold $M$ that are integrable in the sense that $[\Pi,\Pi]=0$, where $[\cdot,\cdot]$ is the Schouten-Nijenhuis bracket on the space of multivector fields.
While these structures are certainly very interesting from the geometric and dynamical points of view, for the purposes of this paper one can think of them topologically as the data of a \emph{singular foliation} equipped with a \emph{leafwise symplectic structure}.
Here, singular foliation is intended in the sense of Stefan \cite{St} and Sussmann \cite{Su}, i.e.\ as an involutive and locally finitely generated $C^\infty(M)$-submodule of the $C^\infty(M)$-module of all smooth vector fields on $M$.
As for regular foliations, any such singular foliation induces a partition of $M$ by (immersed) submanifolds, but in this singular setting, these can be of different dimensions. We refer to \cite{LGLR_notes} for a nice introduction to singular foliations.
In the Poisson case, the singular foliation $\cF$ is given by the image of the map $\Pi^\sharp\colon \Omega^1(M)\to \fX(M)$ given by $\alpha\mapsto \iota_\alpha\Pi$. By definition $\Pi^\sharp$ is invertible over its image, and the inverse $\omega^\flat:=(\Pi^\sharp)^{-1}\vert_{\cF}\colon \cF\to \Omega^1(M)$ induces the leafwise symplectic structure on $\cF$.

There has been quite some interest recently in the study of \emph{topological} properties of Poisson structures through the use of symplectic techniques.
While there are many examples in the literature of adaptations of classical arguments, such as Moser's normal forms applied to special classes of Poisson structures, we are especially interested here in more topological and global decomposition/structural results.

In the general setting, this is quite a hard task, as one lacks local normal forms, and the global structure of the leaves of the singular symplectic foliation associated to the Poisson structure (which are, in general, open without controlled behavior at infinity) is wild. 

\smallskip
A first approach to tackle global topological properties is to restrict to very special classes of Poisson manifolds with very restricted singularities. A prototypical testground for symplectic techniques is that of \emph{b-Poisson manifolds} \cite{bPois}: a Poisson manifold of dimension $2n$ such that $\Pi^n$ vanishes transversely along a hypersurface. In particular, the Poisson structure is a symplectic structure away from a hypersurface. Works in the rigidity direction include understanding which manifolds admit such structures \cite{Cav_examples}, attempts to define Floer homologies \cite{BMO_bFloer, K_floer}, or rigidity results about the ambient topology \cite{Alboresi_LogSymplectic}. Other restricted classes of Poisson manifolds have been considered, like \emph{scattering symplectic structures} \cite{Lanius_scattering,Alboresi_ScatteringFillings}. On the flexible side, certain restrictive $h$-principles have been established for regular Poisson open manifolds \cite{Bert_flexibility,FF_flexibility} and $b$-Poisson open manifolds \cite{FMM_flexibility}.

\smallskip

The second approach is to try to leverage symplectic results that hold for \emph{all} symplectic manifolds, closed or not. 
For instance, leveraging the general energy-capacity inequality \cite{LalMcD}, Joksimović and M\u{a}rcu\c{t} prove in \cite{JoksimovicMarcut_HoferNormPoisson} that the natural extension of the symplectic Hofer norm to Hamiltonians on a general Poisson manifold is indeed a norm.
More recently, relying exactly on \cite{JoksimovicMarcut_HoferNormPoisson},  Joksimović proved in \cite{Jo} the analogue of Eliashberg-Gromov's $C^0$-rigidity result for Poisson diffeomorphisms, leading to the notion of \emph{Poisson homeomorphism}, analogously to the symplectic case.  

\medskip

\paragraph{Scope of this work.} This paper aims to inaugurate the study of Poisson homeomorphisms by trying to understand their rigidity and lack thereof. Contrary to almost all previously mentioned results in the direction of a ``Poisson topology", we obtain rigidity (and some flexibility) results for all (or most, for some results) Poisson manifolds. The bulk of the work will consist in answering the three questions of Joksimovi\'c and M\u{a}rcu\c{t} reported in \cite{Jo}, which we will recall. Along the way, and key to some of the proofs, is the introduction and leverage of the notion of ``clean intersection point" (see Definition \ref{def:cleanpointfol}), which is of independent interest and relevant already in the context of singularly foliated spaces and of smooth Poisson geometry. As the paper involves both a new object in the literature (Poisson homeomorphisms) and a new notion in the setting of singular foliations (clean intersection points), we will also give several examples illustrating their properties.

Also worth of notice is the fact that, all the ingredients considered, one could argue that analogously to \cite{JoksimovicMarcut_HoferNormPoisson}, the deep reason why the new rigid phenomena we present in this paper for Poisson homeomorphisms hold is actually that the symplectic topological rigidity we rely on is of a semi-local nature, so that the fact that singular leaves are non-compact is ultimately not an issue, and the difficulties are pushed to understanding the behavior of different objects within local charts of singular foliations. 
Indeed, we focus on the case of ``local Poisson homeomorphisms'', homeomorphisms that are locally limits of Poisson diffeomorphisms, and one of the main symplectic results which we use is the main result in \cite{HLS15}, which holds indeed for local symplectic homeomorphisms too.
\medskip

\noindent \emph{Conventions:} Unless otherwise specified, we work with possibly open ambient manifolds. By submanifold, we mean an embedded submanifold.

\paragraph*{I. Restriction of Poisson homeomorphisms to the symplectic leaves.}

It is well known that a Poisson diffeomorphism $\varphi\colon (M,\Pi)\to (M,\Pi)$ behaves nicely with respect to the singular symplectic foliation $(\cF,\omega)$ associated to the Poisson structure: namely, for every symplectic leaf $L$ of $(\cF,\omega)$, $f(L)$ is a (possibly different) symplectic leaf, and $f\vert_L\colon (L,\omega_L)\to (f(L),\omega_{f(L)})$ is a symplectomorphism.

Now, already the singular foliation $\cF$ associated to $\Pi$ is an invariant of a smooth nature, as its definition involves submodules of vector fields.
It is hence natural to wonder whether Poisson homeomorphisms also behave nicely with respect to $\cF$, as well as with respect to the leafwise symplectic structure $\omega$.
This is indeed a question of M\u{a}rcu\c{t}, see \cite[Question 1 and Acknowledgments]{Jo}:

\begin{questionother}[M\u{a}rcu\c{t}]\label{q1}
    Let $\varphi:M\longrightarrow M$ be a Poisson homeomorphisms of $(M,\Pi)$. Does $\varphi$ map symplectic leaves to symplectic leaves homeomorphically? If so, does it map them by symplectic homeomorphisms?
\end{questionother}

We prove that the answer to the above question is positive for local symplectic homeomorphisms:

\begin{theorem}\label{thm:main1}
    Let $(M,\Pi)$ be a Poisson manifold. 
    Any local Poisson homeomorphism maps the symplectic leaves to symplectic leaves by local symplectic homeomorphisms.
\end{theorem}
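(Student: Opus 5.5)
The proof has two steps: first show that $\varphi$ maps leaves to leaves, then show that the restriction to each leaf is a local symplectic homeomorphism. Both reduce to facts about the Hamiltonian flows, which generate the symplectic foliation. Throughout, $\varphi_k$ denotes Poisson diffeomorphisms converging locally uniformly to $\varphi$ on some open set $U$.

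\textbf{Step 1: leaves go to leaves.} Two points lie on the same symplectic leaf exactly when they are joined by a piecewise path of Hamiltonian flow lines. So it suffices to show the following: for compactly supported $H$ (supported in a chart) and small $t$, the map $\varphi\circ\phi^t_H\circ\varphi^{-1}$ is a Hamiltonian homeomorphism whose orbits stay in leaves. Since each $\varphi_k$ is Poisson, we have
\[
\varphi_k\circ\phi^t_H\circ\varphi_k^{-1}=\phi^t_{H\circ\varphi_k^{-1}}.
\]
The Hamiltonians $H\circ\varphi_k^{-1}$ converge uniformly to $H\circ\varphi^{-1}$, which is only continuous. I would then use Joksimović's framework, i.e.\ the $C^0$-rigidity based on the Poisson Hofer norm \cite{JoksimovicMarcut_HoferNormPoisson,Jo}. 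Exactly as in the symplectic Humilière--Oh theory of continuous Hamiltonians, this should show that the limit homeomorphism $\varphi\phi^t_H\varphi^{-1}$ is the $C^0$-limit of flows tangent to leaves.

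The key remaining point is that $C^0$-limits of leaf-preserving maps preserve leaves. This fails naively, since leaves are not closed. I would handle it locally using the Weinstein splitting theorem. Near $x$, write $M\cong S\times N$ with $S$ a symplectic ball and $N$ a transversal on which the induced Poisson structure vanishes at $x$. The leaves are then products $S\times \ell$, where $\ell$ is a leaf of $N$. Instead of pushing Hamiltonian flows forward, I would argue directly with Casimir-type invariants. A continuous function $f$ on $U$ is constant along leaves if and only if $f\circ\phi^t_G=f$ for all $G$. This characterization survives uniform limits, because $\varphi_k$ intertwines the Hamiltonian flows of $G$ and $G\circ\varphi_k^{-1}$. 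Whether this is enough depends on having enough leafwise-constant continuous functions to separate leaves. That fails in general, for instance for dense leaves.

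So the more robust route is the symmetric argument. Show that $\varphi$ maps each leaf into a single leaf, and apply the same to $\varphi^{-1}$, which is also a local Poisson homeomorphism. Mapping into a single leaf reduces to this: if $y=\phi^t_G(x)$ with $G$ supported in a chart, then $\varphi(y)$ lies on the leaf of $\varphi(x)$. I expect this to be the main obstacle. My first attempt is to show, via the Hofer-norm energy-capacity estimate of \cite{JoksimovicMarcut_HoferNormPoisson}, that the limits of the flows $\phi^t_{G\circ\varphi_k^{-1}}$ exist as continuous Hamiltonian flows of $G\circ\varphi^{-1}$ in the sense of \cite{Jo}. I would then prove that continuous Hamiltonian flows preserve leaves. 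To do that, restrict to the leaf through a point. There the restricted flows are symplectic isotopies of the leaf, and they converge uniformly on compact pieces of the leaf, with the leaf topology controlled by the splitting chart.

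\textbf{Step 2: leafwise symplectic.} Once leaves go to leaves, fix a leaf $L$ and work in a splitting chart $S\times N$ around $x\in L$, so that $L$ locally contains the plaque $S\times\{x_N\}$. The map $\varphi_k$ sends $S\times\{x_N\}$ symplectomorphically onto a plaque $S\times\{n_k\}$ of the leaf through $\varphi_k(x)$. Composing with the projection to $S$ gives symplectic embeddings $S\to S$ that converge uniformly to $\mathrm{pr}_S\circ\varphi|_{S\times\{x_N\}}$. The plaques converge to the plaque through $\varphi(x)$ because $n_k\to \mathrm{pr}_N\varphi(x)$. By Step 1, that limit is a homeomorphism onto the corresponding plaque of $\varphi(L)$. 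Hence $\varphi|_L$ is locally a $C^0$-limit of symplectic embeddings, and so it is a local symplectic homeomorphism.

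For Step 2 to go through, the splitting chart must be chosen at $\varphi(x)$ and the plaques must be compared in that target chart. This requires equicontinuity coming from uniform convergence, which is standard.
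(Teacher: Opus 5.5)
Your Step 2 is the paper's argument: take a Weinstein chart at $\varphi(x)$ and project $\varphi_k(V)$ onto the symplectic factor. Your conjugation identity $\varphi_k\circ\phi^t_G\circ\varphi_k^{-1}=\phi^t_{G\circ\varphi_k^{-1}}$ is a legitimate alternative to the paper's purely foliated Theorem~\ref{thm:foliatedhomeo}, and is closer to Corollary~\ref{cor:Hameos_preserve_leaves}. It reduces Step 1 to one claim. Consider $\Psi^t:=\varphi\circ\phi^t_G\circ\varphi^{-1}$, the locally uniform limit of the isotopies $\Psi^t_k:=\phi^t_{G\circ\varphi_k^{-1}}$, each of which preserves every leaf; you need $\Psi^t$ to preserve every leaf as well.

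That limit statement is the whole content of Step 1, and you leave it as ``the main obstacle'' without proving it. Neither tool you suggest addresses it. The Hofer-norm/energy--capacity detour is beside the point: $\Psi^t$ exists for free because $\varphi_k^{\pm 1}\to\varphi^{\pm 1}$ locally uniformly. The only question is \emph{which leaf} $\Psi^t(q)$ lies on, and energy estimates say nothing about that. ``Restrict to the leaf and use uniform convergence on compact pieces of it'' is circular. Convergence in $M$ of points of $L_q$ does not keep the limit in the non-closed set $L_q$ (think of a dense leaf, or a leaf accumulating on a singular point). Convergence in the leaf topology is exactly what has to be shown.

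What is missing is a two-step local argument, namely the content of Lemma~\ref{lem:dimensionleaveshomeo} together with the splitting-chart arguments of the paper.

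\emph{First, control the leaf dimension of the limit point.} Since $\Psi^t(q)=\lim_k\Psi^t_k(q)$ with $\Psi^t_k(q)\in L_q$, lower semicontinuity of $\dim T\mathcal{F}$ (Lemma~\ref{lem:rankcont}) gives $\dim L_{\Psi^t(q)}\le\dim L_q$. Applying the same to $(\Psi^t_k)^{-1}\to(\Psi^t)^{-1}$ at the point $\Psi^t(q)$ gives equality. Without this, $\Psi^t(q)$ could sit on a lower-dimensional leaf in $\overline{L_q}$. Near such a point, $L_q$ meets a splitting chart in sets $W_1\times\ell$ with $\dim\ell>0$, and no rigidity is available.

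\emph{Second, run the slice argument.} Set $q_1:=\Psi^{t_1}(q)$ and $d:=\dim L_q=\dim L_{q_1}$, and take a splitting chart $W_1\times W_2$ at $q_1$ with $\dim W_1=d$. Then $L_q$ meets this chart in countably many slices $W_1\times\{a\}$. For $|s-t_1|<\delta$ and $k$ large, $s\mapsto\pi_2(\Psi^s_k(q))$ is a continuous path in a countable set. It is therefore constant, equal to $a_k=\pi_2(\Psi^{t_1}_k(q))\to 0$. Passing to the limit gives $\Psi^s(q)\in W_1\times\{0\}\subset L_{q_1}$ for $|s-t_1|<\delta$. So $t\mapsto L_{\Psi^t(q)}$ is locally constant, hence constant.

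Your Step 2 also uses this dimension equality tacitly. It is what guarantees that $\varphi_k(V)$ lies in a single slice $S\times\{n_k\}$ of the target chart, rather than in some $S\times\ell$ with $\dim\ell>0$ where projection to $S$ would not be an embedding.
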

As it will become clear, our proofs only allow us to conclude that in general, even if the Poisson homeomorphism is global, the map induced on each leaf is a priori only a local symplectic homeomorphism. We point out, however, that it is not known whether there exist local symplectic homeomorphism which are not (global) symplectic homeomorphisms. A first step to prove \Cref{thm:main1} establishes a topological version of it, which is of independent interest: homeomorphisms that are limits of foliation-preserving diffeomorphism map leaves to leaves homeomorphically (\Cref{thm:foliatedhomeo}). Once this is established, we deduce \Cref{thm:main1} via a local argument in Weinstein splitting charts for the Poisson structure. One can also show, and the argument here is much simpler, that for the restricted subclass of Hameomorphisms (defined analogously to the symplectic setting), every symplectic leaf is mapped to itself by a symplectic homeomorphism (\Cref{cor:Hameos_preserve_leaves}).

\paragraph{II. Poisson homeomorphisms and coisotropic submanifolds} As we mentioned, one of the pinacles of $C^0$-symplectic topology is the rigidity of coisotropic submanifolds \cite{HLS15}. These submanifolds are also of crucial relevance in Poisson geometry \cite{Weinstein_CoisotropicCalculus}, hence the following natural question.

\begin{questionother}[Joksimovi\'c]\label{q2}
    Let $C$ be a coisotropic submanifold of $(M,\Pi)$, and let $\varphi:M\longrightarrow M$ be a Poisson homeomorphism such that $C'=\varphi(C)$ is a smooth submanifold. Is $C'$ coisotropic?
\end{questionother}

Our second main theorem establishes that this is indeed the case, $C^0$-rigidity of coisotropics holds in the wide context of Poisson manifolds.

\begin{theorem}\label{thm:main2}
    Let $(M,\Pi)$ be a Poisson manifold and $\varphi:M\longrightarrow M$ be a Poisson homeomorphism. Suppose that $C\subset M$ is a coisotropic submanifold and that $C'=\varphi(C)$ is a smooth submanifold. Then $C'$ is coisotropic.
\end{theorem}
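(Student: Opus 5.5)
The plan is to reduce \Cref{thm:main2} to the symplectic result of \cite{HLS15}, applied leaf by leaf. The tools are \Cref{thm:main1} and the notion of clean intersection point (Definition~\ref{def:cleanpointfol}). Recall that $C'$ is coisotropic if and only if $\Pi^\sharp(N^*_{q}C')\subset T_{q}C'$ for every $q\in C'$. Both sides vary continuously along the smooth submanifold $C'$, so this condition is closed in $q$. It therefore suffices to verify it on a dense subset of $C'$.

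\emph{Step 1 (a good dense set).} I would first show that the set of clean intersection points of a submanifold with the leaves of $\cF$ is open and dense in that submanifold. Here $p\in C$ is clean if, near $p$ and in a Weinstein splitting chart, $C$ meets the local plaque of the leaf through $p$ cleanly, i.e.\ the intersection is a submanifold with tangent space $T_pC\cap T_pL$. The natural candidate is the set of points where $\dim(T_pC\cap T_p\cF)$ is locally constant: lower semicontinuity of the rank gives openness and density of the locally constant set, and from there one upgrades to the clean intersection property. Since $\varphi|_C\colon C\to C'$ is a homeomorphism, the set
\[ D:=\{p\in C : p \text{ is clean for } C \text{ and } \varphi(p) \text{ is clean for } C'\} \]
is open and dense in $C$. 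Hence $\varphi(D)$ is dense in $C'$.

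\emph{Step 2 (leafwise transfer).} Fix $p\in D$, let $L$ be the leaf through $p$ and $L'$ the leaf through $\varphi(p)$. Since $C$ is coisotropic and meets $L$ cleanly near $p$, the local piece $C\cap L$ is coisotropic in $(L,\omega_L)$. Indeed, $(T_pC\cap T_pL)^{\omega_L}=\Pi^\sharp(N_p^*C)\subset T_pC$, and this set also lies in $T_pL$. By \Cref{thm:main1}, $\varphi$ maps $L$ to $L'$ by a local symplectic homeomorphism. Restricting to plaques, we get a local symplectic homeomorphism between open pieces of $L$ and $L'$ which maps the smooth coisotropic submanifold $C\cap L$ onto $C'\cap L'$, a smooth submanifold near $\varphi(p)$ by cleanness. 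The theorem of \cite{HLS15}, valid for local symplectic homeomorphisms, then gives that $C'\cap L'$ is coisotropic in $L'$ near $\varphi(p)$.

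\emph{Step 3 (back to the ambient condition).} Let $q=\varphi(p)$ and $\alpha\in N^*_qC'$. Then $\Pi^\sharp\alpha\in T_qL'$. For $v\in T_qC'\cap T_qL'=T_q(C'\cap L')$ (cleanness) we have $\omega_{L'}(\Pi^\sharp\alpha,v)=\pm\alpha(v)=0$. Hence $\Pi^\sharp\alpha\in T_q(C'\cap L')^{\omega_{L'}}\subset T_q(C'\cap L')\subset T_qC'$, so the ambient condition holds on the dense set $\varphi(D)$, and by closedness on all of $C'$.

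\emph{Main obstacle.} The delicate step is Step 1 together with the local bookkeeping in Step 2. One must show that clean points are dense, not merely that the rank is generically locally constant. One must also check that $\varphi$ sends plaques of $L$ near $p$ onto open subsets of $L'$ in its leaf topology, which is not automatic for immersed, possibly non-compact leaves; this compatibility should come from the proof of \Cref{thm:main1} in Weinstein charts. Finally, one must ensure that $\varphi(C\cap\text{plaque})$ is exactly $C'\cap\text{plaque}'$ locally. I would attempt density via a stratification of $C$ by the rank of $TC\cap T\cF$, using the local finite generation of $\cF$ in a splitting chart $\R^{2k}\times N$ with transverse Poisson structure vanishing at the origin.
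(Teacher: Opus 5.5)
Your overall architecture is exactly the paper's proof (Item~1 of \Cref{thm:C0-rigidity_coisotropics}). It uses a dense set of points clean for $C$ whose images are clean for $C'$; \Cref{prop:cleancoisotropicinleaf} to pass to $C\cap L$; \Cref{thm:main1} together with \cite{HLS15} inside the leaf; \Cref{prop:cleancoisotropicinleaf} again to return to the ambient condition; and closedness. Steps~2 and~3 are fine. The plaque bookkeeping you worry about is harmless: in a splitting chart $W$ at $\varphi(p)$, the set $L'\cap W$ is a countable union of plaques. Hence the connected image of a small plaque through $p$ lies in a single plaque, and it is open there by invariance of domain.

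The step that does not go through as written is Step~1. First, $\Cl(C,\cF)$ is in general \emph{not} open (\Cref{exa:clean_not_open_singular,exa:clean_not_open}). What holds, and what you actually need, is that it contains an open dense subset $W_1$; then $W_1\cap\varphi^{-1}(W_2)$ plays the role of your $D$. Second, and more seriously, your candidate set need not consist of clean points once the leaf dimension jumps along $C$. This is the set of points where $\dim(T_pC\cap T_p\cF)$ is locally constant. Take $\Pi=\partial_a\wedge\partial_b+(x^2+y^2)\,\partial_x\wedge\partial_y$ on $\R^5$ with coordinates $(a,b,x,y,z)$, and $C=\{x=0,\ y=b^2\}$. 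For $b\neq0$ the leaf is $4$-dimensional and $TC\cap T\cF=\langle\partial_a,\partial_b+2b\,\partial_y\rangle$. For $b=0$ the leaf is the plane $\{x=y=0,\ z=z_0\}$ and $TC\cap T\cF=\langle\partial_a,\partial_b\rangle$. So this dimension is identically $2$ on $C$. Yet $C$ meets that plane in the line $\{b=0\}$, whose tangent space $\langle\partial_a\rangle$ is strictly smaller, so no point with $b=0$ is clean. Relatedly, without control of the leaf dimension, $p\mapsto\dim(T_pC\cap T_p\cF)$ has no semicontinuity in general. So lower semicontinuity of the leaf dimension alone does not give what you claim.

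The repair is to stratify by leaf dimension first, as the paper does in the proof of \Cref{thm:clean}. Inside any open subset of $C$, let $k$ be the maximal leaf dimension there; lower semicontinuity gives an open piece where $\dim T\cF\equiv k$. In a splitting chart $\R^k\times\R^{n-k}$ this piece lies in $\R^k\times A$, where $A$ is the set of $0$-dimensional leaves of the transverse foliation. So along it, $T\cF$ is the constant subbundle $\R^k\times 0$. With $\pi_2$ the projection to $\R^{n-k}$, the quantity $\dim(TC\cap T\cF)=\dim C-\operatorname{rank}\mathrm{d}(\pi_2|_C)$ is then upper semicontinuous, hence constant on a further nonempty open subset. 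The constant rank theorem shows that the fibres $C\cap(\R^k\times\{c\})$ are submanifolds with tangent space $TC\cap T\cF$, i.e.\ these points are clean. With this correction your route to Step~1 is valid. It is somewhat more direct than the paper's inductive Property~$\mathrm{P}(r)$ argument in \Cref{prop:cleanregular_local}, which reaches the same constant-rank situation one coordinate function at a time. The rest of your proof then coincides with the paper's.
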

There are two key ingredients in the proof of \Cref{thm:main2}.
First, is the symplectic analogue of the result, proven in \cite{HLS15}. The second one tries to leverage the fact that if a coisotropic submanifold $C$ of a Poisson manifold $(M,\Pi)$ ``intersects cleanly'' every leaf of the singular foliation of $\Pi$, then $C\cap L$ is coisotropic (in the symplectic sense) in any symplectic leaf $L$. Of course, the assumption that $C$ intersects cleanly every leaf is far from true in general, and is a rather strong assumption. Motivated by this fact, we introduce a local notion of clean intersection, that of ``clean intersection point" (\Cref{def:cleanpointfol}), which essentially requires that the intersection between the submanifold and the leaf through that point is locally clean. Surprisingly, this notion has never been considered or studied before. The proof then revolves around understanding this notion and its behavior within coisotropic submanifolds. For instance, at these points, being coisotropic is completely characterized by the local intersection with the symplectic leaf (\Cref{prop:cleancoisotropicinleaf}). One of our main technical results is the proof that most points of a submanifold in a singularly foliated space are clean intersection points.

\begin{theorem}\label{thm:clean_intro}
    Let $\mathcal{F}$ be a singular foliation on $M$, and $N$ an immersed submanifold. Then the set of clean intersection points of $N$ with the leaves of $\cF$ contains an open and dense subset of $N$.
\end{theorem}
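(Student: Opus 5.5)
The plan is to stratify $N$ by two integer-valued, lower semicontinuous functions. On the open dense set where both are locally constant, I will show that the intersection with the local leaf is clean using the local splitting theorem for singular foliations. I read Definition~\ref{def:cleanpointfol} as saying that $p\in N$ is a clean intersection point if, for a small neighbourhood $U$ of $p$ in $N$, the set $N\cap L_p\cap U$ is a submanifold whose tangent space at each point $q$ is $T_qN\cap T_qL_p$. Here $L_p$ is the leaf through $p$, or its local plaque near $p$. Since the question is local, I may assume $N$ is embedded, by restricting to small pieces of the immersion.

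\emph{Step 1: the generic set.} For $q\in N$ put $r(q)=\dim F_q$, where $F_q\subset T_qM$ is the tangent space of the leaf through $q$. Put $k(q)=\operatorname{rank}(F_q\to T_qM/T_qN)$, so that $\dim(T_qN\cap F_q)=r(q)-k(q)$. Choosing local generators of $\cF$, the function $r$ is the rank of finitely many smooth vector fields, hence lower semicontinuous on $N$. The set where a lower semicontinuous integer-valued function is locally constant is open and dense: in any open set, a point where it attains its (bounded) maximum has a neighbourhood on which the function is constant. So $W_1=\{r \text{ locally constant}\}$ is open and dense in $N$.

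On $W_1$, near any $p$ with $r(p)=r$, choose $X_1,\dots,X_r\in\cF$ that are linearly independent at $p$, and hence on a neighbourhood in $M$. At points $q\in N$ near $p$ we have $\dim F_q=r$, so $F_q=\operatorname{span}\{X_i(q)\}$. Thus $F|_N$ is a smooth constant-rank bundle near $p$, and $k$, being the rank of a smooth bundle map, is lower semicontinuous. Repeating the argument gives an open dense $W\subset W_1$, hence open and dense in $N$, on which $r$ and $k$ are locally constant.

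\emph{Step 2: local structure at a point of $W$.} Fix $p\in W$. By the splitting theorem for singular foliations (Stefan; Androulidakis--Skandalis), there is a chart $M\supset V\cong P\times T$ around $p$ with the following properties: $P$ is a disc of dimension $r$; $T$ is a transversal with $p\leftrightarrow(0,0)$; and $\cF|_V=TP\oplus\cF_T$, where $\cF_T$ is a singular foliation on $T$ vanishing at $0$. The leaf through $(x,t)$ is then $P\times(\text{leaf of }\cF_T\text{ through }t)$, of dimension $r+\dim(\cF_T)_t$.

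Points of $N\cap V$ near $p$ all have leaf dimension exactly $r$. Hence they lie in $P\times Z$, where $Z=\{t:(\cF_T)_t=0\}$, and the local leaf through any such point $(x,t)$ is just $P\times\{t\}$. In particular the plaque of $L_p$ is $P\times\{0\}$, and $N\cap L_p\cap V=(\pi_T|_N)^{-1}(0)$, where $\pi_T$ is the projection onto $T$.

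\emph{Step 3: constant rank.} The kernel of $d(\pi_T|_N)$ at $q$ is $T_qN\cap T_q(P\times\{t\})=T_qN\cap F_q$, which has constant dimension $r-k$ on $W$. So $\pi_T|_N$ has constant rank near $p$. By the constant rank theorem, its fibre over $0$ is near $p$ a submanifold with tangent space $T_qN\cap F_q$ at each point. This is exactly cleanness at $p$, so $W$ is contained in the set of clean intersection points.

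\emph{Main obstacle.} I expect the delicate point to be Step 2. First, one must check that the splitting chart really makes all nearby points of $N$ lie on product plaques $P\times\{t\}$; this uses only the local constancy of $r$ on $N$, not on $M$. Second, if the definition refers to the global leaf $L_p$ rather than its plaque, then $L_p$ may return to $V$ through other plaques $P\times\{t\}$ with $t\in Z$ accumulating at $0$. In that case I would first try to show that the definition only involves the connected component of $N\cap L_p\cap U$ through $p$, equivalently the plaque. Failing that, I would shrink $W$ further, using that in the leaf topology the relevant plaque is open.
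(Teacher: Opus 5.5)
Your proof is correct, and it takes a genuinely different route from the paper's.

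\textbf{The paper's route.} It first proves \Cref{prop:cleanregular_local} by induction on the transverse coordinate functions $f_1,\dots,f_{n-k}$ restricted to the submanifold. At each step, the next coordinate is either constant on the current fibres, or it becomes regular on them after passing to a smaller open set. This yields a non-empty open set of clean points inside every open subset. The theorem is then assembled in two parts: $N\cap M_{\operatorname{reg}}$ is treated as a regular-foliation problem, and the interior of $N_{\operatorname{sing}}$ is stratified iteratively by maximal leaf dimension.

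\textbf{Your route.} You stratify $N$ in one go by the two lower semicontinuous functions $\dim F_q$ and $\dim(T_qN\cap F_q)$. You then apply the constant rank theorem once, to $\pi_T|_N$ in a splitting chart.

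\textbf{What each buys.} Your argument is shorter and treats regular and singular points uniformly; you rightly note that leaf dimension only needs to be locally constant along $N$, not on $M$. It also replaces the paper's coordinate-by-coordinate induction, which is in effect a hand-made constant rank argument, by a standard theorem. Finally, it produces an explicit open dense set of clean points. The paper's inductive scheme has its own use: a variant of it, with regular values in place of regular points, proves \Cref{prop:always_one_clean_leaf_regular}, which needs a leaf whose whole intersection with the submanifold is clean.

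\textbf{On your ``main obstacle''.} The second worry does not arise. \Cref{def:cleanpointfol} only asks for a neighbourhood $V$ of $p$ in $L_p$ with its leaf topology, so you may take $V$ to be the plaque $P\times\{0\}$. Then $U\cap V=(\pi_T|_U)^{-1}(0)$ exactly, and no further shrinking of $W$ is needed.
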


\Cref{thm:clean_intro} is of independent interest. Indeed, the notion of clean intersections is a natural generalization of transverse intersection that naturally appears in symplectic and Poisson geometry (e.g.\ in the contexts of Lagrangian Floer homology \cite{Pozniak_FloerClean} and of Poisson relations \cite{Weinstein_CoisotropicCalculus}). From this theorem and the properties of clean intersection points for coisotropics, one deduces \Cref{thm:main2}.
\medskip

In the Poisson context, coisotropic submanifolds also have an induced characteristic foliation, which is singular instead of regular (as it would be the case in the symplectic context). The main result in \cite{HLS15} also shows that characteristic leaves are rigid: any symplectic homeomorphism mapping a coisotropic to a smooth (hence, coisotropic by the rigidity result) submanifold must map characteristic leaves to characteristic leaves homeomorphically. Hence, one can complement \Cref{q2} with the following: in the situation of \Cref{thm:main2}, are the leaves of the characteristic foliation of $C$ mapped homeomorphically to leaves of the characteristic foliation of $C'$?

As we will see with explicit examples, this is in general \emph{not} the case. In particular, this exhibits some new flexibility phenomena. As it follows from the proof of \Cref{thm:C0-rigidity_coisotropics} (whose statement includes \Cref{thm:main2}, with some more information about the behavior of characteristic leaves), the answer to the question above (Question \ref{q:dense}) is positive ``locally" on an open and dense subset of $C$. However, a legitimate question for which we do not have a proof or a counterexample is the following.

\begin{questionintro}\label{q:dense}
Is there always a dense set of characteristic leaves of $C$ that are mapped homeomorphically to characteristic leaves of $C'$? If $C$ has a compact closure, is there always a dense and open set of characteristic leaves mapped homeomorphically?
\end{questionintro}

We provide sufficient conditions for a positive answer to this question in Section \ref{sec:when_almost_all_char_leaves_mapped_homeo}, showing that the answer seems to depend again on the properties of clean intersection points, and the related notions of clean characteristic leaf (\Cref{def:cleancharleaf}) and non-clean path (\Cref{def:noncleanpath}). In particular, the answer is positive if one can answer \Cref{q:cleanpaths} in the negative, a question that only involves a property of the set of clean points of coisotropic submanifolds (without involving any Poisson homeomorphism).
\medskip

\paragraph{III. $C^0$-Hamiltonians, rigid partitions and $C^0$-coisotropics.} The previously introduced tools allow us to study a generalization to the Poisson setting of \cite[Theorem 3]{HLS15}.
Recall that the latter states, in the symplectic setup, that ``$C^0$-Hamiltonians'' restrict to a connected coisotropic $C$ as a function of time only if and only if the continuous isotopy they generate preserves $C$ and flows along the characteristic foliation.
Roughly speaking, a $C^0$-Hamiltonian $H$ is a continuous (time-dependent) real-valued function that is $C^0$-limit of smooth Hamiltonians whose generated (smooth) isotopies also $C^0$-converge to a family of homeomorphisms $\phi^t$, that is called Hameotopy.
These notions can be extended naturally to the Poisson setup (see \Cref{def:hameotopy_C0-hamiltonian}), and generalizing (a part of) \cite[Theorem 3]{HLS15} involves understanding more in detail the behavior of non-clean paths in coisotropic submanifolds. Our generalization of \cite[Theorem 3]{HLS15}, Theorem \ref{thm:C0_Hamiltonians_vanishing_C}, gives then a characterization of those $C^0$-Hamiltonians whose Hameotopy preserves $C$. 
It is actually a strengthening of \Cref{thm:main2} analogously to the symplectic case, as one can see that it implies it (see \Cref{rem:Ham_vanish_implies_rigidity}).

We also recall that in the symplectic setup the identity Hameotopy can only be generated by $C^0$-Hamiltonians that solely depend on time (see \cite{Viterbo_UniquenessGeneratingLowRegHamiltonians,BuhovskySeyfaddini_UniquenessGeneratingC0Hamiltonians}); it directly follows as a consequence that, in the Poisson setup, the identity Hameotopy can only be generated by a time-dependent Casimir function (i.e. a 
function that is constant, depending on time, on each singular symplectic leaves). An immediate corollary of it (see \Cref{cor:C0-Hamiltonians_up_to_Casimirs}) is the extension to the Poisson setup of \cite[Corollary 4]{HLS15}, namely the fact that, on a Poisson manifold, a $C^0$-Hamiltonian $H$ is a continuous time-dependent Casimir function if and only if the Hameotopy $\phi^t$ that it generates is the identity.

All this naturally leads us to introduce the notion of \textbf{$C^0$-characteristic partition} of a coisotropic in a Poisson manifold (Definition \ref{def:C0-characteristic_foliation}): this is a new topological invariant of coisotropic submanifolds, given by a certain partition of $C$ that can be shown to be invariant under Poisson homeomorphisms, and has other desirable rigidity properties. We do not tackle this notion in this work, but ask whether the $C^0$-characteristic partition is in fact a partition \emph{by topological submanifolds}; see \Cref{quest:C0-characteristic_leaves_topological_submanifolds}. Lastly, our rigidity Theorem \ref{thm:main2} naturally gives rise to the definition of $C^0$-coisotropic submanifolds, that is, topological submanifolds that are locally Poisson homeomorphic to a smooth coisotropic. 
We deduce the following surprising consequence of \Cref{thm:C0_Hamiltonians_vanishing_C}, pointed out to us by Du\v{s}an Joksimovi\'c, and showing that these non-smooth objects define Lie subalgebras of the Poisson algebra induced by $\Pi$ on $C^\infty(M)$.
\begin{corollary}\label{cor:van_ideal}
    Let $C$ be a $C^0$-coisotropic submanifold of a Poisson manifold $(M,\Pi)$. Then the vanishing ideal 
    $$\mathcal{I}(C)=\{f\in C^\infty(M)\mid f|_C=0\}$$
    is a Lie subalgebra of $(C^\infty(M), \{\cdot,\cdot\})$, where $\{ \cdot,\cdot\}$ is the Poisson bracket associated to the Poisson bivector $\Pi$.
\end{corollary}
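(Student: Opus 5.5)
We want to show that if $f,g\in C^\infty(M)$ vanish on $C$, then $\{f,g\}$ vanishes on $C$. The key input is \Cref{thm:C0_Hamiltonians_vanishing_C}, which characterizes $C^0$-Hamiltonians whose Hameotopy preserves a coisotropic $C$. That result is stated for smooth coisotropics, but it is invariant under (local) Poisson homeomorphisms, and $C$ is by definition locally Poisson homeomorphic to a smooth coisotropic. The plan is to use it in the direction ``Hamiltonian vanishing on $C$ $\Rightarrow$ its flow preserves $C$''.

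\textbf{Step 1: reduce to a local statement.} Since the claim is pointwise along $C$, fix $p\in C$. Choose a neighborhood $U$ of $p$ and a local Poisson homeomorphism $\psi$, a $C^0$-limit of Poisson diffeomorphisms $\psi_k$, with $\psi(C\cap U)=C_0\cap V$ for a smooth coisotropic $C_0$.

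\textbf{Step 2: the flow of $f$ preserves $C$.} Consider the smooth Hamiltonian flow $\phi_f^t$ of $f$. Conjugate it by the approximations: $\psi_k\circ\phi_f^t\circ\psi_k^{-1}$ is the Hamiltonian flow of $f\circ\psi_k^{-1}$, because Poisson diffeomorphisms intertwine Hamiltonian flows. As $k\to\infty$ these $C^0$-converge to $\psi\circ\phi_f^t\circ\psi^{-1}$, for small time and locally. Hence $f\circ\psi^{-1}$ is a $C^0$-Hamiltonian with this Hameotopy. Moreover $f\circ\psi^{-1}$ vanishes on $C_0$, in particular it restricts to $C_0$ as a function of time only. \Cref{thm:C0_Hamiltonians_vanishing_C} then says the Hameotopy preserves $C_0$. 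Transporting back, $\phi_f^t$ preserves $C$ near $p$ for small $t$.

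\textbf{Step 3: conclude.} By Step 2, $g\circ\phi_f^t$ vanishes on $C$ near $p$ for small $t$. Differentiating at $t=0$ gives $X_f(g)=\pm\{f,g\}=0$ on $C$ near $p$. Since $p\in C$ was arbitrary, $\{f,g\}\in\mathcal I(C)$. Bilinearity and the Jacobi identity are inherited from the ambient algebra, so $\mathcal I(C)$ is a Lie subalgebra.

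\textbf{Main obstacle.} The delicate point is Step 2's localization. We must ensure that the conjugated flows and their limit are defined on a fixed neighborhood and small time interval. This needs a cutoff of $f$ away from $p$ that does not affect the flow near $p$ for small times. We also must check that the local version of \Cref{thm:C0_Hamiltonians_vanishing_C} applies, meaning the hypotheses on $C_0$ (connectedness, etc.) hold after shrinking. If \Cref{thm:C0_Hamiltonians_vanishing_C} is only stated globally, first try multiplying $f\circ\psi^{-1}$ by a bump function supported in $V$; it still vanishes on $C_0$, and its Hameotopy agrees with the conjugated one near $\psi(p)$ for short time.
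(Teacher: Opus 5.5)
Your proposal is correct and follows the paper's own route. The paper first proves \Cref{cor:Hamiltonians_C0_coisotropics} by the same chart-conjugation argument: $H'=H\circ\varphi$ is a $C^0$-Hamiltonian on the smooth model, \Cref{thm:C0_Hamiltonians_vanishing_C} keeps the point in $C'$ for short time, and one transports back. It then concludes with the identity $H\circ\phi^t_G-H=\int_0^t\{H,G\}\circ\phi^s_G\,\d s$, which is the integrated form of your differentiation at $t=0$.

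There are two minor differences. Because you only use short-time invariance near each point, you avoid the open--closed argument, and with it the closedness of $C$ that \Cref{cor:Hamiltonians_C0_coisotropics} assumes. Also, the hypothesis of \Cref{thm:C0_Hamiltonians_vanishing_C} you need to arrange after shrinking is closedness of $C_0$, not connectedness. This is automatic in a small enough neighborhood, since embedded submanifolds are locally closed.
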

To the author's knowledge, even in the symplectic setting, this consequence of the symplectic version of \Cref{thm:C0_Hamiltonians_vanishing_C} (namely, \cite[Theorem 3]{HLS15}) was not explicitly stated in the literature\footnote{Such an observation was known in the symplectic case to, at least, Du\v{s}an Joksimovi\'c and Michael Usher, as we have been informed by the former.}, but it follows from the same computations as in the proof of \Cref{cor:van_ideal}, relying in this case on \cite[Theorem 3]{HLS15}.

\paragraph*{IV. Lifting Poisson homeomorphisms to symplectic integrations.}

As proved in \cite{Weinstein_LocalStructure,coste_dazord_weinstein:groupoides_symplectiques} (c.f.\ also \cite{CrainicMarcut_ContravariantSprays}), Poisson manifolds can be seen as quotient space of a fibration from a symplectic manifold (that will in general be open); such fibrations are called \textit{symplectic realizations}.
When the covering symplectic manifold is a (local) symplectic Lie groupoid and the projection map is the target map (or the source one, depending on conventions), the symplectic realization is called a \emph{symplectic integration}.
Among all symplectic integrations, there is a ``nicer'' one, canonical in the sense that it is the unique one (up to isomorphism) with simply-connected target-fibers\footnote{We use the convention in \cite{CFM_book} of using the target map as projection map onto the base for symplectic integrations.}. Unfortunately (or fortunately), this canonical integration doesn't always exist, and the Poisson manifold $(M,\Pi)$ is called \emph{integrable} if it does;
the works \cite{CrainicFernandes_IntegrabilityLieBrackets,CrainicFernandes_IntegrabilityPoissonBrackets} of Crainic and Fernandes completely characterize when this happens, and give an explicit construction of (a representative in the isomorphism class of) this canonical symplectic integration, namely the ``Poisson homotopy groupoid'' $\Sigma(M)$.

In this context, M\u{a}rcu\c{t} then formulated the following natural question (see \cite[Question 3 and Acknowledgments]{Jo}, and c.f.\ \Cref{cav:sympl_realizations_not_hausdorff}):

\begin{questionother}[M\u{a}rcu\c{t}]\label{q4}
    Let $\varphi:M\longrightarrow M$ be a Poisson homeomorphism of an integrable Poisson manifold $M$, and $\Sigma(M)$ the canonical symplectic integration of $(M,\Pi)$. Is there a symplectic homeomorphism $\tilde \varphi$ of $\Sigma(M)$ lifting $\varphi$, i.e., satisfying $t\circ\varphi=\tilde \varphi \circ t$ where $t:\Sigma(M)\longrightarrow M$ is the target map?
\end{questionother}

More generally, one can call a Poisson homeomorphism of an arbitrary Poisson manifold ``liftable" if there is a symplectic homeomorphism lift for some symplectic realization of $M$. As we will see, whenever a Poisson homeomorphism is liftable in this more general sense, it inherits additional coisotropic rigidity from its symplectic lift (\Cref{prop:full_coisotropic_rigidity_liftable}). Exploiting the flexibility of characteristic foliations of coisotropics for general Poisson homeomorphisms, we show that almost every Poisson manifold admits non-liftable Poisson homeomorphisms, exhibiting some flexibility phenomena in $C^0$-Poisson geometry. 
For the following statement, we say that $\Pi$ is \emph{almost everywhere non-degenerate} if $M$ is of even dimension, say $2n$, and $\Pi^{n}\neq 0$ on a dense subset of $M$.

\begin{theorem}\label{thm:main3}
Let $(M,\Pi)$ be a Poisson manifold such that $\Pi$ is not almost everywhere non-degenerate. Then there exists a non-liftable Poisson homeomorphism of $(M,\Pi)$.
\end{theorem}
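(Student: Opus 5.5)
The strategy is to exploit the contrast described above. Liftable Poisson homeomorphisms inherit full coisotropic rigidity from their symplectic lift (\Cref{prop:full_coisotropic_rigidity_liftable}): if $\varphi$ lifts to a symplectic homeomorphism $\tilde\varphi$ of a symplectic realization $\mu\colon S\to M$, then for a coisotropic $C$ with $\varphi(C)=C'$ smooth, the preimages $\mu^{-1}(C)$ and $\mu^{-1}(C')$ are coisotropic in $S$. \cite{HLS15} then forces $\tilde\varphi$ to map characteristic leaves of $\mu^{-1}(C)$ to those of $\mu^{-1}(C')$. Projecting by $\mu$, $\varphi$ must map characteristic leaves of $C$ to characteristic leaves of $C'$. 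So it suffices to construct, on any $(M,\Pi)$ that is not almost everywhere non-degenerate, a Poisson homeomorphism $\varphi$ and a coisotropic $C$ with $\varphi(C)$ smooth such that some characteristic leaf of $C$ is not mapped onto a characteristic leaf of $\varphi(C)$.

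First I will localize. Since $\Pi$ is not almost everywhere non-degenerate, there is an open set $U$ on which the rank of $\Pi$ is constant and strictly less than $\dim M$. This holds either because $\dim M$ is odd, or because $\Pi^n$ vanishes on an open set; in the latter case one takes a point of locally maximal rank inside it. By the Weinstein splitting theorem for regular Poisson structures, $U$ contains a chart $V\cong D^{2k}\times D^{m}$ with $m\ge1$ and $\Pi=\sum \partial_{p_i}\wedge\partial_{q_i}$, whose leaves are the slices $D^{2k}\times\{z\}$. Every function of $z$ alone is Casimir in $V$. The plan is to build $\varphi$ supported in $V$ as a $C^0$-limit of Poisson diffeomorphisms of the form $(x,z)\mapsto(\psi_z(x),z)$, where $\psi_z$ is a family of compactly supported Hamiltonian diffeomorphisms of $D^{2k}$ depending smoothly on $z$. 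These are time-one maps of Hamiltonians $H(x,z)$ and hence Poisson. They extend by the identity to $M$.

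Next comes the flexible construction, which I expect to be the main obstacle. I take $C=\{q_1=c\}\cap V$, extended suitably, or more simply $C=D^{2k}\times\{z_1=0\}$ if that is easier. This $C$ is a hypersurface, hence coisotropic. Its characteristic foliation is singular: where the leaf is contained in $C$ it is $0$-dimensional (points), and elsewhere it is given by $\Pi^\sharp$ of the conormal. The idea is to choose $\psi_z$ that degenerate as $z\to$ the critical slice. Take a sequence of Hamiltonian diffeomorphisms $\psi^{(j)}_z$ converging uniformly to a family $\psi_z$ that is continuous in $z$ but not smooth in $z$. For instance, let $\psi_z$ be a rotation of $D^{2k}$ (in a $(q_1,p_1)$-plane, cut off near the boundary) by an angle $\theta(z_1)$, where $\theta$ is continuous but nondifferentiable at $0$, e.g.\ $\theta(z_1)=\sqrt{|z_1|}$, suitably approximated. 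Then $\varphi$ is a Poisson homeomorphism. Take the coisotropic $C=\{q_1=0\}$ inside $V$. Its characteristic leaves in $V$ are segments of lines in the $p_1$-direction, one in each slice. The image $\varphi(C)$ is the union over $z$ of rotated lines $\{\cos\theta q_1-\sin\theta p_1=0\}$. This image is non-smooth, so instead I will choose $C$ transverse to the twisting, e.g.\ $C=\{z_1=h(q_1,p_1)\}$, and arrange $\varphi(C)$ smooth while characteristic lines get bent into curves that are not characteristic. The delicate point is balancing these: $\varphi(C)$ must be smooth, yet the image of a characteristic leaf must fail to be a leaf. I would first try examples where $\varphi$ maps $C$ to itself, with $C=\{z_1=0\}$ a union of symplectic leaves. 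Its characteristic foliation is then by points, hence automatically preserved, which is too rigid. So instead I would use $C=\{z_1=f(q_1)\}$ and $\varphi$ a leafwise shear, verifying explicitly that $\varphi(C)$ is smooth. The existence of an explicit example of non-rigid characteristic foliations, which the paper promises in the coisotropic section, would be invoked here in the chart $V$.

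Finally, I will conclude that $\varphi$ violates the leaf-preservation property required of liftable maps, so $\varphi$ is non-liftable. The even-dimensional almost-everywhere non-degenerate case is excluded exactly because no such open regular chart of positive corank exists there.
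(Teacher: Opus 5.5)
\textbf{The gap is the rank-zero case.} Passing to an open set $U$ of constant rank $2k<\dim M$ does not give you $k\ge 1$. For $\Pi\equiv 0$, or for $\Pi=f\,\partial_x\wedge\partial_y$ on $\R^2$ with $f$ vanishing exactly on a closed half-plane, every open set of constant non-maximal rank has rank $0$, and no choice of $U$ avoids this. In that situation your construction has nothing to work with: there are no $(q_1,p_1)$-planes and no nontrivial leafwise Hamiltonian maps. More fundamentally, the only obstruction you use is empty there. When $\Pi=0$, every submanifold is coisotropic with $\cK_C=\Pi^\sharp(TC^\circ)=0$. So all characteristic leaves are points, and \emph{every} homeomorphism maps characteristic leaves homeomorphically onto characteristic leaves. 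Hence \Cref{prop:full_coisotropic_rigidity_liftable} can never certify non-liftability in this case, and your argument does not prove the theorem for, e.g., the zero Poisson structure.

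\textbf{The missing idea is a different obstruction, namely volume,} which the paper uses in \Cref{prop:nonliftzeroPoisson}. Take a homeomorphism $f$ equal to $x\mapsto(\sqrt[3]{x_1},\dots,\sqrt[3]{x_n})$ on a cube in a chart of the rank-zero region. It is a $C^0$-limit of diffeomorphisms, hence a Poisson homeomorphism for $\Pi=0$. Suppose $g$ were a symplectic lift to a realization $\mu\colon W^{2m}\to M$.
\begin{itemize}
\item The functions $x_i\circ\mu$ Poisson-commute, so by Darboux--Carath\'eodory they are part of Darboux coordinates near a point of $\mu^{-1}(0)$. In these coordinates $\mu$ is a coordinate projection and $g(x,w)=(\sqrt[3]{x},w'(x,w))$.
\item Then $g$ sends the slab $[-\varepsilon,\varepsilon]^n\times[-\varepsilon_0,\varepsilon_0]^{2m-n}$, of volume of order $\varepsilon^n$, to a set lying over $[-\varepsilon^{1/3},\varepsilon^{1/3}]^n$.
\item By injectivity and continuity of $g$, the fibres of this image still contain balls of a fixed radius $D/2>0$. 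Its volume is therefore of order at least $\varepsilon^{n/3}$.
\item Letting $\varepsilon\to 0$ contradicts the fact that $C^0$-limits of symplectomorphisms preserve volume.
\end{itemize}

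\textbf{Positive-rank case.} Your final plan, a leafwise shear by $\sqrt[3]{z_1}-z_1$ applied to $C=\{z_1=x_1\}$ as in \Cref{ex:cleantononclean}, is sound. You still need a compactly supported version with $\varphi(C)$ smooth in order to extend by the identity to $M$; this is what \Cref{prop:nonliftregular} provides.
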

One might wonder if the result holds for the class of almost every non-degenerate Poisson manifold, which are as close as possible to the symplectic world. We believe that this is probably the case, at least in dimension four or higher, but it seems that the example has to be constructed ``ad-hoc" according to the particular singularities of the Poisson structure. We illustrate this by showing in Section \ref{sec:examplebPoisson} that, for example, non-liftable examples (obtained from examples where characteristic leaves are not rigid) exist on the prototypical class of $b$-Poisson manifolds.\\

\paragraph{Organisation of the paper.} In Section \ref{sec:prel}, we cover some background material on singular foliations and Poisson geometry. In Section \ref{sec:leafwisebehavior}, we analyse the leaf-wise behavior of Poisson homeomorphisms, proving Theorem \ref{thm:main1}. We then proceed in Section \ref{sec:cleanpoints} to introduce the notion of ``clean intersection point", establish its properties within the context of coisotropic submanifolds, and prove Theorem \ref{thm:clean_intro}. In Section \ref{sec:rig_nonrig_coisotropics}, we establish our coisotropic rigidity Theorem \ref{thm:main1}, exhibit examples showing that the characteristic foliation is not fully rigid in general, and give partial answers to \Cref{q:dense}. In Section \ref{sec:C0coisotropics}, we introduce the notion of $C^0$-characteristic partition and that of $C^0$-coisotropic submanifold, and characterize Hameotopies preserving a coisotropic submanifold and prove Corollary \ref{cor:van_ideal}. In the last Section \ref{sec:non-liftable_homeos}, we recall the notion of liftable Poisson homeomorphisms, give examples, establish their rigidity of characteristic leaves, and prove Theorem \ref{thm:main3}.\\

\paragraph{Acknowledgements.} The authors are grateful to Du\v{s}an Joksimovi\'c for useful comments on the first version of this work, and for suggesting Corollary \ref{cor:van_ideal}.  RC acknowledges partial support from the AEI grant PID2023-147585NA-I00 and the Spanish State Research Agency, through the Severo Ochoa and María de Maeztu Program for Centers and Units of Excellence in R\&D (CEX2020-001084-M).
FG benefits from the support of the ANR grant SiSyFo (ANR-25-ERCS-0010) and, during part of the development of this work, also benefited from the support of the project Étoile Montante 2023 SymFol funded by the région Pays de la Loire.

%%%%%%%%%%%%%%%%%%%%%%%%%%%%%%%%%%%%%%%%%%%%%%%%%%%%%%%%%%%%%%%%%%%%%%%%%%%%%%%%%%%%%%%%%%%%%%%%%%%%%%%%%%%%%%%%%
%%%%%%%%%%%%%%%%%%%%%%%%%%%%%%%%%%%%%%%%%%%%%%%%%%%%%%%%%%%%%%%%%%%%%%%%%%%%%%%%%%%%%%%%%%%%%%%%%%%%%%%%%%%%%%%%%

\section{Preliminaries}\label{sec:prel}
In this section, we recall some basic facts about singular foliations and Poisson manifolds.

%%%%%%%%%%%%%%%%%%%%%%%%%%%%%%%%%%%%%%%%%%%%%%%%%%%%%%%%%%%%%%%%%%%%%%%%%%%%%%%%%%%%%%%%%%%%%%%%%%%%%%%%%%%%%%%%%

\subsection{Singular foliations}

We first recall the notion of singular foliation, as defined in \cite{AS} inspired by the Stefan-Sussmann integrability conditions \cite{Su, St}. We refer to \cite{LGLR_notes} for an extensive discussion of this notion and its properties.

\begin{definition}[\cite{AS}]
\label{def:singfol}
    A \textbf{singular foliation} on $M$ is a subspace $\mathcal{F}\subset \mathfrak{X}(M)$ which is involutive, stable under multiplication by elements in $C^\infty(M)$, and locally finitely generated.
\end{definition}
We will say that a diffeomorphism
$$\phi: M \longrightarrow M$$
is \textbf{foliation-preserving} if $d\phi(\mathcal{F})=\mathcal{F}$. These are also called symmetries of the foliation, see \cite[Section 1.3.1]{LGLR_notes}.
In the same spirit of how we defined the symplectic leaves on a Poisson manifold, we define the leaf $L_p$ of $\mathcal{F}$ along a point $p$.

\begin{definition}
\label{def:leaf}
Let $p$ be a point in a manifold $M$ with a singular foliation $\cF$. 
The \textbf{leaf} $L_p$ of $\cF$ containing $p$ is the set of points $q\in M$ for which there is a sequence of points $p_1=p,p_2,...p_{k-1},p_k=q$ and vector fields $X_1,...,X_{k-1} \in \mathcal{F}$ such that the integral curve of $X_i$ starting at $p_i$ reaches $p_{i+1}$ at time $t=1$.
\end{definition} 

As detailed in e.g.\ \cite[Theorem 1.7.9]{LGLR_notes}, such leaves are smooth manifolds partitioning $M$, and their tangent space is given at any of its points by $\mathcal{F}$. 
Note also that a foliation-preserving diffeomorphism maps leaves of $\mathcal{F}$ to leaves of $\mathcal{F}$ diffeomorphically.

\medskip

The \textbf{dimension of the leaf} through a point $p\in M$, which denote by $\dim(T_p\mathcal{F})$ must not be confused with the \textbf{rank} of $\mathcal{F}$ at $p$, which is denoted by $\operatorname{rk}_p(\mathcal{F})$ and is defined as the minimal number of generators of $\mathcal{F}$ in a sufficiently small neighborhood of that point. 
One has that the latter is always greater than or equal to the former (see e.g.\ \cite[Lemma 1.3.18]{LGLR_notes}). An important property of the dimension of the leaves that we will need is the following (see e.g.\ \cite[Lemma 1.3.19]{LGLR_notes}).
\begin{lemma}\label{lem:rankcont}
    Let $\mathcal{F}$ be a singular foliation in $M$. The map
    \begin{align*}
        \dim_{T\mathcal{F}}: M &\longrightarrow \mathbb{N}\\
        p &\longmapsto \dim_p(T\mathcal{F})
    \end{align*}
    is lower semi-continuous.
\end{lemma}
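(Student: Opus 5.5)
The plan is to work in a local chart around an arbitrary point $p\in M$ and use local finite generation of $\mathcal{F}$. Choose a neighborhood $U$ of $p$ and finitely many generators $X_1,\dots,X_k\in\mathcal{F}$ such that every element of $\mathcal{F}$ restricted to $U$ is a $C^\infty(U)$-combination of the $X_i$. The first step is to identify $\dim_q(T\mathcal{F})$, for $q\in U$, with the dimension of $\operatorname{span}\{X_1(q),\dots,X_k(q)\}\subset T_qM$. This uses the fact quoted after Definition~\ref{def:leaf} that the tangent space of the leaf $L_q$ at $q$ is given by $\{X(q)\mid X\in\mathcal{F}\}$, i.e.\ the evaluation of the module at $q$. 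By the generating property, that evaluation is exactly the span of the $X_i(q)$.

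Having reduced to a finite family of smooth vector fields, I would express the dimension as the rank of a matrix. Fix local coordinates on $U$ and form the $n\times k$ matrix $A(q)$ whose columns are the coordinate expressions of $X_i(q)$; its entries are continuous functions of $q$. Then $\dim_q(T\mathcal{F})=\operatorname{rank}A(q)$, and lower semi-continuity reduces to the classical fact that rank is lower semi-continuous. Concretely, suppose $\operatorname{rank}A(p)=r$. Then some $r\times r$ minor of $A(p)$ is nonzero. By continuity of the determinant, that same minor stays nonzero on a neighborhood $V\subset U$ of $p$, so $\operatorname{rank}A(q)\ge r$ for all $q\in V$. Hence $\{q\mid \dim_q(T\mathcal{F})\ge r\}$ is open for every $r$, which is the definition of lower semi-continuity for an $\mathbb{N}$-valued function.

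The only genuinely nontrivial step is the first one: justifying that the leaf tangent space at $q$ equals the pointwise evaluation of $\mathcal{F}$. For this I would rely on the cited Stefan--Sussmann theory (\cite[Theorem 1.7.9]{LGLR_notes}), which the paper already assumes. Without that result one would have to show that flows of elements of $\mathcal{F}$ preserve the evaluation spaces $\mathcal{F}(q)$, which is the standard argument using involutivity and local finite generation. I would simply invoke the quoted result rather than reprove it. Everything after that is elementary linear algebra and continuity.
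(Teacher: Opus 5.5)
Your argument is correct, and it is the standard one. The paper does not prove this lemma itself but cites \cite[Lemma 1.3.19]{LGLR_notes}, where lower semi-continuity follows exactly as you describe: evaluate finitely many local generators, identify the leaf dimension with $\dim \operatorname{span}\{X_1(q),\dots,X_k(q)\}$ via \cite[Theorem 1.7.9]{LGLR_notes}, and use that a nonvanishing $r\times r$ minor persists on a neighborhood.
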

We also recall explicitly a useful consequence of this lemma and of the fact that $\dim_{T\cF}$ is integer valued, see \cite[Proposition 1.3.23]{LGLR_notes}.
For this, let
$$M_{\operatorname{reg}}=\{p\in M \mid \exists \text{ a neighborhood } U \text{ of } p \text{ such that } \dim_{T\cF}|_U \text{ is constant}\},$$
be the set of \textbf{regular points} of $(M,\cF)$. 

\begin{corollary}
    The set $M_{\operatorname{reg}}$ is a dense open subset of $M$.
\end{corollary}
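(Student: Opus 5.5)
The plan is to combine the lower semi-continuity of $\dim_{T\cF}$ from Lemma~\ref{lem:rankcont} with the elementary fact that an integer-valued function which is bounded above and lower semi-continuous is locally constant on an open dense set. Write $d:=\dim_{T\cF}$.

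The first step is to show that $M_{\operatorname{reg}}$ is open. This follows directly from the definition: if $d$ is constant on a neighbourhood $U$ of $p$, then every $q\in U$ has $U$ as a witnessing neighbourhood. So $U\subset M_{\operatorname{reg}}$.

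The second step, density, is the main point. Fix a nonempty open set $V\subset M$; we must find a regular point in $V$.

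\begin{itemize}
\item The function $d$ takes values in $\{0,\dots,\dim M\}$, since leaves are submanifolds of $M$. So $d|_V$ attains a maximum value $k$ at some point $p\in V$.
\item By lower semi-continuity (Lemma~\ref{lem:rankcont}), the set $\{q\in M \mid d(q)>k-1\}=\{d\ge k\}$ is open. Indeed, lower semi-continuity means exactly that the superlevel sets $\{d>c\}$ are open for every $c$.
\item Set $W:=V\cap\{d\ge k\}$. This is an open neighbourhood of $p$ contained in $V$.
\item On $W$ we have $d\ge k$, and also $d\le k$ by maximality of $k$ on $V$. Hence $d\equiv k$ on $W$.
\end{itemize}

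It follows that $p\in M_{\operatorname{reg}}\cap V$, which proves density.

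The only point to check is the convention for lower semi-continuity: it must mean that the dimension can only jump up near a point, i.e.\ $d(q)\ge d(p)$ for $q$ near $p$. This is the convention in \cite{LGLR_notes}, and it is what makes the superlevel sets open. I do not expect any real obstacle beyond this.
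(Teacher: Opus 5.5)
Your proof is correct and matches the paper's approach. The paper gives no proof of its own: it says the corollary follows from the lower semi-continuity of $\dim_{T\cF}$ (Lemma~\ref{lem:rankcont}) together with its being integer-valued, and cites \cite[Proposition 1.3.23]{LGLR_notes}. Your argument, which takes the maximal dimension $k$ on an open set and uses that $\{\dim_{T\cF}\ge k\}$ is open, is exactly that reasoning; the same step reappears in the paper's proof of Theorem~\ref{thm:clean}, and your convention for lower semi-continuity agrees with the paper's.
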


A foundational result in the theory of singular foliations is a local normal form theorem, known as the local splitting lemma. It admits several equivalent statements, see \cite[Section 1.7.3]{LGLR_notes}; we state here the one, namely \cite[Theorem 1.7.17]{LGLR_notes}, which is most convenient for our purposes.

\begin{theorem}[Local splitting]\label{thm:localsplitsingular}
    Let $\mathcal{F}$ be a singular foliation on $M$, with $\dim M=m$. Consider a point $p\in M$ and let $k$ denote the dimension of $L_p$. Then there exists an open neighborhood $U\subset M$ of $p$, diffeomorphic to $\Omega\subset \mathbb{R}^k \times \mathbb{R}^{m-k}$ via $\phi$, satisfying that $\phi_*(\mathcal{F}|_U)$ is the product of
    \begin{itemize}
        \item[-] the singular foliation spanned by all vector fields on an open ball in $\mathbb{R}^k$,
        \item[-] a singular foliation $\mathcal{T}$ on an open ball in $\mathbb{R}^{m-k}$ whose set of generators vanish at the origin, and which satisfies $\operatorname{rk}_0(\mathcal{T})=\operatorname{rk}_p(\mathcal{F})-k$.
    \end{itemize}
\end{theorem}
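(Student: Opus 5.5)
The plan is to prove the local splitting by induction on $k=\dim L_p$, straightening one direction of the leaf at a time. The case $k=0$ needs no straightening: all elements of $\cF$ vanish at $p$, and we may take $\mathcal{T}=\cF$ near $p$. For $k\geq 1$, I would choose $X\in\cF$ with $X(p)\neq 0$ and flow-box coordinates $(t,y)\in\R\times\R^{m-1}$ near $p$ with $X=\partial_t$. I would then show that, near $p$, $\cF$ is the product of the foliation spanned by $\partial_t$ on an interval with a singular foliation $\cF'$ on the slice $S=\{t=0\}$. The leaf of $\cF'$ through $p$ has dimension $k-1$, so applying the induction hypothesis to $(S,\cF')$ and taking products gives the decomposition, with the $\R^k$-factor spanned by all vector fields.

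\textbf{Key lemma (flows of elements of $\cF$ preserve $\cF$).} This is the step I expect to be the main obstacle, since it is where local finite generation is used. Take local generators $Y_1,\dots,Y_r$ of $\cF$ near $p$. By involutivity, $[X,Y_i]=\sum_j c_{ij}Y_j$ for some smooth $c_{ij}$. Set $Z_i(t)=(\phi^X_{-t})_*Y_i$. Differentiating gives the linear system
\begin{equation*}
\tfrac{d}{dt}Z_i(t)=\textstyle\sum_j \big(c_{ij}\circ\phi^X_{-t}\big)Z_j(t).
\end{equation*}
Solving this via the fundamental matrix $A(t)$ of the coefficient system shows $(\phi^X_{-t})_*Y_i=\sum_j A_{ij}(t)Y_j$, so $(\phi^X_{t})_*\cF\subseteq\cF$ locally. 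Here one must check that a vector field solving such a linear ODE stays in the $C^\infty$-span of the $Y_j$. I would argue this by writing $Z_i(t)=\sum_j A_{ij}(t)\,Y_j$ with $A$ the fundamental solution of the system with coefficients $c\circ\phi^X_{-t}$, and verifying uniqueness of solutions for the pointwise ODE. Applying the same argument to $-X$ gives equality.

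\textbf{Product structure.} Every $Y\in\cF$ decomposes as $Y=a\,\partial_t+Y^\perp$, where $Y^\perp$ has no $\partial_t$-component. Since $\partial_t\in\cF$, we get $Y^\perp\in\cF$. Define $\cF'$ as the set of restrictions to $S$ of those $Y^\perp$. Because $S$ is transverse to $X$, this is a $C^\infty(S)$-module, finitely generated by the $Y_i^\perp|_S$, and involutive. The key lemma applied to $\partial_t$ shows that translating an element of $\cF'$ in $t$ gives an element of $\cF$. Conversely, each $Y^\perp$ at time $t$ is, after pulling back by the flow, a $C^\infty$-combination of such translates. Hence $\cF=\langle\partial_t\rangle\oplus \mathrm{pr}_S^*\cF'$ near $p$, and the leaf of $\cF$ through $p$ is locally $(-\epsilon,\epsilon)\times L'_p$.

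\textbf{Properties of $\mathcal{T}$.} Iterating produces $\mathcal{T}$ on a slice of dimension $m-k$ whose leaf through $0$ is the point $\{0\}$, so every element of $\mathcal{T}$ vanishes at $0$. For the rank identity, $\partial_{x_1},\dots,\partial_{x_k}$ together with minimal generators of $\mathcal{T}$ generate $\cF$, which gives $\operatorname{rk}_p\cF\le k+\operatorname{rk}_0\mathcal{T}$. Conversely, I would use the characterization of the rank as $\dim \cF/\mathfrak{m}_p\cF$, where $\mathfrak{m}_p$ is the ideal of functions vanishing at $p$. By the product structure, $\cF/\mathfrak{m}_p\cF\cong\R^k\oplus\mathcal{T}/\mathfrak{m}_0\mathcal{T}$, which yields equality.
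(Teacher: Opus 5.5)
The paper does not prove this statement itself but quotes it from \cite[Theorem~1.7.17]{LGLR_notes}; your proposal is correct and is the standard argument behind that result (cf.\ \cite{AS}): flow-box coordinates for some $X\in\cF$ with $X(p)\neq 0$, invariance of $\cF$ under the flow of $X$ via the pointwise linear ODE for the coefficient matrix, $t$-independent generators on a transverse slice, induction on $k$, and the rank identity via $\operatorname{rk}_p\cF=\dim \cF/\mathfrak{m}_p\cF$ (Nakayama). The one step you should write out explicitly is the translation step: it needs the coefficients $A_{ij}(t,x)$ to be jointly smooth in $(t,x)$, so that you can evaluate $(\phi^X_t)_*Y_i$ on the slice $\{t\}\times S$ for every $t$ simultaneously, which is more than $(\phi^X_t)_*\cF\subseteq\cF$ for each fixed $t$, and your fundamental-matrix formulation does provide this.
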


%%%%%%%%%%%%%%%%%%%%%%%%%%%%%%%%%%%%%%%%%%%%%%%%%%%%%%%%%%%%%%%%%%%%%%%%%%%%%%%%%%%%%%%%%%%%%%%%%%%%%%%%%%%%%%%%%

\subsection{Poisson geometry}\label{ss:prelim_poisson_geom}

We proceed to review some notions in Poisson geometry that will be important in the next sections. 
We refer to the excellent and extensive monograph \cite{CFM_book} for more background on Poisson geometry. 

%%%%%%%%%%%%%%%%%%%%%%%%%%%%%%%%%%%%%%%%%%%%%%%%%%%%%%%%%%%%%%%%%%%%%%%%%%%%%%%%%%%%%%%%%%%%%%%%%%%%%%%%%%%%%%%%%

\subsubsection{Basic notions}

Throughout this section, we let $M$ be a manifold and $\Pi \in \Gamma(\wedge^2TM)$ be a Poisson bivector field, i.e., a bivector field satisfying the integrability condition $[\Pi,\Pi]=0$, where $[\cdot,\cdot]$ denotes the Schouten-Nijenhuis bracket of multivector fields. 

A map 
    $$f: (M,\Pi) \longrightarrow (M',\Pi')$$
is a \textbf{Poisson map} if $\Pi'(\alpha,\beta)|_{f(p)}=\Pi(df^*\alpha, df^*\beta)|_p$ for any $\alpha, \beta \in T^*M'$. 
We will denote by $\operatorname{Diff}(M,\Pi)$ the set of Poisson diffeomorphisms of $(M,\Pi)$.

Any smooth function $H\in C^\infty(M)$ is referred to as \textbf{Hamiltonian function} whenever one is interested in looking at the following associated Poisson-preserving dynamical system: the \textbf{Hamiltonian vector field} associated to $H$ is $X_H:=\Pi(\d H,\cdot)$, and its flow, which we will denote by $\phi_H^t: M\rightarrow M$, generates a path of Poisson diffeomorphisms of $(M,\Pi)$. 
One can do the same with possibly time-dependent Hamiltonian functions, and the time-one map of their flow will be called a \textbf{Hamiltonian diffeomorphism}. 
We will denote by $\operatorname{Ham}(M,\Pi)$ the set of all such Hamiltonian diffeomorphisms; note that this is a subgroup of the group of all Poisson diffeomorphisms.

The Poisson bivector field $\Pi$ induces a bundle map
\begin{align*}
    \Pi^\sharp: \Omega^1(M)=\Gamma(T^*M)&\longrightarrow \fX (M)=\Gamma(TM)\\
            \alpha &\longmapsto \Pi(\alpha,\cdot)
\end{align*}
whose image $\operatorname{Im}(\Pi^\sharp)$ defines a singular foliation in the sense of \Cref{def:singfol} (i.e.\ of \cite{St,Su}). 

    More explicitly, the leaf of $(M,\Pi)$ through a point $p\in M$ is given by
    $$L_p= \{ \phi^t_H(p) \mid H\in C^\infty(M), t\in \mathbb{R} \},$$
    and $\Pi$ naturally induces on it the following symplectic form 
    \[
    \omega_{L_p}(\Pi^\sharp(\alpha),\Pi^\sharp(\beta))=-\Pi(\alpha,\beta) \; .
    \]
    This singular foliation equipped with its leafwise symplectic forms will be referred to as the \textbf{(singular) symplectic foliation} associated to $\Pi$.
    
    In this Poisson setup, there is also a local splitting form that takes into account the Poisson structure. More precisely, as originally proved by Weinstein \cite{Weinstein_LocalStructure}, every Poisson structure can be put in the following local split form (from which the local description of its associated singular symplectic foliation is immediate):
    \begin{theorem}[Weinstein splitting theorem]
        \label{thm:weinstein_splitting_poisson}
        Let $(M,\Pi)$ be an $m$-dimensional Poisson manifold, and $p\in M$. There is a neighborhood $U$ of $p$ with coordinates $(x_1,y_1,\ldots,x_k,y_k,z_1,\ldots,z_{m-2k})$ centered at $p$ such that
        \[
        \Pi\vert_U = \sum_{i=1}^k \partial_{x_i}\wedge \partial_{y_i} + \sum_{1\leq u,v\leq m-2k}f_{u,v}(z) \partial_{z_u}\wedge \partial_{z_v} \, ,
        \]
        where $f_{u,v}(z)$ are smooth functions vanishing at the origin.
    \end{theorem}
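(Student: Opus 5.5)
The plan is to argue by induction on the rank of $\Pi$ at $p$, splitting off one symplectic plane $\partial_{x}\wedge\partial_{y}$ at each step. If $\Pi_p=0$ there is nothing to prove: take $k=0$, $z$ any coordinates centered at $p$, and $f_{u,v}=\Pi(\d z_u,\d z_v)$, which vanish at the origin. Otherwise the induction step below produces coordinates in which $\Pi$ is $\partial_{x_1}\wedge\partial_{y_1}$ plus a Poisson bivector in the remaining variables with rank at $p$ lowered by two. Iterating, and stopping once the transverse part vanishes at $p$, gives the stated form.

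\textbf{Induction step.} Suppose $\Pi_p\neq 0$. First pick $y_1\in C^\infty(M)$ with $X_{y_1}(p)\neq 0$. By the flow-box theorem, straighten $X_{y_1}$ near $p$ and let $x_1$ be the corresponding time coordinate, normalized so that $X_{y_1}(x_1)=\pm1$ with the sign fixed by the convention; then $\{x_1,y_1\}$ is a nonzero constant, which we take equal to $1$. Next consider the two vector fields.
\begin{itemize}
\item[(a)] They commute, since $[X_{x_1},X_{y_1}]=\pm X_{\{x_1,y_1\}}=X_{\text{const}}=0$.
\item[(b)] They are linearly independent at $p$, since $X_{y_1}(y_1)=0$ while $X_{x_1}(y_1)=\pm1$.
\end{itemize}
Now choose a codimension-two submanifold $S$ through $p$ transverse to $\on{span}(X_{x_1},X_{y_1})$, with coordinates $z$ on $S$. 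Transport $z$ to a neighborhood of $p$ via the commuting flows $(s,t,z)\mapsto\phi^s_{X_{x_1}}\phi^t_{X_{y_1}}(z)$. This gives functions $z_j$ that are invariant under both flows, so $\{x_1,z_j\}=\{y_1,z_j\}=0$.

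\textbf{Coordinates and the form of $\Pi$.} We check that $(x_1,y_1,z)$ are coordinates near $p$: their differentials are independent at $p$ because $x_1,y_1$ restricted to the $(s,t)$-directions have nondegenerate Jacobian, and the $z_j$ are coordinates on $S$. In these coordinates $\Pi$ pairs $x_1$ with $y_1$ to $1$ and pairs $x_1,y_1$ with every $z_j$ to $0$. Hence
\[
\Pi=\partial_{x_1}\wedge\partial_{y_1}+\sum_{u,v}h_{u,v}\,\partial_{z_u}\wedge\partial_{z_v},\qquad h_{u,v}=\{z_u,z_v\}.
\]
By the Jacobi identity, $\{x_1,h_{u,v}\}=\{y_1,h_{u,v}\}=0$; with the form above this says $\partial_{y_1}h_{u,v}=\partial_{x_1}h_{u,v}=0$, so $h_{u,v}=h_{u,v}(z)$. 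The bivector $\sum h_{u,v}(z)\partial_{z_u}\wedge\partial_{z_v}$ on $\R^{m-2}$ is Poisson, because $[\Pi,\Pi]=0$ splits along the product. Its rank at $0$ is $\on{rk}\Pi_p-2$, so the induction hypothesis applies to it, and the new coordinates in $z$ combine with $(x_1,y_1)$.

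\textbf{Main obstacle.} The delicate step is the construction of the $z_j$ together with the verification that the transverse coefficients depend only on $z$. This rests on the commutation $[X_{x_1},X_{y_1}]=0$, which is exactly why one normalizes $\{x_1,y_1\}=1$ rather than merely requiring it to be nonzero, and on a careful use of the Jacobi identity; everything else is bookkeeping.
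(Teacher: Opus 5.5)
The paper does not prove this statement; it quotes it from Weinstein, and your proposal is a correct rendering of the classical argument: flow-box to obtain $\{x_1,y_1\}=1$, commuting flows of $X_{x_1},X_{y_1}$ to build transverse coordinates $z$ with $\{x_1,z_j\}=\{y_1,z_j\}=0$, the Jacobi identity to see that $\{z_u,z_v\}$ depends only on $z$, and then induction on the rank. The only slip is a harmless normalization: when you sum over all ordered pairs $(u,v)$, take $h_{u,v}=\tfrac12\{z_u,z_v\}$ (and likewise $f_{u,v}=\tfrac12\Pi(\d z_u,\d z_v)$ in the base case), or else restrict the sum to $u<v$.
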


%%%%%%%%%%%%%%%%%%%%%%%%%%%%%%%%%%%%%%%%%%%%%%%%%%%%%%%%%%%%%%%%%%%%%%%%%%%%%%%%%%%%%%%%%%%%%%%%%%%%%%%%%%%%%%%%%

\subsubsection{Symplectic realizations and symplectic groupoids}

Another relevant notion for us is that of symplectic realization, first studied in \cite{Weinstein_LocalStructure}:
\begin{definition}
\label{def:symplectic_realization}
    Given a symplectic manifold $(W,\Omega)$, a map
    $$\mu: (W,\Omega) \longrightarrow (M,\Pi)$$
    is a \textbf{symplectic realization} of $(M,\Pi)$ if it is a surjective submersion which is also a Poisson map.
\end{definition}
More generally, one can require some additional structure on $W$ to be compatible with the Poisson manifolds. 
It turns out that the natural structure to expect in $W$ is that of a symplectic groupoid, which leads to the notion of symplectic integration and integrable Poisson manifold. 
We now briefly recall these notions as well.

A Lie groupoid is a category with structural spaces/maps behaving nicely from a smooth viewpoint. More precisely:
\begin{definition}
    \label{def:lie_groupoid}
    A \textbf{Lie groupoid}\footnote{As we will not look at non-Lie groupoids, with a little abuse of notation we will simply write ``groupoid'' for ``Lie groupoid'' in the rest of the paper.} consists of a set $M$, called set of \textbf{objects}, and a set $\cG$, called set of \textbf{arrows}, together with:
    \begin{enumerate}
        \item smooth and submersive \textbf{source} $s\colon \cG\to M$ and \textbf{target} $t\colon \cG\to M$ maps;
        \item a smooth \textbf{multiplication}  map
        \[
        m\colon \cG^{(2)}=\{(g,h)\in \cG\times \cG \mid s(g)=t(h)\} \to \cG \; , 
        \quad 
        m(g,h)=g\cdot h
        \]
        such that
        \begin{itemize}
            \item[-] $s(g\cdot h)=s(h)$ and $t(g\cdot h)=t(g)$,
            \item[-] $(g\cdot h)\cdot k = g\cdot(h\cdot k)$;
        \end{itemize}
        \item a smooth \textbf{unit} map $u\colon M \to \cG$, $p\mapsto 1_p$, such that
        \begin{itemize}
            \item[-] $s(1_p)=t(1_p)=p$,
            \item[-] $g\cdot 1_{s(g)}=1_{t(g)}\cdot g=g$;
        \end{itemize}
        \item a smooth \textbf{inverse} map $\iota \colon \cG\to \cG$, $g\mapsto g^{-1}$, such that
        \begin{itemize}
            \item[-] $s(g^{-1})=t(g)$ and $t(g^{-1})=s(g)$,
            \item[-] $g^{-1}\cdot g = 1_{s(g)}$ and $g\cdot g^{-1}=1_{t(g)}$.
        \end{itemize}
    \end{enumerate}
\end{definition}

\begin{definition}
\label{def:symplectic_integration}
    A \textbf{symplectic integration} of $(M,\Pi)$ is a symplectic realization $\mu:(W,\Omega) \longrightarrow (M,\Pi)$, and a Lagrangian section $u:M\longrightarrow W$ for which there is a symplectic groupoid structure $(W,\Omega)\rightrightarrows (M,\Pi)$ with target map $\mu$ and unit map $u$. 
\end{definition}
A Poisson manifold is called \textbf{integrable} if such a symplectic integration exists.
In this case, after work of Crainic--Fernandes \cite{CrainicFernandes_IntegrabilityLieBrackets,CrainicFernandes_IntegrabilityPoissonBrackets} there is a unique canonical symplectic integration for which the target map has simply-connected fibers, which we call the symplectic groupoid of $(M,\Pi)$. While we will not need this, note that the existence of a ``local symplectic integration'' was previously known due to work of Weinstein \cite{coste_dazord_weinstein:groupoides_symplectiques}.

\begin{caveat}[Lack of Hausdorff-ness]
\label{cav:sympl_realizations_not_hausdorff}
    Symplectic realizations are not required to be Hausdorff.
    This is because, even if Hausdorff \emph{local} symplectic realizations do exist, in general, global symplectic realizations and integrations will not have this property.
    In particular, the Poisson homotopy groupoid is quite often \emph{not} Hausdorff. 
    This is analogous to the fact that holonomy groupoid of a smooth foliation is more often than not non-Hausdorff.
    Note, however, that even though the uniqueness of limits does not hold in $\Sigma(M)$, considering symplectic homeomorphisms on symplectic realizations as we will do later on still makes sense.
\end{caveat}

%%%%%%%%%%%%%%%%%%%%%%%%%%%%%%%%%%%%%%%%%%%%%%%%%%%%%%%%%%%%%%%%%%%%%%%%%%%%%%%%%%%%%%%%%%%%%%%%%%%%%%%%%%%%%%%%%

\subsubsection{Coisotropic submanifolds}
\label{sec:coisotropics}

We will be interested in coisotropic submanifolds, whose definition is as follows.
\begin{definition}
    A submanifold $C\subset M$ is \textbf{coisotropic} if $\Pi^\sharp(TC^\circ)\subseteq TC$, where $TC^\circ|_p=\{\alpha\in T^*_pM\mid \alpha(v)=0 \text{ for all } v\in T_pC\}$ is the \emph{annihilator} of $TC$.
\end{definition}

When $\Pi$ is non-degenerate, and thus dual to a symplectic form, this coincides with the notion of coisotropic submanifold of a symplectic manifold. 
Recall that in the symplectic context, a coisotropic submanifold admits an induced characteristic foliation, which is a regular foliation given by the kernel of the restriction of $\omega$ to the coisotropic submanifold itself. 

In the Poisson setting, one can similarly define the characteristic distribution of a coisotropic submanifold $C\subset (M,\Pi)$ as $\mathcal{K}_C:=\Pi^\sharp(TC^\circ)$.
In fact, one has the following standard fact (c.f.\ \cite[p. 50]{LGLR_notes}), which we explicitly prove below for completeness:
\begin{lemma}\label{lem:charfol}
    The singular distribution $\mathcal{K}_C$ is a singular foliation.
\end{lemma}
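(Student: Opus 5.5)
The natural object to work with is the $C^\infty(C)$-module of vector fields on $C$ obtained as Hamiltonian vector fields of functions vanishing on $C$. Concretely, let $\mathcal{I}(C)=\{f\in C^\infty(M)\mid f|_C=0\}$ and set
\[
\mathcal{K}_C:=\{\,X\in\fX(C) \mid X=\textstyle\sum_i g_i\, X_{f_i}|_C \text{ locally, with } g_i\in C^\infty(C),\ f_i\in\mathcal{I}(C)\,\}.
\]
Since $\{\d f|_p : f\in \mathcal{I}(C)\}=TC^\circ|_p$, its pointwise image is exactly $\Pi^\sharp(TC^\circ)$, so this module is the one we must show is a singular foliation in the sense of \Cref{def:singfol}. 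I will check the three conditions in turn.

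First, the vector fields $X_f$ for $f\in\mathcal{I}(C)$ are tangent to $C$. Indeed, for $p\in C$ we have $X_f(p)=\Pi^\sharp(\d f_p)$ (up to sign convention), and $\d f_p\in T_pC^\circ$, so coisotropy gives $X_f(p)\in T_pC$. Hence restricting to $C$ produces genuine vector fields on $C$, and $\mathcal{K}_C$ is by construction a $C^\infty(C)$-submodule of $\fX(C)$. It is local in nature, so every statement can be checked in small charts.

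\textbf{Local finite generation.} Take submanifold coordinates $(u,w)$ near $p\in C$ with $C=\{w_1=\dots=w_r=0\}$. Any $f\in\mathcal{I}(C)$ can be written locally as $f=\sum_j h_j w_j$, so that
\[
X_f|_C=\sum_j h_j|_C\, X_{w_j}|_C,
\]
because the terms $w_j X_{h_j}$ vanish on $C$. The $X_{w_j}|_C$ are thus local generators, and they are finitely many. To pass from local to global I use a partition of unity on $C$, extending local $w_j$ by cutoffs; this preserves $\mathcal{I}(C)$ and changes nothing near $p$.

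\textbf{Involutivity.} This is the key step, and I expect it to be the main obstacle, since it is where coisotropy must be used a second time. For $f,g\in\mathcal{I}(C)$ we have $[X_f,X_g]=\pm X_{\{f,g\}}$. Moreover $\{f,g\}|_C=\d g(X_f)|_C=0$, because $X_f$ is tangent to $C$ and $g$ vanishes on $C$; so $\{f,g\}\in\mathcal{I}(C)$. Since restriction to an invariant submanifold commutes with brackets of tangent vector fields, $[X_f|_C,X_g|_C]\in\mathcal{K}_C$. For general elements $\sum a_i X_{f_i}|_C$, the Leibniz rule gives additional terms $a_i X_{f_i}(b_j)\,X_{g_j}|_C$, which already lie in the module. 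This settles involutivity.

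It remains to note that the leaves of this singular foliation have tangent spaces $\Pi^\sharp(TC^\circ)$. This identifies the singular foliation with the distribution $\mathcal{K}_C$ of the statement, and completes the proof.
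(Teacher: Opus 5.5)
Your proof is correct and follows essentially the same route as the paper. The paper also takes local coordinates with $C=\{w_1=\dots=w_r=0\}$, uses the finitely many $X_{w_j}|_C$ as generators, and gets involutivity from $[X_f,X_g]=\pm X_{\{f,g\}}$ with $\{f,g\}|_C=0$ by coisotropy. The one small difference is that you work with the whole vanishing ideal $\mathcal{I}(C)$ plus Hadamard's lemma, rather than only the coordinate functions; this makes it immediate that $X_{\{f,g\}}|_C$ lies in the module itself, whereas the paper only checks that it lies pointwise in $\Pi^\sharp(TC^\circ)$.
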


We will refer to $\cK_C$ as the \textbf{(singular) characteristic foliation} of $C$. Notice that $\cK_C$ is contained in $\operatorname{Im}(\Pi^\#)$, which generates the symplectic foliation. In particular, each characteristic leaf lies in a symplectic leaf of $(M,\Pi)$.

\begin{proof}
    Given any point $p \in C$, we consider a neighborhood $U$ of $p$ in $M$ and coordinates $(x_1,...,x_n)$ such that $C\cap U=\{x_{k+1},...,x_n=0\}$, where $k$ is the dimension of $C$ and $n$ the dimension of $M$. Then we have
    $$\mathcal{K}_C|_{C\cap U}=\langle \Pi^\sharp(dx_{k+1}),...,\Pi^\sharp(dx_n)\rangle.$$
    The distribution is thus stable under multiplication by elements in $C^\infty(C)$ (by the $C^\infty$-multilinearity of $\Pi$) and finitely generated. In addition, we have 
    \begin{equation}\label{eq:eq1}
        [\Pi^\sharp(dx_i), \Pi^\sharp(dx_j)]=[X_{x_i},X_{x_j}]=X_{\{x_i,x_j\}}=\Pi^\sharp(d\{x_i,x_j\}).
    \end{equation}
    Using the identity 
    $$\{x_i,x_j\}=\Pi^\sharp(dx_i)(dx_j)$$ 
    and the fact that that $\Pi^\sharp(dx_i)|_C\in TC$ and $dx_j|_C\in TC^\circ$, we deduce that $\{x_i,x_j\}|_C=0$. It follows that $d\{x_i,x_j\}|_C\in TC^\circ$, which implies by Equation \eqref{eq:eq1} that    
$$[\Pi^\sharp(dx_i), \Pi^\sharp(dx_j)] \in \Pi^\sharp(TC^\circ)=\mathcal{K}_C,$$ as we wanted to show.
\end{proof}

\section{Poisson homeomorphisms and their leafwise behavior}\label{sec:leafwisebehavior}
We begin in this section our study of Poisson homeomorphisms, focusing on their leafwise behavior.

%%%%%%%%%%%%%%%%%%%%%%%%%%%%%%%%%%%%%%%%%%%%%%%%%%%%%%%%%%%%%%%%%%%%%%%%%%%%%%%%%%%%%%%%%%%%%%%%%%%%%%%%%%%%%%%%%

\subsection{Poisson, Hamiltonian and foliated homeomorphisms}

We start by defining the notion of $C^0$-convergence of maps that will be used.
Given two manifolds $M$ and $N$, a compact subset $K\subset M$, a Riemannian distance $d$ on $N$ and two maps $f,g\colon M\to N$, one denotes
\[
d_K(f,g)=\sup_{p\in K}d(f(p),g(p)) \, .
\]
Then, a sequence of maps $f_i\colon M\to N$ is said to \textbf{$C^0$-converge} to a map $f\colon M\to N$ if, for every compact subset $K\subset M$, the sequence of real numbers $d_K(f_i,f)$ converges to $0$.
In this case, we denote $f_i\xrightarrow{C^0}f$.
(Note that this notion is independent of the auxiliary Riemannian metric on $N$.)

\medskip

At this point, following \cite{Jo}, one can generalize the definition of symplectic homeomorphism to the Poisson setting. A common definition that is used in the symplectic setting is that the homeomorphism can be globally $C^0$-approximated by symplectic diffeomorphisms. We adapt the definition given in \cite{BO}, as it is more general.

\begin{definition}
    A homeomorphism $\varphi: M \rightarrow M$ is a \textbf{(global) Poisson homeomorphism} of $(M,\Pi)$ if there is a sequence of open sets $U_1\subset U_2 \subset \dots \subset M$ which exhaust $M$, and a sequence of Poisson embeddings $\varphi_i:U_i\longrightarrow M$ that $C^0$-converge to $\varphi$.
\end{definition}
We will denote the set of Poisson homeomorphisms of $(M,\Pi)$ by $\Homeo(M,\Pi)$.

More generally, one can define ``local Poisson homeomorphisms'', analogously to the symplectic case.
\begin{definition}
    A homeomorphism $\varphi: M \longrightarrow M$ is a \textbf{local Poisson homeomorphism}\footnote{Whenever we talk about ``Poisson homeomorphism'', the adjective ``global'' is always implicitly intended. Whenever local Poisson homeomorphisms are considered, the adjective ``local'' will always be explicit.} of $(M,\Pi)$ if for every point $p\in M$, there is a neighborhood $U$ of $p$ such that $\varphi|_U: U \longrightarrow \varphi(U)$ is a Poisson homeomorphism.
\end{definition}
We will denote the set of local Poisson homeomorphisms by $\LocHomeo(M,\Pi)$. 
Already in the symplectic case, it is unknown whether a locally symplectic homeomorphism is always a symplectic homeomorphism. The following result follows by mimicking the proof in the symplectic case, described e.g.\ in \cite[Proposition 1.5]{BO}.
\begin{lemma}
\label{lem:composition_inverse_poisson_homeos}
Let $\varphi, \phi:M\longrightarrow M$ be Poisson (respectively locally Poisson) homeomorphisms in $(M,\Pi)$. Then $\varphi^{-1}$ and $\varphi\circ \phi$ are Poisson (respectively locally Poisson) homeomorphisms. For an open set $U\subset M$, the map $\varphi|_U:U\longrightarrow \varphi(U)$ is a Poisson (respectively locally Poisson) homeomorphism. 
\end{lemma}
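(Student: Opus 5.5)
The plan is to adapt the standard symplectic argument of \cite[Proposition 1.5]{BO} directly, working with the definitions of (global) Poisson homeomorphism via exhausting sequences $U_1\subset U_2\subset\cdots$ and Poisson embeddings $\varphi_i\colon U_i\to M$ with $\varphi_i\xrightarrow{C^0}\varphi$. The basic tool throughout is a uniform-convergence lemma: if $\varphi_i\xrightarrow{C^0}\varphi$ on exhausting opens and $\varphi$ is a homeomorphism, then for any compact $K\subset\varphi(M)$ and compact neighbourhood $K'$ of $\varphi^{-1}(K)$, for $i$ large we have $K'\subset U_i$ and $\varphi_i(K')\supset K$ (by a degree/invariance of domain argument, since $\varphi_i|_{K'}$ is uniformly close to the homeomorphism $\varphi|_{K'}$ and is injective). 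Since Poisson embeddings are open maps, each $\varphi_i(U_i)$ is open and $\varphi_i^{-1}\colon\varphi_i(U_i)\to U_i$ is again a Poisson embedding.

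For the \textbf{inverse}, I set $V_j$ to be an exhaustion of $M$ by relatively compact opens and, using the lemma above, choose a subsequence $i_j$ with $\overline{V_j}\subset\varphi_{i_j}(U_{i_j})$; then $\psi_j:=\varphi_{i_j}^{-1}|_{V_j}$ are Poisson embeddings. To see $\psi_j\xrightarrow{C^0}\varphi^{-1}$ on a compact $K$, write $d(\psi_j(q),\varphi^{-1}(q))$ and use uniform continuity of $\varphi^{-1}$ on a compact neighbourhood: $\varphi^{-1}(q)=\varphi^{-1}(\varphi_{i_j}(\psi_j(q)))$ and $\varphi_{i_j}(\psi_j(q))$ is uniformly close to $\varphi(\psi_j(q))$, while $\psi_j(K)$ stays in a fixed compact set (again by the lemma). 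Hence $\varphi^{-1}(\varphi_{i_j}(x))$ is uniformly close to $x$, giving the claim. This step, controlling that the preimages stay in a fixed compact set, is the main technical point, and it is exactly where invariance of domain is needed.

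For the \textbf{composition}, with $\phi_i\colon W_i\to M$ approximating $\phi$, I take an exhaustion $V_j$ by relatively compact opens, and choose indices so that $\overline{V_j}\subset W_{k_j}$ and $\phi_{k_j}(\overline{V_j})\subset U_{i_j}$ (possible since $\phi_{k}(\overline{V_j})$ lies in a small neighbourhood of the compact $\phi(\overline{V_j})$). Then $\varphi_{i_j}\circ\phi_{k_j}|_{V_j}$ is a Poisson embedding, and the convergence follows from the triangle inequality together with uniform continuity of $\varphi$ on a compact neighbourhood of $\phi(K)$, choosing $i_j$ large enough, after $k_j$ is fixed, that $d_{K''}(\varphi_{i_j},\varphi)<1/j$.

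The \textbf{restriction} statement is immediate: the sets $U_i\cap U$ exhaust $U$, and the maps $\varphi_i|_{U_i\cap U}$ are Poisson embeddings into $M$. Their images eventually lie near $\varphi(U)$, but to land in $\varphi(U)$ one restricts further to an exhaustion of $U$ by relatively compact opens, using that $\varphi(\overline{V})\subset\varphi(U)$ has positive distance from the complement. The local versions follow by applying the global statements on small neighbourhoods: for $\varphi\circ\phi$ at $p$, pick $V\ni p$ with $\phi|_V$ Poisson and $\phi(V)$ inside a neighbourhood where $\varphi$ is Poisson, then compose using the above. For $\varphi^{-1}$ at $q=\varphi(p)$, use the neighbourhood $\varphi(U)$ of $q$ and invert $\varphi|_U$.
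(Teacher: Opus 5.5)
Your proposal is correct and follows the route the paper intends. The paper gives no argument of its own; it says the lemma follows by mimicking the symplectic proof of \cite[Proposition 1.5]{BO}. The essential topological input there is what you isolate: the degree/invariance-of-domain lemma giving $\varphi_i(K')\supset K$ eventually, combined with injectivity of the $\varphi_i$, which forces $\varphi_i^{-1}(K)\subset K'$ and hence uniform convergence of the inverses.

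Two points deserve to be stated explicitly when you write this up. In the composition step, the compact set on which you impose $d_{K''}(\varphi_{i_j},\varphi)<1/j$ should be a compact neighbourhood $N_j$ of $\phi(\overline{V_j})$ that contains $\phi_{k_j}(\overline{V_j})$ and is fixed at stage $j$, not one depending on the test compact $K$. Also, all three statements should be formulated for homeomorphisms between open subsets such as $U\to\varphi(U)$; this is needed for the restriction claim and for your local arguments, and your proofs carry over to that setting verbatim.
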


Among Poisson homeomorphisms, one can define ``Hameomorphisms''. The terminology is reminiscent of ``Hamiltonian homeomorphism", although the latter is often used in the symplectic setting for a weaker notion, see Remark \ref{rmk:Hamhomeos}. 
The notion of Hameomorphism is the natural generalization of the one introduced in \cite{MO}, and widely exploited in \cite{HLS15} in the setting of coisotropic submanifolds in symplectic manifolds. 

\begin{definition}
\label{def:hameotopy_C0-hamiltonian}
    Let $(M,\Pi)$ be a Poisson manifold. An isotopy $\phi^t: M\longrightarrow M$ is called a \textbf{Hameotopy} if there is a sequence of time-dependent smooth Hamiltonian functions $H_i\in C^\infty([0,1]_t\times M)$ and with support in a compact subset $K\subset M$ uniformly in $t$, such that:
    \begin{itemize}
        \item[-] The sequence of flows $\phi^t_{H_i}$ $C^0$-converges to $\phi^t$ uniformly in $t$.
        \item[-] The sequence $H_i(t,\cdot)$ $C^0$-converges to a continuous time-dependent function $H(t,\cdot)$.
    \end{itemize}
\end{definition}
We will say that the function $H$ \textbf{generates} $\phi^t$, and call it a \textbf{continuous Hamiltonian}, or \textbf{$C^0$-Hamiltonian} in short.
The time-one map of a Hameotopy will be called a \textbf{Hameomorphism}. These are also called \emph{strong Hamiltonian homeomorphisms} in the literature, see e.g.\ \cite{Buh}. 
\begin{remark}\label{rmk:nonuniqueness_C0_Hamiltonians}
    In the symplectic setting, the $C^0$-Hamiltonian generating a Hameotopy is known to be unique up to a time-dependent constant \cite{Vit_uniqueness, BS_uniqueness}. 
    This is false in the Poisson setting, already in the smooth setup, as one can change a given Hamiltonian by a function that is constant in each symplectic leaf without changing the isotopy generated by it. 
    This being said, what still holds in the Poisson setup is the following: analogously to the fact that smooth Hamiltonian isotopies are generated by a smooth Hamiltonian that is unique up to addition of a smooth Casimir function (i.e.\ a smooth function that is constant on each leaf of the singular symplectic foliation), it follows directly from said $C^0$-symplectic results that Hameotopies are generated by a $C^0$-Hamiltonian that is unique up to addition by a \emph{continuous} Casimir function.
\end{remark}
\begin{remark}\label{rmk:Hamhomeos}
    As we mentioned, other notions involving continuous Hamiltonian diffeomorphisms exist, see for instance the discussion in \cite{Buh}. A \emph{Hamiltonian homeomorphism} often refers to a homeomorphism that is the $C^0$-limit of Hamiltonian diffeomorphisms. One can further require that the Hamiltonians generating the approximating Hamiltonian diffeomorphisms are uniformly bounded in the $C^0$-norm, in which case one talks about \emph{finite energy Hamiltonian homeomorphism}. In this work, we will construct some pathological examples of Hameomorphisms, as this is the strongest definition.
\end{remark}

Lastly, as Poisson diffeomorphisms preserve the singular symplectic foliation associated to the Poisson structure, Poisson homeomorphisms are in particular $C^0$-limits of diffeomorphisms preserving such singular foliation. 
More generally, one can speak of (global and local) foliated homeomorphisms.
\begin{definition}
Let $\mathcal{F}$ be a singular foliation in $M$. 
A homeomorphism $\varphi: M\longrightarrow M$ is a \textbf{(global) foliated homeomorphism} if there is a sequence of open sets $U_1\subset U_2 \subset \dots \subset M$ which exhaust $M$, and a sequence of foliation-preserving embeddings $\varphi_i:U_i\longrightarrow M$ that $C^0$-converge to $\varphi$.
\end{definition}
One defines analogously \textbf{local foliated homeomorphisms}. As in \Cref{lem:composition_inverse_poisson_homeos} and its symplectic equivalent, composition and inverses of (global, resp.\ local) foliated homeomorphisms are also (global, resp.\ local) foliated homeomorphisms. This definition leads to the topological counterpart of Question \ref{q1}: does a (possibly locally) foliated homeomorphism map leaves to leaves homeomorphically? We will first address this question. 

%%%%%%%%%%%%%%%%%%%%%%%%%%%%%%%%%%%%%%%%%%%%%%%%%%%%%%%%%%%%%%%%%%%%%%%%%%%%%%%%%%%%%%%%%%%%%%%%%%%%%%%%%%%%%%%%%

\subsection{Foliated homeomorphisms map leaves homeomorphically}
In this section, we show that (locally) foliated homeomorphisms do indeed map leaves to leaves homeomorphically. 

\begin{theorem}\label{thm:foliatedhomeo}
    Let $\mathcal{F}$ be a singular foliation of $M$, and $\varphi: M\longrightarrow M$ a local foliated homeomorphism. 
    Then $\varphi$ maps leaves of $\mathcal{F}$ to leaves of $\mathcal{F}$ homeomorphically. 
\end{theorem}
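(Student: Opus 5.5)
The plan is to prove two things. First, $\varphi(L)$ is contained in a single leaf for every leaf $L$. Second, the restriction $\varphi|_L\colon L\to\varphi(L)$ is a homeomorphism for the intrinsic (leaf) topologies. Since $\varphi^{-1}$ is again a local foliated homeomorphism, containment applied to $\varphi^{-1}$ gives $\varphi(L)=L'$ exactly. So everything reduces to a local statement. Every $p\in M$ should have a small neighborhood $V$ such that $\varphi$ maps each plaque of $L_p$ through $p$ in $V$ into the leaf $L_{\varphi(p)}$, continuously for the leaf topology. Once this holds, connectedness of leaves (leaves are path-connected by chains of plaques, Definition~\ref{def:leaf}) propagates it along chains: $\varphi(L)$ is a union of overlapping pieces each lying in a single leaf, hence lies in one leaf.

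For the local statement, fix $p$ with $k=\dim L_p$. Use the local splitting Theorem~\ref{thm:localsplitsingular} around $p$ and around $\varphi(p)$; note that $\dim L_{\varphi(p)}$ is not yet known to equal $k$. In the chart at $p$, the plaque is $P=B^k\times\{0\}$. The key observation concerns the transversal foliation $\mathcal T$: its generators vanish at $0$, so $\{0\}$ is a leaf of $\mathcal T$, and the plaque $P$ is exactly the set of points in the chart whose leaf meets the slice $\{z=0\}$. The approximating foliation-preserving embeddings $\varphi_i$, defined near $p$ by the local hypothesis, map $P$ diffeomorphically onto a plaque of the leaf through $\varphi_i(p)$.

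The main obstacle is passing to the limit. A $C^0$-limit of sets contained in leaves need not lie in a leaf, since leaves of nearby points can have larger dimension and accumulate. I would first try to control dimensions using lower semicontinuity (Lemma~\ref{lem:rankcont}). Then $\dim L_{\varphi_i(p)}=k$ for all $i$. If $\dim L_{\varphi(p)}=k'>k$, then $\varphi_i(p)$, which converges to $\varphi(p)$, would eventually have dimension $\ge k'$, a contradiction. Applying the same argument to $\varphi^{-1}$ gives $k'\le k$, so $k=k'$. It remains to show that the limit of plaques of dimension $k$ lies in the plaque through $\varphi(p)$.

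For this, work in the splitting chart $B^k\times B^{m-k}$ at $\varphi(p)$ with projection $\pi_2$ to the transverse ball. The image $\varphi_i(P)$ is a $k$-dimensional piece of a $k$-dimensional leaf. At a point $q$ with transverse coordinate $z\neq 0$ the leaf has dimension at least $k$. The leaf through such a point $q$ should not project to a single transverse point in general, so I would instead argue by \emph{invariant sets}. The set $S=\{x: \dim L_x\le k\}$ is closed by lower semicontinuity, and the minimal-dimension stratum near $\varphi(p)$ is $B^k\times Z$, where $Z=\{z:\dim_z\mathcal T=0\}$ is the zero set of $\mathcal T$. Each $\varphi_i(P)$ is a connected $k$-dimensional subset of a $k$-dimensional leaf inside $S$. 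Within $S$ near $\varphi(p)$, a $k$-dimensional leaf must have $\mathcal T$-part of dimension $0$, so it is a horizontal slice $B^k\times\{z_i\}$. Hence $\varphi_i(P)\subset B^k\times\{z_i\}$ with $z_i\to 0$, and in the limit $\varphi(P)\subset B^k\times\{0\}$, which is the plaque of $L_{\varphi(p)}$.

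Continuity for the leaf topology then follows because $\varphi|_P$ is a continuous map into the plaque, which is an open subset of the leaf. Injectivity is automatic. Openness of $\varphi|_L$ in the leaf topology follows by invariance of domain, since both plaques are $k$-manifolds, or alternatively by the same argument applied to $\varphi^{-1}$. I expect the step showing that the approximating images are horizontal slices to need the most care, in particular shrinking the neighborhoods so that $\varphi_i(P)$ stays inside one splitting chart.
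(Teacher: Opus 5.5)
Your proposal is correct. Its core mechanism matches the paper's: in a splitting chart at the image point, the approximants $\varphi_i$ send a connected piece of a $k$-dimensional leaf into a single horizontal slice $B^k\times\{z_i\}$, and the transverse coordinate passes to the limit.

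The organization differs. The paper first proves $\varphi(D_k)=D_k$ as a separate lemma, phrased as an induction over $k$; your semicontinuity-plus-inverse argument has the same content. It then argues by contradiction along a path $\gamma\subset L_p$ from $x$ to $p$. It selects a parameter $t_0$ at which $\varphi\circ\gamma$ locally meets two leaves, so $\pi_2\circ\varphi\circ\gamma$ is non-constant near $t_0$. Hence so is $\pi_2\circ\varphi_i\circ\gamma$ for large $i$, contradicting that $\varphi_i\circ\gamma$ lies in a single leaf.

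You instead prove a direct plaque-to-plaque statement at every point, and globalize by an open/closed (or chain-of-plaques) argument in the leaf topology. This has two advantages:
\begin{itemize}
\item[-] it avoids selecting the critical time $t_0$ and treating the endpoint cases;
\item[-] it yields continuity of $\varphi|_L$ for the intrinsic leaf topology.
\end{itemize}
The paper's argument does not give the second point explicitly: it only produces a bijection $L_p\to L_q$, continuous for the subspace topology. Yet leaf-topology openness is used afterwards, e.g.\ when the proof of \Cref{thm:poisson_homeos_are_leafwise_sympl_homeos} asserts that $\varphi(V)$ is an open neighborhood of $q$ inside $L_q$.

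A few points to tidy up.
\begin{itemize}
\item[-] The inequality obtained from $\varphi^{-1}$ should read $k\le k'$, not $k'\le k$ a second time.
\item[-] The phrase ``a $k$-dimensional leaf \dots\ is a horizontal slice'' is inaccurate, since a leaf may meet the chart in many slices $B^k\times\{z\}$ with $z\in Z$. A clean justification that $\varphi_i(P)$ lies in one slice: along $B^k\times Z$ one has $T\mathcal{F}=\R^k\times\{0\}$, and $d\varphi_i(TP)\subset T\mathcal{F}$. So $\pi_2\circ\varphi_i$ has vanishing derivative on the connected manifold $P$, hence is constant.
\item[-] Choose $P$ with compact closure in some $U_i$ and with $\varphi(\overline P)$ inside the chart. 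Uniform convergence then keeps $\varphi_i(\overline P)$ in the chart.
\item[-] Your ``key observation'' that $P$ is the set of points whose leaf meets $\{z=0\}$ holds only for the local leaves of the chart, since $L_p$ can re-enter in other slices. You never use it, so it can simply be dropped.
\end{itemize}
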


Since (local) Poisson homeomorphisms are in particular (local) foliated homeomorphisms of the symplectic foliation, we obtain a positive answer to the first part of Question \ref{q1}. 
\begin{corollary}\label{coro:Poisleaveshom}
    Let $\varphi:M\longrightarrow M$ be a locally Poisson homeomorphism of $(M,\Pi)$. 
    Then $\varphi$ maps leaves of the symplectic foliation to leaves of the symplectic foliation homeomorphically. 
\end{corollary}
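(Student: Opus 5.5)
This follows directly from \Cref{thm:foliatedhomeo}; the only thing to check is that its hypotheses are met. Let $\cF=\operatorname{Im}(\Pi^\sharp)$ be the singular foliation of $(M,\Pi)$, whose leaves are the symplectic leaves (\Cref{ss:prelim_poisson_geom}). The claim is then that every local Poisson homeomorphism of $(M,\Pi)$ is a local foliated homeomorphism of $\cF$.

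\emph{Step 1: Poisson embeddings preserve $\cF$.} Let $\psi\colon U\to M$ be a Poisson embedding of an open set. We need $d\psi(\cF|_U)=\cF|_{\psi(U)}$ (up to the $C^\infty$-module structure and locality). Since $\psi$ is a Poisson map, for a function $H$ on $\psi(U)$ we have $\psi_*X_{H\circ\psi}=X_H$. The module $\cF$ is locally generated by the Hamiltonian vector fields of the coordinate functions. Hence $d\psi$ maps $\cF|_U$ onto $\cF|_{\psi(U)}$, and $d\psi^{-1}$ maps it back. So $\psi$ is foliation-preserving as a diffeomorphism onto its open image.

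\emph{Step 2: matching the definitions.} Let $\varphi$ be a local Poisson homeomorphism. Fix $p$ and a neighborhood $U$ on which $\varphi|_U\colon U\to\varphi(U)$ is a Poisson homeomorphism. Then there is an exhaustion $U_1\subset U_2\subset\cdots$ of $U$ and Poisson embeddings $\varphi_i\colon U_i\to\varphi(U)$ that $C^0$-converge to $\varphi|_U$. By Step 1, each $\varphi_i$ is a foliation-preserving embedding for the restricted foliation $\cF|_U$ on $U$ and $\cF|_{\varphi(U)}$ on the target. Thus $\varphi|_U$ is a foliated homeomorphism, and $\varphi$ is a local foliated homeomorphism of $\cF$.

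\emph{Step 3.} Apply \Cref{thm:foliatedhomeo}: $\varphi$ maps leaves of $\cF$, which are the symplectic leaves, homeomorphically onto leaves of $\cF$.

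The only step requiring any care is Step 1. It relies on two facts: Poisson maps intertwine Hamiltonian vector fields, and the local generators of $\operatorname{Im}(\Pi^\sharp)$ can be taken to be Hamiltonian vector fields $X_{x_j}$ of coordinates. Both are standard, so I expect no genuine obstacle.
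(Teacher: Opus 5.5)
Your proposal is correct and follows the paper's argument: the paper deduces the corollary in one line from \Cref{thm:foliatedhomeo}, noting that local Poisson homeomorphisms are local foliated homeomorphisms of the symplectic foliation. Your Step 1 simply writes out the standard fact, used implicitly there, that a Poisson embedding $\psi$ satisfies $\psi_*\Pi^\sharp(\alpha)=\Pi^\sharp((\psi^{-1})^*\alpha)$ and hence preserves $\operatorname{Im}(\Pi^\sharp)$.
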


We first establish an auxiliary lemma proving that every point in a leaf of given dimension must be mapped to the union of all the leaves of that same dimension. 
To this end, define the subsets 
    \[
    C_k:= \{x\in M\mid \dim_{T\mathcal{F}}(x) \leq k\} \, ,
    \quad 
    D_k:=C_k\setminus C_{k-1} 
    \text{ for } k\in \{1,...,n\} \, , 
    \quad 
     D_0:=C_0 \, ,
    \]
    where $n=\dim(M)$.
    In particular, the set $D_k$ is the union of all the leaves of dimension $k$.

    \begin{lemma}\label{lem:dimensionleaveshomeo}
    Let $\varphi:M\longrightarrow M$ be a local foliated homeomorphism. Then $\varphi(D_k) = D_k$.
    \end{lemma}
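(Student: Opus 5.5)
Since $D_k=C_k\setminus C_{k-1}$, it suffices to prove $\varphi(C_k)\subseteq C_k$ for every $k$ (and every local foliated homeomorphism). Applying this to $\varphi^{-1}$, itself a local foliated homeomorphism, gives $\varphi^{-1}(C_k)\subseteq C_k$, hence $\varphi(C_k)=C_k$ for all $k$. Then $\varphi(D_k)=\varphi(C_k)\setminus\varphi(C_{k-1})=C_k\setminus C_{k-1}=D_k$, as $\varphi$ is a bijection. Note that by \Cref{lem:rankcont} each $C_k$ is closed, being a sublevel set of a lower semi-continuous function.

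To show $\varphi(C_k)\subseteq C_k$, fix $p\in C_k$, suppose by contradiction that $\dim_{T\cF}(\varphi(p))=\ell>k$, and work locally. Choose a neighborhood $U$ of $p$ and foliation-preserving embeddings $\varphi_i\colon U_i\to M$ (with $U_i$ exhausting $U$) that $C^0$-converge to $\varphi|_U$. By \Cref{lem:rankcont}, the set $V=\{q\mid\dim_{T\cF}(q)>k\}$ is open and contains $\varphi(p)$. Since $\varphi_i(p)\to\varphi(p)$, we get $\varphi_i(p)\in V$ for $i$ large. But $\varphi_i$ is a foliation-preserving diffeomorphism onto its image, so $d\varphi_i$ maps $T_p\cF$ isomorphically onto $T_{\varphi_i(p)}\cF$. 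This gives $\dim_{T\cF}(\varphi_i(p))=\dim_{T\cF}(p)\le k$, which is a contradiction.

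One must check that a foliation-preserving \emph{embedding} $U_i\to M$ still carries $T_q\cF$ onto $T_{\varphi_i(q)}\cF$. This is the main (mild) point to verify. I would interpret "foliation-preserving embedding" as $d\varphi_i(\cF|_{U_i})=\cF|_{\varphi_i(U_i)}$, with $\varphi_i(U_i)$ open. Since $T_q\cF$ is the evaluation at $q$ of the module $\cF$, which is local, the images of the evaluated tangent spaces match, and the dimensions agree.

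The only remaining subtlety is that lower semi-continuity gives openness of $\{\dim>k\}$ in the right direction. It does, since a lower semi-continuous integer-valued function can only jump up in a neighborhood, never down. So the argument goes through once the pointwise equality of leaf dimensions under each $\varphi_i$ is in hand.
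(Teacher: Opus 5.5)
Your proof is correct and follows essentially the same route as the paper. Both arguments use the closedness of $C_k$ (equivalently, the openness of $\{\dim_{T\cF}>k\}$) from \Cref{lem:rankcont}, the invariance of leaf dimension under each approximating foliation-preserving embedding $\varphi_i$, and the limit $\varphi_i(p)\to\varphi(p)$, then apply the same reasoning to $\varphi^{-1}$ and take set differences. The differences are cosmetic: you argue by contradiction rather than directly, and you drop the paper's induction on $k$, which is indeed unnecessary since $\varphi(C_k)\subseteq C_k$ holds for each $k$ independently.
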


\begin{proof}
    By Lemma \ref{lem:rankcont}, the function $\dim_{T\mathcal{F}}:M\longrightarrow \mathbb{N}$ is lower semi-continuous; in other words, the sets $C_k$ are closed in $M$ for all $k\in\{0,\ldots , \dim(M)\}$.

    Now, for a sufficiently small neighborhood $U'$ of $p$, we know that the map 
    $$\varphi|_{U'}:U' \longmapsto \varphi(U')\subset M$$ 
    is a foliated homeomorphism; that is, there exists a sequence of foliation preserving embeddings 
    $$\varphi_i: U_i\longrightarrow M$$ 
    converging to $\varphi$ on every compact subset of $U'$, where the sequence $U_i$ is an open exhaustion of $U'$. 
    Let $U\subset U'$ be a smaller neighborhood of $p$ such that  $U\subset U_i$ for all $i$'s, and $V:=\varphi(U)$; we then have $\varphi_i|_U\xrightarrow{C^0} \varphi|_U$.

    Note now that since $\varphi_i$ preserves $\cF$, we must have that $\varphi_i(D_k)= D_k$ for each $i$; in particular, $\varphi_i(D_k\cap U)= D_k\cap \varphi_i(U)$.
    We now claim that $\varphi(D_k\cap U)= D_k\cap V$ for all $k$, where we recall that $V=\varphi(U)$.
    Note that, by the arbitrary choice of initial point $p$, this readily implies $\varphi(D_k)= D_k$, as desired.
    Let us then prove such claim, which we do by induction on $k=0,\ldots , \dim (M)$.
    \smallskip
    
    \textbf{Case $k=0$.} Consider a point $x\in U\cap D_0$, and $y=\varphi(x)$. Then we know that $\varphi_i(x) \xrightarrow{i \to \infty} y$, and that $\varphi_i(x)\in D_0$ for all $i$. Since $D_0=C_0$, which is closed, we must then have $y\in D_0$. This proves that $\varphi(D_0\cap U)\subset D_0\cap V$; applying the same argument to $\varphi^{-1}$ gives the equality, as desired.
\medskip

    \textbf{Inductive step.} 
    By the inductive hypothesis, we know that $\varphi(D_j\cap U)= D_j\cap V$ for each $j=0,\ldots ,k$, where $k<\dim(M)$. 
    Now, since $C_{k+1}$ is closed and $\varphi_i(C_{k+1}\cap U)\subset C_{k+1}$, arguing as in the case of $k=0$ (on $\varphi$ and its inverse), we deduce that $\varphi(C_{k+1}\cap U) = C_{k+1}\cap V$. 
    Moreover, since $C_{k}\cap U= \bigsqcup_{j=0}^k (D_k\cap U)$, arguing as before with the inverse we must have $\varphi(C_k\cap U)=C_k\cap V$. 
    We then also have
    \begin{align*}
        \varphi(D_{k+1}\cap U)&=\varphi((C_{k+1}\setminus C_k)\cap U)\\
        &=\varphi(C_{k+1}\cap U)\setminus \varphi(C_k\cap U)\\
        &=\varphi(C_{k+1}\cap U)\setminus (C_k\cap V)\\
        &= \left(C_{k+1}\cap V\right)\setminus (C_k\cap V)\\
        &=D_{k+1}\cap V,
    \end{align*}
    as we wanted to show.
\end{proof}

We can now proceed to the proof of Theorem \ref{thm:foliatedhomeo}.

\begin{proof}[Proof of Theorem \ref{thm:foliatedhomeo}]
Given $p\in M$, we want to show that the leaf $L_p$ of the singular foliation passing through $p$ is mapped to the leaf $L_q$ through $q=\varphi(p)$. 
Note that, once this is established, applying the same conclusion to the inverse $\varphi^{-1}$ of $\varphi$, we deduce that $\varphi$ maps $L_p$ to $L_q$ homeomorphically, as desired.

By contradiction, let us then assume that there is a point $x\in L_p$ such that $y=\varphi(x) \not \in L_q$; equivalently, we would have $L_y\neq L_q$. 
By Lemma \ref{lem:dimensionleaveshomeo}, the set of leaves of any given dimension is invariant under $\varphi$.
We must then have
$$\begin{cases}
    \dim L_p= \dim L_q,\\
    \dim L_x= \dim L_y,
\end{cases}$$
and since $x$ and $p$ belong to the same leaf, i.e.\ $L_x=L_p$, all these dimensions agree, and we denote them by $k$. 
Let now
$$\gamma: I \longrightarrow L_p,$$
be a smooth path in $L_p$ satisfying $\gamma(0)=x$ and $\gamma(1)=p$. 
Again, by \Cref{lem:dimensionleaveshomeo} we know that $\varphi(\gamma(I))\subset D_k$. 
Since the extreme points of $\gamma$ are mapped by $\varphi$ to different leaves, there must be some value $t_0\in [0,1]$ such that for every $\delta>0$, the set $(\varphi\circ\gamma)((t_0-\delta,t_0+\delta))$ intersects more than one leaf. 
(Strictly speaking, it could happen that $t_0=0$ or $t_0=1$, in which case the sets to consider are those of the form $(\varphi\circ\gamma)([0,\delta))$ or $(\varphi\circ\gamma)((1-\delta,1])$, but to avoid overburdening notation we don't treat explicitly these situations as one can argue analogously.)
\medskip

Taking a small enough neighborhood $U$ of $w_0:=\gamma(t_0)$, we can assume that there is a sequence of foliation-preserving diffeomorphisms $\varphi_i: U_i \longrightarrow M$ converging to $\varphi|_U$ with $U\subset U_i$ for each $i$. 
According to the local splitting \Cref{thm:localsplitsingular}, there is a neighborhood $V$ of the point $z_0=\varphi(w_0)$ that is (up to diffeomorphism) of the form $W_1\times W_2 \subset \mathbb{R}^k \times \mathbb{R}^{m-k}$, where $m=\dim(M)$, in such a way that $z_0\simeq (0,0)\in W_1\times W_2$ and that $\mathcal{F}|_U$ is the product of
    \begin{itemize}
        \item[-] the singular foliation in $W_1\subset \mathbb{R}^k$ spanned by all vector fields,
        \item[-] a singular foliation $\mathcal{T}$ on $W_2$ in $\mathbb{R}^{m-k}$ whose set of generators vanish at the origin.
    \end{itemize}
In particular, all leaves of $\mathcal{F}|_U$ of dimension $k$ are of the form $W_1\times \{w_2\}$ for some $w_2\in W_2$. In other words, we have
$$D_k\cap U = \bigcup_{a\in A} W_1 \times \{a\} = W_1 \times A, $$
for some subset $A\subset W_2$ containing $0\in \R^{m-k}$. Up to shrinking $U$, we can moreover assume that $\varphi(U)\subset V$. 
Consider then the projection onto the second factor
$$\pi_2 : W_1\times W_2 \longrightarrow W_2\subset \R^{m-k},$$
and the curve
\begin{align*}
    \eta:(-\tau,\tau) &\longrightarrow W_2\\
    t &\longmapsto (\pi_2\circ\varphi\circ\gamma)(t_0+t), 
\end{align*}
for some small enough $\tau>0$. 
We know that this curve is not constant by choice of $t_0$, and that its image is contained in the set $A\subset W_2$ (which is, a priori, an arbitrary closed set of $\R^{n-k}$), see Figure \ref{fig:foliatedhomeo}.

\begin{figure}[!ht]
    \centering
    \begin{tikzpicture}
    \node at (0,0) {\includegraphics[width=0.7\linewidth]{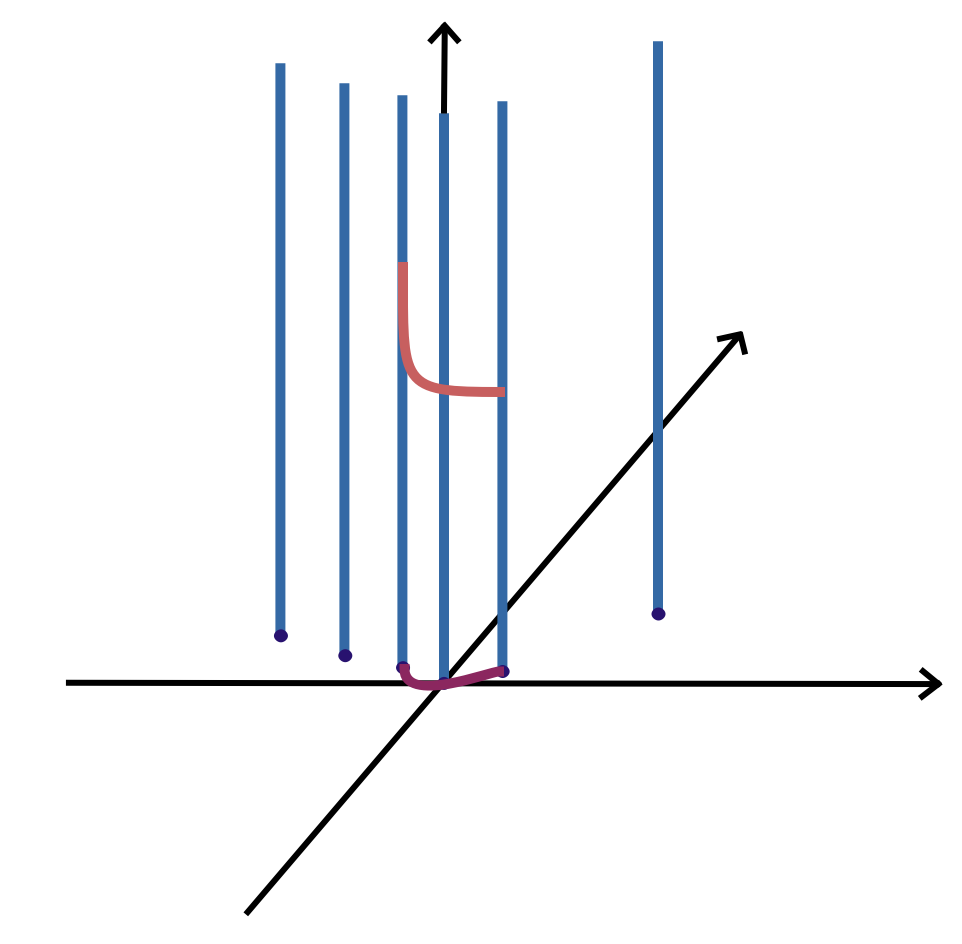}};
    \node[scale=1.2] at (4.6,-0.6) {$\R^{n-k}$};
    \node[scale=1.2] at (0,5.2) {$\R^k$};
    \node[scale=1.2, color=myorange] at (1,1.5) {$\varphi\circ \gamma$};
    \node[scale=1.2, color=mymagenta] at (0.8,-1.6) {$\eta$};
    \node[scale=1.2, color=myblue] at (-2.7,1) {$D_k$};
    \node[scale=1.2, color=mydarkblue] at (-2.7, -1.6) {$A$};
    \end{tikzpicture}
    \caption{Curve in $D_k$ and in $A$}
    \label{fig:foliatedhomeo}
\end{figure}

By continuity, for $i$ large enough, the curve $\eta_i=\pi_2(\varphi_i(\gamma(t_0+\cdot))$, defined on some small interval centered at $0$ and valued in $W_2$, is not constant either. 
By how the singular foliation looks like in the local splitting neighborhood, this implies that $t\mapsto \varphi_i(\gamma(t_0+t))$ cannot be completely contained in a single leaf: indeed, if this were the case, its post-composition with $\pi_2$ would need to map to the single point $0\in A\subset W_2$. 
Now, this is a contradiction with the fact that $\varphi_i$ is a foliation-preserving embedding. 
We hence conclude that $\varphi(L_p)\subset L_q$, as we wanted to show.
\end{proof}

\medskip

We now want to argue that, under strong enough assumptions (inspired by the notion of Hameomorphism), a limit of foliated diffeomorphisms mapping each leaf to itself must also preserve each leaf individually. In order to state this precisely, let us introduce some notions:
\begin{definition}
    \label{def:C0-isotopy}
    Let $M$ be a smooth manifold.
    A \textbf{continuous isotopy} $\Phi\colon [0,1]\times M \to M$ is a continuous map such that $\phi^t:=\Phi(t,\cdot) \colon M \to M$ is a homeomorphism for all $t\in[0,1]$, and $\phi_0=\Id$.
\end{definition}
\begin{definition}
    \label{def:foliated_leafwise_C0-isotopy}
    Let $M$ be a smooth manifold and $\cF$ a singular foliation on $M$.
    A continuous isotopy $\Phi\colon [0,1]\times M \to M$ is said to be a \textbf{(global) foliated $C^0$-isotopy} if there is a sequence of open sets $U_1\subset U_2\subset \ldots \subset M$ exhausting $M$, and a sequence $(\Psi_n)_n$ of smooth maps $\Psi_n\colon [0,1]\times U_i \to M$ that $C^0$-converges to $\Phi$ and such that each $\psi_{n}^t:=\Psi_n(t,\cdot)$ is an embedding preserving the singular foliation $\cF$.
    We also call $\Phi$ a \textbf{local foliated $C^0$-isotopy} if for each point $p\in M$ there is an open neighborhood $U$ such that $\Phi\vert_{[0,1]\times U}$ is a foliated $C^0$-isotopy.
    Lastly, $\Phi$ is called \textbf{(global, resp. local) leafwise $C^0$-isotopy} if it is a (global, resp.\ local) foliated $C^0$-isotopy in such a way that, for $n$ big enough and for all $t\in[0,1]$, $\psi_n^t$ (set-wise) preserves each leaf of $\cF$.
\end{definition}

The desired result is then easy to prove, and does not even rely on Theorem \ref{thm:foliatedhomeo}.
\begin{lemma}
    Let $M$ be a smooth manifold and $\cF$ a singular foliation on $M$.
        If $\Phi$ is a leafwise $C^0$-isotopy, then $\phi^t:=\Phi(t,\cdot)$ maps each leaf to itself.
\end{lemma}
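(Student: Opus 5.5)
The plan is a purely local, point-set argument that does not use \Cref{thm:foliatedhomeo}. Fix $p\in M$ and $t\in[0,1]$. We want $\phi^t(p)\in L_p$; applying the same conclusion to $(\phi^t)^{-1}$ then gives $\phi^t(L_p)=L_p$. We cannot simply pass to the limit in $\psi_n^t(p)\in L_p$, since leaves are in general not closed. Instead we use connectedness of $[0,1]$ and local closedness of leaves in splitting charts.

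First, consider the set $S=\{s\in[0,1] \mid \phi^s(p)\in L_p\}$, which contains $0$ because $\phi^0=\Id$. We will show that $S$ is both open and closed in $[0,1]$.

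Fix any $s_0\in[0,1]$ and set $q=\phi^{s_0}(p)$, with $k=\dim L_q$. By \Cref{thm:localsplitsingular}, $q$ has a neighborhood $V\cong W_1\times W_2$ with $q\simeq(0,0)$, in which the plaque $P:=W_1\times\{0\}$ is an open subset of $L_q$. The key observation is that $P$ is closed in $V$, and that $P$ is exactly the path-component containing $q$ of $L_q\cap V$ with respect to its leaf topology. By continuity of $\Phi$, there is a $\delta>0$ such that $\phi^s(p)\in V$ for all $|s-s_0|<\delta$. By the uniform convergence $\Psi_n\to\Phi$, for $n$ large we also have $\psi_n^s(p)\in V$ for all such $s$.

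Second, we exploit the leafwise property. The curve $s\mapsto\psi_n^s(p)$ is smooth and lies in the single leaf $L_p$. Smooth curves in $M$ contained in a leaf of a singular foliation are smooth as curves in the leaf; this is the standard fact that leaves are weakly embedded, or alternatively one can check directly in the splitting chart that $\pi_2\circ\psi_n^s(p)$ is constant, as in the proof of \Cref{thm:foliatedhomeo}. Hence the arc $\{\psi_n^s(p)\}_{|s-s_0|<\delta}$ lies in one plaque $W_1\times\{a_n\}$.

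Two consequences follow.

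\textbf{Closedness of $S$.} Suppose $s_0\in\overline S$, and pick $s_1\in S$ with $|s_1-s_0|<\delta$. Then $\psi_n^{s_1}(p)\to\phi^{s_1}(p)$ and $\psi_n^{s_0}(p)\to q$, while $\pi_2(\psi_n^{s_1}(p))=\pi_2(\psi_n^{s_0}(p))=a_n$. Taking limits gives $\pi_2\phi^{s_1}(p)=\pi_2(q)=0$. So $\phi^{s_1}(p)$ and $q$ lie in the same plaque $P\subset L_q$. Since $\phi^{s_1}(p)\in L_p$, we get $L_q=L_p$, that is, $s_0\in S$.

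\textbf{Openness of $S$.} If $s_0\in S$, the same computation shows that $\pi_2\phi^s(p)=\lim_n a_n=0$ for all $|s-s_0|<\delta$. Hence $\phi^s(p)\in P\subset L_q=L_p$ for all such $s$.

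By connectedness of $[0,1]$, we conclude $S=[0,1]$.

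The step I expect to be the main obstacle is the claim that a smooth (or merely continuous) path lying in a leaf stays in a single plaque of the splitting chart, so that $\pi_2$ is constant along it. For continuous paths this can fail naively, since a leaf may re-enter the chart through a dense set of other plaques. It is the smoothness of $s\mapsto\psi_n^s(p)$ that resolves this: its velocity lies in $T\cF=\R^k\times 0$ at points of $D_k$, and more generally $d\pi_2$ applied to it is tangent to $\mathcal T$. I would argue that the $W_2$-component is a smooth curve inside a single leaf of $\mathcal T$ passing through $a_n$. Where that leaf is the point $\{a_n\}$ this gives constancy; otherwise I would replace \enquote{plaque} by \enquote{local leaf $W_1\times L^{\mathcal T}$} and rerun the same open/closed argument with that local leaf, using that $\pi_2$ of the limit point $q$ is the point leaf $\{0\}$ of $\mathcal T$.
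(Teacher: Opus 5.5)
Your open--closed argument for $S=\{s\mid \phi^s(p)\in L_p\}$ follows the same strategy as the paper: closedness of the central plaque in a splitting chart, propagated along $s\mapsto\phi^s(p)$. Your openness step is correct. There $s_0\in S$ forces $L_q=L_p$, so $\dim L_p=k$, and $L_p\cap V$ is a union of at most countably many plaques $W_1\times\{a\}$ (by second countability of $L_p$, or by weak embeddedness as you say). Hence the connected set $\pi_2(\{\psi_n^s(p)\})$ is a single point $a_n\to 0$.

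The gap is in the closedness step. There you only know $s_0\in\overline S$, so lower semicontinuity gives only $\dim L_p\geq k=\dim L_q$. If $\dim L_p>k$, the arc $\{\psi_n^s(p)\}$ lies in some $W_1\times\ell_n$ with $\ell_n$ a positive-dimensional leaf of $\mathcal T$. Your fallback of rerunning the argument with such local leaves cannot work, because they are not closed in $V$ and may accumulate on the point leaf $\{0\}$.

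In fact, your closedness argument never uses that $\phi^{s_0}$ is a homeomorphism, and without this the conclusion is false. Take the foliation of $\R^2$ generated by $x\partial_x+y\partial_y$, whose leaves are the origin and the open rays. The leaf-preserving diffeomorphisms $\psi_n^s(v)=(1-s+s/n)v$ converge uniformly on compact sets to $\phi^s(v)=(1-s)v$. For $p\neq 0$ this gives $S=[0,1)$, which is not closed, even though everything your argument uses holds here except invertibility of $\phi^1$.

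The missing input is $\dim L_q=\dim L_p$. This follows from \Cref{lem:dimensionleaveshomeo} applied to $\phi^{s_0}$, which is a foliated homeomorphism as the $C^0$-limit of the $\psi_n^{s_0}$. You can also see it directly: $\psi_n^{s_0}(p)\in L_p$ and closedness of $C_{\dim L_p}$ give $\dim L_q\le\dim L_p$. The reverse inequality follows the same way from $(\psi_n^{s_0})^{-1}(q)\in L_q$ converging to $p$ for $n$ large, and this is exactly where invertibility enters.

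Once $\dim L_p=k$, the single-plaque claim holds by the countability argument above, and your closedness step goes through as written. Since \Cref{lem:dimensionleaveshomeo} does not rely on \Cref{thm:foliatedhomeo}, your aim of avoiding the latter is preserved.

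Finally, the appeal to $(\phi^t)^{-1}$ at the end is unnecessary. Knowing $\phi^t(L)\subset L$ for every leaf $L$, together with surjectivity of $\phi^t$, already gives $\phi^t(L)=L$.
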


\begin{proof}
Consider a point $p\in M$ and a splitting chart $W$ centered at $p$. Taking a small enough neighborhood $W'$ of $p$ in $W$, there is an approximating sequence
$$ \psi_i^t : W'\subset W \longrightarrow M, \quad t\in [0,1]$$
that preserves each leaf of the foliation and that $C^0$-converges to $\psi^t|_{W'}$.

For $t$ small enough each $\phi^t_i(p)$ belongs to the plaque $L_p^W\subset L_p$ of $p$ inside $W$.
Since each plaque of dimension $\dim L_p$ is a locally closed set in $W$, we must have that the point $\phi^t(p)$
belongs to $L_p^W$ for $t$ small enough. 
Now, covering the path $\phi^t(p)$ by splitting charts and repeating this argument, we conclude that $\phi^t(p)$ is in $L_p$ for all $t$. 

This holds for every point $p$, proving that every leaf is preserved by $\phi^t$, as desired. 
\end{proof}

We point out the following consequence for Hameomorphisms:

\begin{corollary}\label{cor:Hameos_preserve_leaves}
        If $\varphi:(M,\Pi) \longrightarrow (M,\Pi)$ is a Hameomorphism, then it maps each leaf to itself.
\end{corollary}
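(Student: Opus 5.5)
We reduce \Cref{cor:Hameos_preserve_leaves} to the preceding lemma by showing that a Hameotopy is a leafwise $C^0$-isotopy of the singular symplectic foliation $\cF=\operatorname{Im}(\Pi^\sharp)$. Let $\varphi=\phi^1$, where $\phi^t$ is a Hameotopy generated by smooth time-dependent Hamiltonians $H_i\in C^\infty([0,1]\times M)$. These are supported in a fixed compact set $K$, and their flows $\phi^t_{H_i}$ $C^0$-converge to $\phi^t$ uniformly in $t$. We set $\Psi_i(t,\cdot):=\phi^t_{H_i}$, defined on all of $M$, so we may take the exhaustion $U_i=M$.

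First we check the smooth properties. Each $\phi^t_{H_i}$ is the flow at time $t$ of the time-dependent vector field $X_{H_i(t,\cdot)}=\Pi^\sharp(\d H_i(t,\cdot))$. By compact support this flow is complete, so $\phi^t_{H_i}$ is a diffeomorphism of $M$. It is Poisson, hence foliation-preserving.

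Second, and more importantly, $\phi^t_{H_i}$ preserves each leaf setwise. Indeed, for every $p$ the curve $s\mapsto\phi^s_{H_i}(p)$, $s\in[0,t]$, is an integral curve of a time-dependent vector field tangent to $\cF$. By \Cref{def:leaf} (after the standard reduction of time-dependent fields in $\cF$ to autonomous ones, e.g.\ by approximating with piecewise autonomous flows, or by noting that the leaves are exactly the accessibility sets of $\cF$), it stays inside $L_p$. Alternatively, the leaf through $p$ is the set of points reached by Hamiltonian flows, as recalled in \Cref{ss:prelim_poisson_geom}. Thus $\phi^t_{H_i}(L)=L$ for every leaf $L$ and every $t$: the inclusion $\subset$ comes from the tangency, and equality follows by applying the same argument to the inverse flow.

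Third, $\Phi(t,p)=\phi^t(p)$ is a continuous isotopy with $\phi^0=\Id$. Continuity in $(t,p)$ follows from the uniform-in-$t$ $C^0$ convergence of continuous maps. Each $\phi^t$ is a homeomorphism because it is part of the given isotopy. Hence $\Phi$ is a (global) leafwise $C^0$-isotopy in the sense of \Cref{def:foliated_leafwise_C0-isotopy}. The preceding lemma then gives that every $\phi^t$, and in particular $\varphi=\phi^1$, maps each leaf to itself. Applying the same to the inverse Hameotopy, or using the fact that $\varphi$ is bijective together with \Cref{thm:foliatedhomeo}, upgrades this to $\varphi(L)=L$ homeomorphically.

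The only genuinely delicate point is the leaf-preservation of the smooth time-dependent flows, since \Cref{def:leaf} is phrased with autonomous vector fields. If needed, we handle it by autonomizing: we consider $\partial_s+X_{H_i(s,\cdot)}$ on $\R\times M$ with the product singular foliation, or we simply invoke that integral curves of time-dependent sections of a singular foliation remain in a leaf (\cite{LGLR_notes}).
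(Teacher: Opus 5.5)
Your proposal is correct and follows the paper's implicit argument: a Hameotopy is a leafwise $C^0$-isotopy because the approximating smooth Hamiltonian flows $\phi^t_{H_i}$ preserve every symplectic leaf, so the preceding lemma applies and gives $\varphi(L)\subset L$ for every leaf $L$. Equality $\varphi(L)=L$ then follows from bijectivity of $\varphi$ together with the fact that the leaves partition $M$, so you do not need to invoke \Cref{thm:foliatedhomeo} or the inverse Hameotopy.
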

%%%%%%%%%%%%%%%%%%%%%%%%%%%%%%%%%%%%%%%%%%%%%%%%%%%%%%%%%%%%%%%%%%%%%%%%%%%%%%%%%%%%%%%%%%%%%%%%%%%%%%%%%%%%%%%%%

\subsection{Poisson homeomorphisms restrict to symplectic homeomorphisms}

Using our preliminary result on foliated homeomorphisms, we can establish \Cref{thm:main1} from the Introduction, i.e.\ that, in the Poisson case, the induced homeomorphism between leaves is in fact a symplectic homeomorphism.

\begin{theorem}\label{thm:poisson_homeos_are_leafwise_sympl_homeos}
    Let $\varphi$ be a local Poisson homeomorphism. Then for every symplectic leaf $L$ of $(M,\Pi)$, the restriction
    $$\varphi|_L : L \longrightarrow \varphi(L),$$
    is a local symplectic homeomorphism.
\end{theorem}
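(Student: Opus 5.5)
By \Cref{coro:Poisleaveshom} we already know that $\varphi$ maps each leaf $L$ homeomorphically onto a leaf $L'=\varphi(L)$. It remains to show that, near every point $x\in L$, the map $\varphi|_L$ is a $C^0$-limit of symplectic embeddings of leaf neighbourhoods. The argument is local, so I work in Weinstein splitting charts.

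Fix $x\in L$ and set $y=\varphi(x)$. Choose a neighbourhood $U$ of $x$ and Poisson embeddings $\varphi_i$ converging uniformly to $\varphi$ on $U$. By \Cref{thm:weinstein_splitting_poisson}, take splitting coordinates $(x_1,y_1,\dots,x_k,y_k,z)$ centred at $x$ and at $y$, on neighbourhoods $B\times Z$ and $B'\times Z'$ respectively. Here $\dim L=2k$ by \Cref{lem:dimensionleaveshomeo}, and the plaques of $L$ and $L'$ through $x$ and $y$ are $B\times\{0\}$ and $B'\times\{0\}$, each carrying the standard symplectic form. Shrinking $U$, I assume $\varphi(U)\subset B'\times Z'$ and $\varphi_i(U)\subset B'\times Z'$ for all $i$ large.

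The natural candidates for leafwise approximations are the maps
\[
\psi_i:=\pi_1\circ\varphi_i\circ\iota_0\colon B_0\to B',
\]
where $B_0\subset B$ is a small ball, $\iota_0(b)=(b,0)$, and $\pi_1\colon B'\times Z'\to B'$ is the projection. These converge uniformly to $\pi_1\circ\varphi\circ\iota_0$. Since $\varphi$ maps the plaque $B_0\times\{0\}$ into $L'$, and by shrinking it lands in the plaque $B'\times\{0\}$ (as in the proof of \Cref{thm:foliatedhomeo}), this limit is exactly $\varphi|_L$ in coordinates.

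The main obstacle is that $\psi_i$ need not be symplectic. The map $\varphi_i$ sends the plaque $B_0\times\{0\}$ symplectomorphically onto the plaque $B'\times\{c_i\}$ of some nearby leaf, with $c_i\to0$. Because $\varphi_i$ preserves leaves and is Poisson, that image is a symplectic leaf piece $B'\times\{c_i\}$ only if $c_i$ lies in the set $A$ of points of $Z'$ where the transverse structure vanishes; a priori the image could instead sit inside a higher-dimensional leaf. This is ruled out by dimension preservation for the $\varphi_i$. Since $\varphi_i(x)$ lies in a $2k$-dimensional leaf and the chart splits, the leaf through $\varphi_i(x)$ in the chart is contained in $B'\times\{c_i\}$, and by dimension count it is an open piece of it. Because $\varphi_i$ is Poisson, the restriction of $\varphi_i$ to the plaque is a symplectic embedding into that leaf. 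In the product chart the leaf $B'\times\{c_i\}$ carries the standard form $\sum dx_j\wedge dy_j$ (the transverse part vanishes at $c_i$). Hence $\psi_i$ is a symplectic embedding of $(B_0,\omega_0)$ into $(B',\omega_0)$, and it converges uniformly to $\varphi|_L$ on $B_0$.

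Finally, I transport back through the chart identifications, which are symplectomorphisms on the plaques. I then use that a uniform limit of symplectic embeddings on an exhausting family of opens (apply the above on slightly larger balls, or directly on compacts) is a symplectic homeomorphism in the sense adapted from \cite{BO}. This gives the local statement at each point of $L$. Since $L$ with its leaf topology is locally the plaque topology, and \Cref{coro:Poisleaveshom} guarantees continuity for the leaf topology, $\varphi|_L$ is a local symplectic homeomorphism.
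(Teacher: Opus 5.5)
Your proposal is correct and follows essentially the same route as the paper. The paper also works in a Weinstein splitting chart at $\varphi(p)$, notes that each Poisson approximant $\varphi_i$ sends the local plaque of $L$ into a nearby $2k$-dimensional plaque $\{z=c_i\}$ with $c_i\to 0$, and composes with the projection onto the plaque through $\varphi(p)$; this projection is symplectic on such plaques, which yields symplectic embeddings converging to $\varphi|_L$. You spell out some details more carefully than the paper does: why the target plaque is $2k$-dimensional (dimension preservation by $\varphi_i$, hence vanishing of the transverse structure at $c_i$), and how to make the approximants land in $\varphi(U)$ via compacts, where the paper instead restricts to $V_i=\phi_i^{-1}(\phi_i(V)\cap V')$.
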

Note that $\varphi(L)$ is a leaf of the singular (symplectic) foliation associated to $\Pi$ by \Cref{thm:foliatedhomeo}.

\begin{proof}
We aim to show that the restriction of $\varphi$ to a sufficiently small neighborhood $U$ of any point $p$ inside its symplectic leaf $L_p$ is a symplectic homeomorphism to a neighborhood $V$ of $\varphi(p)$ inside $L_{\varphi(p)}$. 

\medskip

Choose a small enough neighborhood $U$ of $p$ such that there exists a family of Poisson embeddings $\varphi_i:U_i\longrightarrow M$ so that $\varphi_i|_{U_i}$ converges to $\varphi|_U$.
Consider then a neighborhood $W_q$ of $q=\varphi(p)$ given by the Weinstein splitting \Cref{thm:weinstein_splitting_poisson}, and denote by $2k$ the dimension of $L_p$ and by $m$ the dimension of $M$.
There are then coordinates $(x_1,y_1,...,x_k,y_k, z_1,...,z_{m-2k})$ of $W_q$ centered at $p$ and such that the Poisson structure writes as 
\begin{equation}
    \Pi|_{W_q}= \sum_{i=1}^k \partial_{x_i}\wedge \partial_{y_i} + \sum_{1\leq u,v \leq m-2k} f_{u,v}(\mathbf{z}) \partial_{z_u}\wedge \partial_{z_v},
\end{equation}
where $f_{u,v}(\mathbf{z})$ are functions vanishing at the origin. In particular, very much like for the splitting theorem for singular foliations, the intersection of any leaf of dimension $2k$ with $W_q$ is necessarily given by disjoint unions of subsets of the form $\{z_i=c_i \mid i=1,...,m-2k \}\subset W_q$, where the $c_i$'s are constants. 

\medskip

Notice now that if we knew that $q_i=\varphi_i(p)$ belonged to $L_q$ for $i$ large enough, the conclusion would follow. 
Indeed, in this case, there would be a small neighborhood $V$ of $p$ inside $L_p$ for which $\varphi_i|_{V}\colon V\hookrightarrow L_q$ would give a $C^0$-approximation by symplectic embeddings of the homeomorphism $\varphi|_{V}\colon V \to L_q$, as required by the definition of local symplectic homeomorphism.

Guided by this observation, the main idea is then as follows. The points $q_i=\varphi_i(p)$ are in $W_q$ for $i$ large enough, but in general belong to a different $k$-dimensional plaque of the foliation than the plaque $Z_q$ where $q$ belongs. We ``correct" the $\varphi_i$'s by composing with the projection onto $Z_q$ in a Weinstein splitting chart of the form $Z_q\times T$ where $Z_q$ is a symplectic factor and $T$ is the transverse Poisson factor with Poisson structure vanishing at the origin.
Here are the details.

\medskip

By \Cref{thm:foliatedhomeo} we know that a small neighborhood $V$ of $p$ inside $L_p$ must be mapped, for $i$ large enough, to the intersection of a leaf of dimension $2k$ with $W_q$. 
In other words, we have that $V_i':=\varphi_i(V)$ lies in a set of the form
$$L_i:=\varphi_i(L_p)\cap W_p\subset \{(x_j,y_j,z_l)\mid j=1,\ldots, k \, , \,\text{ and }\,  z_l=b_l^i \text{ for } l=1,...,m-2k\},$$ 
for some constants $b_l^i$. 
Since $\varphi_i(p)\to\varphi(p)$ and $\varphi(p)$ is mapped to the origin in $W_q$, these constants must satisfy
\[
b_l^i \xrightarrow{i\rightarrow \infty} 0 \, .
\]
At this point, consider the projection 
\begin{align*}
    \pi: W_q &\longrightarrow Z_q\subset L_q\cap W_q\\
        (x_i,y_i,z_i) &\longmapsto (x_i,y_i,0)
\end{align*}
where $Z_q$ is the connected component of $L_q\cap W_q$ containing $q\simeq 0$,
and notice that 
$$\phi_i:=\pi \circ \varphi_i:V \longrightarrow L_q$$ 
is a symplectomorphism from $U$ to $\pi\circ \varphi_n(U)$. 
Indeed, we know that 
$$\varphi_i: (V, \omega_{L_p})\longrightarrow Z_i\subset (L_i, \omega_{L_i})$$
where $Z_i$ is the connected component of $L_i\cap W_q$ containing $\varphi_i(p)$,
is a symplectic embedding, because $\varphi_i$ is a Poisson diffeomorphism into its image, and the projection map 
$$\pi|_{Z_i}: (Z_i, \sum_{i=1}^k dx_i\wedge dy_i)\longrightarrow (L_q, \sum_{i=1}^k dx_i\wedge dy_i)$$
is trivially a symplectomorphism onto its image. 

Now, denoting by $V':=\varphi(V)$, we know by \Cref{thm:foliatedhomeo} that $V'$ is an open neighborhood of $q$ inside $L_q$; up to shrinking $V$, we can assume that $V'\subset Z_q\subset W_q$. 
We then define the open sets
$$V_i= (\phi_i)^{-1}(\phi_i(V)\cap V'),$$
and we denote the restricted maps by
$$\psi_i:=\phi_i|_{V_i}:V_i \longrightarrow V' \, .$$
Then we have
$$\psi_i \xrightarrow{C^0} \varphi \, ,$$
and each $\psi_i$ is a symplectic embedding by what said above, thus proving that $\varphi|_V$ is a symplectic homeomorphism, as desired.
\end{proof}

We point out that the proof above relies strongly on the local description of the Poisson structure, and hence of its singular symplectic foliation, given by the Weinstein splitting \Cref{thm:weinstein_splitting_poisson}. 
In particular, it does not allow one to prove that (global) Poisson homeomorphisms induce (global) symplectic homeomorphisms between leaves in general.

%%%%%%%%%%%%%%%%%%%%%%%%%%%%%%%%%%%%%%%%%%%%%%%%%%%%%%%%%%%%%%%%%%%%%%%%%%%%%%%%%%%%%%%%%%%%%%%%%%%%%%%%%%%%%%%%%

%%%%%%%%%%%%%%%%%%%%%%%%%%%%%%%%%%%%%%%%%%%%%%%%%%%%%%%%%%%%%%%%%%%%%%%%%%%%%%%%%%%%%%%%%%%%%%%%%%%%%%%%%%%%%%%%%
%%%%%%%%%%%%%%%%%%%%%%%%%%%%%%%%%%%%%%%%%%%%%%%%%%%%%%%%%%%%%%%%%%%%%%%%%%%%%%%%%%%%%%%%%%%%%%%%%%%%%%%%%%%%%%%%%

\section{Clean intersection points}\label{sec:cleanpoints}
In this section, we introduce and study the notion of ``clean intersection point", which plays a pivotal role in the understanding of the $C^0$-rigidity of coisotropic submanifolds.

\subsection{Clean intersections points}
We start by recalling the classical notion of ``clean intersection" between two submanifolds:
\begin{definition}\label{def:cleanintsub}
    Two submanifolds $N_1, N_2$ of an ambient manifold $M$ are said to \textbf{intersect cleanly} if
\begin{itemize}
    \item[-] $N_1\cap N_2$ is a smooth submanifold, and
    \item[-] for every point $x\in N_1\cap N_2$, we have $T_x(N_1\cap N_2)=T_xN_1\cap T_xN_2$.
\end{itemize}
\end{definition}
Notice that if two submanifolds intersect transversely, then they intersect cleanly, but the converse is not necessarily true. 

In the setting where $M$ is endowed with a singular foliation $\mathcal{F}$, which is the case of interest for us, one can similarly require that a submanifold intersects cleanly with $\mathcal{F}$:
\begin{definition}\label{def:cleansubfol}
    Let $\mathcal{F}$ be a singular foliation on $M$. 
    A submanifold $N\subset M$ is said to \textbf{intersect cleanly with $\mathcal{F}$} if for every leaf $L$ of $\mathcal{F}$, the submanifolds $N$ and $L$ intersect cleanly.
\end{definition}

When studying submanifolds in Poisson manifolds, requiring that they intersect cleanly with the symplectic foliation is a common (yet rather strong) assumption made in the literature, see e.g.\ \cite{Weinstein_CoisotropicCalculus}. 
In order to study arbitrary submanifolds and to make sense of ``the locus where they intersect cleanly'', we introduce the key notion of ``clean intersection point", which we could not find defined/studied anywhere in the literature.

\begin{definition}\label{def:cleanpointsub}
    Let $N_1,N_2$ be two submanifolds of $M$. 
    A point $p\in N_1\cap N_2$ is a \textbf{clean intersection point} if there are neighborhoods $U_1, U_2$ of $p$ in $N_1$ and $N_2$ respectively, such that $U_1$ and $U_2$ intersect cleanly.
\end{definition}

\begin{remark}
\label{rmk:alternative_def_cleanpoint}
Another possible natural definition for clean intersection point is that sufficiently small neighborhoods satisfy that $U_1\cap U_2$ is a submanifold and $T_p(U_1\cap U_2)=T_pU_1\cap T_pU_2$. 
In fact, it turns out that this notion is equivalent to the one in \Cref{def:cleanintsub}, despite sounding weaker a priori. 
Indeed, suppose that a point $p$ is clean in this second sense, but not in the sense of \Cref{def:cleanintsub}. 
Then there is a sequence of points $p_i\in U\cap V$ such that $p_i\rightarrow p$ such that $T_{p_i}(U\cap V)\subsetneq T_{p_i}U\cap T_{p_i}V$. 
If $k$ denotes the dimension of $U\cap V$, we must have $\dim (T_{p_i}U\cap T_{p_i}V) \geq k+1$. 
As this is a closed condition, we conclude that $\dim (T_pU\cap T_pV)\geq k+1$, which is a contradiction.
\end{remark}

Similarly, we have clean intersection points with an ambient singular foliation:

\begin{definition}\label{def:cleanpointfol}
    Let $\mathcal{F}$ be a singular foliation in $M$, and $N\subset M$ a submanifold. We say that $p\in N$ is a \textbf{clean intersection point of $N$ with $\cF$} if there are neighborhoods $U$ of $N$ and $V$ of the leaf $L_p$ of $\mathcal{F}$ containing $p$ such that $U$ and $V$ intersect cleanly.
\end{definition}

We denote by $\Cl(N,\cF)\subseteq N$ the set of clean intersection points of $N$ with $\cF$.

\begin{remark}\label{rmk:openleafclean}
As we will see later on, the set $\Cl(N,\cF)$ is, in general, not open, even if $\cF$ is regular. However, it follows from the definition that if $p$ is a clean intersection point, then for small enough neighborhoods $U$ and $V$ of $p$ inside $C$ and $L_p$ respectively, the set $U\cap V$ is made of clean intersection points.
\end{remark}

\begin{remark}
    It is straightforward to check that given a Poisson diffeomorphism $\psi:(M,\Pi)\longrightarrow (M,\Pi)$, a submanifold $N$, and a point $p\in N$ that is a clean intersection point with the singular symplectic foliation, then $\psi(p)$ is a clean intersection point of $\psi(N)$ with the singular symplectic foliation. 
    As we will see later in \Cref{ex:cleantononclean}, this is not the case anymore for Poisson homeomorphisms (for which the image of $N$ is a smooth submanifold), even if $N$ is coisotropic.
\end{remark}

\begin{remark}\label{rmk:cleanimmersed}
    In the above definitions, by submanifold we really mean \emph{embedded submanifold}. 
    Even though we will not need this, let us mention that one can generalize  \Cref{def:cleanintsub,def:cleanpointsub} to the case where $N_1$ and/or $N_2$ is/are (possibly non-injectively) immersed submanifolds. 
    Similarly, \Cref{def:cleansubfol,def:cleanpointfol} can be adapted to the case where $N$ is a (possibly non-injectively) immersed submanifold in a singular foliated manifold. 
    In these cases, clean intersection points are defined using neighborhoods in the source manifold of parameterizing immersions. 
    Namely, for instance in the case of an immersed submanifold and a singular foliation, if $\iota:N \longrightarrow M$ denotes the immersion of $N$ in $M$, a point $p\in N$ is a clean intersection point if there is a neighborhood $U$ of $p$ in $N$ such that $\iota(U)$ is an embedded submanifold for which $\iota(p)$ is a clean intersection point with $\cF$.
\end{remark}

Let us give an easy but illustrative example of a submanifold with different types of clean and non-clean intersection points with respect to a regular foliation. 

\begin{example}
Consider on $\mathbb{R}^n$, equipped with coordinates $(x_1,...,x_n)$, the regular corank-one foliation whose leaves are given by fixing the value of the last coordinate to a constant. 
Consider also a submanifold of $\R^n$ of the form
$$N=\{(x_1,...,x_n)\mid x_n=\varphi(x_1)\},$$
where $\varphi: \R\longrightarrow \R$ is a smooth function with the following properties:
\begin{itemize}
    \item[-] it is strictly increasing in $(-\infty,-1)\cup(1,\infty)$;
    \item[-] it is constant and equal to zero in $[-1,1]$.
\end{itemize}
All points of $N$ in with $x_1\in (-\infty,1)\cup (1,\infty)$ are points where $N$ intersects transversely the corresponding leaf $x_n=\varphi(x_1)$, and thus are clean intersection points. 
The codimension-two affine subspaces $A_{-1}=\{x_1=-1, \, x_n=0\}\subset N$ and $A_1=\{x_1=1,\, x_n=0\}\subset N$ are made of non-clean intersection points:
indeed, any neighborhood $U$ of any point $p\in A_{-1}\cup A_1$ inside $N$, the intersection $U\cap L_p=U\cap \{x_n=0\}$ is not a submanifold (it is in fact a submanifold with boundary). 
Lastly, every point in $A:=\{-1<  x_1< 1, \, x_n=0\}\subset N$ is a point of non-transverse clean intersection with the foliation: indeed, given $p\in A$, a sufficiently small neighborhood $U\subset N$ of $p$ satisfies $U\subset L_p$, and thus $U\cap L_p=U$ is a smooth submanifold which trivially satisfies $T(U\cap L_p)=TU\cap TL_p$.
\end{example}

\subsection{Clean points of coisotropic submanifolds}
When $C$ is a coisotropic submanifold of $M$, clean intersection points have some remarkable geometric properties. The first one is that every characteristic leaf contains only clean or non-clean points.
\begin{lemma}\label{lem:cleanleafcoiso}
    Let $C$ be a coisotropic submanifold. 
    If $p\in C$ is a clean intersection, then so is every point in the leaf $K_p\subset C$ of $\cK_C$ containing $p$. 
\end{lemma}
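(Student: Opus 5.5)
The idea is to transport cleanness along the characteristic leaf using flows of Hamiltonian vector fields that are tangent to $C$. These flows are Poisson diffeomorphisms, so they preserve the symplectic foliation. They also preserve $C$ locally, so they carry the local picture of $C$ meeting the leaf $L_p$ near one point to the same picture near another point.

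\textbf{Step 1: reduce to flows of the local generators.} By \Cref{def:leaf}, applied to the singular foliation $\cK_C$ of \Cref{lem:charfol}, every point $q\in K_p$ is reached from $p$ by a finite chain of time-one flows of vector fields in $\cK_C$. So it suffices to treat a single piece of such a chain, and by compactness of the connecting path I may assume the piece lies in one coordinate chart. As in the proof of \Cref{lem:charfol}, take a chart $U$ in which $C\cap U=\{x_{k+1}=\dots=x_n=0\}$. There $\cK_C$ is generated by the Hamiltonian vector fields $X_{x_j}=\Pi^\sharp(dx_j)$, $j>k$. A vector field in $\cK_C|_{C\cap U}$ has the form $\sum_j a_j X_{x_j}$ with $a_j\in C^\infty(C)$. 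Extending the $a_j$ to $U$ (constant in the normal directions), it is the restriction to $C$ of the Hamiltonian vector field of $H=\sum_j a_j x_j$: indeed $X_H=\sum_j a_jX_{x_j}+\sum_j x_j X_{a_j}$, and the second sum vanishes on $C$.

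\textbf{Step 2: the flow preserves $C$ and maps clean points to clean points.} The vector field $X_H$ is tangent to $C$ because $C$ is coisotropic. Hence its local flow $\phi^t$ preserves $C$, and restricted to $C$ it coincides with the flow of the chosen element of $\cK_C$. Each $\phi^t$ is a local Poisson diffeomorphism, so it sends leaves of the symplectic foliation to leaves (in fact to the same leaf, being Hamiltonian). Now let $p'\in K_p$ be a clean point and $q'=\phi^1(p')$. If $U_1\subset C$ and $V_1\subset L_{p'}$ are neighborhoods of $p'$ that intersect cleanly, then $\phi^1(U_1)\subset C$ and $\phi^1(V_1)\subset L_{q'}=L_{p'}$ are neighborhoods of $q'$. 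They intersect cleanly because a diffeomorphism preserves submanifolds and intersections of tangent spaces. So $q'$ is clean, in the sense of \Cref{def:cleanpointfol}.

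\textbf{Step 3: chain along $K_p$.} Iterating Step 2 along the finite chain connecting $p$ to $q$ shows that every point of $K_p$ is clean.

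\textbf{Main obstacle.} The delicate point is that a point of $K_p$ is defined using vector fields defined only on $C$, while the cleanness argument needs ambient Poisson diffeomorphisms defined on open sets of $M$. This is the purpose of the local Hamiltonian extension in Step 1. One must also check that the time-one flows are defined on a neighborhood of the relevant points, which holds after subdividing the chain into short pieces within charts, using cutoffs for $H$ if needed.
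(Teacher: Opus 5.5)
Your proposal is correct and follows essentially the same route as the paper. Both arguments replace a chain of flows of $\cK_C$ by local Hamiltonian flows of functions vanishing on $C$; these are Poisson diffeomorphisms preserving both $C$ and the symplectic leaf, so they carry a cleanly intersecting pair of neighborhoods at one point to such a pair at the next. Your explicit extension $H=\sum_j a_j x_j$, with $X_H|_C=\sum_j a_j X_{x_j}|_C$, simply makes precise the paper's local identity $\cK_C=\Pi^\sharp(\{dH\mid H\in\mathcal{I}(C)\})$.
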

\begin{proof}
    Let $p\in C$ be a clean intersection point; namely, there exist a neighborhood $U$ of $p$ in $C$ and a neighborhood $V$ of $p$ in $K_p$ such that:
    \begin{itemize}
        \item[-] $U\cap V$ is a smooth submanifold,
        \item[-] $T(U\cap V)=TU\cap TV$.
    \end{itemize}
    Recall that $\cK_C= \Pi^\sharp(TC^\circ)$, and that, as argued in \Cref{lem:charfol}, if $U$ is an open set with coordinates $(x_1,...,x_n)$ such that $C\cap U=\{x_{k+1}=...=x_n=0\}$, one has 
     $$\cK_C|_{C\cap U}=\langle \Pi^\sharp(dx_{k+1}),...,\Pi^\sharp(dx_n)\rangle.$$
    It follows that locally we have 
    \begin{equation}\label{eq:charHam}
    \cK_C=\Pi^\sharp(\{dH\mid H\in \mathcal{I}(C)\},     
    \end{equation}
    where $$\mathcal{I}(C)=\{H\in C^\infty(M) \mid H(q)=0 \text{ for all }q\in C\}$$ 
    is the vanishing ideal of $C$. 
    By the definition of a leaf of a singular foliation, we know that for any $q\in K_p$, there is a family of vector fields $X_1,...,X_m\in \cK_C$ such that their time-one flows $\varphi_1,...,\varphi_m$ satisfy 
    $$\varphi_m\circ...\circ\varphi_1(p)=q.$$

    Now, one can break the orbits of $X_1,...,X_m$ into smaller pieces so that each piece of the relevant integral curve lies in a local chart where Equation \eqref{eq:charHam} holds. Thus, without loss of generality, we can assume that
    $$X_i= \Pi^\sharp(dH_i),$$
    for some function $H_i$ defined in a neighborhood of the relevant orbit.  In particular, the local diffeomorphisms $\varphi_i$ are Hamiltonian diffeomorphisms. In particular, their flows are tangent both to the leaves of $\cK_C$ and of the symplectic foliation of $(M,\Pi)$. In other words, if $\psi:=\varphi_m\circ...\circ \varphi_1$, we have:
    \begin{itemize}
        \item[-] $U_q= \psi(U)$ is a neighborhood of $q$ inside $C$,
        \item[-] $V_q= \psi(V)$ is a neighborhood of $q$ inside the symplectic leaf of $q$, which is $L_p$.
    \end{itemize}
    It follows that $U_q\cap V_q=\psi(U\cap V)$ is a submanifold, and that 
    \begin{align*}
        T(U_q\cap V_q)&= \d\psi(T(U_q\cap V_q))\\
        &=\d\psi(TU\cap TV)\\
        &=\d\psi(TU)\cap \d\psi(TV)\\
        &= TU_q\cap TV_q.
    \end{align*}
    This proves that $q$ is a clean intersection of $C$ with $\cF$, as we wanted to show.
\end{proof}

This allows us to split characteristic leaves into two types.
\begin{definition}\label{def:cleancharleaf}
    Let $C$ be a coisotropic submanifold in $(M,\Pi)$. A characteristic leaf $K$ is a \textbf{clean characteristic leaf} if $K\subset \Cl(C,\cF)$, and a \textbf{non-clean characteristic leaf} if $K\cap \Cl(C,\cF)=\emptyset$.
\end{definition}
As we mentioned, Proposition \ref{prop:cleancoisotropicinleaf} implies that every leaf is either clean or non-clean.

\begin{example}
We now give an example of a coisotropic submanifold with clean and non-clean intersection points with the symplectic foliation of a Poisson manifold. 
Variations on this example will be useful to exhibit different possible behaviors, with respect to the characteristic foliation, of Poisson homeomorphisms mapping a coisotropic submanifold into a smooth submanifold. 
Consider in $\mathbb{R}^{2n+1}$ with $n\geq 1$ and coordinates $(x_i, y_i, z)$ with $i=1,...,n$ the regular Poisson structure
$$\Pi=\sum_{i=1}^n \pp{}{x_i}\wedge \pp{}{y_i}.$$
Then, for $k$ a positive integer, the submanifold
$$C= \{(x_i,y_i,z)\in \R^{n+1}\mid z=x_1^k\}\, ,$$
is automatically coisotropic, because it is of codimension $1$. 
Moreover, if the integer $k$ is strictly larger than $1$, $C$ is moreover transverse to the symplectic leaves except along $\{z=0\}$, which is exactly the locus of non-clean intersection points of $C$ with the singular symplectic foliation. 
\end{example}

\medskip

Let us now pass to another important geometric property of clean intersection points of coisotropics with the singular symplectic foliation, namely that, along clean intersection points, being coisotropic is characterized by restricting to a coisotropic submanifold along the symplectic leaf.

\begin{proposition}\label{prop:cleancoisotropicinleaf}
    Let $C$ be a smooth submanifold in $(M,\Pi)$. Let $p$ be a clean intersection point of $C$ with the symplectic foliation $\cF$. 
    Then $C$ is coisotropic at $p$ if and only if the following holds: 
    for a neighborhood $U$ of $p$ in $C$ and a neighborhood $V$ of $p$ in $L_p$ such that $U\cap V$ is a smooth submanifold satisfying $T(U\cap V)|_p=(TU\cap V)|_p$, the latter is coisotropic (in the symplectic sense) at $p$ inside $(L_p,\omega_{L_p})$.
\end{proposition}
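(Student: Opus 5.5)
The statement is pointwise and purely linear-algebraic once cleanness is used. Set $W:=T_pC$ and $E:=T_pL_p=\Pi^\sharp(T_p^*M)$, and equip $E$ with the symplectic form $\omega:=\omega_{L_p}|_p$, so that $\omega(\Pi^\sharp\alpha,\Pi^\sharp\beta)=-\Pi(\alpha,\beta)$. Since $p$ is a clean intersection point (\Cref{def:cleanpointfol}), for suitable neighborhoods $U\subset C$ and $V\subset L_p$ the set $U\cap V$ is a submanifold with $T_p(U\cap V)=W\cap E$. The condition ``$U\cap V$ is coisotropic at $p$ in $(L_p,\omega_{L_p})$'' therefore means $(W\cap E)^{\omega}\subseteq W\cap E$, where ${}^{\omega}$ denotes the $\omega$-orthogonal inside $E$. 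On the other hand, $C$ is coisotropic at $p$ exactly when $\Pi^\sharp(W^\circ)\subseteq W$. The whole proof reduces to the identity
\[
\Pi^\sharp(W^\circ)=(W\cap E)^{\omega}.
\]

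To prove this identity, I will compute the $\omega$-orthogonal of $\Pi^\sharp(W^\circ)$ inside $E$. Every $u\in E$ can be written as $u=\Pi^\sharp\beta$ for some covector $\beta$. For $\alpha\in W^\circ$ we have $\omega(\Pi^\sharp\alpha,\Pi^\sharp\beta)=-\Pi(\alpha,\beta)=\pm\alpha(\Pi^\sharp\beta)=\pm\alpha(u)$, with the sign fixed by convention and irrelevant here. Hence $u$ is $\omega$-orthogonal to all of $\Pi^\sharp(W^\circ)$ if and only if $\alpha(u)=0$ for every $\alpha\in W^\circ$, that is, if and only if $u\in W$. This gives $\Pi^\sharp(W^\circ)^{\omega}=W\cap E$. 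Since $\omega$ is nondegenerate on $E$, taking $\omega$-orthogonals once more yields $\Pi^\sharp(W^\circ)=(W\cap E)^{\omega}$.

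With the identity in hand, the equivalence follows. Because $\Pi^\sharp(W^\circ)$ already lies in $E$, the inclusion $\Pi^\sharp(W^\circ)\subseteq W$ holds if and only if $\Pi^\sharp(W^\circ)\subseteq W\cap E$. By the identity, this is the same as $(W\cap E)^{\omega}\subseteq W\cap E$, which is coisotropy of $U\cap V$ at $p$ inside the leaf.

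I do not expect a serious obstacle. The only points needing care are the following. First, cleanness is precisely what allows one to identify $T_p(U\cap V)$ with $W\cap E$; without it the leafwise condition would not be meaningful at all. Second, the sign convention in the definition of $\omega_{L_p}$ does not affect any of the orthogonality statements.
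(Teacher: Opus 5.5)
Your proof is correct, and it is organized differently from the paper's. The paper proves the two implications separately by element-chasing. For ``coisotropic in the leaf $\Rightarrow$ coisotropic'' it does essentially your computation, $\omega_{L_p}(\Pi^\sharp\alpha,v)=\alpha(v)$ for $\alpha\in W^\circ$ and $v\in W\cap E$, which yields $\Pi^\sharp(W^\circ)\subseteq (W\cap E)^{\omega}$.

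For the forward direction the paper proves the reverse inclusion $(W\cap E)^{\omega}\subseteq \Pi^\sharp(W^\circ)$ by hand. Given $u\in (W\cap E)^{\omega}$, it extends $\omega_{L_p}(u,\cdot)$ from $E$ to a covector $\alpha$ on $T_pM$. This $\alpha$ vanishes on a complement $A$ of $W\cap E$ in $W$ and on a complement of $E\oplus A$, so $\alpha\in W^\circ$. The paper then reads off $\Pi^\sharp\alpha=u$ from a block decomposition of $\Pi^\sharp_p$.

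You instead compute $\Pi^\sharp(W^\circ)^{\omega}=W\cap E$ once. The reverse inclusion then follows from $(S^{\omega})^{\omega}=S$ in the symplectic vector space $(E,\omega)$, so you need no complements or block matrices.

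What your route buys is the exact identity $\Pi^\sharp(T_pC^\circ)=(T_pC\cap T_pL_p)^{\omega_{L_p}}$. This is pure linear algebra and holds at every point of any submanifold; cleanness enters only to identify the right-hand side with the symplectic orthogonal of a genuine tangent space. Applied at each point of $U\cap V$, the identity shows that $\mathcal{K}_C$ coincides there with the characteristic distribution of the coisotropic $U\cap V$ inside $L_p$. This essentially gives the local step of \Cref{cor:poisson_char_fol_equals_sympl_one_at_clean_intersect} without the coisotropic normal form used in its proof. The paper's route has the minor advantage of exhibiting explicitly the covector $\alpha\in T_pC^\circ$ with $\Pi^\sharp\alpha=u$.
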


Indeed, it is known that if a coisotropic submanifold intersects cleanly a leaf, then the intersection is coisotropic in the symplectic leaf \cite[Exercise 8.52]{CFM_book}); we nonetheless give an explicit proof of both implications for completeness.

\begin{proof}
 We first prove the first implication. By the definition of clean intersection point, we can find neighborhoods $U$ and $V$ of $p$ inside $C$ and $L_p$ respectively, such that:
 \begin{itemize}
     \item[-] $U\cap V$ is a smooth submanifold of $L_p$,
     \item[-] $T(U\cap V)=TU\cap TV=TC\cap TL_p|_{U\cap V}$.
 \end{itemize}
 To show that $U\cap V$ is coisotropic at $p$ inside $(L_p, \omega_p)$, we first recall that $\omega_{L_p}$ is given by the following identity: for all $q\in L_p$ and $u,v\in T_q (L_p)$, as $T_q(L_p)=\on{Im}(\Pi^\sharp_q)\subset T_qM$ one can write $u=\Pi^\sharp(\alpha)$ and $v=\Pi^\sharp(\beta)$ for some $\alpha,\beta\in T_q^*M$, and 
 $$\omega_{L_p}(u,v)=\omega_{L_p}(\Pi^\sharp(\alpha),\Pi^\sharp(\beta))=-\Pi(\alpha,\beta) \, .$$
 Now, using the symplectic notion of coisotropic, we need to show that
 $$T(U\cap V)^{\omega_{L_p}}=\{u\in TL_p\mid \omega_{L_p}(u,v)=0 \text{ for all } v\in T(U\cap V)\}$$
 is contained in $T(U\cap V)$ at $p$.
 We argue as follows.
 \medskip 
 
 Given $u\in T_p(U\cap V)^{\omega_{L_p}}$, consider the covector $\nu=\omega_{L_p}(u,\cdot)\in T_p^*L_p$. This satisfies
 $$\nu(v)=0,\qquad  \text{for all} \qquad v\in T_p(U\cap V)=T_pC\cap T_pL_p\, .$$
 Since we have that $T(U\cap V)=TC\cap TL_p$, we can find a linear complement $A$ of $T_p(U\cap V)$ inside $T_pC$. Extend then $\nu$ to a covector $ \alpha\in T^*_pM$ by requiring 
$$\begin{cases}
    \alpha(v)= \nu(v) \quad \text{if} \quad v\in T_pL_p\\
     \alpha(v)=0 \quad \text{if} \quad v\in A
 \end{cases},$$
and then further extend by zero on the complement $B$ of $T_pL_p\oplus A$ in $T_p M$. 
Notice that $ \alpha(v)=0$ for all $v\in T_pC$, and thus $ \alpha \in (T_pC)^\circ$. 
In particular, since $C$ is coisotropic, we have $\Pi^\sharp( \alpha)\in T_pC$, and thus $\Pi^\sharp( \alpha)\in T_pC\cap T_pL_p$, since $TL_p$ is the image of $\Pi^\sharp$ at the point $p$. Moreover, as $\Pi^\sharp$ writes, as as map from $T^*_p M \simeq T_p^*(L_p) \oplus A^* \oplus B^*$ to $T_pM=T_p(L_p)\oplus A\oplus B$, in blocks as follows
\[
\left(
\begin{array}{ccc}
   (\omega_{L_p}^\flat)^{-1}  & * & *  \\
   0 & 0 & 0 \\
   0 & 0 & 0 \\
\end{array}
\right) \, ,
\]
we have $\Pi^\sharp (\alpha ) = u$.
We have thus shown that $u\in T_pC\cap T_pL_p=T_p(U\cap V)$, concluding the proof.

\medskip

Conversely, suppose that $U\cap V$ is coisotropic at $p$ inside $(L_p, \omega_{L_p})$, and we want to show that $p$ is coisotropic in $(M,\Pi)$. Let $\alpha \in T_pC^\circ$ be an arbitrary covector in $T^*M_p$ in the annihilator of $T_pC$. The vector $u=\Pi^\sharp(\alpha)$ belongs to $T_pL_p$, since $\on{Im}(\on{\Pi^\sharp})=TL_p$ at the point $p$. We claim that $u$ belongs to $T_p(U\cap V)^{\omega_{L_p}}$. 

To see this, observe that given $v\in T_p(U\cap V)=T_pC\cap T_pL_p$ we have 
$$\omega_{L_p}(u,v)= \omega_{L_p}(\Pi^\sharp(\alpha),v),$$
and, writing $v=\Pi^\sharp(\beta)$ for some $\beta\in T_p^*M$, one gets 
\begin{align*}
    \omega_{L_p}(\Pi^\sharp(\alpha),\Pi^\sharp(\beta)) =- \Pi(\alpha,\beta)=-\iota_\beta\iota_\alpha\Pi=\iota_\alpha(\iota_\beta\Pi)=\alpha(v) \, .
\end{align*}
Hence, we deduce that
$$\omega_{L_p}(u,v)=\alpha(v),$$
which is zero because $\alpha \in T_pC^\circ$ and $v\in T_pC$. 
This shows that $u$ belongs to $T_p(U\cap V)^{\omega_{L_p}}$, and the latter is included in $T_p(U\cap V)=T_pC\cap T_pL_p$ using that $U\cap V$ is coisotropic at $p$ inside $L_p$. We conclude that $u\in T_pC$, and thus that $\Pi^\sharp(T_pC^\circ)\subseteq T_pC$, as we wanted to show.
\end{proof}

In such a situation, characteristic leaves do coincide with the characteristic leaves in the symplectic sense.

\begin{corollary}
    \label{cor:poisson_char_fol_equals_sympl_one_at_clean_intersect}
    Let $C$ be a coisotropic submanifold of a Poisson manifold $(M,\Pi)$, $p$ a clean intersection point of $C$ with the singular symplectic foliation of $\Pi$, and $L$ the singular symplectic leaf of $\Pi$ passing through $p$.
    Then, the characteristic leaf $K_{p}$ of $C$ (in the Poisson sense) passing through $p$ coincides with the characteristic leaf $K_{p}^L$ of $C\cap L$ (in the symplectic sense, i.e.\ in the symplectic leaf $L$) passing through $p$.
\end{corollary}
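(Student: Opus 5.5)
We show that the two characteristic distributions agree on the clean locus, and then compare the leaves they generate.

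\emph{Pointwise comparison.} Let $q$ lie in a small clean neighborhood $U\cap V$ of $p$; by \Cref{rmk:openleafclean} all such points are clean. There $T_q(U\cap V)=T_qC\cap T_qL$. By \Cref{prop:cleancoisotropicinleaf}, $U\cap V$ is coisotropic in $(L,\omega_L)$. The symplectic characteristic distribution at $q$ is $T_q(U\cap V)^{\omega_L}$, and we claim it equals $\Pi^\sharp(T_qC^\circ)$.

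For the inclusion $\Pi^\sharp(T_qC^\circ)\subseteq T_q(U\cap V)^{\omega_L}$, take $\alpha\in T_qC^\circ$ and $u=\Pi^\sharp(\alpha)$. The second half of the proof of \Cref{prop:cleancoisotropicinleaf} gives $\omega_L(u,v)=\alpha(v)=0$ for every $v\in T_qC\cap T_qL$. Hence $u$ lies in the $\omega_L$-orthogonal of $T_q(U\cap V)$.

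For the reverse inclusion, take $u\in T_q(U\cap V)^{\omega_L}$. The first half of that proof builds, from $\nu=\omega_L(u,\cdot)$, a covector $\alpha\in T_qC^\circ$ with $\Pi^\sharp(\alpha)=u$. The choice of a complement $A$ of $T_q(U\cap V)$ in $T_qC$ is exactly what makes $\alpha$ vanish on $T_qC$.

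Therefore $\cK_C|_q=T_q(U\cap V)^{\omega_L}$ for every $q$ in a clean neighborhood of $p$.

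\emph{From distributions to leaves.} Near $p$, $\cK_C$ is generated by $\Pi^\sharp(dH)$ with $H\in\mathcal I(C)$, by \eqref{eq:charHam}. On the clean piece $U\cap V$ these vector fields span the regular distribution $T(U\cap V)^{\omega_L}$, which has constant rank $\dim L-\dim(U\cap V)$. So near $p$ the leaves of $\cK_C$ through points of $U\cap V$ are exactly the local leaves of the symplectic characteristic foliation of $U\cap V$.

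We then globalize by chaining. Points of $K_p$ are reached from $p$ by finitely many flows of local Hamiltonian vector fields $\Pi^\sharp(dH_i)$ with $H_i\in\mathcal I(C)$, as in \Cref{lem:cleanleafcoiso}. By \Cref{lem:cleanleafcoiso}, every point of $K_p$ is clean, and $K_p\subset L$. Each flow segment can be cut into pieces lying in clean neighborhoods, and in each such neighborhood the flow is tangent to the symplectic characteristic distribution. This gives $K_p\subseteq K_p^L$.

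Conversely, a path along the symplectic characteristic foliation of $C\cap L$ from $p$ can be covered by finitely many clean neighborhoods, in each of which it stays in one local $\cK_C$-leaf. This gives $K_p^L\subseteq K_p$.

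\emph{Main obstacle.} The delicate point is that $C\cap L$ need not be a submanifold globally; only locally clean pieces are. So $K_p^L$ has to be read as the leaf obtained by gluing the characteristic foliations of the local clean pieces. Making the covering argument rigorous requires checking that these local pieces patch consistently. The key input is that every point reached along either leaf stays clean, which \Cref{lem:cleanleafcoiso} guarantees for the Poisson leaf and the pointwise equality above then transfers to the symplectic leaf.
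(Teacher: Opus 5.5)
Your argument is correct, but it follows a different route from the paper's. The paper first reduces, via \Cref{lem:cleanleafcoiso}, to a statement near $p$. It then passes to a Weinstein splitting chart (\Cref{thm:weinstein_splitting_poisson}) and uses the symplectic coisotropic normal form to straighten $P=C\cap(\R^{2k}\times\{0\})$ into a coordinate subspace. Finally it reads off, by an ``explicit computation'', that both local leaves are $\{x_r=y_r=\dots=x_k=y_k=0\}$.

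You instead prove the coordinate-free identity $\Pi^\sharp(T_qC^\circ)=(T_qC\cap T_qL)^{\omega_L}$ by recycling the two halves of the proof of \Cref{prop:cleancoisotropicinleaf}. You then use constant rank on a clean piece to pass from distributions to leaves. This avoids both normal form theorems. It also supplies the linear algebra the paper's computation tacitly relies on: in the split chart only $P$ is normalized, while $C$ is arbitrary in the transverse directions. So one still needs to know that $\Pi^\sharp(T_qC^\circ)$ depends only on $T_qC\cap T_qL$.

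Your identity holds at every point of $C$, and indeed for any subspace in place of $T_qC$; neither coisotropy nor cleanness enters. Cleanness is used only to identify $T_qC\cap T_qL$ with $T_q(C\cap L)$. This also explains why Poisson characteristic leaves drop dimension at non-clean points, as in \Cref{ex:cleantononclean}. The paper's route, in exchange, gives an explicit common local model for both leaves.

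Your reading of $K_p^L$ as the leaf obtained by gluing the clean pieces of $C\cap L$ is not merely convenient; it is necessary. Take $(\R^3,\partial_x\wedge\partial_y)$ and $C=\{z=x(1-\psi(y))+x^3\psi(y)\}$, where $\psi$ is smooth, $\psi=0$ on $(-\infty,0]$, $\psi=1$ on $[1,\infty)$, and $\psi<1$ on $(-\infty,1)$.
\begin{itemize}
\item[-] $C\cap\{z=0\}$ is the whole $y$-axis. This is a smooth Lagrangian curve in the leaf, so its symplectic characteristic leaf is the entire axis.
\item[-] Along the axis, $\cK_C$ is spanned by $(1-\psi(y))\partial_y$, so $K_0=\{(0,y,0)\mid y<1\}$.
\item[-] The points with $y\geq 1$ are non-clean.
\end{itemize}
So drop the sentence claiming that the pointwise equality ``transfers'' cleanness to the symplectic leaf; it does not. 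Cleanness of all points of $K_p^L$ holds only by construction of the glued leaf, and that is exactly what your covering argument for $K_p^L\subseteq K_p$ uses. The paper's reduction step, ``moving $p$ along its symplectic and Poisson characteristic leaf'', tacitly needs the same reading.
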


\begin{proof}
    By \Cref{lem:cleanleafcoiso}, up to applying the same argument moving the point $p$ along its (symplectic and Poisson) characteristic leaf, it is enough to prove the following:
    there is a neighborhood $U$ of $p$ in $M$ such that the connected component $K^U_p$ of $K_p\cap U$ containing $p$ coincides with the connected component $K_p^{L,U}$ of $K_p^L\cap U$ containing $p$.
    We do so as follows.

    \medskip

    Consider a neighborhood $U\simeq \R^{2k}\times \R^{n-2k}$ centered at $p$, as given by the Weinstein split neighborhood \Cref{thm:weinstein_splitting_poisson}, i.e.\ so that $\Pi$ is the sum of the standard non-degenerate Poisson structure $\Pi_1$ on $\R^{2k}$ and a Poisson structure $\Pi_2$ on $\R^{n-2k}$ vanishing at the origin, where $n$ is the dimension of $M$.
    Note that the local symplectic leaf $\R^{2k}$ is a subset of $L$.
    
    By the assumption that $p$ is a clean intersection point of $C$ with the singular symplectic foliation, in this split neighborhood $C$ intersects the local singular leaf $\R^{2k}\times \{0\}$ in a coisotropic submanifold $P\subset \R^{2k}$ passing through the origin.
    Moreover, by the coisotropic normal neighborhood theorem in symplectic geometry, up to applying a Poisson diffeomorphism of the form $\psi\times \Id$, where $\psi$ is a Poisson diffeomorphism of $\Pi_1$, i.e.\ a symplectomorphism of the dual (standard) symplectic form $\omega_1$ on $\R^{2k}$, we can arrange that, for some $1\leq r \leq k$, $P=\{x_{r}=x_{r+1}=\ldots=x_k=0\}\subset \R^{2k}$.
    
    It follows then from an explicit computation that the characteristic leaf (in the symplectic sense) of $P=C\cap \R^{2k}$ passing through $p$ coincides with the characteristic foliation (in the Poisson sense) of $C\cap U$ passing through $p$, and that both are just given by the submanifold
    \[
    \{x_r=y_r=x_{r+1}=y_{r+1}=\ldots=x_k=y_k=0\}\subset P \, .
    \]
    In other words, locally we have $K_p^{L,U}=K_p^U$, concluding the proof.
\end{proof}

%%%%%%%%%%%%%%%%%%%%%%%%%%%%%%%%%%%%%%%%%%%%%%%%%%%%%%%%%%%%%%%%%%%%%%%%%%%%%%%%%%%%%%%%%

\subsection{Points of clean intersection with singular foliations}

In this section, we will prove the following result, which is a reformulation of \Cref{thm:clean_intro} from the Introduction via our new notation.
\begin{theorem}\label{thm:clean}
    Let $\mathcal{F}$ be a singular foliation on $M$, and $N$ an immersed submanifold. Then $\Cl(N,\cF)$ contains an open and dense subset of $N$.
\end{theorem}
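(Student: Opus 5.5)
The argument is local, so I would first reduce to an embedded submanifold $N$. By \Cref{rmk:cleanimmersed}, cleanness is defined through small neighborhoods in the source of the immersion, and every point of an immersed submanifold has a neighborhood that is embedded. So it suffices to show that every open subset $W\subset N$ contains a nonempty open subset of clean points. The strategy is a stratification by two lower semi-continuous, integer-valued functions, restricting successively to open sets on which each is constant.

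First, I would use \Cref{lem:rankcont}: the function $p\mapsto \dim_p(T\cF)$ restricted to $N$ is still lower semi-continuous and integer valued. Hence its maximum on $W$ is attained on an open subset $W_1\subset W$, where it is locally constant; call its value $k$. Next, I would consider
\[
d(p):=\dim\big(T_pN\cap T_p\cF\big),
\]
where $T_p\cF$ denotes the tangent space of the leaf. On $W_1$ the leaves have constant dimension $k$. By the local splitting \Cref{thm:localsplitsingular}, near any point of $W_1$ the $k$-dimensional leaves are the slices $W_1'\times\{a\}$, and $T\cF$ is spanned by the first $k$ coordinate vector fields. So on $W_1$ the assignment $p\mapsto T_p\cF$ is the restriction of a smooth rank-$k$ distribution $E=\R^k\times\{0\}$ defined on the whole chart. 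The function $d$ is then upper semi-continuous (the intersection dimension of two continuous subbundles can only jump up at limit points). Choosing points where $d$ attains its minimum, or equivalently taking a point of local constancy of an integer-valued semicontinuous function on an open set, gives an open $W_2\subset W_1$ on which $d\equiv r$. There, $TN\cap E$ is a smooth rank-$r$ distribution on $W_2$, and it is involutive because $E$ is involutive (coordinate fields) and $TN$ is involutive.

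Then I would work in a single splitting chart with coordinates $(u,v)\in\R^k\times\R^{m-k}$. On $W_2$ the map $\pi_2|_N$ has differential with kernel $TN\cap E$ of constant rank $r$, hence $\pi_2|_{W_2}$ has constant rank. By the constant rank theorem, after shrinking, its fibres are submanifolds of dimension $r$ with tangent space $TN\cap E$. For $p\in W_2$, the leaf $L_p$ meets the chart in a disjoint union of plaques $\R^k\times\{a\}$ with $a$ in the countable-or-worse set $A\subset\R^{m-k}$. The main obstacle lies here: $N\cap L_p$ near $p$ is $\pi_2|_N^{-1}(A_p)$, where $A_p$ is the set of plaques of $L_p$, and infinitely many plaques of the same leaf may accumulate at $p$'s plaque. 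The fix is to take neighborhoods $V$ of $p$ in $L_p$ in the leaf topology, which is what the definition allows. A small such $V$ is exactly a piece of the single plaque $\R^k\times\{\pi_2(p)\}$. Then $U\cap V$ is a fibre of $\pi_2|_U$, a submanifold with tangent space $TU\cap TV$, so $p$ is clean.

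Finally, every nonempty open $W\subset N$ contains a nonempty open $W_2$ of clean points. Taking the union of all such $W_2$ yields an open dense subset contained in $\Cl(N,\cF)$. I would double check the semicontinuity direction for $d$: if it turns out lower rather than upper semi-continuous, one instead takes the locus where $d$ is locally maximal, which is equally open and nonempty.
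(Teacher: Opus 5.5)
Your argument is correct. It reaches the result by a more economical route than the paper, at two points.

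\textbf{Global organization.} The paper first splits $N=N_{\operatorname{reg}}\sqcup N_{\operatorname{sing}}$ and treats $N_{\operatorname{reg}}$ by the regular-foliation case. It then iterates over the interiors of successively lower leaf-dimension strata inside $(N_{\operatorname{sing}})^{\circ}$. You instead use that it suffices to find a nonempty open set of clean points inside an arbitrary open $W\subset N$. Passing to the open locus where the leaf dimension is maximal puts you directly in a splitting chart whose $k$-dimensional leaves are slices. So neither the regular/singular split nor the iteration is needed.

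\textbf{Local step.} The paper proves \Cref{prop:cleanregular_local} by induction on the transverse coordinates $y_1,\dots,y_{n-k}$ (property $\mathrm{P}(r)$). At each step, either the next coordinate is constant on the current fibres or one shrinks to regular points. In effect this is a hand-made proof that $\pi_2|_N$ has locally constant rank on some nonempty open set. You obtain this in one step. Since $d(p)=\dim N+k-\dim(T_pN+E_p)$ and the rank of a continuous frame is lower semi-continuous, $d$ is upper semi-continuous. So your first choice, taking the minimum of $d$, is the right one. The constant rank theorem then identifies $U\cap V$ with an open piece of a fibre of $\pi_2|_U$, with tangent space $TU\cap E$.

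You also state explicitly that small neighborhoods of $p$ in the leaf topology of $L_p$ lie in the single plaque through $p$. The paper leaves this point implicit when it reduces to the slice foliation, so your remark makes it precise.

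What the paper's coordinate-wise scheme buys is reuse. It is adapted, with regular values in place of regular points, in \Cref{prop:always_one_clean_leaf_regular}. There one needs a whole plaque meeting $C$ cleanly, rather than cleanness near a given point.
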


We are going to deduce the result above from the following local statement:
\begin{proposition}
    \label{prop:cleanregular_local}
    Consider $\R^n=\R^k\times \R^{n-k}$, with $1\leq k\leq n$, equipped with a singular foliation $\cF$ spanned by all the coordinate vector fields on $\R^k$, and by a singular foliation $\mathcal{T}$ on $\R^{n-k}$ such that the origin is a leaf of dimension $0$.
    Let also $V$ be an embedded (possibly open) submanifold of $\R^n$ passing through the origin, whose projection to $\R^{n-k}$ is contained in the set of leaves of $\mathcal{T}$ of dimension $0$.
    Then, there is a non-empty open subset $U\subset V$ such that every point $p\in U$ is a clean intersection point of $U$ (and hence of $V$) with $\cF$.
\end{proposition}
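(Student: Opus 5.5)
The plan is to reduce cleanness to a constant rank statement for the projection of $V$ onto the transverse factor. Let $\pi_2\colon \R^k\times\R^{n-k}\to\R^{n-k}$ be the projection, and set $f:=\pi_2|_V\colon V\to\R^{n-k}$.

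\emph{Step 1: identify the leaves through points of $V$.} Since $\cF$ is the product of the foliation on $\R^k$ spanned by all vector fields and of $\mathcal{T}$, the leaf of $\cF$ through a point $(a,z)$ is $\R^k\times L^{\mathcal{T}}_z$. This is immediate from \Cref{def:leaf}: flows of the generators act factorwise, and the $\R^k$-factor has the single leaf $\R^k$. By hypothesis, for every $q\in V$ the point $\pi_2(q)$ is a $0$-dimensional leaf of $\mathcal{T}$, so $L_q=\R^k\times\{\pi_2(q)\}$. Two consequences follow for any $q\in V$ and any open $W\subset V$ containing $q$:
\[
W\cap L_q=W\cap f^{-1}(f(q)), \qquad T_qL_q=\R^k\times\{0\}=\ker \d\pi_2 .
\]
Hence $T_qW\cap T_qL_q=\ker \d_q f$.

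\emph{Step 2: choose $U$ via constant rank.} The function $q\mapsto \operatorname{rk}\d_q f$ is integer valued, bounded, and lower semicontinuous on $V$. Therefore the set of points near which it is locally constant is open and dense, exactly as in the argument giving density of $M_{\operatorname{reg}}$. In particular it is non-empty. One concrete choice is to let $r$ be the maximal rank of $\d f$ on $V$ and take $U:=\{q\in V\mid \operatorname{rk}\d_q f=r\}$; this set is open because the rank is lower semicontinuous and cannot exceed $r$, and it is non-empty by the choice of $r$.

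\emph{Step 3: verify cleanness at each $p\in U$.} By the constant rank theorem there is a neighborhood $W\subset U$ of $p$ with coordinates in which $f$ is the linear map $(u_1,\dots,u_d)\mapsto(u_1,\dots,u_r,0,\dots,0)$. Then $W\cap L_p=f^{-1}(f(p))\cap W$ is a coordinate slice, hence an embedded submanifold of $W$. At each of its points $q$ (note $L_q=L_p$ by Step 1) its tangent space is $\ker\d_q f=T_qW\cap T_qL_p$. Taking as neighborhood of $p$ in $L_p$ any open subset of $L_p$ containing $W\cap L_p$ (for instance $L_p$ itself), the two pieces intersect cleanly in the sense of \Cref{def:cleanintsub}. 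So $p$ is a clean intersection point of $U$, and therefore of $V$, with $\cF$.

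There is no serious obstacle here. The only points needing care are the leaf identification in Step 1, which is where the hypothesis that $\pi_2(V)$ lies in the zero-dimensional leaves of $\mathcal{T}$ is used, and making sure the whole fiber inside the chart $W$ is captured. If $V$ is only immersed, I would work in the source manifold as in \Cref{rmk:cleanimmersed}.
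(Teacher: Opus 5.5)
Your proof is correct, but it takes a different route from the paper's. You share the paper's first reduction (your Step 1): the hypothesis on $\pi_2(V)$ lets one replace $\cF$ near $V$ by the regular foliation by slices $\R^k\times\{*\}$.

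From there the paper argues by induction on $r$, proving a property $\mathrm{P}(r)$ about $F_r=(f_1,\dots,f_r)$, the first $r$ transverse coordinates restricted to $V$. At each step there are two cases:
\begin{itemize}
\item[-] $f_{r+1}$ is constant on every fiber $V_r^p$ already obtained, and the fibers are kept;
\item[-] otherwise, one shrinks to an open set of regular points of $f_{r+1}|_{V_r^p}$, cuts the fibers by its level sets, and checks $TV^p_{r+1}=TU_{r+1}\cap TL^{r+1}_p$ by transversality.
\end{itemize}
The statement is then $\mathrm{P}(n-k)$. In effect this is a coordinate-by-coordinate construction of a constant-rank situation, using only the regular value theorem at each stage.

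You instead apply the constant rank theorem once, to $f=\pi_2|_V$ on the open set where $\operatorname{rk}\d f$ is maximal, together with the identity $T_qW\cap T_qL_q=\ker \d_q f$. Your version is shorter and avoids the paper's somewhat informal bookkeeping about ``smooth families of fibers'' in the inductive step. It also gives slightly more: an explicit open dense set of clean points, namely the locus where $\operatorname{rk}\d(\pi_2|_V)$ is locally constant. So the density needed in the proof of Theorem~\ref{thm:clean} follows directly, without applying the proposition on each element of a countable basis. The paper's iterative scheme, for its part, is the one it reuses in Proposition~\ref{prop:always_one_clean_leaf_regular}.
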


We postpone the proof of \Cref{prop:cleanregular_local}, and first deduce \Cref{thm:clean} from it.
For readability purposes, we do so first in the case of $\cF$ a \emph{regular} foliation, and afterwards in the case of $\cF$ a general singular foliation.

\begin{proof}[Proof of \Cref{thm:clean} in the case of $\cF$ regular]
    The statement is equivalent to proving that in any open set $V$ of $N$, there is a non-empty open subset of clean intersection points. 
Indeed, if that were the case, let $V_i$ be a countable basis of open sets of $N$. For each $V_i$, there is some non-empty open subset $U_i\subset V_i$ of clean intersection points. Then we have
$$\overline{\bigcup_{i} U_i}=N, \qquad \text{and} \qquad \bigcup_i U_i\subset \Cl(N,\mathcal{F}),$$
and since $\bigcup_i U_i$ is open, this proves the statement.

Let us then prove that in any open subset $V\subset N$ there is a non-empty open subset $U$ made of clean intersection points with the foliation.

\medskip

Up to shrinking $V$, we can assume that $V$ is an embedded submanifold and that $V=N\cap W$ where $W$ is a foliated chart $W\simeq \R^k\times \R^{n-k}$ inside $M$, equipped with coordinates $(x_i, y_j)$ for $i=1,\ldots , k$ and $j=1,\ldots, n-k$, in such a way that the foliation in $W$ is given by leaves $\{y_j=c_j \mid j=1,\ldots,n-k\}$ for some $(c_1,\ldots,c_{n-k})\in \R^{n-k}$. 

Let $e\colon V\hookrightarrow W$ be the inclusion map, which we write in coordinates as
\begin{align*}
    e: V &\longrightarrow W\subset \mathbb{R}^{k}\times \mathbb{R}^{n-k}\\
     p & \longmapsto (g_i(p), f_j(p))
\end{align*}
with $g_i,f_j\colon V\to \R$ are just the compositions of the inclusion of $V$ with the obvious coordinate projections, for all $i=1,\ldots,k$ and $j=1,\ldots,n-k$.

As being a clean intersection point is a local property, this reduces the problem to proving the statement for $M=\mathbb{R}^n$ with leaves of the form $\R^{k}\times \{a\}$ with $a\in \mathbb{R}^{n-k}$, and $V\subset \R^n$. 
At this point,  the conclusion follows directly from \Cref{prop:cleanregular_local}.
\end{proof}

\begin{proof}[Proof of \Cref{thm:clean} in the general case]
As before, since the statement can be localized to arbitrarily small subsets of $N$, we assume without loss of generality that $N$ is embedded. 
Consider the closed subset
$$K:=\{ p\in M \mid \dim T_p\mathcal{F}\text{ is not constant near } p\}= M\setminus M_{\operatorname{reg}},$$
where $M_{\operatorname{reg}}\subset M$ is the open and dense set of regular points of $(M,\cF)$.
We decompose $N$ as 
$$N=N_{\operatorname{reg}}\sqcup N_{\operatorname{sing}},$$ 
where
$$N_{\operatorname{reg}}:= N\cap M_{\operatorname{reg}} \, ,
\quad 
N_{\operatorname{sing}}:=N\cap K=N\setminus N_{\operatorname{reg}}.$$
Notice that even though $M_{\operatorname{reg}}$ is open and dense in $M$, and $N_{\operatorname{reg}}$ is open (since $N$ is embedded), it is not true in general that $N_{\operatorname{reg}}$ is dense in $N$. For example, the latter could even be entirely contained in $K$. However, notice that if $N_{\operatorname{sing}}$ has no interior, then it holds that $N_{\operatorname{reg}}$ is dense, and it is enough to show that there is an open and dense set of clean points in $N_{\operatorname{reg}}$. Otherwise, we also need to show that the interior of $N_{\operatorname{sing}}$ in $N$ admits an open and dense set of clean intersection points.

\medskip

\textbf{Clean intersection points in $N_{\operatorname{reg}}$.} The foliation $\mathcal{F}|_{M_{\operatorname{reg}}}$ is a regular foliation of $M_{\operatorname{reg}}$, since the function $\dim_{T\cF}$ is locally constant on $M_{\operatorname{reg}}$ (by definition). 
Note that $M_{\operatorname{reg}}$ will in general have several connected components.
The subset $N_{\on{reg}}$ is a submanifold of $M_{\on{reg}}$, and thus by the regular case proved above, there is an open and dense subset $U_{\operatorname{reg}}\subset N_{\operatorname{reg}}$ of clean intersection points. 
\medskip

\textbf{Clean intersections points in $N_{\on{sing}}$.} 
We assume here that $N_{\on{sing}}$ is non-empty and that it has interior, since otherwise we are done. We need to prove that there is an open and dense subset of points, inside the interior $U:=(N_{\operatorname{sing}})^{\mathsf{o}}$ of $N_{\operatorname{sing}}$, that is made of clean intersection points. 

Note that $U$ is a codimension zero submanifold of $N$ (understood as the source manifold of the embedding map into $M$).

Define $$k:=\max\{\dim T_p\mathcal{F} \mid p\in U\},$$
and since every set 
$$M_j:=\{p\in M \mid \dim T_p\mathcal{F}\geq j\} \qquad \text{with} \qquad j=1,...,k$$ 
is open by \Cref{lem:rankcont} and $\dim_{T\cF}$ is integer-valued, it follows that
$$U_k= \{p\in U \mid \operatorname{dim}T\mathcal{F}_p=k\}=U\cap M_k\subset U$$
is an open subset of $U$. 
It is also non-empty by the definition of $k$. 
We will now show that there is an open and dense subset $O_k\subset U_k$ made of clean intersection points.

\medskip

In order to prove this, it is enough to show that given any point $p\in U_k$ and an arbitrary neighborhood $V'$ of $p$ in $U_k$, there is an open subset of clean intersection points in $V'$. Without loss of generality, we assume that $V'$ is a small enough neighborhood such that it lies in a local splitting chart of $\mathcal{F}$ as in \Cref{thm:localsplitsingular}.
Namely, we denote by $(x_i, y_j)$ the splitting coordinates in 
$$W\subset \R^k\times \R^{n-k},$$ 
where $i=1,...,k$ and $j=1,...,n-k$ (here, $n$ denotes the dimension of $M$). 
The singular foliation $\mathcal{F}$ in $W$ is moreover given by the product of the trivial foliation in $\R^k$ and a singular foliation $\mathcal{T}$ in $\R^{n-k}$ whose generators vanish at the origin.

Now, as $U_k$ (by definition) lies entirely in the subset of $W$ foliated by $k$-dimensional leaves, which are all given by subspaces of the form $\R^{k}\times \{c\}$, \Cref{prop:cleanregular_local} implies that $U_k$ has an open and dense subset of clean intersection points. We now split the set $U$ as
$$U=U_k \sqcup U_{\leq k-1},$$
where 
$$U_{\leq k-1}=\{p\in U\mid \dim T\cF_p\leq k-1\}.$$
As we argued for $N_{\on{reg}}$ and $N_{\on{sing}}$, if $U_{\leq k-1}$ has no interior, we are done. Otherwise, we iterate the reasoning by considering its interior $({U_{\leq k-1}})^\circ$ and the open subset of points where $\dim T\cF$ is maximal, and so forth. At the end of the process, it might happen that we are left with the set $U_0$, possibly with a non-empty interior. Every point in $U_0$ is trivially a clean intersection point, which concludes the proof.
\end{proof}

\begin{proof}[Proof of \Cref{prop:cleanregular_local}]
As $V$ is contained in the union of all leaves of $\cF$ of dimension $k$, and since the local splitting \Cref{thm:localsplitsingular} allows us to assume that all such leaves are locally given by subspaces of the form $\R^k\times \{*\}$, it is in fact enough to prove the statement for $\cF$ the regular foliation given by leaves of the form $\R^k\times \{*\}$.
We hence work in this (notationally easier) setting.

\medskip 

Let us use coordinates $(x_i, y_j)$ in $\R^n=\mathbb{R}^k \times \mathbb{R}^{n-k}$, with $i=1,...,k$ and $j=1,...,n-k$. 
As we did before, we denote by 
$$f_j: V\longrightarrow \R$$
the composition of the inclusion map of $V$ with the projection onto the $y_j$-coordinate.
Consider now the following property $\mathrm{P} (r)$, with $r\in\{1,\ldots,n-k\}$:
\begin{propertyP}
    There is a non-empty open subset $U_r\subset V$ such that the map
    \begin{align*}
        F_r\colon U_r &\longrightarrow \R^r\\
    p&\longmapsto (f_1(p),\ldots,f_r(p))
    \end{align*}
    has fibers $V_r^p:=(F_r)^{-1}(F_r(p))$ giving a smooth (in $p$) family of submanifolds of 
    \[
    L_p^r:=\R^k\times \{y_j=f_j(p) \mid j=1,\ldots, r\}\times \R^{n-k-r}\subset \R^k\times\R^{n-k}
    \]
    satisfying $V_r^p=U_r\cap L_p^r$ (by definition) and $T(V_r^p) = T U_r \cap T L_p^r$ at all points of $V_r^p$.
\end{propertyP}

\noindent
Notice that \Cref{prop:cleanregular_local} immediately follows from $\mathrm{P}(n-k)$.
It is hence enough to prove the above property, which we do by induction on $r\in\{1,\ldots, n-k\}$.

\medskip

Let us start from $r=1$. 
Consider the map
$f_1: V \longrightarrow \mathbb{R}$.
Two cases arise.

\begin{itemize}
    \item \textbf{Case 1: The map $f_1$ is constant near $p$.} Notice that this can only happen if $\dim V\leq n-1$. 
Consider a neighborhood $V_1$ of $p$ in $C$ for which $f_1$ is constant. 
Then 
$$e_1:=e|_{V_1}: V_1 \longrightarrow \mathbb{R}^{k}\times \{f_1(p)\}\times \R^{n-k-1}=L^1_p$$
is an embedding. 
Simply considering $U_1:=V_1$, we have thus found the desired neighborhood $U_1$ of $p$ inside $V$. 
Indeed, $U_1\cap L^1_p=V_1$ is a submanifold, and since the neighborhood $U_1$ and the submanifold $V_1$ coincide in this first case, the equality $T(V_1)=TU_1\cap TL_p^1$ holds trivially.

\item \textbf{Case 2: The map $f_1$ is not constant near $p$.} We can then find a non-empty open set $U_1\subset V$ which only contains regular points of $f_1$. Up to shrinking $U_1$, 
$$V_1^p:=U_1\cap f_1^{-1}(f_1(p)) \, ,$$
is a smooth (in $p$) family of submanifolds; for a well chosen $U_1$, we can even arrange, for instance, that $U_1$ is a ball and $V_1^p$ is a smooth family of balls of codimension one in $U_1$. 
Notice now that $V_1^p=U_1\cap L_p^1$. 
Since $V$ is transverse to $L_p^1$ for each $p\in U_1$, we have $T(U_1\cap L_p^1)= \ker (\d f_1\vert_{U_1})$. 
Now, since $\ker (\d f_1\vert_{U_1})=TL_p^1 \cap TU_1$, we deduce that $T(U_1\cap L_p^1)=TU_1 \cap T L_p^1$.
In other words, $U_1$ is the desired neighborhood of $p$.

\end{itemize}

To resume, in both cases, the described $U_1$ together with $F_1:=f_1|_{U_1}$ satisfy the conclusion of the property $\mathrm{P}(1)$. 
Let us then pass to the inductive step.

\medskip
By induction, assume that for some $r<n-k$ we have found an open subset $U_r\subset V$ as described by the property $\mathrm{P}(r)$.
Consider then the smooth (in $p$) family of smooth maps
\begin{align*}
    f_{r+1}^p:=f_{r+1}\vert_{V_r^p}: V_r^p &\longrightarrow \mathbb{R}\\
    q &\longmapsto f_{r+1}(q).
\end{align*}
There are again two possibilities. 

\begin{itemize}
    \item \textbf{Case 1: for all $p\in U_r$, $f_{r+1}^p$ is constant.}
    In this case, define $U_{r+1}:=U_r$, and consider the map
    \begin{equation}
        \label{eq:Frplusone}
            F_{r+1}\colon U_{r+1}\to \R^{r+1} \, ,
            \quad
            p\to (f_1(p),\ldots,f_{r+1}(p)) \, .
    \end{equation}
    Then, for all $p\in U_{r+1}=U_r$, we define the sets 
    \begin{equation}
        \label{eqn:Vrplusone}
        V_{r+1}^p:=  (F_{r+1})^{-1}(F_{r+1}(p)) \, .
    \end{equation}
    Now, because
    \begin{equation*}
        (F_{r+1})^{-1}(F_{r+1}(p)) 
         = (F_r)^{-1}(F_r(p)) \cap (f_{r+1})^{-1}(f_{r+1}(p))
         = V_r^p\cap (f_{r+1}^p)^{-1}(f_{r+1}^p(p))
    \end{equation*} 
    and $f_{r+1}^p$ is constant on the $V_r^p$, we get that $V_{r+1}^p$
    simply coincides with $V_{r}^p$ itself.
    Moreover, since $V_r^p=V_{r+1}^p\subset L^{r+1}_p$ and $TL^{r+1}_p\subset TL^{r}_p$, we have $TV_r^p= TU_r\cap TL^r_p = TU_r \cap TL^{r+1}_p$ at all points of $U_r$.
    This gives that $T V_{r+1}^p=TU_{r+1}\cap TL^{r+1}_p$ at all points of $U_{r+1}=U_r$ as well.

    \item \textbf{Case 2: there is some $q$ for which $f_{r+1}^q$ is not constant.} 
    Since the family of maps $(f_{r+1}^p\mid p \in U_r)$ is smooth in $p$, we can find a non-empty open neighborhood $U_{r+1}\subset U_r$ of the point $q$ such that for any $p\in U_{r+1}$, the set $W_p:=U_{r+1} \cap V_r^p$ is an open set in $V_r^p$ made of regular points for $f_{r+1}^p$. 
    We can then define the map $F_{r+1}$ as in Equation \eqref{eq:Frplusone}, and, for any $p\in U_{r+1}$, the set $V_{r+1}^p$ as in \eqref{eqn:Vrplusone},
which in fact is just equal to $U_{r+1}\cap L^{r+1}_p$. 
Notice that this set is also equal to
$$j_p \Big(\big(f_{r+1}^p\big)^{-1}\big(f_{r+1}^p(p)\big)\cap W_p\Big) \, , $$
where $j_p$ is the inclusion of $W_p$ inside $U_{r+1}$.
Since all points in $W_p$ are regular for $f_{r+1}^p$, the family $V_{r+1}^p$ is a smooth family of submanifolds.

In addition, we know that $W_p$ is transverse to $R_s:=\{(x_i,y_i) \mid y_{r+1}=f_{r+1}(s)\}$ for any $s\in W_p$. 
This implies that $TV_{r+1}^p= TW_p \cap TR_p$.
Moreover, as $L^r_p\cap R_p = L^{r+1}_p $ we also have $T(L^r_p \cap R_p)=TL^{r+1}_p$.
Lastly, recalling that $W_p=U_{r+1}\cap V_r^p$ with $U_{r+1}$ being an open subset of $U_r$, and $TV_{r}^p=TU_r\cap TL^r_p$ by the induction hypothesis, we get
$$TW_p=TU_r\cap TL^r_p|_{W_p}=TU_{r+1}\cap TL^r_p \, . $$
All together, this implies 
\begin{align*}
    TV_{r+1}^p &= TW_p\cap TR_p\\
    &=TU_{r+1}\cap TL^{r}_p \cap TR_p\\
    &=TU_{r+1}\cap TL^{r+1}_p
\end{align*} 
as we wanted to show.

\end{itemize}
In other words, in both cases the pair $(U_{r+1},F_{r+1})$ satisfies $\mathrm{P}(r+1)$ as desired. 
\end{proof}

\medskip

We point out that, as the following examples show, the conclusion of \Cref{thm:clean} is sharp: the set $\Cl(N,\cF)$ is in general not open. 

\begin{example}[Clean points are not open in singular foliated spaces]
\label{exa:clean_not_open_singular}
    Consider $\R^3$ equipped with coordinates $(x,y,z)$ and with the Poisson structure $\Pi=(x^2+y^2)\pp{}{x}\wedge \pp{}{y}$. 
    Then, the origin is a clean intersection point of $N=\{z=x^2\}$ with the singular symplectic foliation of $\Pi$, while the $y$-axis besides the origin are all made of non-clean intersection points.
\end{example}
In the regular case of corank one, it can be proven that clean intersection points form an open set.
As the following example shows, already in codimension two, this does not hold anymore.
\begin{example}[Clean points are not open in regular foliated spaces]
    \label{exa:clean_not_open}
    Consider on $\R^3$ the foliation $\cF$ with leaves given by lines parallel to the $y$-axis, and $N=\{z=0\}\subset \R^3$. 
    The main idea is that one can perturb $N$, by ``pushing it" in the $z$-direction at a sequence of its points $(p_n)_{n\in \bN}$ accumulating to the origin, to a new hypersurface $N'$ for which the origin is a clean intersection point while each $p_n$ is not.
    An explicit perturbation achieving this property is, for instance, the following.

    Consider an auxiliary smooth function $f\colon \R\to \R_{\geq 0}$ that is supported in $[-1,1]$, is equal to $1$ at $0$, and is strictly smaller than $1$ on $[-1,1]\setminus \{0\}$.
    Let now $p_n=(1/n,0,0)\subset N$, and define $N'=\{(x,y,z)\in \R^3 \mid z=F(x,y) \}$
    where 
    \[
    F\colon \R^2\to \R \, , \quad (x,y)\mapsto \sum_{n\in \bN}e^{-n^2} f(e^n \, ((x-1/n)^2+y^2)) \, .
    \]
    Note that $F$ is a smooth function by the Weierstrass M-test of convergence for series of functions. It is now immediate to check that, because $f'(0)=0$, $N'$ has as a sequence of non-clean intersection points $p_n$, accumulating on the origin, which is, on the contrary, a clean intersection point, as the whole $y$-axis lies in $N'$.
\end{example}

%%%%%%%%%%%%%%%%%%%%%%%%%%%%%%%%%%%%%%%%%%%%%%%%%%%%%%%%%%%%%%%%%%%%%%%%%%%%%%%%%%%%%%%%%%%%%%%%%%%%%%%%%%%%%%%%%

\section{Rigidity and non-rigidity of coisotropics}\label{sec:rig_nonrig_coisotropics}
In this section, we use the previously developed theory to analyze the behavior of coisotropic submanifolds and their characteristic foliations under Poisson homeomorphisms.

\subsection{Rigidity of coisotropic submanifolds}
We are now ready to establish our main result concerning the $C^0$-rigidity properties of coisotropic submanifolds, by combining our study of clean intersection points in Section \ref{sec:cleanpoints} and symplectic rigidity statements \cite{HLS15}. 
To specify those leaves for which rigidity can be established, we need to introduce the following set. 
\begin{definition}
    Let $C$ be a submanifold in a foliated manifold $(M,\cF)$ and $\varphi$ a homeomorphism such that $C':=\varphi(C)$ is smooth. A point $p\in C$ is a $\varphi$-clean point if $p\in \Cl(C,\cF)$ and $\varphi(p)\in \Cl(C',\cF)$,
\end{definition}
For the set of $\varphi$-clean points, we will use the notation
$$\on{Cl}(C,\varphi):=\{p\in C\mid p\in \on{Cl}(C,\mathcal{F}) \text{ and } \varphi(p)\in \on{Cl}(C',\cF)\} \subset C.$$
In the case where $\varphi$ is a Poisson homeomorphism of $(M,\Pi)$, the foliation $\cF$ is the singular symplectic foliation of $(M,\Pi)$. 
It follows from Theorem \ref{thm:clean} that $\on{Cl}(C,\varphi)$ contains an open and dense set of points of $C$. In the following theorem, Item 1 below implies Theorem \ref{thm:main2} stated in the introduction, and Item 2 gives a sufficient condition for a characteristic leaf to be mapped homeomorphically to a characteristic leaf.
\begin{theorem}\label{thm:C0-rigidity_coisotropics}
    Let 
$$\varphi : (M,\Pi) \longrightarrow (M,\Pi)$$
be a Poisson homeomorphism, $C$ a coisotropic submanifold, and assume that $C'=\varphi(C)$ is a smooth submanifold.  Then the following hold:
\begin{enumerate}
    \item $C'$ is a coisotropic submanifold, 
    \item if a characteristic leaf of $C$ is completely contained in $\Cl(C,\varphi)$, then it is mapped homeomorphically to a characteristic foliation of $C'$.
\end{enumerate}
\end{theorem}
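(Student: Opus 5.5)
Our plan is to reduce both items to the symplectic rigidity theorem of \cite{HLS15}, applied leafwise, using \Cref{thm:poisson_homeos_are_leafwise_sympl_homeos}, \Cref{prop:cleancoisotropicinleaf} and \Cref{thm:clean}.

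\textbf{Step 1: coisotropy at $\varphi$-clean points.} Fix $p\in \Cl(C,\varphi)$, let $q=\varphi(p)$, and let $L=L_p$, so that $\varphi(L)=L_q$ by \Cref{thm:foliatedhomeo}. Since $p$ is clean, there are small neighborhoods $U\subset C$ and $V\subset L$ of $p$ such that $P:=U\cap V$ is a smooth submanifold of $L$ with $TP=TU\cap TV$. By \Cref{prop:cleancoisotropicinleaf}, $P$ is coisotropic in $(L,\omega_L)$; more precisely, the argument there applies at every point of $P$, since clean points form a neighborhood in $U\cap V$ by \Cref{rmk:openleafclean}. By \Cref{thm:poisson_homeos_are_leafwise_sympl_homeos}, $\varphi|_L\colon L\to L_q$ is a local symplectic homeomorphism, and $\varphi(P)=\varphi(U)\cap\varphi(V)$ is a neighborhood of $q$ in $C'\cap L_q$. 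Because $q$ is clean for $C'$, after shrinking we may take this set to be a smooth submanifold $P'$ of $L_q$. The Humilière--Leclercq--Seyfaddini theorem \cite{HLS15}, applied to the local symplectic homeomorphism $\varphi|_V$, then gives two conclusions: $P'$ is coisotropic in $L_q$, and $\varphi$ maps characteristic leaves of $P$ to characteristic leaves of $P'$. Applying \Cref{prop:cleancoisotropicinleaf} in the converse direction, $C'$ is coisotropic at $q$. One technical point needs care here: we must check that the neighborhood of $q$ in $C'\cap L_q$ provided by the definition of clean point matches $\varphi(U\cap V)$ after shrinking. This should follow because $\varphi$ is a homeomorphism that maps $C$ to $C'$ and $L$ to $L_q$.

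\textbf{Step 2: density gives item 1.} By \Cref{thm:clean} applied to $C$ and to $C'$, $\Cl(C,\cF)$ contains an open dense set $O\subset C$, and $\Cl(C',\cF)$ contains an open dense set $O'\subset C'$. Since $\varphi|_C$ is a homeomorphism onto $C'$, the set $O\cap\varphi^{-1}(O')$ is open and dense in $C$, so its image is dense in $C'$. By Step 1, $C'$ is coisotropic on a dense subset. Coisotropy is a closed condition: the inclusion $\Pi^\sharp(TC'^\circ)\subseteq TC'$ is expressed by the vanishing of the continuous quantities $\Pi(\alpha,\beta)$ for local sections $\alpha,\beta$ of $TC'^\circ$. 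Hence $C'$ is coisotropic everywhere.

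\textbf{Step 3: item 2.} Let $K$ be a characteristic leaf of $C$ with $K\subset\Cl(C,\varphi)$. By \Cref{cor:poisson_char_fol_equals_sympl_one_at_clean_intersect}, near each point of $K$ the leaf $K$ locally coincides with the symplectic characteristic leaf of $P=C\cap L$. By Step 1, $\varphi$ maps these local pieces into symplectic characteristic leaves of $P'=C'\cap L_q$. Since every point of $\varphi(K)$ is clean for $C'$, the same corollary identifies these with local pieces of Poisson characteristic leaves of $C'$. A connectedness argument along paths in $K$, covering them by finitely many such charts, then shows that $\varphi(K)$ lies in a single characteristic leaf $K'$ of $C'$. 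Applying the same argument to $\varphi^{-1}$ gives $\varphi^{-1}(K')\subset K$, provided $K'\subset \Cl(C',\varphi^{-1})$. This inclusion follows from \Cref{lem:cleanleafcoiso} applied to $C'$, which is now known to be coisotropic, together with the same lemma applied to $C$ to pull back along $\varphi^{-1}$.

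\textbf{Main obstacle.} The delicate point is Step 1: identifying $\varphi(U\cap V)$ with a smooth clean piece $C'\cap L_q$ near $q$, so that \cite{HLS15} applies within the leaf. A related difficulty is the localization of the result of \cite{HLS15} to the local symplectic homeomorphism $\varphi|_V$, which is only defined on an open subset of $L$.
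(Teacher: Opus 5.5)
Your Steps 1 and 2 follow the paper's proof of item 1 essentially verbatim. That proof applies \cite{HLS15} leafwise at $\varphi$-clean points via \Cref{thm:poisson_homeos_are_leafwise_sympl_homeos} and \Cref{prop:cleancoisotropicinleaf}, then uses density from \Cref{thm:clean} and the closedness of the coisotropy condition. The first half of your Step 3 is also what the paper's short proof of item 2 amounts to: $\varphi(K)$ lies in a single characteristic leaf $K'$ of $C'$, and $\varphi$ maps plaques of $K$ onto plaques of $K'$.

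The gap is in your last move. To run the argument for $\varphi^{-1}$ you need $\varphi^{-1}(K')\subset\Cl(C,\cF)$, and \Cref{lem:cleanleafcoiso} applied to $C$ does not give this. That lemma only propagates cleanness along a characteristic leaf of $C$. You could use it at $\varphi^{-1}(r)$, $r\in K'$, only if you already knew $\varphi^{-1}(r)\in K$, which is exactly the inclusion you are trying to prove. The step is circular, and it cannot be repaired, because the inclusion fails in general.

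Here is a counterexample. In $(\R^3,\Pi=\partial_x\wedge\partial_y)$, choose a smooth $a\colon\R\to[0,1]$ with $a\equiv 1$ on $(-\infty,-1]$, $a>0$ on $(-\infty,1)$ and $a\equiv 0$ on $[1,\infty)$. Let $C=\{z=f(x,y)\}$ with $f=a(y)\,x+(1-a(y))\,x^3$.

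For each $y$, the map $x\mapsto f(x,y)$ is an increasing homeomorphism of $\R$. Its inverse $g(y,z)$ is continuous, with $g(y,0)=0$.

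Set $\varphi(x,y,z)=(x-g(y,z)+z,\,y,\,z)$. This is a Poisson homeomorphism: it is the $C^0$-limit of the Poisson diffeomorphisms $(x-g_n(y,z)+z,\,y,\,z)$, where $g_n$ is smooth and $g_n\to g$. Moreover $\varphi(C)=C'=\{x=z\}$, which is transverse to every leaf and hence everywhere clean.

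Along the $y$-axis, $\Pi^\sharp(\d z-\d f)=-a(y)\partial_y$. So $K=\{(0,y,0)\mid y<1\}$ is a characteristic leaf of $C$. Since $C$ is transverse to the leaves along $K$, we get $K\subset\Cl(C,\varphi)$.

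However, $\varphi$ fixes the whole $y$-axis pointwise, while the characteristic leaf $K'$ of $C'$ through the origin is the entire $y$-axis. The points $(0,y,0)$ with $y\geq 1$ lie in $K'$, are fixed by $\varphi$, and are non-clean, zero-dimensional characteristic leaves of $C$. Hence $\varphi(K)\subsetneq K'$.

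So what your argument actually proves for item 2, like the paper's, is weaker: $\varphi|_K$ is a homeomorphism onto an open subset of a single characteristic leaf $K'$ of $C'$. Equality $\varphi(K)=K'$ holds under the extra hypothesis $\varphi^{-1}(K')\subset\Cl(C,\cF)$. Under that hypothesis $K'\subset\Cl(C',\varphi^{-1})$, and your symmetric argument goes through.
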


\begin{remark}\label{rem:Nonrigiditycharacteristic}
     It is not true in general that all leaves of $\cK_C$ are mapped homeomorphically to leaves of $\cK_{C'}$, even if $\varphi$ is required to be a Hameomorphism.
     We will see examples in \Cref{sec:examples_nonrigid_char_fol}.
\end{remark}

\begin{proof}
\textbf{Item 1.} Let $p\in \Cl(C,\varphi)$, and $(L_p,\omega_{L_p})$ the symplectic leaf passing through $p$ of the singular symplectic foliation associated to $\Pi$. 
Consider also small enough neighborhoods $U$ of $p$ in $C$ and $V$ of $p$ in $L_p$ such that $U\cap V$ is a smooth submanifold of $L_p$.
Then, as $p$ is in particular a clean intersection point, according to \Cref{prop:cleancoisotropicinleaf} we know that $U\cap V$ is a coisotropic submanifold (in the symplectic sense) of $(L_p, \omega_{L_p})$. 
What is more, by \Cref{rmk:openleafclean} we can moreover choose $U$ and $V$ such that $U\cap V\subset \Cl(C,\cF)$.

Note also that, since $q=\varphi(p)\in \Cl(C',\cF)$ by choice of $p\in \Cl(C,\varphi)$, we can also choose $U$ and $V$ small enough so that $\varphi(U\cap V)$ is made only of clean intersection points of $C'$ with $\cF$, and thus $U\cap V\subset \Cl(C,\varphi)$. 

Now, by \Cref{thm:poisson_homeos_are_leafwise_sympl_homeos}, we know that $\varphi$ maps $L_p$ to $L_{\varphi(p)}$. 
Thus, as $q$ is a clean intersection point, denoting  by $U'$ the neighborhood $\varphi(U)$ of $q=\varphi(p)$, for $U$ and $V$ small enough we must have that $V':=\varphi(V)$ is a neighborhood of $q$ in $L_q$ and it satisfies that $U'\cap V'$ is a smooth submanifold and
$$T(U'\cap V')= TU'\cap TV'=(TC'\cap TL_q)|_{U'\cap V'} \; .$$
Moreover, \Cref{thm:poisson_homeos_are_leafwise_sympl_homeos} also tells us that 
$$\phi:=\varphi|_{L_p}:(L_p,\omega_{L_p})\longrightarrow (L_q,\omega_{L_q})$$ 
is a local symplectic homeomorphism. 
Since $U\cap V$ is a coisotropic submanifold of $(L_p, \omega_{L_p})$ and $\phi(U\cap V)=U'\cap V'$ is a smooth submanifold of $L_q$, \cite[Theorem 1]{HLS15} and \cite[Remark 10]{HLS15} then imply that $U'\cap V'$ is a coisotropic submanifold of $(L_q,\omega_{L_q})$ and that the characteristic leaves are mapped to characteristic leaves homeomorphically via $\varphi$. 
By \Cref{prop:cleancoisotropicinleaf}, we conclude that $C'$ is in fact coisotropic at all points in $\varphi(\Cl(C,\varphi))\subset C'$. 

Now, by \Cref{thm:clean}, there are two subsets 
$$W_1\subseteq \on{Cl}(C,\cF)\subseteq C, \qquad \text{and} \qquad W_2\subseteq \on{Cl}(C',\cF)\subset C',$$
that are open and dense in $C$ and $C'$ respectively, where $\cF$ denotes the symplectic foliation of $(M,\Pi)$. Consider the subset
$$W=W_1\cap \varphi^{-1}(W_2)\subseteq \on{Cl}(C',\cF),$$
which is also open and dense in $C$. 
Then $C'$ is coisotropic at every point in $\varphi(W)$, which is an open and dense set in $C'$. Lastly, notice that the set of points at which a submanifold is coisotropic is closed, which implies that $C'$ is coisotropic. \\

\textbf{Item 2.} The previous argument also shows that for every point $p\in \Cl(C,\varphi)$, the characteristic leaf $K_p$ is sent (locally near $p$) homeomorphically to the characteristic leaf through $\varphi(p)$. 
Thus, if the whole leaf is included in $\Cl(C,\varphi)$, the leaf is mapped homeomorphically to a characteristic leaf, which gives Item 2 as well.
\end{proof} 

%%%%%%%%%%%%%%%%%%%%%%%%%%%%%%%%%%%%%%%%%%%%%%%%%%%%%%%%%%%%%%%%%%%%%%%%%%%%%%%%%%%%%%%%%%%%%%%%%%%%%%%%%%%%%%%%%

\subsection{Examples of non-rigidity of characteristic foliation}
\label{sec:examples_nonrigid_char_fol}

We end this section by giving some examples of Poisson homeomorphisms on trivial regular Poisson manifolds for which there is no complete rigidity of the characteristic foliation of a coisotropic submanifold. These examples use hypersurfaces as coisotropic submanifolds, but can be easily generalized to coisotropic submanifolds of higher codimension. 

\begin{example}[A Hameomorphism mapping coisotropic submanifolds with non-homeomorphic characteristic foliations]\label{ex:cleantononclean}
    Consider in $\mathbb{R}^{2n+k}$, with coordinates 
    $$(x_1,x_2,...,x_n,y_n,z_1,...,z_k),$$ and regular Poisson structure 
    $$\Pi=\sum_{i=1}^n\pp{}{x_i}\wedge \pp{}{y_i},$$ 
    the coisotropic submanifold $C=\{(x_i,y_i,z_j) \mid z_1=x_1\}$ and the coisotropic submanifold $C'=\{(x_i, y_i, z_j) \mid z_1=x_1^3\}$. Let $h_n\colon \R\to\R$ be a family of diffeomorphisms of the real line such that 
    $$h_n \xrightarrow{C^0} h \,, $$
    where $h(s)=\sqrt[3]{s}$ for all $s\in \R$.
    Consider the Hamiltonian functions
    $$H_n=(z_1-h_n(z_1))y_1$$
    whose Hamiltonian vector field is
    $$X_{H_n}= \left(h_n(z_1)-z_1\right)\pp{}{x_1}.$$
    Its flow is given by
    $$\phi_{H_n}^t(x_i,y_i,z_j)=\big(x_1+t \left(h_n(z_1)-z_1\right), y_1, x_2,y_2,...,x_n,y_n, z_1,...,z_k\big),$$
    and thus
    $$\phi^1_{H_n}(C)=\{z_1=h_n^{-1}(x_1)\}.$$
    Taking the limit as $n$ goes to infinity, we get $$H_n \xrightarrow[C^0]{} H \, ,$$
    where $H\colon\R\to\R$ is given by $H=(z_1-\sqrt[3]{z_1})y_1$, 
    and $\phi_{H_n}^t$ $C^0$-converges to
    $$\phi^t_H(x_i,y_i,z_j):= \left(x_1+t(\sqrt[3]{z_1}-z_1), y_1, x_2,y_2,...,x_n,y_n,z_1,...,z_k\right).$$
    This function satisfies $\phi_H^1(C)=C'$, and thus sends a coisotropic submanifold all of whose points are of clean intersection with the leaves, to one that has non-clean intersection points. 
    Indeed, given a point $p=(\hat x_i,\hat y_i,0, \hat z_2,...,\hat z_k)\in C'$ and any sufficiently small neighborhood of $p$ in $C'$, the intersection $U\cap L_p$, where $L_p$ is the symplectic leaf of $\Pi$ containing $p$, is a smooth submanifold satisfying
    $$T(U\cap L_p)=\left\langle \pp{}{y_1},\pp{}{x_2}, \pp{}{y_2},...,\pp{}{x_n}\pp{}{y_b}\right\rangle,$$
    which is strictly contained in 
    $$TU\cap TL_p= \left\langle \pp{}{x_1},\pp{}{y_1},...,\pp{}{x_n}\pp{}{y_n}\right\rangle.$$
    Notice that at $p$ we have $TC'|_p=TL_p$, and thus $\Pi^\sharp\big((TC')^\circ\big)=\{0\}$, namely, the characteristic leaves of $C'$ along $z_1=0$ are points. 
    However, the characteristic leaf of $C$ along $p=(\hat x_i, \hat y_j,0, \hat z_2, ..., \hat z_k)$ is the line 
    $$K_p=\{(\hat x_1, \hat y_1+t,\hat x_2,\hat y_2,...,\hat x_n,\hat y_n,0, \hat z_2, ..., \hat z_k)\mid t\in \R\},$$
    which is mapped by $\varphi$ to a collection of $0$-dimensional leaves of $\cK_{C'}$.
\end{example}

\medskip

In the previous example, the homeomorphism type of $(C,\cK_C)$ is different from that of $(C',\cK_{C'})$. 
With a similar argument, one can also construct, as we detail next, an example of a Poisson homeomorphism preserving a smooth coisotropic submanifold set-wise, but \emph{not} mapping characteristic leaves to homeomorphic ones.

\begin{example}[A Poisson homeomorphism not mapping all characteristic leaves homeomorphically]
    \label{exa:char_fol_not_C0-rigid}
    Consider $\mathbb{R}^{2n+k}$ with the same coordinates and Poisson structure as in the previous example. Let $C$ be the coisotropic submanifold $\{(x_i,y_i,z_j)\mid z_1=x_1^3\}$. Consider the homeomorphism
    \begin{align*}
    \phi\colon \R^{2n+k} &\longrightarrow \R^{2n+k} \\
    (x_i,y_i,z_j)&\longmapsto (x_1+1,y_1,x_2,y_2,...,x_n,y_n, (\sqrt[3]{z_1}+1)^3,
    z_2,...,z_k)        
    \end{align*}
    It is a Poisson homeomorphism, indeed if $h_n\colon \R\to \R$ is a sequence of diffeomorphisms so that $h_n\xrightarrow{C^0} h$ with  $h(s)=\sqrt[3]{s}$, then 
    \begin{align*}
    \psi_n\colon \R^{2n+k} &\longrightarrow \R^{2n+k} \\
    (x_i,y_i,z_j) &\longmapsto (x_1+1,y_1,x_2,y_2,...x_n,y_n, h_n^{-1}(h_n(z_1)+1),z_2,...,z_k)        
    \end{align*}
    is a sequence of Poisson diffeomorphisms that $C^0$-converges uniformly on compact sets to $\phi$. 
    Moreover, the map $\phi$ preserves $C$, but, it sends $C\cap \{z=0\}$ to $C\cap \{z=1\}$, and hence sends $0$-dimensional leaves of the characteristic foliation of $C$ to $1$-dimensional ones.
\end{example}

\medskip

We point out that the above Poisson homeomorphism is not a Hameomorphism, as it does not map each symplectic leaf of the Poisson structure to itself, which Hameomorphisms always do according to \Cref{cor:Hameos_preserve_leaves}.
As we will see in \Cref{prop:nonliftregular}, there also exist examples of Hameomorphisms preserving a coisotropic submanifold $C$ for which some leaf of $\cK_C$ is not mapped homeomorphically to another leaf.

\subsection{Non-clean paths and almost rigidity}
\label{sec:when_almost_all_char_leaves_mapped_homeo}

A natural question that stems from the rigidity of coisotropics and some of the examples we have seen is whether ``most" characteristic leaves are mapped homeomorphically to characteristic leaves. For instance, one could ask the following question. As before, we suppose that $\varphi$ is a Poisson homeomorphism mapping a coisotropic submanifold $C$ to a smooth (and hence coisotropic) submanifold $C'$.

\begin{question}[\Cref{q:dense} in the introduction]\label{q:dense2}
Is there always a dense set of characteristic leaves of $C$ that are mapped homeomorphically to characteristic leaves of $C'$? If $C$ has a compact closure, is there always a dense and open set of characteristic leaves mapped homeomorphically?
\end{question}
A few comments are in order. First, as it is customary in the regular foliation case, the topology that we consider in the leaf space is the quotient one: namely, the leaf space is here simply seen as the quotient (with induced topology) of the ambient manifold $M$ with respect to the relation $x\sim y$ if $x$ and $y$ are on the same leaf of the singular foliation $\cF$. Note that, as in the case of regular foliations already, the leaf space is somewhat ``pathological'': for instance, it is very often not Hausdorff.

Secondly, we insist on the necessity of the fact that $C$ has compact closure. Indeed, if $C$ is allowed to have a non-compact closure (for instance, if $M$ is not compact itself), there exist examples where there is only a dense set of leaves (and not an open and dense set) that is mapped homeomorphically. 
Density could still hold in the general case, though. 

Thirdly, the set $\Cl(C,\varphi)$ is the relevant set that needs to be understood. Indeed, the question is equivalent to asking whether there is a dense, or an open and dense set of leaves that is contained in $\Cl(C,\varphi)$. Indeed, Theorem \ref{thm:C0-rigidity_coisotropics} shows that leaves in $\Cl(C,\varphi)$ are mapped homeomorphically, so having an open and dense (or only dense) set of leaves contained in $\Cl(C,\varphi)$ imply the desired statement. Conversely, the fact that $\Cl(C,\varphi)$ contains an open and dense set and Lemma \ref{lem:cleanleafcoiso} together show that if there is an open and dense set (or only a dense set) of leaves that are mapped homeomorphically, then there is an open and dense set (or dense, respectively) of leaves contained in $\Cl(C,\varphi)$. 
\medskip

Although we do not give a definitive answer to the question, we give here a sufficient condition on the characteristic foliation of the involved coisotropics, which involves the notion of ``non-clean path". (This notion will be useful again later in Section \ref{sec:C0coisotropics}.)

\begin{definition}\label{def:noncleanpath}
    Let $C$ be a coisotropic submanifold of $(M,\Pi)$. 
    Let $K$ be a clean characteristic leaf of $\mathcal{K}_C$ and $L$ the symplectic leaf such that $K\subset L$. 
    A \textbf{non-clean path} of $K$ is a continuous path $\gamma:[0,1]\longrightarrow L$ such that $\gamma([0,1))\subset K$ and $\gamma(1)$ is not a clean intersection point with $\cF$.
\end{definition}
First, we give more information about the endpoint of a non-clean path.
\begin{lemma}
    Let $K$ be a clean characteristic leaf of $C$ in a symplectic leaf $L$. Suppose that $\gamma:[0,1]\rightarrow L$ is a non-clean path. Then $\gamma(1)$ belongs to a different characteristic leaf $\tilde K$, made of non-clean intersection points, and satisfying $\dim \tilde K<\dim K$.
\end{lemma}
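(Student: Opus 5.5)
The plan has three steps, taken in order. First I will show that $\gamma(1)$ is not on $K$. Second, Lemma \ref{lem:cleanleafcoiso} forces its leaf $\tilde K$ to be non-clean. Third, I will compare dimensions using lower semicontinuity of leaf dimension for the singular foliation $\cK_C$ (Lemma \ref{lem:rankcont}), plus a local normal form at clean points.

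\textbf{Step 1.} Set $q:=\gamma(1)$. Since $C$ is closed in a neighborhood of its points and $\gamma([0,1))\subset K\subset C$, we have $q\in C$; indeed $C$ is embedded, so $\gamma(1)$, being a limit of points of $C$ lying near $C$, is in $C$ after shrinking to a chart. If $q$ were in $K$, it would be a clean intersection point, because $K$ is a clean characteristic leaf. This contradicts the definition of a non-clean path. Hence $q$ lies on a different characteristic leaf $\tilde K$. By Lemma \ref{lem:cleanleafcoiso}, if some point of $\tilde K$ were clean, then $q$ would be clean, which is impossible. So $\tilde K\cap \Cl(C,\cF)=\emptyset$, i.e.\ $\tilde K$ is a non-clean characteristic leaf.

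\textbf{Step 2.} Lemma \ref{lem:rankcont}, applied to the singular foliation $\cK_C$ on $C$ (Lemma \ref{lem:charfol}), gives $\dim \tilde K=\dim_q \cK_C\le \liminf_{t\to 1}\dim_{\gamma(t)}\cK_C=\dim K$. So it remains to exclude equality.

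\textbf{Step 3, the main obstacle.} Suppose $\dim\tilde K=\dim K=:d$. Work in a Weinstein splitting chart around $q$ (Theorem \ref{thm:weinstein_splitting_poisson}), where $L$ has local plaques $\R^{2k}\times\{c\}$. Since $K\subset L$, the points $\gamma(t)$ for $t$ near $1$ lie in plaques of $L$ accumulating at $q$. At the clean points $\gamma(t)$, Corollary \ref{cor:poisson_char_fol_equals_sympl_one_at_clean_intersect} and Proposition \ref{prop:cleancoisotropicinleaf} identify $T\cK_C$ with $(T C\cap TL)^{\omega}$. This is the symplectic orthogonal, inside $TL$, of the coisotropic space $TC\cap TL$, and the ambient leaf $L$ has the fixed dimension $2k$ since $\dim$ of symplectic leaves is constant along $L$. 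So $d=2k-\dim(T_{\gamma(t)}C\cap T_{\gamma(t)}L)$.

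At $q$, I will use the inequality $\dim (T_qC\cap T_qL)\ge \limsup_t \dim(T_{\gamma(t)}C\cap T_{\gamma(t)}L)$, which holds by upper semicontinuity of the dimension of an intersection of continuously varying subspaces. I will also use $\Pi^\sharp(T_qC^\circ)=(T_qC\cap T_qL)^{\omega}$, which is the linear-algebra part of the proof of Proposition \ref{prop:cleancoisotropicinleaf} and does not need cleanness. Equality of dimensions $d$ then forces $\dim(T_qC\cap T_qL)$ to equal the nearby constant value $2k-d$.

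The final argument is to show that $T C\cap T L$ having locally constant rank near $q$ along the leaf $L$ implies that $C\cap L$ is locally a submanifold with tangent space $TC\cap TL$, i.e.\ that $q$ is clean, which is a contradiction. I would attempt this via the constant-rank theorem applied to the projection of $C$ to the transverse factor of the splitting chart, restricted to $C\cap L$. The delicate point, and where I expect difficulty, is that constant rank holds only along $L$ and not on an open subset of $C$. I would first try to use that $C\cap L$ near $q$ is swept out by characteristic leaves of constant dimension $d$, with $TK\subset TC\cap TL$.
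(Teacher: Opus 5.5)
Your Steps 1 and 2 match the paper. $\gamma(1)\notin K$ because $K$ consists of clean points, $\tilde K$ is non-clean by \Cref{lem:cleanleafcoiso}, and lower semicontinuity of leaf dimension for $\cK_C$ gives $\dim\tilde K\le\dim K$. (Your justification of $\gamma(1)\in C$ is circular: local closedness of $C$ only controls limits at points already known to lie in $C$. This is harmless, since $\gamma(1)\in C$ is implicit in the definition of a non-clean path.)

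\textbf{The gap is Step 3.} You never actually rule out $\dim\tilde K=\dim K$, and the implication you plan to prove there is false. Constant rank of $TC\cap TL$ along $C\cap L$ near $q$ does not make $q$ clean. Take $C=\{z=x^2\}$ in $(\R^3,\partial_x\wedge\partial_y)$ and $q=0$. Then $C\cap L=\{x=z=0\}$ and $TC=TL$ along it, so $TC\cap TL$ has constant rank $2$. Yet $C\cap L$ is a line, so $q$ is not clean. Since $\cK_C$ is spanned by $x\,\partial_y$ there, the characteristic leaves along $C\cap L$ are points, all of the same dimension. So your proposed refinement, sweeping $C\cap L$ by characteristic leaves of constant dimension, does not rescue it either.

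\textbf{The missing idea.} The strict inequality is a purely foliation-theoretic fact about $\cK_C$; it needs neither cleanness nor symplectic linear algebra. Apply the local splitting \Cref{thm:localsplitsingular} to the singular foliation $\cK_C$ on $C$ at $q$, and suppose $\dim\tilde K=\dim K=d$.
\begin{enumerate}
\item In the splitting chart $W\simeq W_1\times W_2$, each $d$-dimensional leaf meets $W$ in plaques $W_1\times\{a\}$, and these are closed in $W$.
\item For small $\delta>0$, the tail $\gamma([1-\delta,1))$ lies in $K\cap W$. This set consists of at most countably many such plaques, because $K$ is a second countable immersed manifold.
\item Hence the transverse component $t\mapsto\pi_2(\gamma(t))\in W_2$ is a continuous map from an interval into a countable set, so it is constant, equal to some $a_0$.
\item By continuity $\pi_2(q)=a_0$, so $q\in W_1\times\{a_0\}\subset K$, contradicting Step 1.
\end{enumerate}
This is the paper's argument; it phrases the last step as local closedness of the $d$-dimensional plaques.
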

\begin{proof}
    The fact that $\gamma(1)$ belongs to a different leaf directly follows from the fact that $K$ is made of clean intersection points. Since every leaf is either clean or non-clean, the endpoint belongs to a leaf $\tilde K$ made of non-clean intersection points. We are left with proving that $\dim \tilde K<\dim K$. But this easily follows from the following two facts. First, we know that the set of leaves of dimension at most $\dim K$ is closed, which implies that $\dim \tilde K\leq \dim K$. By contradiction, suppose that $\dim \tilde K=\dim K$. On a splitting chart $W$ near $\gamma(1)$, each plaque of dimension $\dim K$ is locally closed. Hence, since $\gamma(1-\delta)$ belongs to exactly one of these plaques for $\delta>0$ small enough, and $\gamma(1-\delta)\in K$, we must have $\tilde K=K$, reaching a contradiction.
\end{proof}

Suppose that there is an example of a Poisson homeomorphism that does not map a dense set of characteristic leaves homeomorphically. 
Then there must be an open set of leaves that are not mapped homeomorphically to characteristic leaves. 
We can then assume that this open set of leaves satisfies that each leaf is clean, and each one somewhere intersects $\Cl(C,\varphi)$, since the set of clean leaves is open and dense, and so is the set $\Cl(C,\varphi)$. The next proposition shows why non-clean paths are relevant in understanding whether this situation can happen, and thus in understanding whether the answer to \Cref{q:dense} is negative. 

\begin{proposition}\label{prop:badleaves_have_nonclean_paths}
    Let $\varphi:(M,\Pi)\longrightarrow (M,\Pi)$ be a Poisson homeomorphism mapping a coisotropic submanifold $C$ to a smooth (and hence coisotropic) submanifold $C'$. Let $K$ be a clean characteristic leaf of $C$ intersecting $\Cl(C,\varphi)$. Then if $\varphi$ does not map $K$ homeomorphically to a characteristic leaf of $C'$, for any point $p\in K\cap \Cl(C,\varphi)$, the characteristic leaf $K_{\varphi(p)}'$ of $C'$ passing through $\varphi(p)\in C'$ admits a non-clean path.
\end{proposition}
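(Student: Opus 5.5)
The plan is to study the set of points of $K$ around which $\varphi$ behaves well, and to show that the first point where good behaviour fails produces a non-clean path in the image leaf $K'_{\varphi(p)}$. Fix $p\in K\cap\Cl(C,\varphi)$ and set $q=\varphi(p)$. By \Cref{lem:cleanleafcoiso}, $q$ being clean for $C'$ makes $K':=K'_q$ a clean characteristic leaf of $C'$. By \Cref{thm:foliatedhomeo}, $\varphi$ maps the symplectic leaf $L\supset K$ homeomorphically onto a leaf $L'\supset K'$. By \Cref{thm:poisson_homeos_are_leafwise_sympl_homeos}, $\varphi|_L$ is a local symplectic homeomorphism. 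Both $K$ and $K'$ are clean, so by \Cref{cor:poisson_char_fol_equals_sympl_one_at_clean_intersect} they coincide locally with the symplectic characteristic leaves of $C\cap L$ and $C'\cap L'$.

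Consider the set
\[
G:=\{x\in K \mid x\in\Cl(C,\varphi) \text{ and } \varphi(K_x^{\mathrm{loc}})=(K'_{\varphi(x)})^{\mathrm{loc}} \text{ near } \varphi(x)\},
\]
where $K_x^{\mathrm{loc}}$ denotes the local plaque of $K$ through $x$. This means $x$ is $\varphi$-clean and $\varphi$ maps a neighbourhood of $x$ in $K$ homeomorphically onto a neighbourhood of $\varphi(x)$ in $K'_{\varphi(x)}$. The proof of \Cref{thm:C0-rigidity_coisotropics} (Item 1, via \cite{HLS15}) shows that every point of $K\cap\Cl(C,\varphi)$ lies in $G$, so $p\in G$. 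Along the connected component $G_0$ of $G$ containing $p$, the images of the local plaques glue, so $\varphi(G_0)\subset K'$. Each $\varphi(x)$ is $\varphi$-clean, hence $\varphi(G_0)$ is open in $K'$ (it is locally a plaque). Applying the same reasoning to $\varphi^{-1}$, whose good set in $K'$ contains $\varphi(G_0)$, shows that $\varphi(G_0)$ is also closed in $K'$ unless a failure occurs.

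Suppose no non-clean path of $K'$ exists. Take a boundary point $x\in\overline{G_0}\setminus G_0$ inside $K$ (such a point exists if $G_0\neq K$), and a path $\gamma$ in $K$ with $\gamma([0,1))\subset G_0$ and $\gamma(1)=x$. Then $\varphi\circ\gamma$ is a path in $L'$ with $\varphi\circ\gamma([0,1))\subset K'$. By assumption $\varphi(x)$ is clean for $C'$. By the Lemma preceding the proposition, together with local closedness of same-dimensional plaques in a splitting chart, $\varphi(x)\in K'$. Since $x\in K$ is clean for $C$, the point $x$ is $\varphi$-clean, and the local rigidity from \Cref{thm:C0-rigidity_coisotropics} puts $x\in G$, contradicting $x\notin G_0$. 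Hence $G_0=K$, and symmetrically $\varphi(K)=K'$. So $\varphi$ maps $K$ homeomorphically onto $K'$ (injectivity is automatic), which contradicts the hypothesis. Therefore a non-clean path of $K'$ exists.

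\textbf{Main obstacle.} The delicate step is the symmetric argument. The failure could occur in the other direction: $\varphi(K)\subsetneq K'$, with $K'$ reaching points whose preimages leave $K$. To handle this I would run the same boundary argument for $\varphi^{-1}$ starting from $q$. There, a limit point $y\in\overline{\varphi(G_0)}\setminus\varphi(G_0)$ in $K'$ is automatically clean for $C'$, and $\varphi^{-1}(y)$ lies in $L$. I then need $\varphi^{-1}(y)$ to be clean for $C$. Since $K$ is a clean leaf, this reduces to showing $\varphi^{-1}(y)\in K$, which again follows from the dimension and local-closedness argument applied to $\varphi^{-1}\circ(\text{path})$, a path lying in $K$ up to its endpoint. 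Making this gluing rigorous, and in particular showing that the local HLS rigidity near $\varphi$-clean points really produces neighbourhoods in the characteristic leaves rather than only in $C\cap L$, is where I expect the most care to be needed.
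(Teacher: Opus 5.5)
Everything up to ``$G_0=K$'' is correct, and it is essentially the paper's proof run contrapositively. The paper gets $K\not\subset\Cl(C,\varphi)$ from Item~2 of Theorem~\ref{thm:C0-rigidity_coisotropics}, takes a path in $K$ along which $\varphi$ first leaves $K'_{\varphi(p)}$, and shows in Claim~\ref{claim:noncleanpath} that the exit point is non-clean. Your boundary point $x$ of $G_0$ is exactly that exit point.

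You do not need $\varphi(x)\in K'$, and the lemma on endpoints of non-clean paths does not give it. Local closedness only excludes a leaf of the same dimension. Excluding a dimension drop at a clean endpoint needs Corollary~\ref{cor:poisson_char_fol_equals_sympl_one_at_clean_intersect}, as in the paper's Claim. But the step is superfluous: $x\in K$ is clean for $C$, and $\varphi(x)$ is clean for $C'$ by your hypothesis. Hence $x\in\Cl(C,\varphi)$, so $x\in G$, and since $G$ is open in $K$, $x\in G_0$.

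So with no non-clean path of $K'$, $\varphi$ maps $K$ homeomorphically onto an open subset of $K'$. This is the only sense in which Item~2 of Theorem~\ref{thm:C0-rigidity_coisotropics}, and hence the paper's proof, uses ``mapped homeomorphically to a characteristic leaf''.

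The genuine gap is the ``symmetric'' step $\varphi(K)=K'$, and your proposed fix is circular. Take a limit point $y$ of $\varphi(K)$ in $K'$ and apply the dimension/local-closedness argument to $\varphi^{-1}$ of a path. This only gives $\dim K_{\varphi^{-1}(y)}\le\dim K$, with equality forcing $\varphi^{-1}(y)\in K$. Ruling out a strict drop requires $\varphi^{-1}(y)$ to be clean for $C$, which is exactly what you wanted to deduce. What this boundary analysis really produces is a non-clean path of $K$ (for $C$), not of $K'$.

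The step cannot be repaired, because surjectivity genuinely fails even when $K'$ has no non-clean path. In $(\R^3,\partial_x\wedge\partial_y)$ set $\beta(y)=e^{-1/y^2}$ for $y<0$ and $\beta(y)=0$ for $y\ge 0$. Let $f(x,y)=x(x^2+\beta(y))$ and $C=\{z=f(x,y)\}$. Let $g(y,z)$ be the unique, continuous, solution $x$ of $f(x,y)=z$. Consider the shear $\varphi(x,y,z)=(x+z-g(y,z),y,z)$:
\begin{itemize}
\item it is the $C^0$-limit of the Poisson diffeomorphisms $(x+a_n(y,z),y,z)$ with $a_n\to z-g$;
\item it maps $C$ onto $C'=\{x=z\}$, which is transverse to every symplectic leaf $\{z=c\}$, so $C'$ has no non-clean points;
\item $K=\{(0,y,0)\mid y<0\}$ is a clean characteristic leaf of $C$ (there $\d f=\beta(y)\,\d x\neq 0$), contained in $\Cl(C,\varphi)$, with $\varphi|_K=\Id$.
\end{itemize}
Hence $\varphi(K)\subsetneq K'=\{(0,y,0)\mid y\in\R\}$, while $K'$ admits no non-clean path.

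So drop the surjectivity claim and read the conclusion as ``$\varphi(K)$ lies in a single characteristic leaf''. With that reading, your first part is already a complete proof.
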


\begin{proof}
    Recall that this set can be described as
$$\Cl(C,\varphi)= \Cl(C,\cF) \cap \varphi^{-1}(\Cl(C',\cF)).$$
Choose $p\in K\cap \Cl(C,\varphi)$. Recall that by Lemma \ref{lem:cleanleafcoiso}, if a point $p$ is in $\Cl(C,\cF)$, then so are all points in its characteristic leaf $K$. In adition, by Theorem \ref{thm:C0-rigidity_coisotropics}, we know that $K\not \subset \Cl(C,\varphi)$. We then must have
$$K_p \not \subseteq \varphi^{-1}(\Cl(C',\cF)).$$
On the other hand, we know that $\varphi(p)\in \Cl(C',\cF)$, and thus $K_{\varphi(p)}'$ is a leaf made of clean intersection points of $C'$ with $\cF$ by Lemma \ref{lem:cleanleafcoiso}. Since $\varphi(K_p)$ is not contained in $\Cl(C',\cF)$, there are non-clean points in $\varphi(K_p)$. This necessarily implies that $\varphi(K_p)$ intersects more than a single leaf, since $K_{\varphi(p)}'$ is a clean leaf. Consider then a path 
$$\gamma: [0,1] \longrightarrow K_p$$
satisfying
\begin{itemize}
    \item[-] $\gamma([0,1])\subset K_p$,
    \item[-] $\varphi\circ \gamma([0,1))\subset K_{\varphi(p)}'$,
    \item[-] $\varphi\circ\gamma(1)\not \in K_{\varphi(p)}'$.
\end{itemize}
To conclude the proof, it is hence enough to prove the following claim, which we do right below.
\begin{claim}\label{claim:noncleanpath}
    The path $\gamma$ is a non-clean path. \qedhere
\end{claim}
\end{proof}

\begin{proof}[Proof of Claim \ref{claim:noncleanpath}]
First, recall that by Theorem \ref{thm:main1}, the homeomorphism $\varphi$ preserves the symplectic foliation. Since every characteristic leaf is contained in a symplectic leaf, we have $K_p\subset L_p$ and thus 
$$\varphi\circ \gamma([0,1])\subset \varphi(L_p)=L_{\varphi(p)}.$$
First, observe that $q=\varphi\circ \gamma(1)$ belongs to a leaf that has a dimension that is lower than $k=\dim K_{\varphi(p)}'$. Indeed, suppose it is not the case. Since the set of leaves of dimension $\leq k$ is closed, we must have that the dimension of the characteristic leaf of $q=\varphi\circ \gamma(1)$ is $k$ too. Take a splitting chart of $\cK_{C'}$ near $q$, and observe that on the splitting chart, the plaques of dimension $k$ are closed. Hence, the point $q$ must belong to the same plaque as $\varphi\circ\gamma(1-\delta)$ for sufficiently small $\delta$. This contradicts the fact that $K_q'\neq K_{\varphi(p)}'$.
\medskip

We have thus shown that $q$ belongs to a leaf of dimension strictly smaller than $k$. Suppose by contradiction that $q$ is a clean intersection point of $C'$ with $\cF$. There are neighborhoods $U$ of $q$ in $C'$ and $V$ of $q$ in $L_q$ such that $U\cap V$ is a smooth submanifold, and $T(U\cap V)=TU\cap TV$. In addition, by Corollary \ref{cor:poisson_char_fol_equals_sympl_one_at_clean_intersect}, we know that the characteristic leaves of $C'$ at points in $U\cap V$ correspond to the symplectic characteristic leaves of $U\cap V$, which is a coisotropic submanifold of $L_q$ with the induced symplectic structure by Proposition \ref{prop:cleancoisotropicinleaf}. Notice that for $\delta$ small enough, the points $\varphi\circ\gamma(1-\delta)$ belong to $U\cap V$. But the characteristic leaves of coisotropic submanifolds in symplectic manifolds have a constant dimension, which contradicts that $\dim K'_q\lneq \dim K'_{\varphi\circ \gamma(1-\delta)}=\dim K'_{\varphi(p)}=k$.
\end{proof}

\medskip

We are now ready to deduce our condition on the characteristic foliation of $C'$ (or of $C$ itself, as one can consider the inverse homeomorphism), to conclude that almost all leaves are mapped homeomorphically.
\begin{theorem}\label{thm:almostrigidity}
    Suppose that $C$ and $C'$ are coisotropic submanifolds.
    \begin{enumerate}
        \item Suppose that for every open set $U\subset C'$, there is a point $p\in U$ such that $K_p$ admits no non-clean path. Then any Poisson homeomorphism such that $\varphi(C)=C'$ maps a dense set of characteristic leaves homeomorphically.
        \item Suppose that for every open set $U\subset C'$ there is an open set $V\subset U$ of points such that for all $p\in V$ the leaf $K_p$ admits no non-clean path. Then any Poisson homeomorphism such that $\varphi(C)=C'$ maps an open and dense set of characteristic leaves homeomorphically. 
    \end{enumerate} 
\end{theorem}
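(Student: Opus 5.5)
The plan is to argue by contraposition, using \Cref{prop:badleaves_have_nonclean_paths} as the main tool. In both items we show that the set $\mathcal{B}\subset C$, the union of characteristic leaves of $C$ that are \emph{not} mapped homeomorphically to characteristic leaves of $C'$, has no interior in the quotient sense. For the open and dense version we show more: the set of good points contains a saturated open dense subset. The relevant good set is built from $G':=\{p'\in C' \mid K'_{p'} \text{ admits no non-clean path}\}$ together with the open dense set $W\subset\Cl(C,\varphi)$ from the proof of \Cref{thm:C0-rigidity_coisotropics}, i.e.\ $W=W_1\cap\varphi^{-1}(W_2)$.

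\textbf{Key step.} Let $p\in C$ be such that $p\in \Cl(C,\varphi)$ and $\varphi(p)\in G'$. We claim $K_p$ is mapped homeomorphically onto a characteristic leaf. Since $p\in\Cl(C,\cF)$, \Cref{lem:cleanleafcoiso} shows that $K_p$ is a clean characteristic leaf meeting $\Cl(C,\varphi)$. If $\varphi$ did not map $K_p$ homeomorphically, then \Cref{prop:badleaves_have_nonclean_paths} would give a non-clean path for $K'_{\varphi(p)}$, contradicting $\varphi(p)\in G'$. Hence every leaf through a point of $\Cl(C,\varphi)\cap\varphi^{-1}(G')$ is good.

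\textbf{Item 1.} Fix an open $O\subset C$. We must find a good leaf meeting $O$; this gives density of good leaves in the quotient topology, since saturations of open sets are the basic opens. Shrink $O$ to $O\cap W$, which is nonempty and open. Then $\varphi(O\cap W)$ is open in $C'$, so by hypothesis it contains a point $\varphi(p)$ with $K'_{\varphi(p)}$ admitting no non-clean path. Now $p\in W\subset\Cl(C,\varphi)$, so by the key step $K_p$ is good and meets $O$.

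\textbf{Item 2.} The same argument applies to any open $O$: it yields an open set $V\subset\varphi(O\cap W)$ consisting of points of $G'$. Hence the interior of $W\cap\varphi^{-1}(G')$ is dense in $C$, and every leaf through it is good. The set of good leaves therefore contains the saturation of an open dense set. We would then check that this saturation is open in $C$. Since $\cK_C$ is generated locally by Hamiltonian flows $\Pi^\sharp(dH)$ with $H\in\mathcal I(C)$, as in the proof of \Cref{lem:cleanleafcoiso}, the saturation of an open set is a union of images of that open set under local diffeomorphisms of $C$, and so it is open. This gives an open and dense set of leaves in the leaf space.

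\textbf{Main obstacle.} The delicate point is the topology bookkeeping in Item 2. One must make sure that openness of the set of good leaves in the quotient topology really follows from the saturation argument. One also has to handle the fact that $W$ is merely contained in $\Cl(C,\varphi)$, which is not itself open. I would resolve this by always working inside the open dense set $W$ given by \Cref{thm:clean}, and by using flows of $\cK_C$ to argue that saturations of open sets are open.
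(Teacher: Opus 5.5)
Your proposal is correct and is essentially the paper's argument: \Cref{prop:badleaves_have_nonclean_paths} is applied at points of the open dense set $W\subset\Cl(C,\varphi)$ and played against the hypothesis on $C'$. You phrase it directly, whereas the paper argues by contradiction. Your explicit check that saturations of open sets under $\cK_C$ are open supplies the quotient-topology bookkeeping that the paper leaves implicit in Item 2.
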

\begin{proof}
As explained before Proposition \ref{prop:badleaves_have_nonclean_paths}, if the set of leaves mapped homeomorphically is not dense, we would be able to find an open set $U\subset \Cl(C,\cF)$ such that every leaf throught $U$ is clean, but every leaf is not mapped homeomorphically. Then by Proposition \ref{prop:badleaves_have_nonclean_paths}, the open set $\varphi(U)$ would be an open set such that for every point $q\in \varphi(U)$, the leaf $K_q'$ admits a non-clean path, which contradicts our assumption.
\medskip

Similarly, under the conditions of the second item, we know already that the set of leaves that are mapped homeomorphically is dense. Suppose that it does not contain an open and dense set. Then there must be an open set $U\subset C$ where the set of leaves that are mapped homeomorphically has no interior, or equivalently, that the set of leaves not mapped homeomorphically are dense in $U$. Without loss of generality, we can assume that $U$ is contained in $\Cl(C,\varphi)$ and contains only clean leaves. By Proposition \ref{prop:badleaves_have_nonclean_paths}, the set $\varphi(U)$ is an open set in $C'$ for which a dense set of points belong to leaves with non-clean paths. This contradicts our assumption.
\end{proof}

Theorem \ref{thm:almostrigidity} sparks a question whose negative answer would imply a positive answer to \Cref{q:dense}.

\begin{question}\label{q:cleanpaths}
    Let $C$ be a coisotropic submanifold in $(M,\Pi)$. Can it happen that there is an open set $U\subset C$ such that every leaf through $U$ admits a non-clean path? If $C$ has a compact closure, can it happen that there is an open set $U$ where the set of points that belong to leaves with non-clean paths is dense in $U$?
\end{question}

In some very simple cases, like regular Poisson manifolds given by symplectic fiber bundles, one can prove that the answer to \Cref{q:cleanpaths} is indeed negative (for instance with arguments as in Proposition \ref{prop:always_one_clean_leaf_regular}), and as a consequence that the answer to \Cref{q:dense} is positive.
However, in the case of general Poisson manifolds (or even just for general regular Poisson manifolds), it is not clear to the authors how to reach the same conclusion.

%%%%%%%%%%%%%%%%%%%%%%%%%%%%%%%%%%%%%%%%%%%%%%%%%%%%%%%%%%%%%%%%%%%%%%%%%%%%%%%%%%%%%%%%%%%%%%%%%%%%%%%%%%%%%%%%%
%%%%%%%%%%%%%%%%%%%%%%%%%%%%%%%%%%%%%%%%%%%%%%%%%%%%%%%%%%%%%%%%%%%%%%%%%%%%%%%%%%%%%%%%%%%%%%%%%%%%%%%%%%%%%%%%%

{

\section{$C^0$-notions related to coisotropics}\label{sec:C0coisotropics}

We have seen that in general one cannot expect Poisson homeomorphisms (or even Hameomorphism, as we will see in Section \ref{sec:non-liftable_homeos}), to behave nicely with respect to the characteristic foliation of smooth coisotropics at every point. This phenomenon is always satisfied in the symplectic case \cite[Theorem 3]{HLS15}, and is a key step to establish the rigidity of characteristic foliations that we know does not hold in the Poisson setting. Motivated by this, we will characterize $C^0$-Hameotopies preserving a coisotropic submanifold and introduce the (rigid) notion of $C^0$-characteristic partition of a smooth coisotropic. Lastly, we introduce $C^0$-coisotropic submanifolds and show that, surprisingly, these non-smooth objects define Lie subalgebras of the Poisson algebra in $C^\infty(M)$.

%%%%%%%%%%%%%%%%%%%%%%%%%%%%%%%%%%%%%%%%%%%%%%%%%%%%%%%%%%%%%%%%%%%%%%%%%%%%%%%%%%%%%%%%%%%%%%%%%%%%%%%%%%%%%%%%%

\subsection{The $C^0$-characteristic partition}

We first introduce $C^0$-rigidity for the characteristic foliation of a coisotropic submanifold.
\begin{definition}
\label{def:C0-rigid_char_fol}
    The characteristic foliation of a coisotropic submanifold $C$ is said to be \textbf{$C^0$-rigid} if every Hameomorphism generated by a function $H$ such that $H|_C$ only depends on time maps every characteristic leaf to itself.
\end{definition}
It is a simple consequence of the arguments in \cite[Proof of Theorem 1]{HLS15} and the fact that in the Poisson context characteristic leaves are spanned by autonomous Hamiltonians that vanish along $C$ (see the proof of Lemma \ref{lem:cleanleafcoiso}) that if $C$ has a $C^0$-rigid characteristic foliation, then any Poisson homeomorphism mapping $C$ to itself must map every characteristic leaf to a characteristic leaf. As we will see later in \Cref{prop:nonliftregular}, there are examples of coisotropic submanifolds whose characteristic foliation is not $C^0$-rigid. A simple rigid example is that of the coisotropic submanifold $C=\{(x,y,z)\mid z=x\}$ in the standard corank $1$ Poisson structure in $\R^3$.

Hence, sometimes, $C^0$-Hamiltonians vanishing along $C$ allow us to connect points in different characteristic leaves of $C$. More generally, one could hope that, like in the symplectic case \cite[Theorem 3]{HLS15}, $C^0$-Hamiltonians vanishing along $C$ generate flows that always leave $C$ invariant. That would lead naturally to considering the equivalence relation given by points that can be connected by $C^0$-Hamiltonians vanishing along $C$, which would then be a (new) topological invariant of coisotropic submanifolds. We aim to establish in this section the following result, which does not follow directly from our main rigidity statement, Theorem \ref{thm:C0-rigidity_coisotropics}.

\begin{theorem}\label{thm:C0_Hamiltonians_vanishing_C}
Let $C$ be a closed coisotropic submanifold in $(M,\Pi)$, and $H$ be a (possibly time-dependent) $C^0$-Hamiltonian. 
Then, the following properties are equivalent.
\begin{enumerate}
    \item\label{item:H_function_time} For every symplectic leaf $L$, the function $H$ restricted to $C\cap L$ is (locally) a function of time.
    \item\label{item:flow_preserves_C} The Hameotopy $\phi^t_H$ generated by $H$ maps $C$ to itself. 
\end{enumerate}
\end{theorem}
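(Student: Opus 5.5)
The plan is to reduce to the symplectic statement \cite[Theorem 3]{HLS15}, applied leafwise, and to use clean intersection points (\Cref{thm:clean}) to pass from an open dense set of points back to all of $C$. The basic observation is the following. Let $H_i\to H$ be smooth Hamiltonians whose flows $\phi^t_{H_i}$ converge to $\phi^t_H$. Each $\phi^t_{H_i}$ preserves every symplectic leaf, and on a leaf $L$ it is the Hamiltonian flow of $H_i|_L$. By the leafwise $C^0$-isotopy lemma and \Cref{cor:Hameos_preserve_leaves}, $\phi^t_H$ also preserves each leaf. The local-chart argument of \Cref{thm:poisson_homeos_are_leafwise_sympl_homeos} then shows that, in a Weinstein splitting chart $Z_q\times T$ around any point $q$, the projected maps $\pi\circ\phi^t_{H_i}$ give a local Hameotopy of the plaque $Z_q$. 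On $Z_q$ this Hameotopy restricts to $\phi^t_H|_{L}$ and is generated by the $C^0$-Hamiltonian $H|_{Z_q}$. To justify this, one checks that the leafwise Hamiltonians $H_i|_{L_i}$, transported by $\pi$, converge to $H|_{Z_q}$; this holds because $\pi$ is the identity on the symplectic factor and $H_i\to H$ uniformly.

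\emph{(1) $\Rightarrow$ (2).} First fix a clean characteristic leaf, or more precisely a point $p\in \Cl(C,\cF)$. Near $p$, \Cref{prop:cleancoisotropicinleaf} says that $C\cap L_p$ is locally a coisotropic submanifold of $(L_p,\omega_{L_p})$. By (1), $H$ is a function of time on it, so the local symplectic version of \cite[Theorem 3]{HLS15} in the plaque $Z_p$ shows that $\phi^t_H$ keeps nearby points of $C\cap L_p$ inside $C\cap L_p$ for small $t$; in fact they flow along the characteristic leaves. Since $\phi^t_H$ preserves leaves, this gives $\phi^t_H(x)\in C$ for $x$ in the open dense set of clean points (\Cref{thm:clean}) and for short times. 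Continuity of $\phi^t_H$ and closedness of $C$ then extend $\phi^t_H(C)\subset C$ to all of $C$ for short times. Finally, one concatenates in time, using that (1) holds on every leaf for all $t$ and that $\phi^{t}_H\circ(\phi^{s}_H)^{-1}$ is generated by $H$ shifted in time. Applying the same argument to the inverse Hameotopy gives equality $\phi^t_H(C)=C$. Some care is needed because ``short time'' must be uniform on compacta; I would handle this by working on a compact piece of $C$ containing $\operatorname{supp}H$ and using uniform continuity.

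\emph{(2) $\Rightarrow$ (1).} Suppose $\phi^t_H(C)=C$. Take a clean point $p$ and a small neighborhood $U\cap V$ of $p$ in $C\cap L_p$, which is coisotropic in $L_p$. Since $\phi^t_H$ preserves both $C$ and $L_p$, it preserves $C\cap L_p$ near $p$ for small $t$. The converse direction of \cite[Theorem 3]{HLS15}, applied in the plaque to the local Hameotopy generated by $H|_{Z_p}$, yields that $H$ restricted to $C\cap L_p$ near $p$ is a function of time. Now fix a leaf $L$. On the clean points of $C\cap L$ we obtain this local statement, and by density and continuity of $H$ we would like to extend it to $C\cap L$. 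Here lies the main obstacle: $\Cl(C,\cF)$ is open dense in $C$, but its intersection with a single leaf $L$ need not be dense in $C\cap L$. For example, $C\cap L$ could consist entirely of non-clean points, as along $\{z_1=0\}$ in \Cref{ex:cleantononclean}.

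To deal with such non-clean points of $C\cap L$, I would try two things. The first is to exploit that $H$ is locally constant along clean characteristic leaves and to reach non-clean points by non-clean paths (\Cref{def:noncleanpath}), using continuity of $H$ along the path. The second, for points not reached this way, is to argue directly on the local piece of $C\cap L$ containing them: the non-clean locus of $C$ restricted to $L$ is where $TC\cap TL$ jumps in dimension. Here I would use that $\phi^t_H$ preserves $C\cap L$ together with a uniqueness-of-generator argument, namely \Cref{rmk:nonuniqueness_C0_Hamiltonians} applied to the restricted flow, to show that $H$ cannot vary along connected pieces of $C\cap L$. I expect this leafwise extension across non-clean points, which is also why (1) is stated only ``locally'', to be the delicate step.
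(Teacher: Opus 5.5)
\textbf{Direction (1) $\Rightarrow$ (2).} Your local step at clean points is sound. Near a clean point, the Hameotopy restricts to a local Hameotopy of the plaque generated by the restriction of $H$. By \Cref{prop:cleancoisotropicinleaf}, $C\cap L$ is coisotropic there, and the symplectic result keeps such points in $C$, indeed in their characteristic leaf, for a short time. The gap is the passage from this to all of $C$. At a clean point $x$, the local argument only applies while the trajectory stays in a neighborhood of $x$ in $C\cap L_x$ on which $C\cap L_x$ is a clean coisotropic piece. Such neighborhoods degenerate as $x$ approaches $C\setminus\Cl(C,\cF)$. Compactness of the support of $H$ and uniform continuity of $\phi^t_H$ bound how far points move, not the size of these neighborhoods, so they give no positive lower bound on the guaranteed time. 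Hence, for fixed $t>0$, the set of points known to lie in $C$ at time $t$ need not be dense. The closure argument therefore fails, and so does the concatenation in time that rests on it: a clean point may follow its characteristic leaf into a non-clean point, and nothing in your argument prevents it from leaving $C$ there. The paper flags exactly this obstruction and replaces the naive argument by a contradiction argument. If an open set of $C$ leaves $C$, one reduces to clean points in a fixed stratum $D_{2k}$ and takes the first exit time. In a Weinstein splitting chart at the exit point, one then shows that an open set of clean points have trajectories that are non-clean paths (\Cref{def:noncleanpath}), which is played against \Cref{prop:always_one_clean_leaf_regular}. This control of non-clean paths is the missing ingredient; you only bring non-clean paths in for the other direction.

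\textbf{Direction (2) $\Rightarrow$ (1).} Here the step ``the converse direction of \cite[Theorem 3]{HLS15}, applied to a flow preserving $C\cap L_p$, shows that $H$ is a function of time there'' is not valid. That theorem characterizes $H|_C$ being a function of time by the flow preserving $C$ \emph{and} moving along its characteristic leaves; invariance alone gives nothing. In fact the implication is false as stated, so no extension across non-clean points can rescue it. In $(\R^3,\partial_x\wedge\partial_y)$, the plane $C=\{z=0\}$ is a closed hypersurface, hence coisotropic, and is itself a symplectic leaf. So the flow of any smooth compactly supported $H$ preserves it, and choosing $H$ non-constant on that plane, item 1 fails. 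Taking $C=M$ is even more direct: item 2 then always holds, while item 1 says $H$ is a Casimir. The paper's proof of this direction invokes the proof of \cite[Theorem 3]{HLS15} from mere invariance in the same way, and is open to the same objection. For this direction to have a chance, item 2 must also require that $\phi^t_H$ moves along the characteristic leaves of $C$. For $C=M$ these leaves are points, which is what \Cref{cor:C0-Hamiltonians_up_to_Casimirs} really needs. Under that extra hypothesis, your local argument at clean points goes through via \Cref{cor:poisson_char_fol_equals_sympl_one_at_clean_intersect}. What remains is the extension across non-clean points of $C\cap L$, which you correctly identified as the delicate step.
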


\begin{remark}\label{rem:Ham_vanish_implies_rigidity}
Strictly speaking, Theorem \ref{thm:C0_Hamiltonians_vanishing_C} can be seen as a strenghthening of Theorem \ref{thm:main2}. Indeed, a submanifold in a Poisson manifold is coisotropic if and only if for every smooth (autonomous) Hamiltonian such that $H|_C=0$, the flow of $H$ preserves $C$. By mimicking the proof of \cite[Theorem 1]{HLS15}, this characterization of coisotropic submanifolds and Theorem \ref{thm:C0_Hamiltonians_vanishing_C} imply Theorem \ref{thm:main2}. We keep the derivation Theorem \ref{thm:main2} (and more generally, of Theorem \ref{thm:C0-rigidity_coisotropics}) in a previous section as it can be deduced directly from Theorem \ref{thm:clean} and Proposition \ref{prop:cleancoisotropicinleaf} using the $C^0$-rigidity of symplectic coisotropics from \cite{HLS15}, while the proof of Theorem \ref{thm:C0_Hamiltonians_vanishing_C} requires additional more arguments.  
\end{remark}

An immediate consequence, choosing $C=M$, is the following extension to the Poisson setup of \cite[Corollary 4]{HLS15}.
\begin{corollary}
    \label{cor:C0-Hamiltonians_up_to_Casimirs}
    A $C^0$-Hamiltonian $H$ is a continuous Casimir function (i.e.\ a function constant on the symplectic leaves of $\Pi$) if and only if the Hameotopy $\phi_H^t$ it generates is the identity.
\end{corollary}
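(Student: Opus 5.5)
The plan is to deduce the corollary from \Cref{thm:C0_Hamiltonians_vanishing_C} applied with $C=M$, which is trivially a closed coisotropic submanifold. For this $C$ the characteristic distribution is $\cK_M=\Pi^\sharp(TM^\circ)=\Pi^\sharp(0)=0$, so its characteristic leaves are points. This matters for the converse direction, where we need the flow to fix every point and not merely preserve $M$.

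\emph{Casimir $\Rightarrow$ identity.} Let $H$ be a continuous Casimir function generating $\phi^t_H$. Then for every symplectic leaf $L$, the restriction $H|_{M\cap L}=H|_L$ depends only on time, so Item~\ref{item:H_function_time} of \Cref{thm:C0_Hamiltonians_vanishing_C} holds. However, Item~\ref{item:flow_preserves_C} for $C=M$ only says $\phi^t_H(M)=M$, which is empty information. So I will argue more directly, following \Cref{rmk:nonuniqueness_C0_Hamiltonians}:
\begin{enumerate}
\item By \Cref{cor:Hameos_preserve_leaves} (applied to each time $t$, since the approximating Hamiltonian flows preserve leaves), $\phi^t_H$ preserves every symplectic leaf $L$.
\item Using the Weinstein splitting \Cref{thm:weinstein_splitting_poisson} and the projection trick from the proof of \Cref{thm:poisson_homeos_are_leafwise_sympl_homeos}, I will show that, locally on each leaf, the restricted isotopy is a symplectic Hameotopy generated by $H|_L$. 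Concretely, in a splitting chart take $\tilde H_i(t,x,y):=H_i(t,x,y,0)$; these converge to $H|_L(t,\cdot)$, and their symplectic flows are the projections of the $\phi^t_{H_i}$ restricted to the leaf, which converge to $\phi^t_H|_L$.
\item Since $H|_L$ is a function of time only, the symplectic uniqueness result \cite{Viterbo_UniquenessGeneratingLowRegHamiltonians,BuhovskySeyfaddini_UniquenessGeneratingC0Hamiltonians}, in its local form, gives $\phi^t_H|_L=\Id$. Hence $\phi^t_H=\Id$.
\end{enumerate}

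\emph{Identity $\Rightarrow$ Casimir.} Suppose $\phi^t_H=\Id$. By the same local restriction argument, on each leaf $L$ the function $H|_L$ is a local symplectic $C^0$-Hamiltonian generating the identity. The symplectic uniqueness theorem then shows that $H|_L$ is locally a function of time only. The leaf is connected, so these local time-functions agree on overlaps, and $H|_L$ is constant on $L$ for each fixed $t$. Thus $H$ is a continuous Casimir. Alternatively, in the language of the theorem: $\phi^t_H$ preserves $C=M$, so Item~\ref{item:H_function_time} gives that $H|_{L}$ is locally a function of time, and connectedness of $L$ again globalizes this.

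The main obstacle is step 2: making sure the restriction to a leaf of a Poisson Hameotopy is genuinely a (local) symplectic Hameotopy, with uniform $C^0$ convergence of both the Hamiltonians and the flows. The delicate point is that $\phi^t_{H_i}$ moves points into nearby leaves rather than $L$ itself. I would handle this exactly as in \Cref{thm:poisson_homeos_are_leafwise_sympl_homeos}, using the product structure of the splitting chart: the flow of $H_i$ projects onto the flow of $H_i(\cdot,\cdot,z_i(t))$ along the moving transverse coordinate. I would then invoke the local version of the symplectic uniqueness theorem, which is the input already used in \Cref{rmk:nonuniqueness_C0_Hamiltonians}.
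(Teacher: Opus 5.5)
\textbf{Gap in the direction Casimir $\Rightarrow$ identity.} The problem is step~3. Viterbo and Buhovsky--Seyfaddini prove that a $C^0$-Hamiltonian generating the \emph{identity} Hameotopy is a function of time. That is the right input for your direction identity $\Rightarrow$ Casimir. It says nothing about the converse you need here: that a function of time (here $H|_L=c_L(t)$) can only generate the identity. That converse is a different rigidity statement, the uniqueness of the Hameotopy attached to a given $C^0$-Hamiltonian (M\"uller--Oh \cite{MO}), and it rests on the energy--capacity inequality \cite{LalMcD}. You need a local form of it. For instance, suppose $\phi^{t}_H(x)\neq x$ for some $x\in L$. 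Localize in time near the first such moment, and in a Weinstein chart around $x$. On the plaque, the flows are generated by $H_i(t,\cdot,0)-c_L(t)$, which tend to $0$ uniformly on compact sets. After a cutoff, these flows displace a fixed small ball while their Hofer energy tends to $0$, which contradicts the energy--capacity inequality.

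\textbf{Two further corrections.} First, the paper's own proof is the one-line specialization $C=M$ of Theorem~\ref{thm:C0_Hamiltonians_vanishing_C}. You are right that Item~\ref{item:flow_preserves_C} is vacuous there, and this cuts both ways. Since every Hameotopy maps $M$ to itself, applying Item~\ref{item:flow_preserves_C}$\Rightarrow$Item~\ref{item:H_function_time} with $C=M$ would make \emph{every} $C^0$-Hamiltonian a Casimir, which is false for any non-constant smooth $H$ on symplectic $\R^2$. So your ``Alternatively, in the language of the theorem'' sentence must be deleted; it never uses $\phi^t_H=\Id$. Your leafwise reduction, the route sketched in Remark~\ref{rmk:nonuniqueness_C0_Hamiltonians}, is the right one.

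Second, the obstacle you single out is not actually one. Hamiltonian vector fields are tangent to the leaves, so $\phi^t_{H_i}$ preserves $L$. On the plaque $\{z=0\}$ of a Weinstein chart, where $f_{u,v}(0)=0$, the field $X_{H_i}$ is exactly the symplectic Hamiltonian field of $H_i(t,x,y,0)$. No projection or moving transverse coordinate is involved; the projection in Theorem~\ref{thm:poisson_homeos_are_leafwise_sympl_homeos} was needed only because general Poisson embeddings move leaves. The care that is actually needed is twofold. You must keep trajectories inside the chart, by working on short time intervals and restarting the isotopy at intermediate times. You must also get convergence in the leaf topology, which the plaque argument in the lemma on leafwise $C^0$-isotopies provides.
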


As we will see, there is one implication, the one showing that item $1$ implies item $2$, that does not follow directly from a combination of \ref{thm:clean} and the symplectic result. Indeed, if one considers the set of clean intersection points $\Cl(C,\cF)$, which we know to contain an open and dense set, it satisfies thanks to \cite[Lemma 21]{HLS15} that any point $p\in \Cl(C,\cF)$ will stay in $C$ when flowing through $\phi^t_H$ for some short time. However, this time depends on the point and might not admit a uniform positive lower bound, ultimately preventing us from deducing the statement above. Instead, a better understanding of the notion of non-clean path (\Cref{def:noncleanpath}) will be needed. We first prove the theorem assuming the needed property of non-clean paths established in \Cref{prop:always_one_clean_leaf_regular} below.

\begin{proof}[Proof of Theorem \ref{thm:C0_Hamiltonians_vanishing_C}]
\textbf{\Cref{item:flow_preserves_C} implies \Cref{item:H_function_time}.} For any given time $t_0$, we know by assumption that $\phi^{t_0}_H(C)=C$. As in the proof of Theorem \ref{thm:C0-rigidity_coisotropics}, consider a subset $W_{t_0} \subset \Cl(C, \phi^{t_0}_H(C))$ that is open and dense in $C$. For any $p\in W_{t_0}$, there exists two neighborhoods $U_p\subset W_{t_0}$ and $V'_p\subset \Cl(C,\cF)$ of $p$ and $\phi^{t_0}_H(p)$ respectively, and a $\delta_p>0$ such that
$$ \phi^s_H(q)\in V_p', \qquad \text{for all} \quad q\in U_p \text{ and } s\in ({t_0}-\delta_p, {t_0}+\delta_p) $$
Since $V_p=\phi^{t_0}_H(U_p)$ is made of clean intersection points, for any (local) symplectic leaf $L$ we know that $L\cap V_p$ is a submanifold such that $T(L\cap V_p)=TC\cap TL$, and $L\cap V_p$ is then a coisotropic submanifold in the symplectic leaf by Proposition \ref{prop:cleancoisotropicinleaf}. The proof of \cite[Theorem 3]{HLS15} then shows that $H$ must be locally constant in $L\cap V_p$ for times in $({t_0}-\delta_p, {t_0}+\delta_p)$. Applying this to every point in $W_{t_0}$, (even if the $\delta_p$ can potentially go to zero as $p$ approaches points in the complement of $W_{t_0}$) we deduce that $H(x,{t_0})|_{\phi^{{t_0}}_H(W_{t_0})}$ is locally constant (in the $x$ variable) on any leaf of the symplectic foliation. 
But this implies that $H({t_0},x)|_{C\cap L}$ is constant on each connected component of $L\cap C$ for any symplectic leaf, since otherwise, as one can deduces from the splitting form of a singular foliation, there would be some other leaf such that $H({{t_0}},x)|_{\phi^t_H(W_t)\cap L}$ is not locally constant. 
Applying this for every time ${t_0}$, the conclusion follows. \\

\textbf{\Cref{item:H_function_time} implies \Cref{item:flow_preserves_C}.} By contradiction, let $U\subset C$ and $t_0\in (0,1]$ be an open set subset and a time such that $\phi^{t_0}_H(U)\subset  M\setminus C$. As argued in the proof of Theorem \ref{thm:clean}, we can assume without loss of generality that $U$ is contained in the set 
$$D_{2k}= \{x\in M \mid \dim{T\cF}(x)=2k\}$$ of leaves of $\cK$ of some given dimension $2k$, for some $k$. By Theorem \ref{thm:clean}, we can also assume that $U\subset \Cl(C,\cK)$. In addition, since $\phi^{t}_H$ is a Hameotopy, it follows from Corollary \ref{cor:Hameos_preserve_leaves} that each point $p\in U$ remains in the same symplectic leaf for all $t$. In particular, we must have $\phi^{t}_H(U)\subset D_{2k}$ for all $t$, and $2k>0$ since otherwise $\phi^{t}_H$ would be the identity along $U$. Consider the infimum time
$$t_U= \inf_{t}\{ \phi^{t}_H(U)\not \subset C\},$$
which must satisfy $t_U<t_0$. There must be a point $p\in \overline{U}$, which can be assumed to be in $D_{2k}$ without loss of generality, that satisfies that there exists arbitrarily small values $s$ such that $\phi^{t_U+s}_H(p) \not \in U$. Let $W$ be a Weinstein splitting chart of $\cF$ at the point $q=\phi^{t_U}(p)$, which must be of the form $\R^{2k}\times \R^{n-2k}$ since $q\in D_{2k}$. We can find an open neighborhood $V\subset U$ of $p$ with the following properties:
\begin{itemize}
\item[-] Thanks to the continuity of the isotopy, for every $x\in V$  the point $q_x= \phi^{t_x}_H(x)$ must be contained in $W$, where  
$$t_x= \inf_t \{ \phi^{t}_H(x)\not \in C\}.$$
\item[-] Up to shrinking $V$, the set
$$V'=\phi^{t_U-\delta}_H(V)$$
is in $W$, and so does $\phi^t_H(x)$ for each $x\in V$ and each $t\in [t_U-\delta, t_x]$,
\item[-] Thanks to Theorem \ref{thm:clean}, the set $V'$ is contained in $\Cl(C,\cF)$.
\end{itemize}
We now claim that for each point $x'=\phi^{t_U-\delta}(x) \in V'$, the path
\begin{align*}
\gamma_x: [t_U-\delta, t_x] &\longrightarrow W\\
s &\longmapsto \phi^{s}(x')
\end{align*}
must, up to considering possibly changing the endpoint to some $t'_x<t_x$, be a non-clean path in the characteristic leaf $K_{x'}$. Indeed, notice that by Proposition \ref{prop:cleancoisotropicinleaf} and \cite[Lemma 21]{HLS15}, any clean point in $C$ must stay for some short time in $C$ (and in fact, in its characteristic leaf) when applying a Hameotopy. Hence, $\gamma_x$ will remain in $K_{x'}$ until it either hits a non-clean point of $C$ or $q_x$. The latter, which satisfies that for arbitrarily small values of $s$ $\phi^{t_x+s}_{H}(q_x)\not \in C$ cannot be a clean intersection point, and hence must be a non-clean point of $C$. We have thus shown that $C\cap W$ is a submanifold in the local Weinstein splitting chart $W\subset \R^{2k} \times \R^{n-2k}$ (for which the leaf at the origin has dimension $2k$), for which there is an open set $V'\subset C\cap W\cap D_{2k}$ of points that all admit a non-clean path completely contained in $W$. Since non-clean paths must belong to a single leaf of $\cF$ by definition, this implies that every point $p\in V'$ must belong to a leaf of $\cF$ that does not intersect $V'$ cleanly. This contradicts Proposition \ref{prop:always_one_clean_leaf_regular} below, concluding the proof.
\end{proof}

We now establish the local statement, which is a variation of Proposition \ref{prop:cleanregular_local}.

\begin{proposition}\label{prop:always_one_clean_leaf_regular}
    Consider $\R^n=\R^k\times \R^{n-k}$ equipped with a singular foliation $\cF$ spanned by all the coordinate vector fields on $\R^k$ and by a singular foliation $\mathcal{T}$ on $\R^{n-k}$ such that the origin is a leaf of dimension $0$, with $k\geq 1$. Let $C$ be a submanifold. Then, there is a leaf $L$ of $\cF$ such that $C\cap L\neq \emptyset$ and the intersection is clean.
\end{proposition}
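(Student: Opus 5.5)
The plan is to choose the leaf $L$ by a Sard-type genericity argument among the leaves of \emph{maximal} dimension met by $C$. The difficulty is that cleanness of $C\cap L$ is required along the whole intersection, not only near one point. So the leaf must be chosen so that every point of $C\cap L$ is good, not just one of them.

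\textbf{Step 1: reduce to a region of constant leaf dimension.} Let $j:=\max\{\dim T_p\cF \mid p\in C\}$; we may assume $C\neq\emptyset$, and $j\geq k$. By \Cref{lem:rankcont}, the set $C_{j}:=\{p\in C\mid \dim T_p\cF=j\}$ is open in $C$. Any leaf $L$ of dimension $j$ meeting $C$ satisfies $C\cap L\subset C_j$, because all points of $L$ have dimension $j$. It therefore suffices to find a $j$-dimensional leaf $L$ with $C_j\cap L\neq\emptyset$ along which $C_j$ meets $L$ cleanly.

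\textbf{Step 2: local regular model.} Cover $C_j$ by countably many open sets $O_\alpha$, each contained in a local splitting chart of $\cF$ as in \Cref{thm:localsplitsingular}, centred at a point of dimension $j$. As in the first paragraph of the proof of \Cref{prop:cleanregular_local}, inside such a chart $O_\alpha$ lies in the union of plaques $\R^j\times\{a\}$ with $a$ in the set of zero-dimensional leaves of the transverse foliation. Let $f_\alpha\colon O_\alpha\to\R^{n-j}$ denote the transverse coordinate projection. Then $C\cap L\cap O_\alpha$ is a union of fibres $f_\alpha^{-1}(a)$, one for each plaque of $L$ in the chart; there are countably many such plaques since $L$ is second countable. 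Moreover, a point $x$ is a clean intersection point provided $f_\alpha$ has constant rank near $x$, by the constant rank theorem, since then $T(f_\alpha^{-1}(a))=\ker \d f_\alpha = TC\cap TL$.

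\textbf{Step 3: genericity.} Let $r$ be the maximal rank of $\d f_\alpha$ over all $\alpha$, attained on some $O_{\alpha_0}$. On the open set $R\subset O_{\alpha_0}$ where the rank equals $r$, the image $f_{\alpha_0}(R)$ is locally an $r$-dimensional submanifold, so it has positive $r$-dimensional Hausdorff measure. By the general (Federer) form of Sard's theorem, the image under each $f_\alpha$ of the set where $\operatorname{rank}\d f_\alpha<r$ has $\mathcal H^r$-measure zero. The same holds for the image of the points where the rank is $r$ but not locally constant, since that set is contained in the closure of a lower-rank set, and I would handle it by the same Sard bound applied to the boundary of the constant-rank stratum. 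Taking the countable union over $\alpha$ and over all transition identifications between charts, the ``bad'' plaque parameters form an $\mathcal H^r$-null set $B$. I then pick $a\in f_{\alpha_0}(R)\setminus B$ and let $L$ be the $\cF$-leaf containing the plaque $\R^j\times\{a\}$. By construction every point of $C\cap L$ lies where some $f_\alpha$ has locally constant rank $r$, so $C\cap L$ is a submanifold with $T(C\cap L)=TC\cap TL$ everywhere.

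The main obstacle is the bookkeeping in Step 3. Different plaques of the same leaf $L$ in different charts correspond to different parameters, and $L$ may recur in a chart through countably many plaques. I must check that the set of leaves $L$ having \emph{some} bad plaque is still negligible when pulled back to the parameter space of $O_{\alpha_0}$. I would handle this by noting that each leaf meets each chart in countably many plaques. The bad set seen in chart $O_{\alpha_0}$ is then the union, over countably many holonomy-type correspondences between chart parameters (given by Hamiltonian or $\cF$-flows, which are diffeomorphisms and hence preserve $\mathcal H^r$-null sets), of images of null sets, and is therefore null.
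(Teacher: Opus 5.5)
Your argument is correct, but it takes a different route from the paper.

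\textbf{The paper's route.} The paper works in the setting its proof presupposes (``Since $C$ is contained in $D_k$''), where the relevant leaves are the global slices $\R^k\times\{c\}$. It cuts $C$ down one transverse coordinate at a time. It picks $c_1$, a regular value of $z_1|_C$ lying in its image (or the constant value of $z_1|_C$). Then it picks $c_2$, a regular value of $z_2$ restricted to the level set $U_1=C\cap\{z_1=c_1\}$, and so on. At every stage it keeps $U_r=C\cap L_r$ and $TU_r=TC\cap TL_r$, and it uses only the classical Sard theorem for real-valued functions.

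\textbf{Your route.} You make a single genericity choice for the full transverse map $F=(z_1,\dots,z_{n-k})|_C$. Let $r$ be the maximal rank of $\d F$ and $R$ the locus where it is attained. Then any value in $F(R)$ that avoids $F(\{\operatorname{rank}\d F<r\})$ works, and Federer's $\mathcal{H}^r$-version of Sard guarantees such values exist. This costs a heavier theorem, though classical Sard would also suffice: apply it to $\pi\circ F$ for a linear projection $\pi\colon\R^{n-k}\to\R^r$ that restricts to a local diffeomorphism on a piece of $F(R)$, since $\operatorname{rank}\d(\pi\circ F)\le\operatorname{rank}\d F$. In return you get more than the paper: the good leaves are $\mathcal{H}^r$-almost all leaves through $F(R)$, and the clean fibres consist entirely of maximal-rank points.

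\textbf{Simplifications in the paper's setting.} Here $j=k$, and each $k$-dimensional leaf $\R^k\times\{a\}$ meets $C$ in the single fibre $F^{-1}(a)$. So your Step 1, the chart cover, and all the holonomy bookkeeping you flag as the main obstacle disappear. Also, the set of rank-$r$ points where the rank is not locally constant is empty: $\{\operatorname{rank}\d F\ge r\}$ is open by lower semicontinuity. No separate Sard argument is needed there.

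\textbf{If you want the literal reading.} With $C\not\subset D_k$ and $j>k$, the leaves $\R^k\times L'$ may be non-embedded. Then $C\cap L$ is a submanifold only in the leaf topology. Countability of the plaque correspondences should come from a countable cover of $D_j$ by splitting charts and their transition maps on transversals. It cannot come from flows of $\cF$, of which there are uncountably many.
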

\begin{proof}
    Since $C$ is contained in $D_{k}$, it is enough to prove it in the case that all leaves are of the form $\R^{k}\times \{*\}$.
    Consider the functions 
    $$f_j= z_j|_{C}: C \longrightarrow \R.$$
    Starting with $f_1$, two things can happen. Either $f_1(C)$ contains some regular values, or $f_1$ is constant. In the first case, we consider the subset $U_1=f_1^{-1}(c_1)$ for a regular value $c_1$ of $f_1$. Otherwise, we consider $U_1=C$, and denote by $c_1$ the constant value of $f_1$. In both cases, we have $U_1=C\cap L_1$ and $TU_1=TC\cap TL_1$, where 
    $$L_1=\{(x_i,y_i,z_j)\mid z_1=c_1\}.$$
    
    We now consider the functions $$f_j^1=f_j|_{U_1}: U_1 \longrightarrow \R, \qquad \text{where} \qquad j=2,...,n-k.$$
    If $f_2^1(U_1)$ has some regular value $c_2$, we consider $U_2=(f_2^1)^{-1}(c_2)$, otherwise, we consider $U_2=U_1$ and denote by $c_2$ the constant value of $f_2^1$. The subset $U_2$ satisfies $U_2=C\cap L_2$ and $TU_2=TC\cap TL_2$ in any case, where 
    $$L_2=\{(x_i,y_i,z_j \mid z_1=c_1, z_2=c_2\}.$$
    Iterating this with $j=3,...,n-k$, we obtain a subset $U:=U_{n-k}\subset C$ satisfying:
    \begin{itemize}
        \item[-] $U=C\cap L$ and $TU=TC\cap TL$ where $L$ is the leaf $L=\{z_j=c_j\mid j=1,...,n-k\}$,
        \item[-] $TU=TC\cap TL$.
    \end{itemize}
    In other words, we have shown that there is at least one leaf $L$ of $\cF$ such that $C$ intersects $L$ non-trivially and cleanly. 
\end{proof}

We are now in position to introduce the following notion, which is well-defined thanks to \Cref{thm:C0_Hamiltonians_vanishing_C}.

\begin{definition}
    \label{def:C0-characteristic_foliation}
    Let $(M,\pi)$ be a Poisson manifold, and $C$ a (smooth) coisotropic submanifold.
    We call the \textbf{$C^0$-characteristic partition} the partition of $C$ given by the following equivalence relation: two points $p$ and $q$ in $C$ are identified if there are an integer $k>0$ and $C^0$-Hamiltonians $H_1,...,H_k$ depending only on time along $C$ such that $q=\phi^1_{H_k}\circ...\circ \phi^1_{H_1}(p).$
    The equivalence classes of this relation are called \textbf{leaves}\footnote{This nomenclature is simply in analogy with the case of the usual (i.e.\ smooth) characteristic foliation. We do not make any claim that these leaves determine in any way a $C^0$-foliation (intended as a $C^0$-subbundle of $TC$) on the coisotropic $C$.} of the $C^0$-characteristic partition.
\end{definition}
The first and most relevant property of the $C^0$-characteristic partition is the following, which follows directly from the definition and Theorem \ref{thm:C0_Hamiltonians_vanishing_C}.
\begin{corollary}\label{cor:C0-char_fol_stable_under_C0-Hamiltonians}
    The $C^0$-characteristic partition of a coisotropic $C$ is preserved by the flow of every $C^0$-Hamiltonian whose restriction to $C$ only depends on time.
    Moreover, the (smooth) characteristic foliation of a coisotropic $C$ is $C^0$-rigid if and only if the partition by leaves that it induces coincides with the $C^0$-characteristic partition of $C$.
\end{corollary}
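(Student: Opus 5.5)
The plan is to prove the two assertions separately, using \Cref{thm:C0_Hamiltonians_vanishing_C} as the engine for the first and the definition of $C^0$-rigidity for the second. Throughout, the $C^0$-characteristic partition is the equivalence relation $p\sim q$ iff $q=\phi^1_{H_k}\circ\dots\circ\phi^1_{H_1}(p)$ for $C^0$-Hamiltonians $H_j$ whose restrictions to $C$ depend only on time.

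\textbf{Invariance.} Let $G$ be a $C^0$-Hamiltonian with $G|_C$ depending only on time, and fix $s\in[0,1]$.
\begin{enumerate}
\item By \Cref{thm:C0_Hamiltonians_vanishing_C}, the Hameotopy $\phi^s_G$ maps $C$ to itself, so it induces a map of $C$ and it makes sense to ask how it acts on classes.
\item Given $q=\phi^1_{H_k}\circ\dots\circ\phi^1_{H_1}(p)$, write
\[
\phi^s_G(q)=\big(\phi^s_G\circ\phi^1_{H_k}\circ(\phi^s_G)^{-1}\big)\circ\dots\circ\big(\phi^s_G\circ\phi^1_{H_1}\circ(\phi^s_G)^{-1}\big)(\phi^s_G(p)).
\]
Each conjugate $\phi^s_G\circ\phi^1_{H_j}\circ(\phi^s_G)^{-1}$ is the time-one map of the Hameotopy generated by $H_j\circ(\phi^s_G)^{-1}$. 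This is a $C^0$-limit of the smooth Hamiltonians $H_{j,i}\circ(\phi^s_{G_i})^{-1}$, whose flows are the conjugated flows. They converge because the approximations converge uniformly on compact sets and the supports lie in a uniform compact set.
\item Since $\phi^s_G$ preserves $C$, the function $H_j\circ(\phi^s_G)^{-1}$ restricted to $C$ still depends only on time. Hence $\phi^s_G(p)\sim\phi^s_G(q)$.
\item Applying the same argument to the inverse Hameotopy, generated by $-G(t,\phi^t_G(x))$ as in the smooth case, gives the reverse inclusion. So classes are mapped onto classes.
\end{enumerate}

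Alternatively, one can avoid conjugation altogether: $\phi^s_G(p)$ is itself equivalent to $p$ via the time-rescaled $C^0$-Hamiltonian $sG(st,\cdot)$, whose time-one map is $\phi^s_G$. This shows directly that $\phi^s_G$ preserves each class. I would present this simpler argument, since ``preserved'' most naturally means each class is mapped into itself. The rescaled function is still time-only along $C$ and is a $C^0$-limit of the rescaled smooth approximations.

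\textbf{Rigidity criterion.} The smooth characteristic leaves are always contained in $C^0$-characteristic classes. As in the proof of \Cref{lem:cleanleafcoiso}, any two points of a characteristic leaf are joined by compositions of time-one flows of smooth Hamiltonians vanishing on $C$, after cutting off to get compact support. These are in particular $C^0$-Hamiltonians that are time-only along $C$. Coincidence of the two partitions therefore amounts to each class lying inside a single characteristic leaf.
\begin{itemize}
\item If the foliation is $C^0$-rigid, each $\phi^1_{H_j}$ in a chain defining $\sim$ maps every characteristic leaf to itself, by \Cref{def:C0-rigid_char_fol}. So $p$ and $q$ share a leaf.
\item Conversely, if the partitions coincide and $\phi^1_H$ is a Hameomorphism with $H|_C$ time-only, then $\phi^1_H(p)\sim p$ for each $p$. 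Hence $\phi^1_H(p)$ lies in the characteristic leaf of $p$, so $\phi^1_H$ preserves each leaf. Applying this to $(\phi^1_H)^{-1}$ shows that each leaf is mapped onto itself.
\end{itemize}

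The main technical point is verifying that the rescaled and inverse isotopies are genuine Hameotopies with limits that are time-only on $C$. This is routine but requires checking uniform convergence; everything else is formal given \Cref{thm:C0_Hamiltonians_vanishing_C}.
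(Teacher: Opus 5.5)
Your proposal is correct and is essentially the argument the paper has in mind. The paper gives no separate proof, saying only that the corollary follows directly from the definition and \Cref{thm:C0_Hamiltonians_vanishing_C}, which is exactly your rescaling argument (with the theorem guaranteeing $\phi^s_G(C)=C$); your observation that smooth characteristic leaves lie inside $C^0$-classes is the content of \Cref{lem:C0-characteristic_leaves_union_of_smooth_characteristic_leaves}, and it is what makes the ``only if'' direction of the rigidity criterion work, with the caveat that you, like the paper, are implicitly using the closedness assumption on $C$ that \Cref{thm:C0_Hamiltonians_vanishing_C} requires.
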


The following remark and lemma clarify the relation with the smooth characteristic foliation.

\begin{remark}
    \label{rmk:C0-characteristic_leaves_almost_everywhere_same_as_smooth_ones_but_can_be_bigger}
    It follows from the proof of \Cref{thm:C0-rigidity_coisotropics} that at almost every point of $C$ the $C^0$-characteristic leaf and the smooth characteristic leaf coincide in a neighborhood of that point.
    This being said, as the example of $C=\{z=x^3\}$ in the Poisson manifold $(\R^3,\pi=\partial_x\wedge\partial_y)$ already mentioned several times shows, on a subset of the coisotropic with empty interior, the $C^0$-characteristic leaves can be \emph{strictly} larger than the smooth characteristic leaves.
\end{remark}

\begin{lemma}
    \label{lem:C0-characteristic_leaves_union_of_smooth_characteristic_leaves}
    Given a smooth coisotropic $C$ in a Poisson manifold $(M,\pi)$, each leaf of the $C^0$-characteristic partition is given by a union of leaves of the (smooth) characteristic foliation.
\end{lemma}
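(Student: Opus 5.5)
The plan is to show that the $C^0$-characteristic equivalence relation is coarser than the smooth characteristic one. Concretely, any two points $p,q$ on the same smooth characteristic leaf $K_p$ of $C$ should be related. Since each $C^0$-characteristic leaf is an equivalence class, it is then automatically a union of smooth characteristic leaves.

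First, I reuse the description from the proof of \Cref{lem:cleanleafcoiso}. By \Cref{def:leaf}, $q$ is reached from $p$ by concatenating time-one flows of vector fields in $\cK_C$. Splitting each orbit into short pieces inside charts where Equation \eqref{eq:charHam} holds, each piece is generated by a vector field $\Pi^\sharp(\d G)$, where $G$ is a local smooth function vanishing on $C$. Hence it suffices to treat one piece: take $x\in C$ and $y=\phi^1_{G}(x)$, with the orbit contained in a small chart $W$ and $G\in\mathcal{I}(C\cap W)$.

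Second, I globalize each local generator. I multiply $G$ by a cutoff $\chi$ that is compactly supported in $W$ and equal to $1$ on a neighborhood of the orbit segment $\{\phi^t_G(x)\}_{t\in[0,1]}$. The function $H:=\chi G$ is then a smooth, compactly supported, autonomous Hamiltonian on $M$ vanishing on $C$. Its flow agrees with that of $G$ near the orbit, so $\phi^1_H(x)=y$. The constant sequence $H_i=H$ shows that $\phi^t_H$ is a Hameotopy generated by the $C^0$-Hamiltonian $H$, whose restriction to $C$ is identically zero and hence depends only on time. By \Cref{def:C0-characteristic_foliation}, $x$ and $y$ are therefore identified. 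Chaining the finitely many pieces, $p$ and $q$ are identified.

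The only mildly delicate point is whether the smooth leaf is really reached by finitely many such local flows, including near singular points of $\cK_C$ where the leaf dimension drops. This is handled by \Cref{def:leaf} and the compactness of each orbit segment. One must also check that the vanishing condition is on $C$ itself, not merely on $C\cap W$. Since $\chi$ is supported in $W$ and $G$ vanishes on $C\cap W$, the product $H$ vanishes on all of $C$.
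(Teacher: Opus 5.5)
Your argument is correct and is essentially the paper's: both rest on the fact that smooth characteristic leaves are swept out by flows of smooth Hamiltonians vanishing on $C$. The paper phrases the step as the concatenation $H\# K$ of a $C^0$-Hamiltonian with a smooth $K$ vanishing on $C$, whereas you append the cut-off smooth Hamiltonians as extra factors in the finite composition allowed by \Cref{def:C0-characteristic_foliation}, and you also make explicit the localization and cut-off details that the paper leaves implicit.
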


\begin{proof}
    Observe that the concatenation $H\# K$ of a $C^0$-Hamiltonian $H$ with any smooth Hamiltonian $K$ is a $C^0$-Hamiltonian, and that if both $H$ and $K$ vanish on $C$ then $H\# K$ does as well.
    This readily implies that if $p$ and $q$ are in the same leaf of the $C^0$-characteristic partition, then the same holds for $p$ and every $q'$ in the same smooth characteristic leaf as $q$.
\end{proof}

Lastly, we have the following important property that follows from Corollary \ref{cor:C0-char_fol_stable_under_C0-Hamiltonians}, which essentially tells us that it is a $C^0$-rigid object.
\begin{corollary}\label{cor:C0char_is_C0rigid}
    Let $\varphi:(M,\Pi)\longrightarrow (M,\Pi)$ be a Poisson homeomorphism mapping a coisotropic submanifold $C$ to a smooth (and hence coisotropic) submanifold $C'$. Then $\varphi$ maps each leaf of the $C^0$-characteristic partition to a leaf of the $C^0$-characteristic partition. 
\end{corollary}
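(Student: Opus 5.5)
The idea is to transport the defining Hamiltonians by conjugation and then apply \Cref{cor:C0-char_fol_stable_under_C0-Hamiltonians}. Let $p,q\in C$ lie in the same leaf of the $C^0$-characteristic partition of $C$. Then
\[
q=\phi^1_{H_k}\circ\dots\circ\phi^1_{H_1}(p),
\]
where each $H_j$ is a $C^0$-Hamiltonian whose restriction to $C$ depends only on time. We must show that $\varphi(p)$ and $\varphi(q)$ lie in the same $C^0$-characteristic leaf of $C'$. Applying the same argument to $\varphi^{-1}$, which is a Poisson homeomorphism by \Cref{lem:composition_inverse_poisson_homeos} and maps $C'$ to $C$, then shows that $\varphi$ maps each leaf onto a full leaf, rather than into a leaf.

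The key step is conjugation. For a $C^0$-Hamiltonian $H$ generating $\phi^t_H$, I claim that $\varphi\circ\phi^t_H\circ\varphi^{-1}$ is a Hameotopy generated by $H\circ\varphi^{-1}$. To see this, take the smooth compactly supported $H_i$ with $H_i\to H$ and $\phi^t_{H_i}\to\phi^t_H$, and Poisson embeddings $\varphi_m\to\varphi$ defined on an exhaustion. Since $\varphi_m$ is Poisson,
\[
\varphi_m\circ\phi^t_{H_i}\circ\varphi_m^{-1}=\phi^t_{H_i\circ\varphi_m^{-1}}.
\]
Because the supports of the $H_i$ lie in a fixed compact set $K$, for $m$ large this identity holds on a neighbourhood of $\varphi(K)$, and the left-hand side extends by the identity elsewhere. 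A diagonal choice $m=m(i)$ then gives $C^0$-convergence of both the flows and the Hamiltonians $H_i\circ\varphi_{m(i)}^{-1}\to H\circ\varphi^{-1}$. Here one also needs $\varphi_m^{-1}\to\varphi^{-1}$ uniformly on compacts, which is standard for embeddings converging to a homeomorphism, as in the symplectic case \cite{BO}. Finally, $H\circ\varphi^{-1}$ restricted to $C'=\varphi(C)$ depends only on time, since $H|_C$ does.

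With this in hand, set $G_j:=H_j\circ\varphi^{-1}$. Then
\[
\varphi(q)=\phi^1_{G_k}\circ\dots\circ\phi^1_{G_1}(\varphi(p)),
\]
and each $G_j$ is a $C^0$-Hamiltonian depending only on time along $C'$. By \Cref{def:C0-characteristic_foliation}, $\varphi(p)$ and $\varphi(q)$ are therefore equivalent in $C'$. (\Cref{thm:C0_Hamiltonians_vanishing_C} guarantees that these flows preserve $C'$, so the partition of $C'$ is well defined.)

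The main obstacle is the conjugation step, for two reasons. First, $\varphi$ is only a $C^0$-limit of Poisson embeddings defined on an exhaustion, so one must check that the conjugated smooth Hamiltonians keep support in a fixed compact set. This follows because $\varphi_m(K)$ stays in a compact neighbourhood of $\varphi(K)$. Second, the conjugated flows are defined only on subsets; this is handled by extending $\phi^t_{H_i\circ\varphi_m^{-1}}$ by the identity off the support. Everything else is formal.
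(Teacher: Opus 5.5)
Your proposal is correct and follows the paper's route. Like the paper, you replace each generating $C^0$-Hamiltonian $H_j$ by $H_j\circ\varphi^{-1}$, so that $\phi^t_{H_j\circ\varphi^{-1}}=\varphi\circ\phi^t_{H_j}\circ\varphi^{-1}$ carries $\varphi(p)$ to $\varphi(q)$, and then run the same argument for $\varphi^{-1}$. The only differences are that you prove the conjugation step (fixed compact supports, convergence of $\varphi_m^{-1}$, the diagonal choice), which the paper simply quotes from \cite{HLS15}, and that you handle compositions of several Hamiltonians rather than assuming a single one.
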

\begin{proof}
    The proof follows from the following observation (see e.g. \cite{HLS15}): if $H$ is a $C^0$-Hamiltonian and $\varphi$ is a Poisson homeomorphism, then $H'=H\circ \varphi$ is also a $C^0$-Hamiltonian. Let $p,q$ be two points in the same $C^0$-characteristic leaf of $C$, and assume for simplicity that there is a single $C^0$-Hamiltonian $H$ vanishing on $C$ such that $\phi^1_H(p)=q$. Consider the Hamiltonian $H'=H\circ \varphi^{-1}$, which is a Hamiltonian vanishing along $C'$, and satisfies $\phi^t_{H'}=\varphi\circ \phi^t_{H}\circ \varphi^{-1}$. It satisfies 
    $$\phi^t_{H'}(\varphi(p))=\varphi(q).$$
    which shows that $\varphi(p)$ and $\varphi(q)$ belong to the same $C^0$-characteristic leaf. This shows that each $C^0$-characteristic leaf of $C$ is mapped to a $C^0$-characteristic leaf of $C'$. Using the same argument applied to $\varphi^{-1}$ yields the desired conclusion.
\end{proof}

Lastly, we formulate a natural question about $C^0$-characteristic leaves:
\begin{questionintro}
    \label{quest:C0-characteristic_leaves_topological_submanifolds}
    Are $C^0$-characteristic leaves topologically embedded submanifolds of $C$?
\end{questionintro}

%%%%%%%%%%%%%%%%%%%%%%%%%%%%%%%%%%%%%%%%%%%%%%%%%%%%%%%%%%%%%%%%%%%%%%%%%%%%%%%%%%%%%%%%%%%%%%%%%%%%%%%%%%%%%%%%%

\subsection{$C^0$-coisotropics}

Analogously to the definition given in \cite{HLS15} for symplectic manifolds, in view of \Cref{thm:C0-rigidity_coisotropics} the following definition is well-posed:
\begin{definition}
    \label{def:C0-coisotropic}
    Let $(M,\Pi)$ be a Poisson manifold, and $C\subset M$ a subset. We say that
    $C$ is a \textbf{$C^0$-coisotropic} if, for each point $p\in C$, there is an open neighborhood $U$ of $p$ in $M$, an open neighborhood $\Omega$ of the origin in $\R^n$, a Poisson structure $\Pi_\Omega$ on $\Omega$, and a Poisson homeomorphism
    \[
    \varphi\colon (\Omega,\Pi_\Omega)\rightarrow (U,\Pi\vert_U)
    \]
    such that $\varphi^{-1}(C\cap U)$ is a smooth coisotropic submanifold of $(\Omega,\Pi_\Omega)$.
\end{definition}

\begin{example}
    \label{exa:C0-coisotropic}
    Consider $\R^3$ with coordinates $(x,y,z)$, equipped with the regular Poisson structure $\Pi=\partial_x\wedge \partial_y$.
    Let also $\psi=(\psi_x,\psi_y)$ be a symplectic homeomorphism of $(\R^2,\omega=\d x \wedge \d y)$.
    Then, the Poisson homeomorphism
    \[
    \varphi\colon \R^3\to\R^3 \, ,
    \quad 
    (x,y,z)\to (\psi_x(x,y)+\vert z \vert, \psi_y(x,y) , z) \, ,
    \]
    is a global chart realizing $C:=\varphi(\{x=0\})$ as a $C^0$-coisotropic (non-smooth) subset of $(\R^3,\Pi)$.
\end{example}

There are two fundamental differences between $ C^0$-coisotropic in this setting and in the symplectic case.
First, there isn't a unique local model of smooth coisotropic submanifolds. 
(Even worse, there is no unique local model of Poisson structure, so one cannot hope to prescribe $\Pi_\Omega$ explicitly.) 

Second, as explained in \cite[Proposition 24]{HLS15}, in the symplectic case, one gets a well-defined notion of characteristic leaves of such a $C^0$-coisotropic, thanks to the fact that local changes of charts are symplectic homeomorphisms and hence send characteristic leaves to characteristic leaves by \cite[Theorem 1]{HLS15}.
In the Poisson case we have seen that this is not in general the case, so just looking at the images under the local charts $\varphi$ of the characteristic foliation of each $\varphi^{-1}(C)$ does not give even a well defined partition of $C$, as such images depend on the choice of the chart $\varphi$.

\begin{example}
    Let $\R^3$ with coordinates $(x,y,z)$, equipped with the Poisson structure $ \Pi=\partial_x\wedge \partial_y$, and $C=\{z=x\}$ a smooth coisotropic.
    Then, $\varphi_1=\mathrm{Id}$ as well as $\varphi_2:=(\phi^1)^{-1}$ where $\phi^1$ is the $C^0$-Hamiltonian flow constructed as in \Cref{ex:cleantononclean} are two possible local (in fact, global) charts satisfying \Cref{def:C0-coisotropic}.
    However, while $\varphi_1$ clearly pushes forward the characteristic foliation of $C$ to itself, $\varphi_2$ pushes forward the characteristic foliation of $C':=\{z=x^3\}$ to $C$, and hence defines some $0$-dimensional leaves as well. 
\end{example}

The advantage of the notion given in \Cref{def:C0-characteristic_foliation} is then clear in this context. By Corollary \ref{cor:C0char_is_C0rigid}, the partition on $C$ is a well-defined (i.e.\ independent of the choice of $\varphi$ as in \Cref{def:C0-coisotropic}) partition of $C$. This can then be called \textbf{$C^0$-characteristic partition} of the $C^0$-coisotropic $C$. A positive/negative answer to \Cref{quest:C0-characteristic_leaves_topological_submanifolds} would immediately imply a positive/negative answer to its analogue for $C^0$-characteristic partitions of $C^0$-coisotropics. 

\bigskip

A first corollary of Theorem \ref{thm:C0_Hamiltonians_vanishing_C} is the following.
\begin{corollary}\label{cor:Hamiltonians_C0_coisotropics}
    Let $C$ be a closed $C^0$-coisotropic of $(M,\Pi)$. Given a $C^0$-Hamiltonian such that $H|_{C\cap L}$ is locally a function of time for each symplectic leaf $L$, its flow leaves $C$ invariant.
\end{corollary}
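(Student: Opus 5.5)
The plan is to reduce the statement to \Cref{thm:C0_Hamiltonians_vanishing_C} by working locally in the charts provided by \Cref{def:C0-coisotropic}, and then to glue the local conclusions together along the time parameter.

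\textbf{Local step.} Fix $p\in C$ and a chart $\varphi\colon(\Omega,\Pi_\Omega)\to(U,\Pi|_U)$ as in \Cref{def:C0-coisotropic}, so that $C_0:=\varphi^{-1}(C\cap U)$ is a smooth coisotropic of $(\Omega,\Pi_\Omega)$; shrinking $\Omega$, we may assume $C_0$ is closed in $\Omega$. Let $\phi^t_H$ be the Hameotopy generated by $H$. Take a cutoff $\chi$ supported in a compact subset of $U$ and equal to $1$ near a compact neighborhood $K$ of $p$. Then $\chi H$ is a $C^0$-Hamiltonian, approximated by $\chi H_i$, and its Hameotopy agrees with $\phi^t_H$ on points of $K$ for as long as their trajectories stay in the region where $\chi\equiv1$. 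This uses the local nature of Hamiltonian flows together with $C^0$-convergence of the approximating flows.

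Next we transport the cut-off Hamiltonian to $\Omega$ by setting $G:=(\chi H)\circ\varphi$. This is a $C^0$-Hamiltonian on $\Omega$ whose Hameotopy is $\varphi^{-1}\circ\phi^t_{\chi H}\circ\varphi$, by the same observation used in the proof of \Cref{cor:C0char_is_C0rigid}. This step requires checking that compositions of smooth Hamiltonians with Poisson embeddings $\varphi_i\to\varphi$ converge as required; here I would follow \cite{HLS15}.

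By \Cref{thm:poisson_homeos_are_leafwise_sympl_homeos} (or \Cref{thm:foliatedhomeo}), $\varphi$ maps symplectic leaves to symplectic leaves. Hence $G$ restricted to $C_0\cap L$ is locally a function of time on each leaf $L$ of $\Pi_\Omega$: on the region where $\chi\equiv1$ this follows from the hypothesis on $H$, and on the region where $\chi=0$ it is trivially true. On the transition region the hypothesis could fail, so I would choose $\chi$ constant on connected components of $C\cap L$ near $\operatorname{supp}\chi$, or simply restrict attention to times before trajectories reach the transition region. \Cref{thm:C0_Hamiltonians_vanishing_C} then gives that the Hameotopy of $G$ preserves $C_0$. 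Consequently, for $q\in C\cap K$, the points $\phi^t_H(q)$ remain in $C$ as long as the trajectory stays inside the region where $\chi\equiv1$.

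\textbf{Global step.} For $q\in C$, set $t^*:=\sup\{t\in[0,1] : \phi^s_H(q)\in C \text{ for all } s\le t\}$. Since $C$ is closed and the isotopy is continuous, $\phi^{t^*}_H(q)\in C$. If $t^*<1$, apply the local step at the point $p=\phi^{t^*}_H(q)$ to the time-shifted Hameotopy $\phi^{t^*+s}_H\circ(\phi^{t^*}_H)^{-1}$, which is generated by a time-shifted copy of $H$ and so still satisfies the hypothesis. Uniform continuity in $t$ gives a short time interval during which the trajectory stays in $K$, contradicting the maximality of $t^*$. Therefore $\phi^t_H(C)\subset C$ for all $t$, and applying the same argument to the reversed isotopy gives equality.

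\textbf{Main obstacle.} The delicate point is the cutoff together with the transport of $C^0$-Hamiltonians through a Poisson homeomorphism chart. One must ensure that $\chi H$ still satisfies the leafwise hypothesis near $C$, and that $G$ is genuinely a $C^0$-Hamiltonian with compact support. Closedness of $C_0$ inside $\Omega$ is needed to apply \Cref{thm:C0_Hamiltonians_vanishing_C}, and I would secure it by shrinking the chart.
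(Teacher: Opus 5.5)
Your overall route is the paper's. You pass to a chart $\varphi\colon(\Omega,\Pi_\Omega)\to(U,\Pi|_U)$ and transport the Hamiltonian by precomposition with $\varphi$, so that $\phi^t_H=\varphi\circ\phi^t_{H\circ\varphi}\circ\varphi^{-1}$. You use \Cref{thm:poisson_homeos_are_leafwise_sympl_homeos} to see that the leafwise hypothesis survives, and apply \Cref{thm:C0_Hamiltonians_vanishing_C} to get that $\varphi^{-1}(p)$ stays in $\varphi^{-1}(C\cap U)$ for short times. You then conclude by an open--closed argument in $t$ using that $C$ is closed; your supremum argument is the same thing. Incidentally, closedness of $C_0$ in $\Omega$ needs no shrinking: $C\cap U$ is closed in $U$ and $\varphi$ is a homeomorphism onto $U$.

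The step that does not go through is the one you add beyond the paper, the cutoff $\chi$.

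\textbf{Remedy (a) fails in general.} On a connected component $A$ of $C\cap L$ the hypothesis gives $H|_A=c_A(t)$, so $(\chi H)|_A=\chi|_A\,c_A(t)$. For this to be locally constant on $A$ when $c_A(t)\neq 0$, $\chi$ must be constant on $A$. That is impossible as soon as the component through $p$ leaves $\operatorname{supp}\chi$. For instance, take $C=\{z=x\}$ in $(\R^3,\partial_x\wedge\partial_y)$ with $U$ a ball: the components of $C\cap L\cap U$ are chords of $U$ parallel to the $y$-axis. Taking $H=z$ near $p$, $\chi H$ generates $zX_\chi$, whose $\partial_x$-component $-z\,\partial_y\chi$ genuinely pushes points of $C$ off $C$ in the transition region.

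\textbf{Remedy (b) is not an application of \Cref{thm:C0_Hamiltonians_vanishing_C}.} That theorem requires the leafwise hypothesis on the whole coisotropic. Once the hypothesis fails somewhere, its conclusion for $G$ is unavailable, and ``restricting attention to early times'' does not restore it.

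What you actually need is a localized form of the theorem. The proof of (1)$\Rightarrow$(2) uses the leafwise hypothesis only near the trajectories under consideration: through \cite[Lemma 21]{HLS15} at clean points, and through the chart-local contradiction with \Cref{prop:always_one_clean_leaf_regular}. That is exactly how the paper uses it. It applies \Cref{thm:C0_Hamiltonians_vanishing_C} directly to $H\circ\varphi$ on $\Omega$, with no cutoff, and extracts only short-time invariance near $\varphi^{-1}(p)$. So drop $\chi$ and justify or invoke the theorem in this local form, rather than trying to make $\chi H$ satisfy its global hypothesis.
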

\begin{proof}
    Given $p\in C$, choose a coisotropic chart as above, i.e., a neighborhood $U$ of $p$ and a Poisson homeomorphism 
    $$\varphi: (\Omega, \Pi_{\Omega}) \longrightarrow (U,\Pi|_U)$$
    such that $C'=\varphi^{-1}(C\cap U)$ is a smooth coisotropic submanifold of $(\Omega, \Pi_{\Omega})$. Consider the $C^0$-Hamiltonian $H'= H\circ\varphi$. By Theorem \ref{thm:poisson_homeos_are_leafwise_sympl_homeos}, we know that $H'|_{C'}$ restricts to each symplectic leaf as a function that is locally a function of time. By Theorem \ref{thm:C0_Hamiltonians_vanishing_C}, its flow $\phi_{H'}^t$ maps $\varphi^{-1}(p)$ to $C'$ (for short times). It follows that the continuous flow generated by $H$, which writes as
    $$\phi^t_H=\varphi \circ \phi^t_{H'}\circ \varphi^{-1},$$
    maps $p$ to $C$ for short times. This shows that the set of times for which $p$ is mapped to $C$ is open, and it is also closed because $C$ is closed. We conclude that the flow of $H$ preserves $C$.
\end{proof}

Lastly, as an unexpected consequence, which was pointed out to us by Du\v{s}an Joksimovi\'c, we deduce \Cref{cor:van_ideal} in the introduction. 

\begin{proof}[Proof of \Cref{cor:van_ideal}]
    Let $H,F$ be in $\mathcal{I}(C)$. 
    We need to prove that $\{H,F\}\in \mathcal{I}(C)$. 
    By Corollary \ref{cor:Hamiltonians_C0_coisotropics}, we know that 
    \begin{equation}\label{eq:preservation}
        \phi^t_H(C)=C, \qquad \text{and} \qquad \phi^t_G(C)=C,
    \end{equation}
    for all $t\in \R$. We now use the following identity (already ingeniously exploited in \cite{Jo}):
    $$ H\circ \phi^t_G -H=\int_0^t \{H,F\}\circ \phi^s_Gds.$$
    By Equation \ref{eq:preservation} and the fact that $H|_C=0$, we deduce that $H\circ \phi^t_G|_C=0$ for any $t\in \R$. It follows that $H\circ \phi^t_G-H|_C=0$, and hence its time derivative vanishes along $C$. On the other hand
    \begin{align*}
        \pp{}{t}\left(\int_0^t \{H,F\}\circ \phi^s_Gds \right)=\{H,F\}\circ \phi^t_G,
    \end{align*}
    and thus $(\{H,F\}\circ \phi^t_G)|_C=0$. Since $\phi^t_G(C)=C$, we conclude that $\{H,F\}\in \mathcal{I}(C)$.
\end{proof}

%%%%%%%%%%%%%%%%%%%%%%%%%%%%%%%%%%%%%%%%%%%%%%%%%%%%%%%%%%%%%%%%%%%%%%%%%%%%%%%%%%%%%%%%%%%%%%%%%%%%%%%%%%%%%%%%%
%%%%%%%%%%%%%%%%%%%%%%%%%%%%%%%%%%%%%%%%%%%%%%%%%%%%%%%%%%%%%%%%%%%%%%%%%%%%%%%%%%%%%%%%%%%%%%%%%%%%%%%%%%%%%%%%%

\section{Non-liftable Poisson homeomorphisms}\label{sec:non-liftable_homeos}

In this last section, we are going to study how Poisson homeomorphisms behave in terms of symplectic realizations.

%%%%%%%%%%%%%%%%%%%%%%%%%%%%%%%%%%%%%%%%%%%%%%%%%%%%%%%%%%%%%%%%%%%%%%%%%%%%%%%%%%%%%%%%%%%%%%%%%%%%%%%%%%%%%%%%%

\subsection{Liftable Poisson homeomorphisms}

The first natural property to look at in this context is whether Poisson homeomorphisms can be lifted to symplectic homeomorphisms of a given symplectic realization.
To capture the special case of symplectic integrations, we introduce the following notions:

\begin{definition}
    Let $(M,\Pi)$ be a Poisson manifold and $\varphi:M \longrightarrow M$ a Poisson homeomorphism.
    \begin{itemize}
        \item[-] The homeomorphism $\varphi$ is \textbf{liftable} to a symplectic realization 
        $$\mu:(W,\omega)\longrightarrow (M,\Pi)$$ if there is a symplectic homeomorphism $\tilde \varphi:W\longrightarrow W$ such that $\varphi\circ \mu =\mu\circ \tilde\varphi$.
        \item[-] The homeomorphism $\varphi$ is \textbf{groupoid-liftable} if $(M,\Pi)$ is integrable, and $\varphi$ is liftable to a symplectic homeomorphism of the canonical symplectic Lie groupoid integrating $(M,\Pi)$.
    \end{itemize}
\end{definition}

\begin{remark}
\label{rmk:liftable_potentially_not_compatible_groupoid}
Groupoid-liftable Poisson homeomorphisms do not in general admit lifts that are compatible with the groupoid structure, as in the definition we don't require any compatibility with the groupoid structure for the sequence of smooth symplectomorphisms of the groupoid approximating the lift of the homeomorphism. 
This being said, the natural class examples in \Cref{sec:lagrangian_C0-bisections} will have this additional nice property.
\end{remark}
\begin{remark}
\label{rmk:locally_liftable}
Many of the results and observations we will make for (groupoid-)liftable Poisson homeomorphisms, for instance \Cref{prop:full_coisotropic_rigidity_liftable}, actually hold for what one could call \textbf{locally (groupoid-)liftable} ones, namely Poisson homeomorphisms $\varphi\colon M \to M$ such that, for any $p\in M$, there is an open neighborhood $U\subset M$ such that $\varphi\vert_U$ is (groupoid-)liftable as a Poisson homeomorphism of $(U,\Pi\vert_U)$. 
\end{remark}

A good reason to focus on the groupoid-liftable setting is that any Poisson diffeomorphism of $(M,\Pi)$ integrates to a symplectic diffeomorphism (and in fact also a Lie groupoid morphism by \cite[Proposition 1.1]{coste_dazord_weinstein:groupoides_symplectiques}) of the canonical symplectic integration $\Sigma(M)$ (by Lie's second theorem for Lie groupoids, see e.g. \cite[Theorem A1]{MX}). 
The motivation behind considering liftable (or groupoid-liftable) Poisson homeomorphisms is that one should expect them to inherit more rigidity from their symplectic lift. 

For example, suppose a groupoid-liftable Poisson homeomorphism $\varphi$ is smooth. 
In that case, the $C^0$-rigidity of symplectic diffeomorphisms proved by Eliashberg--Gromov implies that its lift is in fact a \emph{symplectic} Lie groupoid (smooth) morphism. 
So the base map $\varphi$ is in fact a Poisson diffeomorphism.

\begin{example}
    \label{exa:symplectic_homeos_are_liftable_Poisson_homeos}
    Symplectic homeomorphisms of simply connected manifolds are groupoid-liftable Poisson homeomorphisms. 
    Indeed, the canonical symplectic groupoid $\Sigma(M)$ of a simply connected symplectic manifold $(M,\omega)$ is given (up to isomorphism) by the product $(M\times M,\omega\oplus -\omega)$, with target and source maps given respectively by the projection onto the first and second factors. If $\varphi\in\Homeo(M,\omega)$ then the induced $\widehat\varphi=(\varphi,\varphi)\colon M\times {M} \to M\times {M}$ is naturally a symplectic homeomorphism as well.
\end{example}

\subsubsection{Examples from loops of groupoid-liftable Poisson homeomorphisms.} Here we describe a source of examples of liftable Poisson homeomorphisms on Poisson manifolds of the form $(M\times S^1,\Pi)$ where $\Pi$ is a Poisson structure on the factor $M$.

\medskip

The input of the construction is a ``loop of groupoid-liftable Poisson homeomorphisms'' on $(M,\Pi)$, by which we don't mean a map from the circle to the subset of $\Homeo(M,\Pi)$ made of liftable Poisson homeomorphisms, but rather the following object. 
We call \textbf{loop of groupoid-liftable Poisson homeomorphisms} a continuous map
$\varphi\colon S^1\times M \to M$ such that:
\begin{itemize}
    \item[-] $\varphi_\theta:=\varphi(\theta,\cdot)\colon M \to M$ is a Poisson homeomorphism for every $\theta\in S^1$, and for $0\in S^1=\R/\mathbb{Z}$ we have $\varphi_0=\mathrm{Id}$;
    \item[-] there is a continuous map $\Phi\colon S^1\times \Sigma(M)\to \Sigma(M)$ such that, for all $\theta\in S^1$, $\Phi_\theta:=\Phi(\theta,\cdot)\colon \Sigma(M)\to \Sigma(M)$ is a lift of $\varphi_\theta$;
    \item[-] there is a sequence of smooth maps $\Psi_n\colon S^1\times \Sigma(M) \to \Sigma(M)$ such that, for all $\theta\in S^1$, $\Psi_{n,\theta}:=\Psi_n(\theta,\cdot)\colon \Sigma(M) \to \Sigma(M)$ is a symplectomorphism, that $C^0$-converges to $\Phi_\theta$ as $n\to \infty$.
\end{itemize}
Note that, by uniqueness of lifts for Poisson diffeomorphisms, $\Phi_0=\mathrm{Id}$. 
Similarly, up to changing the approximating family $\psi_n$ by precomposing with $\psi_{n,0}^{-1}$, we can arrange that $\psi_{n,0}=\mathrm{Id}$ for all $n$.

A loop of groupoid-liftable Poisson homeomorphisms will also be said to be \textbf{exact} if there are a $C^0$-Hamiltonian (time-dependent) function $H\colon S^1\times M \to M$, and a sequence $(\tilde H_n\colon S^1\times \Sigma(M) \to \Sigma(M))_n$ of smooth (time-dependent) Hamiltonian functions such that $\tilde H_n\xrightarrow{C^0} \tilde H$, where $\tilde H := H\circ s $ with $s\colon \Sigma(M)\to M$ being the source map, and so that the $\Psi_n$ above can be chosen to be generated by the time-dependent Hamiltonian vector fields $\tilde X_{n,\theta}$ on $\Sigma(M)$ associated to $\tilde H_{n,\theta}:=\tilde H_n(\theta,\cdot)$, i.e.\  $\frac{\partial \Psi_{n,\theta}}{\partial \theta}=\tilde X_{n,\theta}\circ\Psi_{n,\theta}$.

\smallskip

Natural examples of exact loops of groupoid-liftable Poisson homeomorphisms are the following.
\begin{enumerate}
    \item Exact loops of Poisson diffeomorphisms, i.e.\ loops of Poisson diffeomorphisms generated by a time-dependent vector field $X_\theta$ that can be written as $X_\theta=\iota_{\d H_\theta}\Pi$ for some family of functions $(H_\theta)_{\theta\in S^1}$.
    These naturally give an exact loop of groupoid-liftable Poisson homeomorphisms, as Poisson diffeomorphisms are liftable in a unique way to a symplectomorphism of $\Sigma(M)$, hence the sequence $\Psi_n$ can be taken to be constant in $n$ and $\Phi_\theta$ is generated by the Hamiltonian vector field associated to the (smooth) lift $\tilde H_\theta$ of $H_\theta$.

    In order to introduce some continuous and honestly-non-smooth behavior, in this case one can simply precompose with a \emph{homeomorphism} $f\colon S^1\to S^1$ all the involved $S^1$-families, i.e.\ consider $\Phi_{f(\theta)}$, $\Psi_{n,f(\theta)}$ and $H_{f(\theta)}$.

    \item If $\Pi$ is dual to a symplectic structure $\omega$ on $M$, and the latter is simply connected, then $\Sigma(M)=M\times M$ with symplectic form $\omega\oplus(-\omega)$.
    In this case, every homeomorphism is easily liftable, hence exact loops of groupoid-liftable Poisson homeomorphisms can be obtained for instance from loops $\varphi_\theta$ of symplectic homeomorphisms of $(M,\omega)$ such that there is a time-dependent $C^0$-Hamiltonian $H$ such that $\varphi_\theta$ is the associated ($S^1$-periodic) Hameotopy.
\end{enumerate}

\medskip

It is now easy to get a groupoid-liftable Poisson homeomorphism on $M \times S^1$ from an exact loop of groupoid-liftable Poisson homeomorphisms on $M$.
Indeed, consider the homeomorphism
    \begin{align*}
        \widehat\phi\colon M\times S^1 &\longrightarrow M\times S^1\\
        (p,\theta)&\longmapsto(\varphi_\theta(p),\theta)
    \end{align*}
    which admits as lift to the Poisson homotopy groupoid $\Sigma(M)\times T^*S^1$ of $M\times S^1$ the homeomorphism
    \begin{equation}
    \label{eqn:groupoid_lift}
    \begin{split}
        \widehat\Phi\colon \Sigma(M)\times T^*S^1 &\longrightarrow \Sigma(M)\times T^*S^1\\
        (x,t,\theta)&\longmapsto (\Phi_\theta(x),t-\widetilde H_\theta (x),\theta)
    \end{split}
    \end{equation} 
Using the notation from the definition of exact loop of groupoid-liftable Poisson homeomorphism, it is easy to check that the diffeomorphisms
\begin{align*}
        \widehat\Psi_n\colon \Sigma(M)\times T^*S^1 &\longrightarrow \Sigma(M)\times T^*S^1\\
        (x,t,\theta) &\longmapsto (\Psi_{n,\theta}(x),t-\widetilde H_{n,\theta} (x),\theta)
\end{align*}
are in fact symplectomorphisms and, for $n\to \infty$, they $C^0$-converges to $\widehat \Phi$. 
This concludes the construction.

\begin{remark}
    More generally, one can work with loops of Poisson homeomorphisms that are liftable to a concrete symplectic realization $(W,\omega)$, and obtain Poisson liftable homeomorphisms in $M\times S^1$ to the symplectic realization $W\times T^*S^1$ with the symplectic form $\omega\oplus \omega_{std}$. 
\end{remark}
\medskip

\subsubsection{Examples from Lagrangian $C^0$-bisections}
\label{sec:lagrangian_C0-bisections}

Let us first start by recalling how ``Lagrangian bisections'' give Poisson diffeomorphisms in the smooth case.

Let $(M,\Pi)$ be an integrable Poisson manifold, and $\Sigma(M)\rightrightarrows M$ its Poisson homotopy groupoid.
A \textbf{bisection} is a smooth map $\sigma\colon M\to \Sigma(M)$ such that $s\circ \sigma = \Id_M$ and $t \circ \sigma\colon M\to M$ is a diffeomorphism.
One moreover says that a bisection $\sigma$ is \textbf{Lagrangian} if $\sigma^*\Omega = 0$, i.e.\ if it is a Lagrangian embedding for the symplectic structure $\Omega$ on $\Sigma(M)$.

Lagrangian bisections naturally act on $\Sigma(M)$ in several ways: there are left and right translation actions, and one by conjugation combining both the previous ones.
We will be especially interested in the latter, so we now recall its definition and properties.

A bisection $\sigma \colon M\to \Sigma(M)$ acts via the \textbf{adjoint action}, i.e.\ by conjugation, on $\Sigma(M)$ in the following way: for any $g\in\Sigma(M)$, 
\begin{equation}
    \label{eqn:adjoint_action_bisection}
    \Ad_\sigma g := \sigma(t(g))\cdot g \cdot (\sigma(s(g))^{-1} \, ,
\end{equation}
where $\cdot$ at the right hand side denotes the groupoid multiplication.\footnote{The adjoint action is best interpreted by looking at elements $g$ of $\Sigma(M)$ as arrows from $s(g)$ to $t(g)$.
Indeed, $\sigma(t(g))$ is an arrow from $t(g)$, and $\sigma(s(g))$ an arrow from $s(g)$, so the composition in \eqref{eqn:adjoint_action_bisection} indeed makes sense.}
What's more, $\Ad_g\colon \Sigma(M)\to \Sigma(M)$ is in fact an automorphism of groupoids, lifting the map $t\circ \sigma \colon M\to M$, that is a diffeomorphism by definition of bisection.
Lastly, if the bisection $\sigma$ is Lagrangian, $\Ad_g\colon \Sigma(M)\to \Sigma(M)$ is a symplectomorphism w.r.t.\ $\Omega$.

\bigskip

In analogy with the smooth case just recalled, another natural source of examples of groupoid-liftable Poisson homeomorphisms is given by ``Lagrangian $C^0$-bisections'' via their natural action by conjugation.

Let us start from the following:
\begin{definition}
    \label{def:lagr_C0-bisections}
    A $C^0$-section $\sigma\colon M\to \Sigma(M)$ is a \textbf{$C^0$-bisection} if 
    \begin{itemize}
        \item[-]  $t \circ \sigma \colon M\to M$ is a homeomorphism, and
        \item[-] there is a sequence of smooth bisections $\tau_n\colon M\to \Sigma(M)$ which uniformly $C^0$-converges to $\sigma$.
    \end{itemize}
    We also say that a $C^0$-bisection $\sigma$ is \textbf{Lagrangian} if there is a sequence $\tau_n$ as above which are moreover Lagrangian as smooth bisections.
\end{definition}

It follows from \cite[Theorem 1]{HLS15} (or directly from the earlier work \cite[Theorem 2]{laudenbach_sikorav:C0_limits_lagrangians} in the case of closed source manifold and trivial relative $\pi_2$) that any Lagrangian $C^0$-bisection that is a (smooth) bisection is in fact automatically Lagrangian. 
In particular, the space of Lagrangian bisections is closed with respect to the $C^0$-norm in the space of bisections. \Cref{def:lagr_C0-bisections} above is then well-posed, in the sense that if $\sigma$ happens to be smooth, this just reduces to the usual notion of Lagrangian (smooth) bisection.

Moreover, if $\sigma$ is a Lagrangian $C^0$-bisection, $t\circ \sigma\colon M \to M$ is a Poisson homeomorphism, as the sequence $t\circ \tau_n$ of Poisson diffeomorphisms uniformly $C^0$-converges to it. 

\begin{example}
    Consider the case of a Poisson manifold given by a split product $M=X\times Y$ of a closed symplectic manifold $(X,\omega)$ and a smooth closed manifold $Y$, equipped with the natural Poisson structure induced by $\omega$, of rank equal to $\dim X$.
    Then, the Poisson homotopy groupoid is $\Sigma(M)=X\times \overline{X}\times T^*Y$, equipped with the natural symplectic structure given by $\omega\oplus (-\omega)\oplus \d\lambda_{\mathrm{std}}$. As explained in \cite[Proposition 26]{HLS15}, any $C^0$ 1-form $\alpha$ on $Y$ that is closed in the sense of distribution gives a $C^0$ Lagrangian in $T^*X$.
    Then, in our split product situation, for any choices of symplectic homeomorphism $\varphi\colon X\to X$ and $C^0$ 1-form $\alpha$ on $Y$ closed in the sense of distributions, the section $\sigma_{\varphi,\alpha}\colon X\times Y \to X\times \overline{X}\times T^*Y$ of the form $\sigma_{\varphi,\alpha}(x,y)=(\varphi(x),\varphi(x),\alpha(y))$ gives a $C^0$ Lagrangian bisection in the sense of \Cref{def:lagr_C0-bisections}.
    Note moreover that $t\circ \sigma_{\varphi,\alpha}=(\varphi,\Id)\colon X\times Y \longrightarrow X\times Y$.
\end{example}

The adjoint action can be defined in the $C^0$-setup analogously to the smooth case, as follows: 
\begin{definition}
    \label{def:adjoint_C0-action}
    Given $\sigma$ a $C^0$-bisection, the \emph{adjoint $C^0$-action} $\Ad_\sigma$ of $\sigma$ on $\Sigma(M)$ is defined by the same formula as in \eqref{eqn:adjoint_action_bisection}, i.e.: for any $g\in\Sigma(M)$, 
\begin{equation}
    \label{eqn:adjoint_action_C0-bisection}
    \Ad_\sigma g := \sigma(t(g))\cdot g \cdot (\sigma(s(g))^{-1} \, ,
\end{equation}
where $\cdot$ at the right hand side denotes the groupoid multiplication..
\end{definition}
Analogously to the smooth setup, $\Ad_\sigma$ is a ``$C^0$-automorphism of groupoids'', i.e.\ a homeomorphism commuting with the underlying groupoid structural maps.
What's more, it is not hard to see that if $\sigma$ is a Lagrangian $C^0$-bisection, $\Ad_\sigma$ and $t\circ \sigma$ are respectively a symplectic and a Poisson homeomorphism, and the first is the lift of the second;
in other words, $t\circ \sigma$ is a groupoid-liftable Poisson homeomorphism.

%%%%%%%%%%%%%%%%%%%%%%%%%%%%%%%%%%%%%%%%%%%%%%%%%%%%%%%%%%%%%%%%%%%%%%%%%%%%%%%%%%%%%%%%%%%%%%%%%%%%%%%%%%%%%%%%%

\subsection{Complete coisotropic rigidity}

Some additional rigidity is inherited from the symplectic lift of a liftable Poisson homeomorphism. Indeed, it turns out that Theorem \ref{thm:C0-rigidity_coisotropics} can be improved in the liftable case to obtain $C^0$-rigidity of all characteristic leaves.

\begin{proposition}\label{prop:full_coisotropic_rigidity_liftable}
Let $\varphi: (M,\Pi) \longrightarrow (M,\Pi)$ be a (local) Poisson homeomorphism and $\mu: (W,\omega) \longrightarrow (M,\Pi)$ a symplectic realization. 
Suppose that $C\subset M$ is a coisotropic submanifold and that $C'=\varphi(C)$ is smooth. 
If $\varphi$ lifts to a (local) symplectic homeomorphism $\tilde \varphi$ of $(W,\omega)$, then $C'$ is coisotropic and every leaf of the characteristic foliation of $C$ is mapped homeomorphically to a leaf of the characteristic foliation of $C'$.
\end{proposition}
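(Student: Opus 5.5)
The plan is to pull the coisotropic $C$ back to the symplectic realization, where the rigidity theorem of \cite{HLS15} applies directly, and then push the conclusions back down to $M$.

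\textbf{Step 1 (lifting).} Set $\tilde C:=\mu^{-1}(C)$ and $\tilde C':=\mu^{-1}(C')$. Since $\mu$ is a surjective submersion, both are smooth submanifolds of $W$. Since $\mu\circ\tilde\varphi=\varphi\circ\mu$ and $\tilde\varphi$ is a bijection, we get $\tilde\varphi(\tilde C)=\tilde C'$.

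\textbf{Step 2 (coisotropy upstairs and downstairs).} The key linear-algebra fact is that, for a Poisson submersion $\mu$ from a symplectic manifold, a submanifold $N\subset M$ is coisotropic if and only if $\mu^{-1}(N)$ is coisotropic in $(W,\omega)$. This follows by noting three things:
\begin{itemize}
\item[-] $T(\mu^{-1}N)^\circ=\mu^*(TN^\circ)$;
\item[-] $d\mu\circ\omega^{-1}\circ\mu^*=\Pi^\sharp$, which is exactly the statement that $\mu$ is Poisson;
\item[-] $T(\mu^{-1}N)=(d\mu)^{-1}(TN)$.
\end{itemize}
Combining these, $\omega^{-1}(\mu^*\alpha)\in T(\mu^{-1}N)$ if and only if $\Pi^\sharp\alpha\in TN$.

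With this in hand, $\tilde C$ is coisotropic. By \cite[Theorem 1]{HLS15}, in its version for local symplectic homeomorphisms, $\tilde C'$ is coisotropic, and moreover $\tilde\varphi$ maps characteristic leaves of $\tilde C$ homeomorphically onto characteristic leaves of $\tilde C'$. Applying the fact again gives that $C'$ is coisotropic.

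\textbf{Step 3 (matching characteristic leaves).} The aim is to show that $\mu$ maps each characteristic leaf of $\tilde C$ onto a characteristic leaf of $C$, and that the preimage $\mu^{-1}(K)\cap\tilde C$ of a leaf $K$ is a union of characteristic leaves of $\tilde C$. Locally, $\mathcal{K}_{\tilde C}$ is spanned by the Hamiltonian vector fields $X_{H\circ\mu}$ with $H\in\mathcal{I}(C)$, since $\mathcal{I}(\tilde C)$ is locally generated by the pullbacks $\mu^*\mathcal{I}(C)$. Because $\mu$ is Poisson, $X_{H\circ\mu}$ is $\mu$-related to $X_H$. Their flows therefore project, and flow-concatenation paths upstairs project to paths along $\mathcal{K}_C$, as in \Cref{def:leaf} and the proof of \Cref{lem:cleanleafcoiso}. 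Conversely, flows of $X_H$ lift to flows of $X_{H\circ\mu}$ starting at any preimage point, so $\mu$ maps characteristic leaves onto characteristic leaves.

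\textbf{Step 4 (conclusion).} Given a characteristic leaf $K$ of $C$ and $p\in K$, choose a lift $\tilde p$ and let $\tilde K$ be its characteristic leaf, so that $\mu(\tilde K)=K$. Then
\[
\varphi(K)=\varphi\mu(\tilde K)=\mu\tilde\varphi(\tilde K),
\]
and $\tilde\varphi(\tilde K)$ is a characteristic leaf of $\tilde C'$. Its projection is therefore a characteristic leaf of $C'$. Since $\varphi$ is injective, the map $K\to\varphi(K)$ is a bijective restriction of a homeomorphism. It is a homeomorphism for the subspace topologies; for the leaf topologies one applies the same argument to $\varphi^{-1}$, which is lifted by $\tilde\varphi^{-1}$.

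\textbf{Main obstacle.} I expect Step 3 to be the hard part. A characteristic leaf of $\tilde C$ may project onto only part of a leaf if the fibers of $\mu$ are disconnected or if $\mu$ is not complete. The argument therefore needs the lifting of flows along paths, handled chart by chart in the piece-by-piece manner of \Cref{lem:cleanleafcoiso}. The required surjectivity is only onto the full leaf $K$, which follows because every flow segment starting at a point of $\mu(\tilde K)$ lifts locally. For local homeomorphisms, one runs the same argument on small open sets.
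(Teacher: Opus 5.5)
There is a genuine gap in Steps 3--4. The claim that $\mu(\tilde K)=K$, where $\tilde K$ is the characteristic leaf through a lift $\tilde p$, is false for a general symplectic realization. You flag the issue in your last paragraph, but the fix you offer does not work. That every flow segment starting in $\mu(\tilde K)$ lifts \emph{locally} only shows that $\mu(\tilde K)$ is open in $K$. The lifted flow of $X_{H\circ\mu}$ can leave $W$ before the flow of $X_H$ downstairs finishes, so $\mu(\tilde K)$ need not be closed in $K$.

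Here is a concrete example, with $W$ connected:
\begin{itemize}
\item[-] Take $M=\R^2$ with $\Pi=\partial_x\wedge\partial_y$, and $W=\R^4\setminus\{x=u,\ y=0\}$ with $\omega=\d x\wedge\d y+\d u\wedge \d v$ and $\mu(x,y,u,v)=(x,y)$. This is a surjective Poisson submersion.
\item[-] For $C=\{y=0\}$, the only characteristic leaf is $K=C$.
\item[-] The characteristic leaves of $\tilde C=\{(x,0,u,v)\mid x\neq u\}$ are the half-lines $\{t>u\}$ and $\{t<u\}$ inside the lines $\{(t,0,u,v)\}$. None of them projects onto $K$.
\end{itemize}
So no choice of lift gives $\mu(\tilde K)=K$, and the identity $\varphi(K)=\mu\tilde\varphi(\tilde K)$ fails. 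For the same reason, your claim that $\mu(\tilde\varphi(\tilde K))$ is a whole leaf of $C'$ is unjustified.

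What survives of Step 3 is enough to finish. Since the generators $X_{H\circ\mu}$ and $X_H$ are $\mu$-related, $\mu(\tilde K)$ lies in a single characteristic leaf of $C$, and $\mu(\tilde\varphi(\tilde K))$ lies in a single leaf of $C'$. By local lifting, each $\mu(\tilde K)$ is open in $K$. On each such piece, $\varphi(\mu(\tilde K))=\mu(\tilde\varphi(\tilde K))$ lies in one leaf of $C'$, and overlapping pieces land in the same leaf. Connectedness of $K$ then gives $\varphi(K)\subset K'_{\varphi(p)}$. Equivalently, as the paper does, cover a path in $K$ from $p$ to any $q$ by finitely many local lifts $\tilde U$, each satisfying $\varphi(\mu(\tilde U))=\mu(\tilde\varphi(\tilde U))\subset K'_{\varphi(p)}$. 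The reverse inclusion comes from running the same argument for $\varphi^{-1}$ with lift $\tilde\varphi^{-1}$. This local-lift-and-chain argument is exactly the paper's, and it needs no global surjectivity of $\mu$ on leaves. Your Steps 1--2 match the paper.
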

\begin{proof}
By assumption, we have the following commutative diagram. 
\begin{center}
    \begin{tikzcd}
W \arrow[d, "\mu"] \arrow[r, "\tilde \varphi"] & W \arrow[d, "\mu"] \\
M \arrow[r, "\varphi"]                 & M                         
\end{tikzcd}
\end{center}
Recall that given a smooth submanifold $C\subset M$, we know that $C$ is coisotropic if and only if $\mu^{-1}(C)$ is coisotropic, see \cite[Proposition 8.61]{CFM_book}. We now consider
$$\tilde C:= \mu^{-1}(C), \qquad \text{and} \qquad \tilde C':=\mu^{-1}(C').$$
Let us first show that $\tilde \varphi(\tilde C)=\tilde C'$. Indeed, for the first inclusion, we have
\begin{align*}
    \mu (\tilde \varphi(\tilde C))= \varphi \circ \mu(\tilde C)=\varphi(C)=C',
\end{align*}
and thus $\tilde \varphi(\tilde C)\subseteq \tilde C'$. For the reverse inclusion, suppose that there is $x\in \tilde C' \setminus \tilde \varphi(\tilde C)$. Then, since $\tilde \varphi$ is a homeomorphism, there must be $y\in W\setminus \tilde C$ such that $\tilde \varphi(y)=x$, and thus
$\mu(\tilde \varphi(y))\in C'$. This implies that 
$$\varphi\circ \mu(y)\in C', \qquad \text{and thus} \qquad \mu(y)\in C,$$
which is a contradiction because this can only happen if $y\in \tilde C$. We conclude that $\tilde \varphi(\tilde C)=\tilde C'=\mu^{-1}(C')$, which in particular implies that $\tilde \varphi(\tilde C)$ is smooth. Hence, the map $\tilde \varphi$ is a symplectic homeomorphism mapping a coisotropic submanifold $\tilde C$ to a smooth submanifold $\tilde C'$. By \cite[Theorem 1]{HLS15}, the submanifold $\tilde C'$ is coisotropic, and the map $\tilde \varphi$ maps characteristic leaves to characteristic leaves homeomorphically.
Since $\mu^{-1}(C')$ is coisotropic, we know that $C'$ is coisotropic. Lastly, the proof of \cite[Proposition 8.61]{CFM_book} shows that at any point $x\in W$ we have
 $$ d_x\mu(\mathcal{K}_{\tilde C'})=\mathcal{K}_{C'},$$
 where $\mathcal{K}_{\tilde C'}$ and $\mathcal{K}_{C'}$ denote the characteristic distributions of $\tilde C'$ and $C'$ respectively. 
 Thus, the map $\mu$ maps every leaf of $\mu^{-1}(C)$ (respectively, of $\mu^{-1}(C')$) to a single leaf of $C$ (respectively, of $C'$). 
 Note that, in general, the preimage of a single leaf by $\mu$ will contain several leaves of the lifted coisotropic. 
 To be precise, given $w\in \tilde C$, $w'\in \tilde C'$, $x\in C$ and $x'\in C'$, we denote by $\tilde L_w, \tilde L'_{w'}, L_x, L'_{x'}$ the leaves, containing the respective points, of the characteristic foliation of $\tilde C, \tilde C', C, C'$ respectively. 
 Then, the map $\mu$ satisfies $$\mu(\tilde L_w) \subseteq L_x, \qquad  \text{and} \qquad \mu(\tilde L_{w'}') \subseteq L_{x'}',$$ 
 as long as $\mu(w)=x$ and $\mu(w')=x'$. 
 \medskip

For any point $p\in C$ and any lift $\tilde p\in \tilde C$ of $p$, using local Hamiltonians near $p$ vanishing at $C$ and their lifts $\mu\circ H$ (whose flow is defined for some short enough time), there is a neighborhood $U_p$ of $p$ in $L_p$ that admits a lift $\tilde U_p$ which is an open neighborhood of $\tilde p$ inside $\tilde L_{\tilde p}$. In addition, since $\tilde \varphi$ maps leaves to leaves homeomorphically, we know that 
$$\tilde\varphi(\tilde U_p)\subset \tilde L'_{\tilde \varphi(p)},$$
which leads us to 
$$\mu(\tilde\varphi(\tilde U_p)) \subset L'_{\varphi(p)}.$$
and then
$$\varphi(\mu(\tilde U_p)) \subset L'_{\varphi(p)}.$$
Using again that $\mu$ maps leaves to leaves, we conclude that
$$\varphi(U_p)\subset L'_{\varphi(p)}.$$
We can cover any path $\gamma:I\longrightarrow L_p$ starting at $p$ and ending at any point $q\in L_p$ by finitely many such open sets, and then the previous argument shows that $q\in L'_{\varphi(p)}$. This shows that $\varphi(L_p)\subset L_\varphi(p)'$. Lastly, note that $\tilde \varphi^{-1}$ is a symplectic homeomorphism lifting $\varphi^{-1}$, so the same argument applied to $\varphi^{-1}$ proves the other inclusion.
\end{proof}

%%%%%%%%%%%%%%%%%%%%%%%%%%%%%%%%%%%%%%%%%%%%%%%%%%%%%%%%%%%%%%%%%%%%%%%%%%%%%%%%%%%%%%%%%%%%%%%%%%%%%%%%%%%%%%%%%

\subsection{Compactly supported examples}\label{ss:nonlift}

In the previous section, we established that liftable Poisson homeomorphisms that map a coisotropic submanifold to a smooth submanifold must also map all characteristic leaves to characteristic leaves homeomorphically. This gives us an obstruction for a Poisson homeomorphism to be liftable, as we have seen that, in general, a Poisson homeomorphism might fail to map every characteristic leaf homeomorphically. In this section, we give constructions of non-liftable Poisson homeomorphisms that lead to a proof of Theorem \ref{thm:main3}.

%%%%%%%%%%%%%%%%%%%%%%%%%%%%%%%%%%%%%%%%%%%%%%%%%%%%%%%%%%%%%%%%%%%%%%%%%%%%%%%%%%%%%%%%%%%%%%%%%%%%%%%%%%%%%%%%%

\subsubsection{The zero Poisson structure}

We first work out the case of the zero Poisson structure, which, as noted in \cite{Jo}, is already interesting: can every homeomorphism of a manifold $M$ be lifted to a symplectic homeomorphism of its canonical symplectic integrating groupoid (e.g., its cotangent bundle if $M$ is simply connected)? We show that the answer is negative. 
Even worse, we prove that there always exist homeomorphisms that are not liftable for \emph{any} symplectic realization, not just for the canonical symplectic integration:

\begin{proposition}\label{prop:nonliftzeroPoisson}
    Let $M$ be a manifold of dimension at least one, which we endow with the trivial Poisson structure $\Pi=0$. There exists a Poisson homeomorphism\footnote{In the case of the trivial Poisson structure, a Poisson homeomorphism is simply a homeomorphism that is $C^0$-limit of diffeomorphisms.} $\varphi:M\longrightarrow M$, compactly supported near a point, which is not liftable for any symplectic realization of $M$.
\end{proposition}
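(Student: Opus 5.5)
We will use \Cref{prop:full_coisotropic_rigidity_liftable} as an obstruction. For the zero Poisson structure every submanifold is coisotropic, since $\Pi^\sharp\equiv 0$. The characteristic foliation $\cK_C=\Pi^\sharp(TC^\circ)=0$ is then the foliation by points, so every homeomorphism trivially maps characteristic leaves to characteristic leaves. This means the obstruction cannot come from $C$ and $C'$ directly, and we have to look at the lifted picture in the realization. If $\mu\colon (W,\omega)\to (M,0)$ is a symplectic realization, then the fibers of $\mu$ are coisotropic, in fact Lagrangian, because $\mu$ is Poisson onto the zero structure. More generally, for a submanifold $C\subset M$ of codimension $c$, the preimage $\tilde C=\mu^{-1}(C)$ is coisotropic and its characteristic foliation is $c$-dimensional, tangent to the $\mu$-fibers. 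Concretely, $\mathcal K_{\tilde C}=\omega^{-1}(\mu^*TC^\circ)$, which lies inside $\ker d\mu$.

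The plan is to extract from a symplectic lift $\tilde\varphi$ a \emph{smooth} (or at least Lipschitz-type) constraint on $\varphi$, and to violate it. Take $\tilde C=\mu^{-1}(C)$ for a hypersurface $C$ through $p$ with $\varphi(C)=C'$ smooth. By \cite[Theorem 1]{HLS15}, $\tilde\varphi$ sends the characteristic leaves of $\tilde C$ to those of $\tilde C'$. These leaves are generated by the lift of a defining function $f$ of $C$: they are the flow lines of $X_{\mu^*f}$ inside $\mu^{-1}(x)$, for $x\in C$. More decisively, $\tilde\varphi$ then intertwines the $C^0$-Hamiltonian dynamics of $f\circ\mu$ with those of $f'\circ\mu$, where $f'$ is a defining function for $C'$. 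Following \cite[Theorem 3]{HLS15} and the conjugation argument in the proof of \Cref{cor:C0char_is_C0rigid}, $(f'\circ\varphi)\circ\mu$ generates a Hameotopy on $W$ via conjugation by $\tilde\varphi$. Its flow is fibrewise translation by $d(f'\circ\varphi)$, which only makes sense if $f'\circ\varphi$ is differentiable along $C$ in the transverse direction. In other words, by uniqueness of generating $C^0$-Hamiltonians up to constants, the flow of $f\circ\mu$ conjugated by $\tilde\varphi$ is the flow of $(f\circ\varphi^{-1})\circ\mu$. This flow moves points in the fibre $\mu^{-1}(x)$ by the covector $d_x(f\circ\varphi^{-1})$, suitably interpreted.

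The homeomorphism we construct is the following. Work in a chart around $p$ and let $\varphi$ be $\varphi(x_1,\dots,x_n)=(h(x_1),x_2,\dots,x_n)$, cut off to be compactly supported, with $h(s)=\sqrt[3]{s}$ near $0$, in the spirit of \Cref{ex:cleantononclean}. It is a $C^0$-limit of diffeomorphisms. Choose $C=\{x_1=0\}$ and $f=x_1$. Then $f\circ\varphi^{-1}=x_1^3$ near $C$ and $f\circ\varphi=\sqrt[3]{x_1}$. The smooth Hamiltonian $K=x_1\circ\mu$ has time-one flow moving each point of $\mu^{-1}(C)$ by a nonzero fibre displacement. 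Conjugating by $\tilde\varphi$ gives a homeomorphism $\tilde\varphi\circ\phi^1_K\circ\tilde\varphi^{-1}$, which is the Hameomorphism generated by $K\circ\tilde\varphi^{-1}=(x_1\circ\varphi^{-1})\circ\mu=x_1^3\circ\mu$. The latter is smooth, and its flow fixes $\mu^{-1}(C)$ pointwise, because $d(x_1^3)=0$ on $C$. The conjugated map, however, moves points of $\tilde\varphi(\mu^{-1}(C))=\mu^{-1}(C)$, since $\phi^1_K$ moves them within the fibres. This contradicts the uniqueness of the Hameotopy generated by a smooth Hamiltonian; here we use that conjugates of Hameotopies by symplectic homeomorphisms are Hameotopies generated by the composed $C^0$-Hamiltonian. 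Hence no lift exists, for any realization $\mu$.

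The main obstacle is justifying that $\tilde\varphi\circ\phi^t_K\circ\tilde\varphi^{-1}$ is generated by $K\circ\tilde\varphi^{-1}$ when $\tilde\varphi$ is only a (possibly local) symplectic homeomorphism on a possibly non-Hausdorff $W$. The step to take first is to localize: restrict to a small Hausdorff open set of $W$ around a point over $p$, where $\mu$ is a fibration, and use the compact support of $K$ after cutting it off in the fibre directions. This is where the standard fact from \cite{HLS15} applies, namely that a $C^0$-Hamiltonian composed with a symplectic homeomorphism generates the conjugated Hameotopy. The remaining care is to check that the cutoff does not affect the displacement on a fibre over a point of $C$.
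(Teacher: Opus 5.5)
Your route differs from the paper's and can be completed, but the decisive step is not justified as written.

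First, the uniqueness you need is that a $C^0$-Hamiltonian determines its Hameotopy, which follows from the energy--capacity inequality \cite{LalMcD}. The uniqueness of generators up to constants recalled in \Cref{rmk:nonuniqueness_C0_Hamiltonians}, which you invoke in your second paragraph, is the converse statement and does not help.

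Second, and more seriously, even the correct global statement does not apply. You must cut $K=x_1\circ\mu$ off in the fibre directions, both for the definition of Hameotopy and for energy--capacity. After that, $K\circ\tilde\varphi^{-1}$ agrees with the smooth function $(x_1\circ\mu)^3$ only on the open set $\tilde\varphi(\{\chi=1\})$ and is merely continuous elsewhere. So it is not a smooth Hamiltonian. The points you single out (conjugation, and that the cutoff does not change the displacement of $\phi^t_K$) are the easy ones.

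What is missing is a \emph{local} uniqueness lemma. Suppose $G$ is a $C^0$-Hamiltonian, obtained as the limit of compactly supported $G_i$, and $G=G_0$ on an open set $U$ with $G_0$ smooth. Then $\phi^t_G(w)=\phi^t_{G_0}(w)$ as long as the $G_0$-orbit of $w$ stays in $U$. This does hold:
take $\chi$ supported in $U$ and equal to $1$ near the orbit, and set $G_i'=\chi G_0+(1-\chi)G_i$. Then $\phi^t_{G_i'}=\phi^t_{G_0}$ near $w$. The Hofer length of $(\phi^t_{G_i})^{-1}\circ\phi^t_{G_i'}$ is at most $2t\,\|G_i'-G_i\|_{C^0}\to 0$. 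If $\phi^t_G(w)\neq\phi^t_{G_0}(w)$, this map displaces a fixed small ball around $w$ for all large $i$, contradicting energy--capacity.

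Apply this in a Hausdorff chart around $w=\tilde\varphi(q)$, $q\in\mu^{-1}(p)$, with $G=K\circ\tilde\varphi^{-1}$ and $G_0=(x_1\circ\mu)^3$, whose flow fixes $w$. It gives $\tilde\varphi(\phi^t_K(q))=w$, i.e.\ $\phi^t_K(q)=q$. This is false for small $t>0$ because $X_K(q)\neq 0$, as $\mu$ is a submersion. Use small times rather than $t=1$, to avoid incompleteness or periodicity. In passing: the fibres of $\mu$ are coisotropic of codimension $\dim M$, and Lagrangian only when $\dim W=2\dim M$; this is not used.

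For comparison, the paper makes the base map the cube root in every coordinate on a cube and argues by volume alone. In Darboux--Carath\'eodory coordinates $(x,z,u,v)$ near $q$ with $\mu=x$, the lift $g$ preserves volume, being a $C^0$-limit of symplectomorphisms. It sends $[-\varepsilon,\varepsilon]^n\times[-\varepsilon_0,\varepsilon_0]^{2m-n}$ over the base cube $[-\varepsilon^{1/3},\varepsilon^{1/3}]^n$. By continuity, each fibre slice of the image contains a ball of a fixed radius $D/2$, with $D>0$ by injectivity. Comparing volumes gives $D^{2m-n}\lesssim\varepsilon^{2n/3}$ for all small $\varepsilon$, hence $D=0$.

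The paper's argument needs no energy--capacity inequality, no Hameotopy theory and no cutoffs, but it only detects homeomorphisms with unbounded volume distortion at a point. Your argument uses genuine $C^0$-symplectic rigidity. In exchange it gives a first-order obstruction with no reference to volume: a liftable $\varphi$ must intertwine the flows of $F\circ\mu$ and $(F\circ\varphi^{-1})\circ\mu$. So it cannot carry a regular point of a smooth $F$ to a critical point of $F\circ\varphi^{-1}$ when the latter is smooth.

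Also, the appeal to \cite[Theorem 1]{HLS15} in your second paragraph gives nothing by itself. The homeomorphism $(x,\xi)\mapsto(\varphi(x),\xi)$ of $T^*\R^n$ covers $\varphi$ and preserves $\mu^{-1}(C)$ together with its characteristic leaves. What is violated is the parametrization of those leaves by the Hamiltonian flows.
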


\begin{proof}
Consider a chart $U\subset \R^n$ of $M$ with coordinates $(x_1,...,x_n)$, and assume without loss of generality that $I^n=[-1,1]^n \subset  U$. 
Consider a homeomorphism $f$ of the form
\begin{equation*}
    f: \mathbb{R}^n \longrightarrow \R^n \, , 
    \quad 
    x=(x_1,...,x_n) \longmapsto (f_1(x),...,f_n(x)),
\end{equation*}
where 
\begin{itemize}
    \item[-] $f_i(x)=\sqrt[3]{x_i}$ for $x\in I^n$,
   \item[-] $f_i(x)=x_i$ for $x$ away from an open neighborhood of $I^n$. 
\end{itemize}

We now want to prove that such an $f$ is not liftable.
Suppose by contradiction that it is liftable, i.e. that there is a symplectic realization $(W,\omega,\mu\colon W\to M)$ of dimension $2m$ and a symplectic homeomorphism 
$$g: W\longrightarrow W\qquad  \text{such that} \qquad \mu\circ g=f\circ \mu \, .$$
Take a point $q\in \mu^{-1}(p)$. 
Since $\mu$ is a submersion, there is a neighborhood $V$ of $q$ in $W$ with coordinates $(x_1,...,x_n,y_1,...,y_{2m-n})$ such that $\mu$ is the standard projection to the first $n$ coordinates. Using the standard Poisson bracket notation for the bracket induced by $\omega$, we have
$$\{x_i,x_j\}= \{x_1\circ \mu, x_j\circ \mu\}=\{x_i,x_j\}_{\Pi_0}\circ \mu=0,$$
i.e., these functions pairwise commute. 
By the Darboux-Carathéodory theorem (see \cite[17.2]{LM}, also known as the Carathéodory-Jacobi-Lie theorem), there exist coordinates $z_1,...,z_n$ and $u_1,... u_{m-n}, v_1,...,v_{m-n}$ such that in a neighborhood $V$ of $q$ such that
$$\omega|_V= \sum_{i=1}^n dx_i\wedge dz_i + \sum_{k=1}^{m-n} du_k \wedge dv_k.$$

Analogously, we find coordinates $(\hat x_i,\hat z_i,\hat u_k,\hat v_k)$ in a neighborhood $V'$ of $g(q)$ with the same properties as above. 
Notice that, because $\mu(g(q))=f(\mu(q))=f(p)=p$, where $p$ is the initial point we chose in $M$, we can assume without loss of generality that $\hat x_i=x_i$. 
\medskip 

Up to shrinking $V$, we can assume that it is a cube of the form $[-\varepsilon_0,\varepsilon_0]^{2m}$, and by continuity, we can assume that $g(V)\subset V'$, and then it induces a (symplectic) homeomorphism of the form
\begin{align*}
    g|_V: V &\longrightarrow g(V)\subset V' \\
    (x_i,z_i,u_k,v_k) &\longmapsto (\sqrt[3]{x_i},z_i', u_k',v_k'),
\end{align*}
where $z_i',u_k',v_k'$ are functions of $x_i,z_i,u_k,v_k$. Define the non-negative real number
    $$D:=d\Big(g(0), g\left((\{0\}\times \partial ([-\varepsilon_0,\varepsilon_0]^{2m-n})\right)\Big)$$
that is, the Euclidean distance between the image of the slice $\{0\}\times \partial [-\varepsilon_0, \varepsilon_0]^{2m-n}$ and the image of the origin (which is $g(q)$).
Note that $D>0$, because $g$ is a homeomorphism and hence injective.
We now aim to prove that in fact $D$ must on the other hand be zero if $g$ is $C^0$-limit of symplectic diffeomorphisms, thus reaching a contradiction.
For this, we argue as follows.

\smallskip

For any positive $\varepsilon\leq \varepsilon_0$ small enough, the rectangle 
$$C_\varepsilon:= [-\varepsilon,\varepsilon]^n\times [-\varepsilon_0,\varepsilon_0]^{2m-n}$$
is contained in $V$. 
Its image $G_\varepsilon:=g(C_{\varepsilon}) \subset g(V)$ satisfies
$$ \mu (G_\varepsilon)= [-\sqrt[3]{\varepsilon}, \sqrt[3]{\varepsilon}]^n.$$
By continuity of $g$, for $\varepsilon$ small enough, we can assume that
$$d\Big(g(x,0), g\left(\{x\}\times \partial([-\varepsilon_0,\varepsilon_0]^{2m-n})\right)\Big)>\frac{D}{2}, \quad \text{for all} \quad x\in[-\varepsilon,\varepsilon].$$
This implies that the symplectic volume (which is the Euclidean volume times $\frac{1}{m!}$) of $G_\varepsilon$ is at least that of 
$$[-\sqrt[3]{\varepsilon},\sqrt[3]{\varepsilon}]^n \times [0,D/2]^{2m-n},$$ namely
$$\operatorname{vol}(G_\varepsilon)\geq \frac{1}{m!} (2\sqrt[3]{\varepsilon})^n \left(\frac{D}{2}\right)^{2m-n}.$$
On the other hand, as a $C^0$-limit of measure-preserving homeomorphisms is measure-preserving, the map $g$ is volume-preserving with respect to the volume form $\omega^n$,
and thus
$$\operatorname{vol}(G_\varepsilon)=\operatorname{vol}(C_\varepsilon)=\frac{1}{m!}(2\varepsilon)^n (2\varepsilon_0)^{2m-n}.$$
Combining these two, we deduce
$$(2\varepsilon)^n (2\varepsilon_0)^{2m-n} \geq (2\sqrt[3]{\varepsilon})^n \left(\frac{D}{2}\right)^{2m-n}$$
from which it follows that 
$$ D^{2m-n} \leq \varepsilon^{\frac{2n}{3}} (4\varepsilon_0)^{2m-n}. $$
Since $\varepsilon$ is arbitrarily small, we deduce that $D=0$, as desired.
As already mentioned, this contradicts the fact that $g$ is a homeomorphism, thus concluding.
\end{proof}

\subsubsection{Regular Poisson manifolds}
We now give examples of compactly supported non-liftable Poisson homeomorphisms for regular Poisson structures, using the obstruction provided by Proposition \ref{prop:full_coisotropic_rigidity_liftable}.

In the following proposition, we exhibit a Hamiltonian example for which $C'=C$ and the same leaves of $\cK_C$ are not mapped homeomorphically to leaves, and make it compactly supported. 

\begin{proposition}
\label{prop:nonliftregular}
Consider $\mathbb{R}^{2n+k}$ with $n,k\geq 1$ endowed with the Poisson structure $\Pi=\sum_{i=1}^n \pp{}{x_i}\wedge \pp{}{y_i}$. There exists a compactly supported Hameomorphism 
$$\varphi:(M,\Pi)\longrightarrow (M,\Pi)$$
preserving a hypersurface $C$ that does not map some characteristic leaf of $\cK_C$ homeomorphically to some other leaf of $\cK_C$. In particular, it is not liftable for any symplectic realization. The corresponding continuous Hamiltonian $H$ generating $\varphi=\phi^1_H$ is autonomous and vanishes along $C$.
\end{proposition}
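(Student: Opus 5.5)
We will build the example by modifying \Cref{ex:cleantononclean} and \Cref{exa:char_fol_not_C0-rigid}, now with an autonomous Hamiltonian that vanishes on the hypersurface. Take
\[
C=\{z_1=x_1^3\}\subset\R^{2n+k} .
\]
Along $\{x_1=z_1=0\}$ the characteristic leaves of $C$ are points, since there $TC=TL$. Elsewhere they are lines in the $y_1$-direction: $\cK_C$ is spanned by $\Pi^\sharp(\d(z_1-x_1^3))=3x_1^2\,\partial_{y_1}$.

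The goal is a $C^0$-Hamiltonian vanishing on $C$ whose flow slides a point of $\{x_1=z_1=0\}\cap C$ along the $y_1$-direction. This would map a $0$-dimensional leaf into a $1$-dimensional set, which cannot be a single characteristic leaf. The natural candidate is
\[
H=\big(z_1^{1/3}-x_1\big)\,\rho,
\]
with $\rho$ a compactly supported cutoff equal to $1$ near the origin, possibly depending on $y_1$ and the other coordinates. Then $H$ vanishes on $C$, and formally
\[
X_H=\Pi^\sharp(\d H)=-\rho\,\partial_{y_1}+(\text{terms with a factor } z_1^{1/3}-x_1).
\]
The term $-\rho\,\partial_{y_1}$ is the smooth contribution from $-x_1\rho$, i.e.\ the $\d x_1$ part. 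Since $z_1$ is a Casimir and each symplectic leaf $\{z=\mathrm{const}\}$ is preserved, on each leaf $z_1^{1/3}$ is a constant $c$. Hence on the leaf $z_1=c^3$ the Hamiltonian is the smooth function $(c-x_1)\rho$.

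Its flow preserves the set $\{x_1=c\}$ near the origin, where $\rho\equiv1$, translating $y_1$ at unit speed. Therefore points of $C\cap\{z_1=0\}$ with $x_1=0$ are moved in $y_1$. In particular the point-leaf at the origin is sent to a point $(0,-t,\dots)$ of the same $0$-dimensional-leaf locus. To get an honest violation, I would take $\rho$ depending on $y_1$, so that different points on the segment $\{x_1=z_1=0\}$ move by different amounts. Then the single point-leaf $p$ still goes to a point-leaf. The cleaner violation is instead the converse: a $1$-dimensional leaf (a $y_1$-segment in $C\cap\{z_1\neq 0\}$ near $z_1\to0$) should not map onto a single leaf. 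Following \Cref{exa:char_fol_not_C0-rigid}, I would rather choose $\rho$ so that the flow also moves $x_1$ by a $z_1$-dependent amount. Taking instead
\[
H=\big(z_1-(\text{cutoff})\,h(z_1)\big)\,y_1-\text{correction}
\]
is awkward, so I will stick with the leafwise picture: on each leaf, prescribe a smooth compactly supported Hamiltonian $G_c(x,y)$ vanishing on $\{x_1=c\}$, depending continuously on $c=z_1^{1/3}$. Choose the family so that for $c=0$ its time-one map moves $\{x_1=0\}$ to itself, while for $c\neq0$ nothing special happens, and design it so that the image of the point-leaf locus $\{x_1=z_1=0\}$ is not a union of point-leaves. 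For instance, $G_c$ could be a rotation-type Hamiltonian in $(x_2,y_2)$ when $n\ge2$, or a $y_1$-shear whose strength depends on $c$. The required non-homeomorphic behaviour should come from continuity failing to commute with the singular leaf structure at $c=0$.

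To justify that $H$ is a $C^0$-Hamiltonian, I approximate $z_1^{1/3}$ by diffeomorphisms $h_m\to h$ as in \Cref{ex:cleantononclean}. The smooth Hamiltonians $H_m=G_{h_m(z_1)}$ are compactly supported and converge in $C^0$. Their flows are leafwise flows of $G_{h_m(z_1)}$, which converge uniformly because $G_c$ and $\partial G_c$ depend continuously on $c$. So the limit flow is a Hameotopy generated by $H$, and $\varphi=\phi^1_H$ is a compactly supported Hameomorphism. Invariance of $C$ is checked leafwise: on the leaf $z_1=c^3$, $C$ is $\{x_1=c\}$, and $G_c$ vanishes on it, so its flow preserves it. This agrees with \Cref{thm:C0_Hamiltonians_vanishing_C}. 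Non-liftability then follows immediately from \Cref{prop:full_coisotropic_rigidity_liftable}.

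The main obstacle is choosing $G_c$ so that a characteristic leaf is genuinely not mapped homeomorphically onto a leaf. The leafwise flow preserves each symplectic coisotropic $\{x_1=c\}$, and for $c\neq0$ it maps $y_1$-lines to $y_1$-lines, since it is a symplectic map preserving that coisotropic. Therefore the failure must occur on $c=0$, where leaves are points. There I need the time-one map of $G_0$ to be non-trivial on $\{x_1=0\}$. The simplest choice is $G_c=(c-x_1)\rho(x,y)$: it moves points of $\{x_1=c\}$ in $y_1$, and for $c=0$ it sends point-leaves to point-leaves, which is harmless. So I would first check whether a $1$-dimensional leaf with $c\neq 0$ is instead broken up. Since $\rho$ is compactly supported, a full $y_1$-line leaf is only partially translated, but it is still mapped into the same line. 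I therefore expect to need the $c=0$ slice of $C$ itself to be a union of point-leaves whose image under the flow is not: this forces a redesign in which $C$ is chosen so that the flow transports an open piece of a $1$-dimensional leaf into the point-leaf locus. That is, I would use a hypersurface like $z_1=x_1^3\,\chi$, with a flat region adapted to the support of $\rho$, and verify this step carefully.
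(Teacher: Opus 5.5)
There is a genuine gap: you never produce a hypersurface and a Hameomorphism that actually break $\cK_C$, and both concrete set-ups you consider provably cannot work. Your own leafwise observation already shows this. A Hameomorphism preserves every symplectic leaf $L=\{z=\mathrm{const}\}$ (\Cref{cor:Hameos_preserve_leaves}) and restricts to it as a local symplectic homeomorphism (\Cref{thm:poisson_homeos_are_leafwise_sympl_homeos}). Two cases arise:
\begin{itemize}
\item[-] if $C$ meets $L$ transversally, then \cite[Theorem 1]{HLS15} together with \Cref{cor:poisson_char_fol_equals_sympl_one_at_clean_intersect} sends characteristic leaves of $C\cap L$ to characteristic leaves;
\item[-] if every point of $C\cap L$ is a point-leaf, then points go to points.
\end{itemize}
For $C=\{z_1=x_1^3\}$ every symplectic leaf falls into one of these two cases. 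So no choice of $\rho$, and no family $G_c$ (rotations in $(x_2,y_2)$, $y_1$-shears, \dots), can give the example.

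The redesign $z_1=x_1^3\chi$, with $\chi\geq 0$ a cutoff, has exactly the same defect. Since $\d(x_1^3\chi)$ vanishes wherever $x_1^3\chi$ does, all characteristic leaves of $C$ in $\{z_1=0\}$ are still points. Meanwhile, for $\chi=\chi(y_1)$ say, $C$ is transverse to every leaf with $z_1\neq 0$. The flat region only creates more point-leaves; it never places a $1$-dimensional leaf next to them inside the same symplectic leaf.

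The missing idea is that a \emph{single} symplectic leaf must contain both $1$-dimensional and $0$-dimensional characteristic leaves of $C$, adjacent along a curve that a Hamiltonian flow can slide along.

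The paper takes $C=\{z_1=f\}$ with the following properties:
\begin{itemize}
\item[-] $f$ is essentially a function of $(x_1,y_1)$ near the relevant region;
\item[-] $f$ is strictly increasing in $x_1$, so that $C$ is a graph $\{x_1=g(y_1,z_1)\}$;
\item[-] $f(0,y_1)=0$;
\item[-] $f=x_1$ for $|y_1|$ large and $f=x_1^3$ for $|y_1|<1$.
\end{itemize}
Then the $y_1$-axis of the leaf $\{z=0\}$ lies in $C$. It consists of $1$-dimensional characteristic leaves where $f=x_1$ and of point-leaves where $f=x_1^3$.

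The function $g$ is only continuous in $z_1$ (it equals $\sqrt[3]{z_1}$ for $|y_1|<1$). The autonomous function $H=g-x_1$ vanishes on $C$. For smooth $g_m\to g$, the flows of $H_m=g_m-x_1$ are explicit: $y_1$ moves at unit speed and $x_1$ is shifted by $g_m(y_1(t),z_1)-g_m(y_1,z_1)$. Hence they converge uniformly, and the limit flow visibly preserves $C$. Since $g(\cdot,0)\equiv 0$, it translates the $y_1$-axis of $\{z=0\}$ and carries part of a $1$-dimensional leaf onto point-leaves.

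A cutoff and \Cref{prop:full_coisotropic_rigidity_liftable} then finish the proof, as you say. Your remark that $H$ is smooth on each leaf because $z_1$ is a Casimir is the right tool for controlling the approximations. It only pays off, however, once $C$ has this mixed structure inside one leaf.
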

\begin{proof}
\textbf{Case $n,k=1$.}
    To better illustrate the ideas, we will first explain the case $n=k=1$. Consider a hypersurface of the form 
    $$C=\{(x,y,z)\mid z=f(x,y)\},$$
    where $f$ is chosen such that 
    \begin{itemize}
        \item[-] $f(x,y)=x^3$ for $y\in (-1,1)$,
        \item[-] $f(x,y)=x$ for $y\not \in (-1-\varepsilon,1+\varepsilon)$,
        \item[-] $f$ vanishes along the $y$ axis.
    \end{itemize}
One can moreover arrange that for each $(y,z)$ there is a unique $z$ such that $z=f(x,y)$. The hypersurface is sketched in Figure \ref{fig:interpolation}, the orange line is the set of clean intersection points with $\cF$, which corresponds with the set of zero-dimensional leaves of $\cK_C$ and with the set of tangency points between $C$ and the leaf $\{z=0\}$.

\begin{figure}[!ht]
    \centering
    \begin{tikzpicture}
        \node at (0,0) {\includegraphics[width=0.9\linewidth]{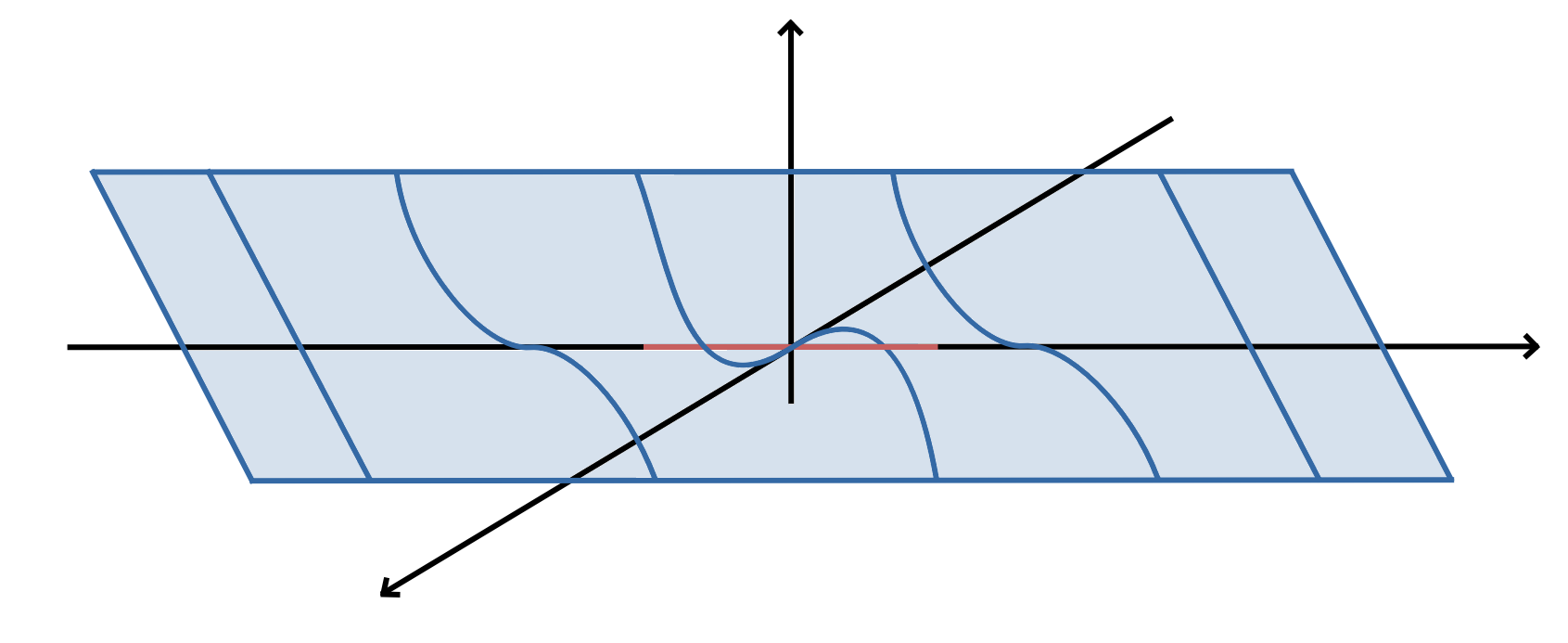}};
        \node at (6,-0.65) {$y$};
        \node at (-2.4,-2.1) {$x$};
        \node at (0.5, 2.3) {$z$};
    \end{tikzpicture}
    \caption{The hypersurface $C$, in blue, and its non-clean intersection points in orange}
    \label{fig:interpolation}
\end{figure}
Consider then the (a priori not smooth) function $g=g(y,z)$ such that $x=g(y,z)$ if and only if $z=f(x,y)$. 
Note that $g$ is continuous but not smooth: indeed, for $y\in (-1,1)$ we have  $g(y,z)=\sqrt[3]{z}$.
    We then claim that the continuous function 
   $$H=-x+g(y,z)$$
    is a $C^0$-Hamiltonian. To prove this, consider a sequence $(H_i)_i$ of smooth Hamiltonians of the form
    $$H_i(x,y,z)=-x+g_i(y,z),$$ 
    where $g_i\colon \R^2_{(y,z)}\to \R$ is a sequence of smooth functions that $C^0$-converges to $g$, so that $H_i$ uniformly $C^0$-converge to $H$.
    If $f$ is chosen properly, namely smooth in $y$, we can moreover arrange that $\frac{\partial g_i}{\partial y}$ $C^0$-converges uniformly to $\frac{\partial g}{\partial y}$ (that is well defined by choice of $f$). 
    Then, the time-$t$ flows $\phi_{X_{H_i}}^t$, that are just the maps
    $$(x,y,z)\mapsto (x+\int_0^t \pp{g_i}{y}(y+t,z)dt \, ,\; y+t \, ,\; z)$$ 
    uniformly $C^0$-converge to a map $\phi^t$ given by 
    $$\phi^t(x,y,z)=(x+\int_0^t \pp{g}{y}(y+t,z)dt  \,, \; y+t \, ,\; z) \, .$$
    This shows that $H$ is a $C^0$-Hamiltonian.
    \noindent
    Now, it is immediate to check that, by definition of $g$, $H$ vanishes on $C$. 
    This being said, its flow $\phi^t$ does not preserve every leaf of the characteristic foliation of $C$, as for instance the image of the $1$-dimensional leaves $C\cap \{z=0,\,y<-1\}$ intersects the collection of $0$ dimensional leaves $C\cap \{z=0, y\in (-1,1)\}$ for $t>0$ large enough. Since $H$ is smooth away from a neighborhood of the origin, we can cut off $H$ to get a compactly supported function $\tilde H$ which is still a $C^0$-Hamiltonian, and consider $C$ restricted to the region where $\tilde H=H$, to obtain the conclusion of the lemma.
\medskip

\textbf{General case.}
    Consider a hypersurface (hence, a coisotropic submanifold) of the form
    $$C=\{(x_i,y_i, z_j) \mid z_1=f(x_i,y_i, z_2,...,z_n)\},$$ 
    where 
    $$f\colon \R^{2n+k} \longrightarrow  \R$$ 
    is a smooth function satisfying
    \begin{itemize}
        \item[-] $f(x_i,y_i,z_2,...,z_n)=x_1^3$ for $(x_i,y_i,z_2,...,z_k)\in B_1^{2n+k-1}(0)$, where the latter is the ball of radius $1$ centered at the origin,
        \item[-] $f(x_i,y_i,z_2,...,z_k)=x_1$ for $(x_i,y_i,z_2,...,z_k)\not \in B_{1+\varepsilon}^{2n+k-1}(0)$, for some fixed $\varepsilon>0$,
        \item[-] $f$ vanishes along the $y_1$ axis $\mathcal{O}_{y_1}$ (and thus $\pp{f}{y_1}=0$ there),
        \item[-] in a neighborhood of $\{y_1\in (-1-2\varepsilon, 1+2\varepsilon)\}\cap \mathcal{O}_{y_1}$, the function is of the form $f=f(x_1,y_1)$.
    \end{itemize}
    One can think of $f$ near $\{y_1\in (-1-2\varepsilon, 1+2\varepsilon)\}\cap \mathcal{O}_{y_1}$ as a family of functions parametrized by $y_1$, smoothly interpolating, as $y_1$ increasing, from $x_1$ to $x_1^3$, and then back to $x_1$.

    We can moreover arrange that for each $(x_2,...x_n,y_1,...,y_n,z_1,...,z_k)$ there is a unique $x_1$ such that $z_1=f(x_i,y_i,z_2,...,z_k)$.
    Consider then the a priori non-smooth function 
    $$g=g(y_1,x_2,y_2,...,x_n,y_n,z_1,...,z_k),$$ 
    such that $x_1=g(y_1,x_2,y_2,...,x_n,y_n,z_1,...,z_k)$ if and only if $z_1=f$. It satisfies that it vanishes along the $y_1$ axis, and $\pp{g}{y_1}(x_1,0,0)=0$. 
    Note that $g$ is only continuous: indeed, for radius larger than $1+\varepsilon$ (in all coordinates byt $z_1$), we have  $g=\sqrt[3]{z_1}$.
    
    Now, as in the case $n=k=1$, the function
    \begin{align*}
    H\colon \R^{2n+k} &\longrightarrow  \R\\
    (x_i,y_i,z_j)&\longmapsto x_1+g(y_1,x_2,y_2,...,x_n,y_n,z_1,...,z_k) 
    \end{align*}
    can be shown to be a $C^0$-Hamiltonian by considering smooth smooth Hamiltonians $H_i$ obtained from smooth approximations $g_i$ of $g$. If $f$ is chosen properly, namely differentiable in every coordinate different from $x_1$, we can moreover arrange that $g$ is differentiable in any coordinate except $z_1$, and $\frac{\partial g_i}{\partial \alpha}$ uniformly $C^0$-converges to $\frac{\partial g}{\partial \alpha}$ for any $\alpha\neq z_1$.
    Then, the time-$t$ flows $\phi_{X_{H_i}}^t$ (which only depend on integrating the derivative of $g_i$ with respect to the $x_j, y_j$ coordinates) will $C^0$-converge 
    uniformly to the flow $\phi^t$ of the ODE defined by 
    $$\begin{cases}
        \dot x_j=\pp{g}{y_j}, \quad \text{for}\quad  j=1,..,n,\\
        \dot y_1= 1,\\
        \dot y_j=-\pp{g}{x_j}, \quad \text{for} \quad j=2,...,n,\\
        \dot z_j=0, \quad \text{for}\quad j=1,...,k.
    \end{cases}$$
    This shows that $H$ is a $C^0$-Hamiltonian.
    \noindent
    Now, it is immediate to check that, by definition of $g$, $H$ vanishes on $C$.
    We claim that $\phi^t$ does not map homeomorphically the characteristic leaves of $C$. Indeed, notice that the $y_1$ axis contains both one-dimensional (for example, when $(x_j,y_j,z_2,...,z_k)$ is not in $B_{1+\varepsilon}^{2n+k-1}$) and zero-dimensional leaves (for example,  when $(x_j,y_j,z_2,...,z_k)$ lies in $B_{1+\varepsilon}^{2n+k-1}$). 
    A point of the form $p=(0,\hat y_1,0,...,0)$ with $\hat y_1\in (-1-2\varepsilon,-1-\varepsilon)$ lies in a one-dimensional characteristic leaf of $C$, but for an adequate $t$, we have $\phi^t(p)=(0,\hat y_1+t,0,...,0)$ with $|\hat y_1+t|<1$. In particular, it is a point that belongs to a zero-dimensional characteristic leaf of $C$. Since $H$ is smooth away from $B^{2n+k}_{1+2\varepsilon}$, we can cut it off to produce a compactly-supported function $\tilde H$ which is still a $C^0$-Hamiltonian. We can also consider the hypersurface $C$ restricted to the region where $\tilde H=H$, and then the flow produced by $\tilde H$ satisfies that for some $t$, it does not map all the characteristic leaves of $C$ to other leaves homeomorphically. 
\end{proof}

\begin{remark}\label{rmk:C0Hamiltonians_coisotropic_fails}
     Recall that \cite[Theorem 3]{HLS15} in the symplectic context states that a $C^0$-Hamiltonian restricts to a coisotropic as a function depending only on time if and only if the flow preserves each characteristic leaf of the coisotropic. Notice that in the previous construction, the $C^0$-Hamiltonian $H$ satisfies $H|_C=0$, which shows that the analog statement for Poisson manifolds is false. This is to be expected, as \cite[Theorem 3]{HLS15} can be understood as a generalization of the uniqueness of generators of Hameotopies, which is false in the Poisson context. 
\end{remark}

%%%%%%%%%%%%%%%%%%%%%%%%%%%%%%%%%%%%%%%%%%%%%%%%%%%%%%%%%%%%%%%%%%%%%%%%%%%%%%%%%%%%%%%%%%%%%%%%%%%%%%%%%%%%%%%%%

\subsection{Proof of Theorem \ref{thm:main3}}

The previous particular cases can now be combined to prove Theorem \ref{thm:main3}. Let $(M,\Pi)$ be a Poisson manifold that is not almost everywhere symplectic. Then there exists some open set $U$ where the rank of $\Pi$ is constant but different from the dimension of $M$.

If $\operatorname{Rank}(\Pi)|_{U}=0$, Proposition \ref{prop:nonliftzeroPoisson} provides a Poisson homeomorphism $\varphi: (U,\Pi=0) \longrightarrow (U,\Pi=0)$ that is not liftable for any symplectic realization. Extending $\varphi$ as the identity elsewhere in $M$, we obtain a non-liftable Poisson homeomorphism of $(M,\Pi)$.

Otherwise, the rank of $\Pi\vert_U$ is constant, different from zero, but has positive corank. 
Up to shrinking $U$, we can assume that we are in a splitting chart where $\Pi$ has the form of the standard regular Poisson structure on the corresponding Euclidean space. 
By \Cref{prop:nonliftregular}, there exists a coisotropic submanifold $C\subset U\subset M$ and a compactly supported Hameomorphism of $(U,\Pi)$ that maps $C$ to a coisotropic submanifold $C'$ in a way that characteristic leaves are not all mapped homeomorphically to characteristic leaves. This Hameomorphism extends as the identity to a Hameomorphism of $(M,\Pi)$. By Proposition \ref{prop:full_coisotropic_rigidity_liftable}, if $\varphi$ was liftable for some symplectic realization, it would map characteristic leaves of $C$ to characteristic leaves of $C'$ homeomorphically. This shows that $\varphi$ is not liftable and concludes the proof. \qed

\subsection{An example in the almost everywhere symplectic case}\label{sec:examplebPoisson}

The example we now describe is for a b-Poisson structure, and is a variation on the construction in the proof of \Cref{prop:nonliftregular}.
We explain it in ambient dimension $4$ for simplicity, but the argument can easily be generalized to higher ambient even dimensions.

\medskip

Consider in $\R^4$ the Poisson structure $\Pi=\pp{}{x}\wedge \pp{}{y} + u \pp{}{u}\wedge \pp{}{z}$. Mimicking the example in the regular case, we consider
$$C:=\{u=f(x,y)\},$$ where $f\colon \R^2_{x,y}\to \R$ is a smooth function, that is equal to $x$ for $y\leq -1$ and to $x^3$ for $y\geq 1$, and such that for each $(y,z)$ there is a 
unique $u$ such that $u=f(x,y)$. 
As in the previous examples, $C$ is automatically coisotropic, being a codimension $1$ submanifold.
Consider then the (non-smooth) function $g=g(y,u)$ such that $x=g(y,u)$ if and only if $u=f(x,y)$.
    We then claim that the continuous function 
    \[
    H\colon \R^4\to \R \, , 
    \quad 
    (x,y,z,u)\mapsto x+g(y,u) 
    \]
    is a $C^0$-Hamiltonian with respect to the Poisson structure $\Pi$.

    \noindent
    Indeed, consider the sequence of smooth Hamiltonians $H_n\colon \R^4\to \R$ defined by $H_n(x,y,z)=x+g_n(y,u)$, where $g_n\colon \R^2_{(y,u)}\to \R$ is defined as the convolution $g_n=\rho_{1/n}*g$, with $\rho_{1/n}:=\rho(n \, \cdot)/n^4$ where $\rho\colon \R^2\to \R$ is a mollifier. 
    In particular, each $g_n$ is smooth, and as a sequence $g_n$ converge in $C^0$-norm, uniformly on compact sets, to $g$; hence, $H_n$ converge in $C^0$-norm, uniformly on compact sets, to $H$.
    If $f$ is chosen properly, namely smooth in $y$, we can moreover arrange that $\frac{\partial g_n}{\partial y}$ in fact uniformly $C^0$-converges to $\frac{\partial g}{\partial y}$ (that is well defined by choice of $f$). 
    We also compute
    $$ \Pi(dH_n,\cdot)= \pp{}{y}-\pp{g_n}{y}\pp{}{x} + u\pp{g_n}{u}(y,u)\pp{}{z}$$
    Integrating, we find that the flow is given by 
    \[
    \phi_n^t:(x,y,z,u) \mapsto (x-\int_0^t \pp{g_n}{y}(y+s,u)\d s , \,  y+t, \, z+\int_{0}^t u\pp{g_n}{u}(y+s,u)\d s , \, u)
    \]
    where one can more explicitly write $\int_0^t \pp{g_n}{y}(y+s,u)\d s = \int_{t}^{t+s}\pp{g_n}{y}(y,u)\d y = g_n(y+t,u)-g_n(y)$.
    Note also that $u\pp{g_n}{u}(y,u) \to u\pp{g}{u}(y,u)$ in the $C^0$-norm uniformly on compact sets, and that $u\pp{g}{u}(y,u)$ is in fact a $C^0$ function.
    Hence, $\phi_n^t$ converges in $C^0$-norm uniformly on compact subsets to the continuous flow 
    \[
        \phi^t:(x,y,z,u) \mapsto (x-g(y+t,u)+g(y,u) , \,  y+t, \, z+\int_{0}^t u\pp{g}{u}(y+s,u)\d s , \, u)
    \]
    It is clear from the expression that $\phi^t$ preserves $C$ and that it sends each point $p=(0,y,0,0)\in C$ to $p'=(0,y+t,0,0)\in C$.
    Now, if $y\leq -1$ and $t> 2$, $p$ belongs to a $1$-dimensional leaf of the characteristic foliation of $C$, while $p'$ to a $0$-dimensional one.
    This concludes the example.

\begin{remark}
    It is very likely that examples of $C^0$-Hamiltonians that map at least one characteristic leaf of a certain coisotropic to a non-homeomorphic one should exist for any Poisson structure. 
    This being said, the specific formulas in the example above is highly dependent on the local formula for $\Pi$ near its singular locus; in other words, the expectation is that explicit examples in the almost everywhere symplectic case must be found in an ad hoc manner depending on the explicit type of the singular locus of the Poisson structure at hand. 
\end{remark}

\bibliographystyle{alpha}
\bibliography{biblio}

\Addresses

\end{document}